\title[$C^1$ density of stable ergodicity]{$C^1$ density of stable ergodicity}
\author[A. Avila, S. Crovisier, A. Wilkinson]{A. Avila, S. Crovisier, and A. Wilkinson}
\date{\today}
\thanks{A.A. was partially supported by the ERC Starting Grant Quasiperiodic
and S.C. by the ERC Advanced Grant NUHGD.
A.A. and S.C. were partially supported by the Balzan Research Project of J.Palis.
A.W. was supported by NSF grant DMS-1316534.
}
\address{Artur Avila \newline
\rm CNRS, IMJ-PRG, UMR 7586, Univ Paris Diderot, Sorbonne Paris Cit\'e,
Sorbonnes Universit\'es, UPMC Univ Paris 06, F-75013, Paris, France \&
IMPA, Estrada Dona Castorina 110, Rio de Janeiro, Brasil}
\address{Sylvain Crovisier \newline
\rm CNRS - Laboratoire de Math{\'e}matiques d'Orsay, UMR 8628\newline
Universit{\'e} Paris-Sud 11, 91405 Orsay Cedex, France.}
\address{Amie Wilkinson \newline
\rm Department of Mathematics,\newline
University of Chicago, 5734 S. University Avenue Chicago, Illinois 60637, USA.}
\theoremstyle{plain}
\newtheorem{theorem}{Theorem}[section]
\newtheorem{lemma}[theorem]{Lemma}
\newtheorem{sublemma}[theorem]{Sublemma}
\newtheorem{corollary}[theorem]{Corollary}
\newtheorem{proposition}[theorem]{Proposition}
\newtheorem{conjecture}{Conjecture}
\newtheorem{quest}[conjecture]{Question}
\newtheorem*{corollary*}{Corollary}
\newtheorem*{claim}{Claim}
\newtheoremstyle{vThm*}%
{}{}%
{\itshape}%
{-3pt}{\bfseries}%
{}{ }%
{\thmnote{#3}}%
\theoremstyle{vThm*}
\newtheorem*{nThm*}{}
\theoremstyle{definition}
\newtheorem{definition}[theorem]{Definition}
\def\length{\mathrm{Length}}
\def\diam{\mathrm{diam}}
\def\Diff{\operatorname{Diff} }
\def\diff{\operatorname{Diff} }
\def\mod{\hbox{mod} }
\def\eproof{$\Box$ \medskip}
\def\title{\em}
\def\bar{\overline}
\def\id{\operatorname{Id}}
\def\phcs{{H^s_{\hbox{\tiny Pes}}}}
\def\phcu{{H^u_{\hbox{\tiny Pes}}}}
\def\phc{{H_{\hbox{\tiny Pes}}}}
\def\Jac{\operatorname{Jac}}
\def\Nuh{\operatorname{Nuh}}
\def\GL{\operatorname{GL}}
\def\DS{\mathcal{DS}}
\def\cW{\mathcal{W}}
\def\cF{\mathcal{F}}
\def\cL{\mathcal{L}}
\def\cU{\mathcal{U}}
\def\cN{\mathcal{N}}
\def\cG{\mathcal{G}}
\def\cC{\mathcal{C}}
\def\cD{\mathcal{D}}
\def\cQ{\mathcal{Q}}
\def\cO{\mathcal{O}}
\def\cV{\mathcal{V}}
\def\cW{\mathcal{W}}
\def\vol{m}
\def\cH{\mathcal{H}}
\def\cP{\mathcal{P}}
\def\diag{\operatorname{diag}}
\def\d{{\underline {j}}}
\def\transverse{\,\raise2pt\hbox to1em{\hfil$\top$\hfil}\hskip -1em \hbox
to1em{\hfil$\cap$\hfil}\,} 
\def\be{\begin{equation}}
\def\ee{\end{equation}}
\def\cA{\mathcal{A}}
\def\cG{\mathcal{G}}
\newcommand\R{\mathbb R}
\newcommand\RR{{\mathbb R}}
\newcommand\ZZ{{\mathbb Z}}
\newcommand\Z{{\mathbb Z}}
\newcommand\NN{{\mathbb N}}
\newlength{\figboxwidth} \setlength{\figboxwidth}{5.8in}
\def\eproof{\hfill$\Box$ \bigskip}
\let\original@addcontentsline\addcontentsline
\newcommand{\dummy@addcontentsline}[3]{}
\newcommand{\DeactivateToc}{\let\addcontentsline\dummy@addcontentsline}
\newcommand{\ActivateToc}{\let\addcontentsline\original@addcontentsline}
\begin{document}

\begin{abstract}

We prove a $C^1$ version of a conjecture by Pugh and Shub:
among partially hyperbolic volume-preserving $C^r$ diffeomorphisms, $r>1$,
the stably ergodic ones are $C^1$-dense.

To establish these results, we develop new perturbation tools for the $C^1$ topology:
linearization of horseshoes while preserving entropy, and creation of ``superblenders" from hyperbolic sets
with large entropy.

\end{abstract}

\maketitle

%\tableofcontents

%\section*{Introduction}

\addcontentsline{toc}{section}{\mbox{}\quad\quad Introduction}
\DeactivateToc
\section{Introduction}

A volume-preserving, $C^2$ diffeomorphism $f\colon M\to M$ of a compact Riemannian manifold $M$ is {\em stably
ergodic} if any volume preserving, $C^2$  diffeomorphism $g\colon M\to M$ that is sufficiently close to $f$ in the $C^1$ topology is ergodic with respect to volume.

Interest in stably ergodic dynamical systems dates back at least to 1954 when Kolmogorov in his ICM address posited the existence of stably ergodic flows \cite{Ko}.    The first natural class of stably ergodic diffeomorphisms was established by Anosov in the 1960's.  These so-called Anosov diffeomorphisms remained the only known examples of stably ergodic diffeomorphisms for nearly 30 more years.

Grayson, Pugh and Shub \cite {GPS} established the existence of
non-Anosov stably ergodic diffeomorphisms in 1995.  
They considered the time-one map of the geodesic flow for a surface
of constant negative curvature.  These examples belong to a class of 
dynamical systems known as the partially hyperbolic diffeomorphisms.  

A diffeomorphism $f\colon M\to M$ is {\em partially hyperbolic} if there is a continuous splitting of the tangent bundle $TM = E^u\oplus E^c\oplus E^s$, invariant under the derivative $Df$, such that vectors in $E^u$ are uniformly expanded in length by $Df$, vectors in $E^s$ are uniformly contracted, and the expansion and contraction of the length of vectors in $E^c$ is dominated by the expansion and contraction in $E^u$ and $E^s$, respectively.  See Section~\ref{ss=domsplit} below for a precise definition.

Based on \cite{GPS} and related work, Pugh and Shub
formulated in 1996 a bold conjecture about partial hyperbolicity and stable ergodicity \cite {PS}:

\begin{nThm*}{\bf{Stable Ergodicity Conjecture.}} Stable 
ergodicity is $C^r$-dense among the $C^r$ partially hyperbolic,
volume-preserving  diffeomorphisms  on a compact connected manifold, for any $r>1$.
\end{nThm*}

Restricting to the class of partially hyperbolic diffeomorphisms for which the center bundle $E^c$ is one-dimensional, this conjecture
was proved by F. Rodriguez-Hertz,
M.A. Rodriguez-Hertz and Ures \cite {RRU}.
Here we will establish, in general, a
$C^1$ version of the Stable Ergodicity Conjecture:

\begin{nThm*}{\bf{Theorem A.}}
Stable ergodicity is $C^1$-dense
in the space of $C^r$ partially
hyperbolic volume-preserving diffeomorphisms on a compact connected manifold,
for any $r > 1$.
\end{nThm*}
Theorem A has a more precise formulation, Theorem A', stated in Section~\ref{ss=blenders}.
Among the class of partially hyperbolic diffeomorphisms where $E^c$ has dimension one or two,
Theorem A has been established earlier in \cite {BMVW} and \cite {RRTU}.

\section{Some background and structure of the proof}

We fix notation to be used throughout the paper.  Let  $M$ be a compact, connected boundaryless
manifold with a fixed Riemannian metric, and denote by $m$ the volume in this metric, normalized so that $m(M)=1$.
Sometimes we will also consider a symplectic structure $\omega$
and its associated normalized volume $m$.
The spaces of $C^r$, volume preserving and symplectic diffeomorphisms
are denoted by $\Diff^r_m(M)$ and $\Diff^r_\omega(M)$, respectively.

\subsection{The Hopf argument and stable ergodicity}

One of the earliest arguments for proving ergodicity, still in use today, was originally employed by Eberhard Hopf
in the 1930's  to prove ergodicity of the geodesic flow for closed, negatively curved surfaces.  These flows (``Anosov flows" in current terminogy) have one-dimensional invariant expanded and contracted distributions $E^u$ and $E^s$, tangent to invariant foliations $\cW^u$ and $\cW^s$, respectively. Hopf's ergodicity argument uses the Ergodic Theorem to show that any invariant set for the flow must consist of essentially whole leaves of both $\cW^u$ and $\cW^s$.  Invariance under the flow implies that the same is true for the foliations $\cW^{cu}$ and $\cW^{cs}$ formed by flowing the leaves of  $\cW^u$ and $\cW^s$, respectively.  Since the leaves of these foliations are transverse, a version of Fubini's theorem implies that every invariant set for the flow must have full measure in neighborhoods of fixed, uniform size in the manifold.  Ergodicity follows from connectedness and this {\em local ergodicity}.  The version of Fubini's theorem employed by Hopf is fairly straightforward, since in his setting, the foliations $\cW^{cu}$ and $\cW^{cs}$ (and indeed,   $\cW^{u}$ and $\cW^{s}$) are $C^1$.

Hopf himself foresaw the general usefulness of his methods beyond the geometric context of geodesic flows.  In 1940 he wrote:  ``The range of applicability of the method of asymptotic geodesics extends far beyond surfaces of negative curvature. In~\cite{H1},  $n$-dimensional manifolds of negative curvature were already investigated. But the method allows itself to be applied to much more general variational problems with an independent variable, aided by the Finsler geometry of the problems. This points to a wide field of problems in differential equations in which it will now be possible to determine the complete course of the solutions in the sense of the inexactly measuring observer."\cite{H2}

Kolmogorov, too, saw the potential of the Hopf argument as a general method to prove ergodicity.
  In his 1954 ICM address \cite {Ko},  he  wrote:
 ``it is extremely likely that, for arbitrary $k$, there are examples of
canonical systems with $k$  degrees of freedom and with stable
transitiveness [i.e. ergodicity] and mixing... I have in mind motion along
geodesics on compact manifolds of constant negative curvature... "
  Kolmogorov's intuition was clearly guided by the 
robust nature of Hopf's ergodicity proof; indeed, inspected carefully with a modern eye, Hopf's original approach gives a
complete proof of Kolmogorov's assertion that the geodesic flow for a hyperbolic manifold remains ergodic when perturbed within the class of Hamiltonian (or even volume-preserving) flows.\footnote{For flows that are perturbations of constant negative curvature geodesic flows, the foliations $\cW^{cu}$ and $\cW^{cs}$ are $C^1$, which is enough to carry out the Fubini step. The same regularity of the foliations does not for geodesic flows on arbitrary negatively curved manifolds.}

Around ten years later, %Dmitry
Anosov \cite {Anosov}  generalized Hopf's theorem to closed manifolds 
of strictly negative (but far from constant) sectional curvatures, in any dimension. 
The key advance was to
extend the Fubini part of Hopf's argument when the foliations $\cW^u$ and $\cW^s$ are not $C^1$ but still satisfy an absolute continuity property. This also gave the ergodicity of $C^2$, volume-preserving uniformly hyperbolic diffeomorphisms,  now known as the {\em Anosov diffeomorphisms}.  These are the diffeomorphisms  $f\colon M\to M$ for which there exists a $Df$-invariant splitting $TM = E^u\oplus E^s$ and an integer $N\geq 1$ such that for every unit vector $v\in M$:
\begin{equation}
\label{uniform-bundle}
v\in E^u \implies \|Df^N v\| > 2, \quad\hbox{and } v\in E^s \implies \|Df^N v\| < 1/2.
\end{equation}

Since Anosov diffeomorphisms form a $C^1$-open class, and $C^2$, volume-preserving Anosov diffeomorphisms are ergodic, it follows that Anosov diffeomorphisms are stably ergodic.  The general expectations of Hopf and Kolmogorov were thus met in Anosov's work.  But there is more to the story: while Anosov diffeomorphisms gave the first examples of stably ergodic systems, their existence raised the question of what other examples there might be.

\subsection{Dominated splittings and partial hyperbolicity: the Pugh-Shub Conjectures}\label{ss=domsplit}

Since stable ergodicity is by definition a robust property, it is natural
to search for an alternate robust, geometric/topological dynamical
characterization of the stably ergodic diffeomorphisms.   Bonatti-Diaz-Pujals \cite{BDP} proved a key result which
can be slightly modified to show that stable ergodicity of $f\colon M\to M$ implies the existence of a {\em dominated splitting}, that is, a $Df$-invariant splitting $TM = E_1 \oplus E_2 \oplus \ldots \oplus E_k$, $k\geq 2$  and an integer $N\geq 1$ such that for every unit vectors $v,w\in M$:
\[v\in E_i, \,w\in E_j \text{ with } i<j \implies \|Df^N v\| > 2 \|Df^N w\| .
\]

The stably ergodic diffeomorphisms built by Grayson, Pugh and Shub have a dominated splitting of a special type,
incorporating features of Anosov diffeomorphisms: they are \emph{partially hyperbolic}, meaning there exists
a dominated splitting $TM=E^u\oplus E^c\oplus E^s$, with both $E^u$ and $E^s$ nontrivial, and an integer $N\geq 1$ such that \eqref{uniform-bundle}
holds for any unit vector $v$. The bundles $E^u$, $E^c$ and $E^s$ are called the \emph{unstable, center} and \emph{stable bundles}, respectively.  Anosov diffeomorphisms are the partially hyperbolic diffeomorphisms for which $E^c$ is trivial.

 As with Anosov diffeomorphisms and flows, the unstable bundle $E^u$ and the
stable bundle $E^s$ of a partially hyperbolic diffeomorphism are uniquely
integrable, tangent to the unstable and
stable foliations, $\cW^u$ and $\cW^s$, respectively.  However, they do not span $TM$, and thus are not transverse (except in the Anosov case).  The Hopf argument for local ergodicity may fail without further assumptions: there do exist non-ergodic partially hyperbolic diffeomorphisms.

As a substitute for transversality \cite{GPS} introduced an additional assumption, a property called ``accessibility,''
which is a term borrowed from the control theory literature.\footnote{Brin and Pesin
had earlier studied the accessibility property and its stability
properties in the 1970's and used it to prove topological properties
of conservative partially hyperbolic systems.}
A partially
hyperbolic diffeomorphism is {\em accessible} if any two points
in the manifold can be connected by a continuous path that is
piecewise contained in leaves of $\cW^u$ and $\cW^s$.
Then we say that $f$ is \emph{stably accessible} if any volume preserving $C^1$ perturbation of $f$ is 
accessible.

The route suggested by Pugh-Shub to prove the Stable Ergodicity Conjecture
was to establish two other conjectural statements about the space of volume-preserving partially hyperbolic diffeomorphisms: 1)
stable accessibility is dense, and 2)
accessibility implies ergodicity.

The $C^r$-density of stable accessibility remains open, except in the case of
one-dimensional center bundle \cite {RRU}.
However, in the $C^1$ case we are concerned with, it was established by
Dolgopyat and Wilkinson \cite {DW} in 2003.

The connection between accessibility and ergodicity is certainly reasonable at the heuristic level:
The Hopf argument can be carried out in the partially hyperbolic setting,
and the ergodic (and even mixing) properties of the system
can be reduced to the ergodic properties of the measurable
equivalence relation generated by the pair of foliations
$(\cW^u, \cW^s)$.  However, in trying to show that these ergodic properties
follow from the assumption of accessibility, one quickly encounters substantial
issues involving sets of measure zero and delicate measure-theoretic
and geometric properties of the foliations. 

Currently these issues can be
resolved  under an additional assumption called \emph{center bunching}
(which constrains the nonconformality of the dynamics along the
center bundle $E^c$), a result due to Burns and Wilkinson \cite {BW}.
In all likelihood, a significant new idea is needed
to make further progress. While including a broad range of examples, the center bunched diffeomorphisms are  not dense
 among partially hyperbolic diffeomorphisms, except among those with
one-dimensional center bundle, where center bunching always holds.

\subsection{Local ergodicity for partially hyperbolic systems: the RRTU argument}

While we don't know how to derive ergodicity from accessibility alone, a
relatively simple argument (due to Brin)
allows one to conclude that accessibility
implies {\em metric transitivity:}  the property that almost every orbit is
dense.  This motivates the idea of combining accessibility with some
``local'' ergodicity mechanism in order to achieve full ergodicity.  The
approach we will follow, due to \cite {RRTU}, is in a certain sense closer
to the original applications of the Hopf argument
than the Pugh-Shub approach, but with several crucial
new ingredients added.

This modified Hopf argument uses a measurable version of stable and unstable foliations  for a nonuniformly hyperbolic set,  called the Pesin unstable and stable disk families.  Nonuniform hyperbolicity is defined using quantities called Lyapunov exponents.
A real number $\chi$ is a \emph{Lyapunov exponent} of a diffeomorphism $f: M\to M$ at $x\in M$ if there exists a nonzero vector
$v\in T_xM$ such that
\begin{equation}\label{e=lyaplim}
\lim_{n\to\infty} \frac{1}{n} \log \|Df^n(v)\| = \chi.
\end{equation}
If $f$ preserves the volume $m$, then Kingman's ergodic theorem implies that the limit 
$\chi=\chi(x,v)$ in \eqref{e=lyaplim}
exists for  for $m$-almost every $x\in M$ and every nonzero $v\in T_xM$. 
Furthermore, Oseledets's theorem gives that $\chi(x,\cdot)$ can assume at most $\dim(M)$ distinct values $\chi_1(x), \ldots, \chi_{\ell(x)} (x)$, and that there exists a {\em measurable}, $Df$-invariant splitting of the tangent bundle (defined over a full $m$-measure subset of $M$)
\begin{equation}\label{e=oseledec}
T M = E_1\oplus E_2\oplus \cdots \oplus E_\ell, 
\end{equation}
such that $\chi(x,v) = \chi_i(x)$ for $v\in E_i(x)\setminus \{0\}$.  The set of $x\in M$ where these conditions are satisfied is called the set of {\em Oseledec regular points (for $m$ and $f$)}.  
Volume preservation implies that the sum of the Lyapunov exponents is zero .

By separately summing the Lyapunov subspaces $E_i$ corresponding to $\chi_i > 0$ and $\chi_i < 0$ we obtain a measurable splitting  $TM = E^+\oplus E^0\oplus E^-$, where $E^0$ is the Lyapunov subspace
(possibly trivial) for the exponent $0$.
We say that a positive $m$-measure invariant set $X\subset M$ (not necessarily compact) is {\em nonuniformly hyperbolic} if for almost every $x\in X$, the exponents $\chi_i(x)$ exist and are nonzero; that is, $E^0$ is trivial over $X$.
%By separately summing the Lyapunov subspaces $E_i$ corresponding to $\chi_i < 0$ and $\chi_i > 0$ we obtain a measurable splitting  $T_XM = E^+\oplus E^-$.  
%Tangent to $E^+$ and $E^-$ are two measurable families of invariant disks called the Pesin unstable and stable disk families, respectively.  These families are transverse absolutely continuous, meaning that a type of Fubini theorem applies to them.

Nonuniform hyperbolicity is a natural property for studying  ergodicity:
Bochi-Fayad-Pujals~\cite{BFP} have shown that among stably ergodic diffeomorphisms there exists a $C^1$-dense and open subset of systems
that  are \emph{nonuniformly Anosov}: they have a dominated splitting $TM=E\oplus F$ and at $m$-almost every point $x$
we have $E^+=E$ and $E^-=F$.

Tangent to $E^+$ and $E^-$ for a nonuniformly hyperbolic set are two measurable families of invariant, smooth disks called the Pesin unstable and stable disk families, respectively.
The dimension and diameter of these Pesin stable disks vary measurably over the manifold.
If we want to
implement the Hopf argument as Hopf and Anosov did, the main  issue is to
ensure the transverse intersection of stable and unstable disks. 
Several difficulties arise:
\begin{enumerate}
\item\label{difficulty1} The Pesin disks should have ``enough dimension'' so that transversality is
even possible.\footnote {Note that in the Pugh-Shub approach,
the Hopf argument is applied
in a context where the dimensions of the strong stable and unstable
manifolds are
not enough to allow for transversality, but the analysis is much more
involved than the simple implementation we are discussing, and it
ends up depending on the center bunching condition.}
\item\label{difficulty2} The spaces $E^+$ and $E^-$ along which they align should display some
definite transversality (i.e. uniform boundedness of angle between them).
\item\label{difficulty3} The stable and unstable disks should be ``long enough'' so that
they have the opportunity to intersect.
\end{enumerate}

It turns out that for an accessible, partially hyperbolic diffeomorphism, difficulties (\ref{difficulty1}) and (\ref{difficulty2}) can be addressed globally if there exists a single nonuniformly hyperbolic set where $E^+$, $E^-$ have constant dimensions and
$E^+\oplus E^-$ is dominated over this set.
Metric transitivity, implied by accessibility,  gives that the nonuniformly hyperbolic set must be dense, so that this dominated splitting extends to the whole manifold.

Such a set is constructed  in \cite{RRTU} for an open set of partially hyperbolic diffeomorphims, dense among those with $\dim(E^c) =2$.  This follows almost directly from results in \cite{BV} and \cite{BB}.  For the partially hyperbolic diffeomorphisms with  $\dim(E^c) > 2$, more delicate arguments are required.  These are contained in our previous work \cite{ACW1}, where we prove:

\begin{theorem}\cite{ACW1}\label{t=ACW1}
For a  generic map $f \in \Diff^1_m(M)$, either 
\begin{enumerate}
\item the Lyapunov exponents of $f$ vanish almost everywhere, or
\item $f$ is {\em non-uniformly Anosov}, meaning there exists a dominated splitting $TM = E^+\oplus E^-$ and $\chi_0>0$ such that for $m$-a.e. $x\in M$, and for every unit vector $v\in T_xM$
\begin{equation*}
v\in E^+  \implies \chi(x,v)> \chi_0  , \quad\hbox{and } v\in E^- \implies \chi(x,v) < -\chi_0.
\end{equation*}
Moreover, $f$ is ergodic.
\end{enumerate}
\end{theorem}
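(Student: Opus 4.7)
The plan is to set up a dichotomy via generic $C^1$-perturbation techniques, propagate any resulting dominated splitting globally using accessibility, and then run a nonuniform Hopf argument for ergodicity in the second alternative.

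First I would work on a residual subset $\cR\subset\Diff^1_m(M)$ where several generic properties hold simultaneously: Oseledets regularity on a full-measure set; continuity of the integrated Lyapunov spectrum at $f$; stable accessibility, which by Dolgopyat--Wilkinson \cite{DW} is $C^1$-dense and open, hence residual; and minimality of the strong stable and unstable foliations. Suppose $f\in\cR$ has a positive-measure set of regular points where not all Lyapunov exponents vanish. The Bochi--Viana perturbation machinery then gives a dichotomy at such orbits: either the positive and negative parts of the Oseledets splitting extend along the orbit to a nontrivial dominated splitting, or an arbitrarily small $C^1$-perturbation of $f$ collapses the nonzero exponents toward zero. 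Generic continuity of the integrated spectrum at $f$ forbids the latter, producing a dominated splitting $E^+\oplus E^-$ on a positive-measure set that separates positive from negative exponents; a parallel Bochi-style argument rules out the mixed scenario in which zero exponents coexist with nonzero ones on a positive-measure set.

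Next I would extend the splitting globally. Brin's theorem yields metric transitivity from accessibility, so the set carrying the dominated splitting is dense; since domination is a closed condition and the bundles are determined by their dynamical characterization, the splitting extends continuously to a global dominated decomposition $TM = E^+\oplus E^-$ of constant dimensions. Continuity of exponents at $f$ then supplies the uniform $\chi_0 > 0$ in the statement, exhibiting $f$ as nonuniformly Anosov. For ergodicity, I would implement the Pesin-theoretic Hopf argument: on Pesin blocks $\Lambda_\delta$ of measure tending to $1$, the Pesin stable and unstable laminations tangent to $E^-$ and $E^+$ are absolutely continuous, and because $E^s\subset E^-$ and $E^u\subset E^+$ they contain $\cW^s$ and $\cW^u$ as subfoliations. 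For any $f$-invariant $L^2$ function $\varphi$, Birkhoff's theorem makes $\varphi$ constant along $\cW^u$- and $\cW^s$-leaves on sets of full measure in each Pesin leaf, and stable accessibility then forces $\varphi$ to be essentially constant globally.

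The main obstacle will be this last step, bridging the measurable Pesin structure with the topological accessibility structure. The Pesin laminations have leaves of wildly varying size, so the passage from ``$\varphi$ constant along strong leaves in a Pesin block'' to ``$\varphi$ constant along the strong saturations of the block'' requires delicate absolute-continuity estimates that remain uniform on each block. A secondary difficulty is ensuring that the perturbative dichotomy of the first step remains compatible with the global structure needed in the second step, so that the Bochi--Viana machinery and the minimality of $\cW^s, \cW^u$ cooperate cleanly as one promotes the splitting from a measurable to a topological object.
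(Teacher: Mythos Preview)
This theorem is not proved in the present paper: it is quoted from \cite{ACW1} and used as a black box. So there is no ``paper's own proof'' here to compare against, but your proposal still contains a genuine gap that would make it fail even as a sketch of the argument in \cite{ACW1}.

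The central error is that you are importing tools that only make sense for partially hyperbolic diffeomorphisms into a statement about \emph{all} of $\Diff^1_m(M)$. Stable accessibility, the strong stable and unstable foliations $\cW^s,\cW^u$, and Brin's metric-transitivity argument all presuppose a partially hyperbolic splitting $E^{uu}\oplus E^c\oplus E^{ss}$ with nontrivial uniform bundles. The Dolgopyat--Wilkinson result you cite establishes $C^1$-density of stable accessibility \emph{within the class of partially hyperbolic diffeomorphisms}, not within $\Diff^1_m(M)$; a generic volume-preserving $C^1$ diffeomorphism has no reason to be partially hyperbolic, and indeed case~(1) of the theorem (all exponents vanish a.e.) describes systems with no hyperbolicity at all. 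So your residual set $\cR$ cannot include accessibility or minimality of strong foliations, and the entire second half of your argument (globalizing the splitting via accessibility, running Hopf through $\cW^s,\cW^u$) collapses.

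A second, independent problem is regularity. The Pesin absolute-continuity of stable and unstable laminations that underlies your Hopf argument requires $f$ to be $C^{1+\alpha}$, while the theorem concerns $C^1$-generic $f$. The actual argument in \cite{ACW1} handles both issues differently: the dichotomy and the globalization of the dominated splitting come from Bochi--Ma\~n\'e/Bochi--Viana machinery combined with semicontinuity of Lyapunov exponents (you have this part roughly right), while ergodicity is obtained not by a direct Hopf argument on the $C^1$ map but by exploiting Avila's $C^1$-density of $C^\infty$ maps \cite{A}, Pesin homoclinic classes for nearby smooth systems, and semicontinuity to pass the conclusion back to the $C^1$-generic $f$.
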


Partial hyperbolicity clearly forbids the first case, and thus for a $C^1$ residual set of  $C^1$ partially hyperbolic diffeomorphisms there is a globally dominated, nonuniformly hyperbolic splitting $E^+ \oplus E^-$.  Using the density of $C^1$ volume preserving diffeomorphisms among the $C^2$ \cite{A} and a semicontinuity argument, one obtains a $C^1$ open and dense set of $C^2$, colume-preserving  partially hyperbolic diffeomorphisms with a  non-uniformly hyperbolic set whose splitting $TM=E^+\oplus E^-$ is dominated.

Among these diffeomorphisms, partial hyperbolicity gives  the strong unstable and stable foliations $\cW^u$ and $\cW^s$
whose leaves  subfoliate the unstable and stable Pesin disks, respectively.
 Thus the Pesin stable and unstable disks are {\em definitely large} in the strong directions $E^u$ and $E^s$.
To address  difficulty (3), we need a way to
increase the size in the
non-strong directions in $E^+$ and $E^-$ to a definite scale as well.
This can be achieved using  a technique introduced in \cite {RRTU}.

In \cite {RRTU}, it is shown how so-called stable and unstable
{\em blenders} can be used to
resolve this third difficulty in a partially hyperbolic context.
An unstable blender is a ``robustly thick'' part of a
hyperbolic set, in the sense
that its stable manifold meets every strong unstable manifold $\cW^u(x)$ that comes
near it, and moreover this property is still satisfied (by its hyperbolic
continuation) after any $C^1$
perturbation of the dynamics.  The key point is that this
property may be satisfied even if the
dimensions of the strong stable manifolds and strong
unstable manifolds are not large: the (thick)
fractal geometry of the blender will be responsible for yielding
the ``missing dimensions" and fix the lack of transversality.

Since the Pesin unstable disks contain the
strong unstable manifolds, we conclude that any Pesin unstable manifold
near the blender has a part that is trapped by the blender dynamics.
Under iteration, this trapped part
evolves according to the hyperbolic dynamics of the blender, which
enlarges even the non-strong directions to a definite size.  Analogously
defined stable blenders
play a similar role of enlarging the Pesin stable manifolds to a definite
size.  If the unstable and stable blenders are contained in a larger
transitive hyperbolic set, then those long pieces of unstable and stable
manifolds do get close to one another and will thus intersect as desired.

\subsection{Blenders}\label{ss=blenders}

How often do blenders arise in the context of partially hyperbolic dynamics?  Originally, 
blenders were constructed using a very concrete geometric model, which
was then seen to arise in the unfolding of heterodimensional cycles between
periodic orbits whose stable dimension differ by one \cite {BD}.  This construction is used in \cite{RRTU} and accounts in part for the low dimensionality assumption on $E^c$ in their result.

The fractal geometry of
such a blender effectively yields one additional dimension in the above
argument, so in order to obtain multiple additional directions, one would
need to use several such blenders.  Unfortunately, there are robust
obstructions to the construction of some of the heterodimensional
cycles needed to produce such blenders.

A rather different approach to the construction of blenders was introduced
by Moreira and Silva \cite {MS}.  The basic idea
is that,
starting from a hyperbolic set whose fractal dimension is large enough to
provide the desired additional dimensions, a blender will arise after a
generic perturbation (``fractal transversality'' argument).  In their
work, they succeeded in implementing this idea  to obtain a blender
yielding a single additional dimension.

Here we will show that if the dimension of the hyperbolic set
is ``very large'', close to the dimension of the entire ambient manifold,
then a superblender (a blender capable of yielding all desired additional
dimensions), can be produced by a suitable $C^1$ small perturbation.
As it turns out, any regular perturbation of an ergodic
nonuniformly Anosov map admits such very large
hyperbolic sets.  Using Theorem~\ref{t=ACW1},
we can then conclude that superblenders
appear $C^1$ densely among partially hyperbolic dynamical systems, and
Theorem A follows.  In fact we have:

\begin{nThm*}{\bf{Theorem A'.}}
For any $r>1$, the space of $C^r$ partially
hyperbolic volume-preserving diffeomorphisms on a compact connected manifold
 contains a  $C^1$ open and dense subset of diffeomorphisms that are non-uniformly Anosov, ergodic and in fact Ber\-noulli.

\end{nThm*}

\ActivateToc
\addcontentsline{toc}{subsection}{\mbox{}\quad\quad\quad Discussion and questions}
\DeactivateToc
\subsection{Further discussion and questions}

To obtain absolute continuity of invariant foliations, a $C^{1+\alpha}$-regularity hypothesis is needed for the Hopf argument.  It still unknown if stable ergodicity can happen in the $C^1$-topology. In particular, the following well-known question is open.

\begin{quest}
Does there exist a non-ergodic volume preserving Anosov $C^1$-diffeo\-morphism on a connected manifold?
\end{quest}

For smoother systems, Tahzibi has shown~\cite{tahzibi2} that stable ergodicity
can hold for diffeomorphisms with a dominated splitting that are not partially hyperbolic.
One can thus hope to characterize stable ergodicity
by the existence of a dominated splitting.
The following conjecture could be compared to~\cite[Conjecture 0.3]{DW} about robust transitivity.

\begin{conjecture}
The sets of stably ergodic diffeomorphisms and of those
having a non-trivial dominated splitting have the same $C^1$ closure in $\diff^r_m(M)$, $r>1$.
\end{conjecture}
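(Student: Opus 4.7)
The plan is to split the conjecture into two inclusions. One direction is already implicit in the tools recalled in the paper: by the Bonatti-Diaz-Pujals dichotomy mentioned in Section~\ref{ss=domsplit}, stable ergodicity of $f$ forces a non-trivial dominated splitting on $M$. Taking $C^1$ closures, the $C^1$ closure of the stably ergodic diffeomorphisms is therefore contained in the $C^1$ closure of those admitting a non-trivial dominated splitting. This inclusion is robust and does not require partial hyperbolicity.

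For the reverse inclusion, which is where all the substance lies, the strategy would be to extend Theorem~A' to the class of volume-preserving diffeomorphisms carrying a non-trivial dominated splitting $TM=E_1\oplus\cdots\oplus E_k$, without requiring that the extreme bundles be uniformly hyperbolic. One would begin with the standard semi-continuity and $C^1$-density reductions (using the smooth approximation result of~\cite{A}) to replace the $C^r$ problem with the task of producing, for a $C^1$-generic diffeomorphism that admits a dominated splitting, a $C^1$-perturbation that is stably ergodic. A natural extension of Theorem~\ref{t=ACW1} to arbitrary dominated splittings would then be needed: for such a generic $f$, the Lyapunov exponents along the extremal bundles should be bounded away from zero, giving a measurable refinement of the dominated splitting into a non-uniformly Anosov pair $E^+\oplus E^-$ compatible with the dominated filtration.

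With this non-uniformly Anosov structure in place, the Hopf argument demands that the Pesin stable and unstable disks attain a definite uniform size. Here one would need a strengthened superblender construction adapted to a purely dominated, non-partially-hyperbolic setting: without strong unstable and stable foliations to inherit, there is no \emph{a priori} uniform scale along which Pesin disks spread, and the blender-trapping mechanism of Section~\ref{ss=blenders} must act transversely to $E^+$ and to $E^-$ in all relevant directions. The main obstacle, and in my view the genuinely new technical step, is the production, by a $C^1$ perturbation in this broader setting, of a robust transitive hyperbolic set whose fractal thickness is large enough to simultaneously supply the missing dimensions on both the stable and unstable sides. Tahzibi's examples~\cite{tahzibi2} confirm that stable ergodicity can be compatible with merely dominated (not partially hyperbolic) behavior, but upgrading this to a general perturbation tool appears to require substantially new ideas beyond the superblender framework developed here.

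Once such a superblender is available, one would run the modified RRTU Hopf argument to conclude ergodicity (and in fact Bernoulliness), and hence stable ergodicity, completing the reverse inclusion and the conjecture.
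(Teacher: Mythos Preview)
This statement is a \emph{conjecture} in the paper, not a theorem; the paper offers no proof and explicitly lists it as an open problem. Your proposal is therefore not being compared against any argument in the paper, and you yourself concede that the hard direction ``appears to require substantially new ideas beyond the superblender framework developed here.'' That is an honest assessment, but it means what you have written is a research programme, not a proof.

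On the details: the easy inclusion is correct and is exactly what the paper alludes to via~\cite{BDP}. For the reverse inclusion, note first that no extension of Theorem~\ref{t=ACW1} is actually needed: that theorem already applies to every $C^1$-generic $f\in\Diff^1_m(M)$, and a non-trivial dominated splitting rules out the all-zero-exponents alternative (the exponents along the top bundle strictly exceed those along the bottom, and they sum to zero), so such a generic $f$ is already non-uniformly Anosov and ergodic. The genuine obstruction lies elsewhere, and you underemphasize it: the ergodicity criterion of Corollary~\ref{c.criterion} requires that almost every orbit be dense, and in the paper this is obtained from accessibility of the \emph{uniform} strong stable and unstable foliations via Brin's argument (Theorem~\ref{t.transitivity}). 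With a mere dominated splitting there are no uniform strong foliations, hence no accessibility and no known substitute for metric transitivity in this generality. Likewise, the blender of Section~\ref{ss.blender} is defined so as to trap uniform strong unstable leaves, not variable-size Pesin disks; adapting it to a setting without $E^{uu}$ is not a refinement but an entirely different construction. Your closing phrase ``run the modified RRTU Hopf argument'' therefore hides exactly the step that makes this a conjecture rather than a corollary.
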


If one considers the space $\diff^r_\omega(M)$ of $C^r$-diffeomorphisms preserving
a symplectic structure $\omega$, a partial version of Theorem A has been established
with different techniques by Avila, Bochi and Wilkinson~\cite{ABW}:
\emph{The generic diffeomorphism in $\diff^1_\omega(M)$  is ergodic once it is partially hyperbolic.}

Note that in the second case the diffeomorphism is not always nonuniformly Anosov.
For that reason we cannot build blenders and obtain a symplectic version of Theorem A.

\begin{quest}
Is stable ergodicity $C^1$-dense in the space $\diff^r_\omega(M)$, $r>1$?
\end{quest}

\ActivateToc

%\section*{further results and techniques in the proof}

\section{Further results and techniques in the proof}

In addition to the use of Theorem~\ref{t=ACW1}, there are two substantial new ideas used in the proof of Theorems A and A':
\begin{itemize}
\item {\bf  A ``Franks' Lemma" for horseshoes.}  (Theorem B)
\item {\bf Superblenders.}  (Theorem C)
 \end{itemize}

%A new method to alter Lyapunov exponents along individual orbits.  This exploits in part the randomized $C^1$ perturbation technique developed by Bochi for a different purpose.

Before describing these ideas in further detail, we fix some notations.
For $f \in \Diff^1_m(M)$, $x \in M$, and a subspace $F \subset T_x M$,
we denote by $\Jac_F(f,x)>0$ the Jacobian of $Df$ restricted to $F$, i.e., the product of the singular values of $Df(x)|F$.

If $\Lambda$ is an invariant compact set, we denote by $h_{top}(\Lambda,f)$ the topological entropy of the restriction
of $f$ to $\Lambda$. If $\mu$ is an invariant probability measure, its entropy is denoted by $h_{top}(\mu,f)$.

As before, if $x$ is an Oseledets regular point, we denote by $\chi_1(x)>\dots>\chi_{\ell(x)}(x)$ its Lyapunov exponents and
by $T_xM=E_1(x)\oplus\dots\oplus E_{\ell(x)}(x)$ the Oseledets splitting. One sometimes prefers to list the Lyapunov exponents,
counted with multiplicity: in this case they are denoted by $\lambda_1(x)\geq \lambda_2(x)\geq \dots\geq \lambda_d(x)$, where $d=\dim(M)$.

\subsection{Linearization of horseshoes}
An essential tool for approximation in $C^1$ dynamics is a simple technique known as  ``the Franks lemma".  It asserts that for any periodic orbit $\cO$ of a diffeomorphism $f$, there is is a $C^1$ small perturbation $g$ of $f$, supported in a neighborhood of $\cO$, such that $g$ is affine near $\cO$.  Further perturbation is then vastly simplified starting from this affine setting.  We introduce and prove here an analogue of the Franks lemma for horseshoes.

\begin{figure}
\includegraphics[scale=0.32]{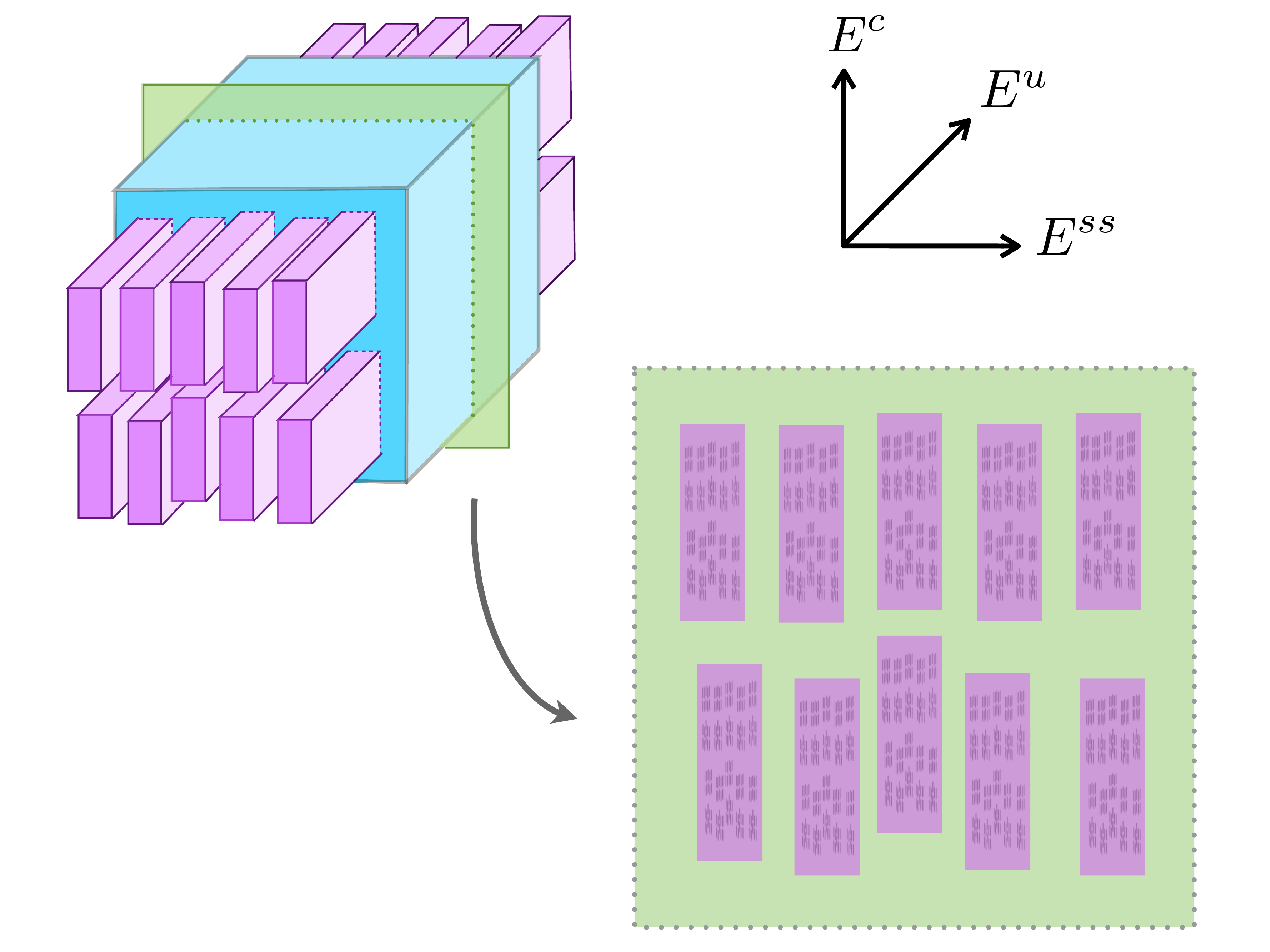}
\caption{An affine horseshoe with constant linear part and its slice inside a stable manifold.}
\end{figure}

Recall that a \emph{horseshoe} for a diffeomorphism $f$
is a transitive, locally maximal hyperbolic set $\Lambda$ that is totally disconnected and not  finite (such a set must be perfect, hence a Cantor set).
\begin{definition}
A horseshoe $\Lambda$ is \emph{affine}
if there exists a neighborhood $U$ of $\Lambda$
and a chart $\varphi\colon U\to \RR^d$ such that
$\varphi\circ f\circ \varphi^{-1}$ is locally affine near each point of $\Lambda$.

If one can choose $\varphi$ 
such that the linear part of $D(\varphi f\varphi^{-1})(x)$ coincides with some $A\in \GL(d,\RR)$ independent of $x$,
we say that $\Lambda$
has \emph{constant linear part} $A$.
\end{definition}

The next result is proved in Section~\ref{s.linearize}.    It is a key component in the proof of Theorem A'.

\begin{nThm*}{{\bf Theorem B.}} {\rm(Linearization)}
Consider a $C^r$ diffeomorphism $f$ with $r\geq 1$,
a neighborhood $\cU$ of $f$ in $\Diff^1(M)$ (in $\Diff^1_m(M)$ or in $\diff^1_\omega(M)$
if $f$ preserves the volume $m$ or the symplectic form $\omega$),
a horseshoe $\Lambda$ and $\varepsilon>0$.
Then there exist a $C^r$ diffeomorphism $g\in \cU$ with an affine horseshoe $\widetilde \Lambda$ such that:
\begin{itemize}
\item[--] $g=f$ outside the $\varepsilon$-neighborhood of $\Lambda$.
\item[--] $\widetilde \Lambda$ is $\varepsilon$-close to $\Lambda$ in the Hausdorff distance.
\item[--] $h_{top}(\widetilde \Lambda,g)  >  h_{top}(\Lambda,f) - \varepsilon$.
\end{itemize}
Moreover there exists a linearizing chart $\varphi\colon U\subset M\to \RR^d$
with $\widetilde \Lambda\subset U$ and a diagonal matrix $A$
whose diagonal entries are all distinct such that
$f$ coincides with the affine map
$z\mapsto A(z-x)+f(x)$ in a neighborhood of each point $x\in \widetilde \Lambda$.
\end{nThm*}

Such a result is false for  topologies stronger than $C^1$.
However, even in higher differentiability, one can always ``diagonalize" a sub horseshoe, as follows.
This result is proved in Section~\ref{s.diagonalize}, and is used in the proof of Theorem~B.

\begin{theorem}[Diagonalization]\label{t.diagonalize}
Consider a $C^k$ diffeomorphism $f$ with $k\geq 1$,
a neighborhood $\cU$ of $f$ in $\Diff^k(M)$ (in $\Diff^k_m(M)$ or in $\diff^k_\omega(M)$
if $f$ preserves the volume $m$ or the symplectic form $\omega$),
a horseshoe $\Lambda$ and $\varepsilon>0$.
Then there exist a $C^k$ diffeomorphism $g\in \cU$ with a horseshoe $\widetilde \Lambda$ such that:
\begin{itemize}
\item[--] $g=f$ outside the $\varepsilon$-neighborhood of $\Lambda$.
\item[--] $\widetilde \Lambda$ is $\varepsilon$-close to $\Lambda$ in the Hausdorff distance.
\item[--] $h_{top}(\widetilde \Lambda,g)  >  h_{top}(\Lambda,f) - \varepsilon$.
\item[--] $\Lambda_g$ admits a dominated splitting into one-dimensional sub bundles
$$T_\Lambda M=E_1\oplus \dots\oplus E_d.$$
\end{itemize}
\end{theorem}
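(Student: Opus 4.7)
My plan is to build $\widetilde\Lambda$ by producing, via a $C^k$-small perturbation $g$ of $f$, a sub-horseshoe on which every periodic orbit has real simple spectrum with uniform gap between consecutive eigenvalues; a classical periodic-orbit criterion then delivers the dominated splitting into one-dimensional sub-bundles. Concretely, I would first fix a Markov partition $\{R_1,\ldots,R_N\}$ of $\Lambda$ with rectangles of diameter less than $\varepsilon/2$, so that $(\Lambda,f)$ is symbolically conjugate to a subshift of finite type. By restricting to admissible itineraries whose minimal recurrence word length is at least some large $N_0$, I would pass to a sub-horseshoe $\Lambda_0 \subset \Lambda$ with $h_{top}(\Lambda_0,f) > h_{top}(\Lambda,f) - \varepsilon/2$; the long-word restriction ensures a definite spatial separation along orbits, leaving room for independent, localized perturbations.

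\medskip

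Using a $C^k$-version of the Franks lemma, I would then perturb $f$ in pairwise-disjoint small balls around finitely many carefully chosen periodic orbits of $\Lambda_0$, thereby modifying the derivative cocycle $Df|_{T_{\Lambda_0}M}$ valued in $\GL(d,\RR)$. The aim is to conjugate the finitely many transition derivatives into diagonal form in a common linearizing chart, with diagonal entries selected generically so that every admissible product of these matrices has a uniform multiplicative gap between consecutive singular values. For the resulting $g \in \cU$ and the hyperbolic continuation $\widetilde\Lambda$ of $\Lambda_0$, every periodic orbit then has real simple spectrum with uniform logarithmic gap; Hausdorff-closeness and entropy control follow from structural stability of horseshoes, and a classical criterion (a Ma\~n\'e--Bonatti--Diaz--Pujals type result asserting that uniform periodic-orbit spectral gaps on a hyperbolic set force a dominated splitting of matching dimensions) yields the required $T_{\widetilde\Lambda}M = E_1 \oplus \cdots \oplus E_d$.

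\medskip

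The main obstacle, in my view, is the Franks-type perturbation step in regularity $k \geq 2$: unlike in $C^1$, a $C^k$-small bump perturbation supported in a ball of radius $r$ changes the derivative at the center by at most $O(r^{k-1})$, so one cannot freely prescribe $Df$ along a single periodic orbit at no cost. The cocycle modifications must therefore be distributed across many periodic orbits of $\Lambda_0$---which is precisely why a large minimal word length $N_0$ is required, to provide enough independent space for small contributions to accumulate. Carefully balancing these contributions so as to simultaneously diagonalize all transition derivatives, while keeping the total perturbation $C^k$-small and preserving most of the entropy, is where the bulk of the technical work should lie.
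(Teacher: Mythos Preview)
Your approach has a genuine gap, and it diverges substantially from the paper's route.

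The core obstruction is the one you yourself flag: in $C^k$ with $k\geq 2$, a Franks-type perturbation cannot freely modify $Df$ along orbits. Your proposed remedy---distributing small contributions across many periodic orbits so they ``accumulate''---does not actually work. Each localized bump of $C^k$-size $\eta$ in a ball of radius $r$ perturbs $Df$ at the center by $O(\eta r^{k-1})$; summing such contributions over many disjoint balls does not produce a larger change of $Df$ at any single point, only changes at many different points. So you never achieve the goal of making the transition derivatives diagonal in a common chart. In fact the paper states explicitly, right after Theorem~B, that linearization of horseshoes is \emph{false} in topologies stronger than $C^1$; your plan is essentially attempting such a linearization. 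A secondary issue: even in $C^1$, ``simultaneously diagonalizing all transition derivatives'' requires the transition matrices to commute, which is generically impossible; and without simultaneous diagonalization there is no reason for arbitrary admissible products to have uniform spectral gaps.

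The paper's argument avoids any attempt to control the cocycle pointwise. Instead it works with the \emph{Lyapunov spectrum of the measure of maximal entropy}. First, Katok's approximation theorem (Theorem~\ref{t.katok}) extracts a subhorseshoe with a dominated splitting $E_1\oplus\cdots\oplus E_\ell$ whose bundle dimensions match the multiplicities of the distinct Lyapunov exponents, with entropy loss at most $\varepsilon$. If some $E_j$ has dimension $\geq 2$, the paper invokes the Bonatti--G\'omez-Mont--Viana criterion: because $E_j$ is $\alpha$-pinched and $\alpha$-bunched, the induced cocycle on $E_j$ has stable and unstable holonomies, and a $C^k$-small perturbation at a periodic point $p$ and a single homoclinic point $q$ suffices to ensure that no probability measure on $P\RR^{\dim E_j}$ is simultaneously invariant under the relevant holonomy/return maps. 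This forces the Lyapunov exponents along $E_j$ to split. One then reapplies Katok's theorem to refine the dominated splitting, and iterates at most $d-\ell$ times. The key point is that only two localized $C^k$-perturbations (at $p$ and $q$) are needed per step, and no control of $Df$ beyond those two neighborhoods is required---the ergodic-theoretic criterion does the rest.
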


\subsection{Approximation of hyperbolic measures by affine horseshoes}

For $C^r$ diffeomorphisms,
a theorem by A. Katok~\cite{katok} asserts that any  ergodic {\em hyperbolic  measure} (that is, an invariant probability measure for which $E^c$ is trivial)
can be approximated by a horseshoe.
It is possible to do this so that the horseshoe has a dominated splitting, with approximately the same  Lyapunov exponents on the horseshoe.
Strictly speaking, the original result of Katok does not explicitly mention such a
control of the Oseledets splitting, but no further work is really needed to
obtain it.  Since we have not been able to find out this precise
statement in the litterature, we include a proof of the following version
of Katok's theorem in Section~\ref{s.katok}.

\begin{theorem}[Katok's approximation]
\label{t.katok}
Consider  $r>1$, a  $C^{r}$-diffeomorphism $f$,
an ergodic, $f$-invariant, hyperbolic probability measure $\mu$, a constant $\delta>0$,
and a weak-* neighborhood $\cV$ of $\mu$ in the space of $f$-invariant probability measures on $M$. 
Then there exists a horseshoe $\Lambda \subset M$ such that:
\begin{enumerate}
\item $\Lambda$ is $\delta$-close to the support of $\mu$ in the Hausdorff distance;
\item $h_{top}(\Lambda, f)  >  h(\mu,f) - \delta$;
\item all the invariant probability measures supported on $\Lambda$ lie in $\cV$;
\item if $\chi_1>\dots>\chi_\ell$ are the distinct Lyapunov exponents of $\mu$,
with multiplicities $n_1,\dots,n_\ell\geq 1$, then there exists a dominated splitting on $\Lambda$:
$$T_\Lambda M=E_1\oplus\dots\oplus E_\ell,\quad \text{ with } \dim(E_i)=n_i;$$
\item there exists $n\geq 1$ such that for each $i=1,\dots,\ell$,
each $x\in \Lambda$ and each unit vector $v\in E_i(x)$,
$$\exp((\chi_i-\delta)n)\leq\|Df_0^n(v)\|\leq \exp((\chi_i+\delta)n).$$
\end{enumerate}
\end{theorem}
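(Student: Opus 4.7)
The plan is to follow Katok's original strategy based on Pesin theory, carefully tracking the Oseledets splitting of $\mu$ through the construction so that the resulting horseshoe inherits both its multiplicities and uniform finite-time exponents. Fix $\varepsilon\ll \delta$. By Lyapunov regularity and the Pesin-Oseledets theorems, for each large $K$ there is a compact \emph{Pesin block} $\cR_K\subset M$ with $\mu(\cR_K)>1-\varepsilon$ on which: (i) the Oseledets splitting $T_xM=E_1(x)\oplus\cdots\oplus E_\ell(x)$ varies continuously and the angles between summands are bounded below by some $\alpha_K>0$; (ii) the Birkhoff averages $\tfrac{1}{n}\log\|Df^n|_{E_i(x)}\|$ converge uniformly to $\chi_i$; (iii) the Pesin local stable and unstable manifolds $W^s_{\text{loc}}(x)$, $W^u_{\text{loc}}(x)$ have size bounded below by some $r_K>0$. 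By choosing $K$ large we may also assume $\cR_K$ is $\delta/2$-dense in $\supp(\mu)$.

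Next I would use the entropy of $\mu$ to produce many orbits returning to $\cR_K$. By the Brin--Katok entropy formula together with the Birkhoff theorem, for a large integer $n$ there exists an $(n,\eta)$-separated subset $S\subset \cR_K$ of cardinality at least $\exp(n(h(\mu,f)-\delta/2))$ such that $f^n(S)\subset \cR_K$. I then invoke the hyperbolic product structure on $\cR_K$: using uniform size of Pesin manifolds and the graph transform, each orbit segment $\{x,f(x),\dots,f^n(x)\}$ with $x,f^n(x)\in\cR_K$ can be freely concatenated with any other such segment to form a genuine orbit, by shadowing. The resulting invariant set $\widetilde\Lambda$ is a topological Markov shift on $|S|$ symbols, hence a horseshoe with $h_{top}(\widetilde\Lambda,f)\geq \tfrac{1}{n}\log|S|>h(\mu,f)-\delta$, establishing items (1) and (2). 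Item (3) follows from a weak-$*$ compactness argument: every invariant measure on $\widetilde\Lambda$ is approximated by empirical measures on orbit segments in $\cR_K$, which lie in $\cV$ as soon as $n$ is large and $\cR_K$ is chosen so that its Birkhoff averages (for a suitable finite set of continuous test functions defining $\cV$) are close to the $\mu$-integrals.

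For items (4) and (5), I would transfer the continuous Oseledets splitting from $\cR_K$ to $\widetilde\Lambda$ using cone fields. Because the $E_i$ are continuous on $\cR_K$ with angles $\geq \alpha_K$, they extend to a continuous distribution of cones $\cC_i$ on a small neighborhood $V$ of $\cR_K$; the uniform Lyapunov estimates on $\cR_K$ guarantee that, for $n$ large enough, $Df^n$ expands $\cC_i$ more sharply than $\cC_{i+1}\oplus\cdots\oplus\cC_\ell$ at every point of $\cR_K$. By continuity of $Df^n$, the same cone estimates persist on $V$, and since the horseshoe $\widetilde\Lambda$ is contained in $V$ by construction, the standard cone criterion produces an $f^n$-dominated splitting $E_1\oplus\cdots\oplus E_\ell$ on $\widetilde\Lambda$ with the prescribed dimensions $\dim E_i=n_i$. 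The finite-time estimate in (5) is a direct consequence of the uniform convergence in (ii), passed to the horseshoe via the same continuity argument.

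The main obstacle I anticipate is precisely this last transfer: ensuring that the dominated splitting on $\widetilde\Lambda$ has the \emph{same refinement} as the Oseledets splitting of $\mu$. A naive cone argument only yields a coarser splitting into stable/unstable bundles; obtaining the full $\ell$-term decomposition requires the uniform angle bound $\alpha_K$, the uniform exponent separation given by (ii), and a careful choice of $n$ large relative to $1/\alpha_K$ and the gaps $\chi_i-\chi_{i+1}$, so that $Df^n$ effects strict cone contraction between each consecutive pair of bundles. Once these three pieces are coordinated, the remaining verifications are routine.
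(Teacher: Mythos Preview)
Your overall strategy matches the paper's: a Pesin uniformity block, Katok's entropy lemma to produce many separated returning points, shadowing to concatenate, and a cone argument to recover the full Oseledets splitting on the horseshoe. Two points deserve care.

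First, the concatenation step as written has a gap. Having $x,f^n(x)\in\cR_K$ is not enough to concatenate segments by shadowing: you need $f^n(x)$ and the next initial point $x'$ to lie in a \emph{single} ball of radius at most the shadowing scale $\epsilon$, and $\cR_K$ is far larger than that. The paper covers $\cR_K$ by finitely many $\epsilon/2$-balls and pigeonholes to find one ball $B$ and a set $Y\subset B\cap\cR_K$ with $f^N(Y)\subset B$; only then are all concatenations $\epsilon$-pseudo-orbits.

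Second, your cone transfer is looser than the paper's and contains a slip. The full horseshoe $\widetilde\Lambda=\Lambda_0\cup f(\Lambda_0)\cup\dots\cup f^{N-1}(\Lambda_0)$ is \emph{not} contained in a neighborhood $V$ of $\cR_K$: only the return set $\Lambda_0$ is. So the argument must be a cone argument for $Df^N$ on $\Lambda_0$, and the neighborhood on which ``$Df^N$ preserves the cones'' persists depends on $N$ through the continuity of $Df^N$; you must fix $N$ first (large enough for the exponent gaps) and only then choose the shadowing accuracy $\epsilon$. The paper sidesteps this dependence by working in Oseledets--Pesin charts $\phi_x$: there each one-step map $\phi_{f(x)}\circ f\circ\phi_x^{-1}$ is $\eta$-close to a block-diagonal linear map with blocks satisfying $e^{\chi_i-\eta}\le\|A_i^{-1}\|^{-1}\le\|A_i\|\le e^{\chi_i+\eta}$. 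Standard cones $\cC^i\subset\RR^d$ are invariant under every such one-step map, so invariance under $Df^N$ and the finite-time estimate (5) come for free by composition, with no circularity. Your direct-in-$M$ approach can be made to work once the order of quantifiers is straightened out, but the Lyapunov-chart device is precisely what makes the $\ell$-term refinement painless.
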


This can be combined with Theorem B in order to obtain (after a $C^1$-perturbation)
an affine horseshoe that approximates the measure, to give a measure theoretic version of Theorem B:

\begin{nThm*}{{\bf Theorem B'.}}
Consider $r>1$, a $C^r$ diffeomorphism $f$, a $C^1$-neighborhood $\cU\subset \Diff^r(M)$ of $f$,
an  ergodic hyperbolic measure $\mu$, a constant $\delta>0$
and a weak-* neighborhood $\cV$ of $\mu$ in the space of $f$-invariant probability measures on $M$. 
There exists $g\in \cU$ and an affine horseshoe $\Lambda$ with constant linear part $A$ such that:
\begin{itemize}
\item[--] $\Lambda$ is $\delta$-close to the support of $\mu$ in the Hausdorff distance;
\item[--] $h_{top}(\Lambda, g)  >  h(\mu,f) - \delta$;
\item[--] all the $g$-invariant probability measures supported on $\Lambda$ lie in $\cV$;
\item[--] $A$ is diagonal, with distinct real positive eigenvalues whose logarithms
$\lambda_1>\dots>\lambda_d$ are $\delta$-close to the Lyapunov exponents of $\mu$ (with multiplicity).
\end{itemize}
If $f$ preserves the volume $m$ or a symplectic form $\omega$, then  $g$ can be chosen to preserve it as well.
\end{nThm*}

\subsection{Blenders}\label{ss.blender}
The definition of a blender is not fixed in the literature (see, e.g. \cite{BCDW} for an informal discussion), so we will choose a rather general definition that is suited to our purposes.  We first define  stable (and analogously, unstable) blenders.

The data for a stable blender are: a horseshoe $\Lambda$ with a partially hyperbolic subsplitting $T_\Lambda M = E^{uu} \oplus E^{c} \oplus E^{s}$,  the blender itself, which is a local chart (``box") centered at a point of the horseshoe, and finally a conefield in the box that  contains $E^{uu}$ at points of $\Lambda$. The blender property requires that any disk tangent to the $E^{uu}$ cone and crossing the box meets the stable manifold of $\Lambda$ (see below for a formal definition).

The dimension of the center bundle of the splitting in some sense describes the strength of the blender.  In the classical blender construction, the dimension of $E^{c}$ is low -- either 1 or 2 \cite{BD,
RRTU}.  The reason for the low-dimensionality of $E^c$ in these constructions is the challenge of controlling the dynamics of $f$ in the central direction.  Roughly, the smaller the dimension of $E^c$, the less wiggle room
for a $uu$-disk to avoid the stable manifold of $\Lambda$.
The other extreme, where $\dim(E^{uu}) =1$ and $\dim(E^{c})$ is arbitrary,  is a ``superblender," which is what we construct here.

\begin{figure}[h]\label{f=superblender}
\includegraphics[scale=.35]{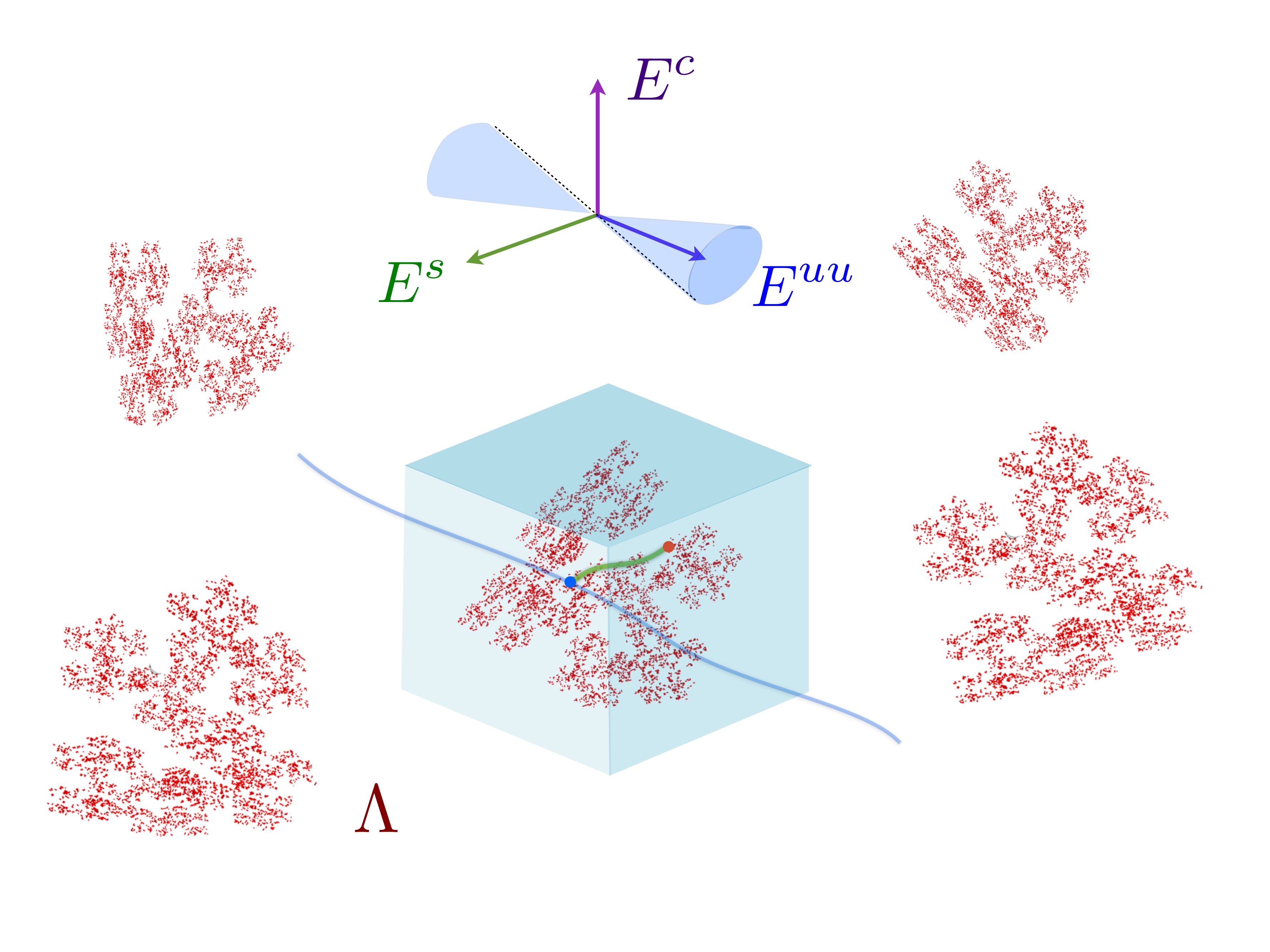}
\caption{A stable blender}
\end{figure}

In order to give a precise definition, we
fix an integer $1\leq d_{cs} \leq d:= \dim(M)$.
\begin{definition}
A horseshoe $\Lambda$ with a dominated splitting
$$T_\Lambda M=E^{u}\oplus E^s=(E^{uu}\oplus E^c) \oplus E^{s}, \quad
\dim(E^c\oplus E^s)=d_{cs}$$ 
is a \emph{$d_{cs}$-stable blender} if
there is a chart $\varphi\colon U\to (-1,1)^d$ of $M$ such that
\begin{itemize}
\item[--] $x=\varphi^{-1}(0)$ belongs to $\Lambda$ and
$\varphi^{-1}((-1,1)^{d-d_{cs}}\times \{0\}^{d_{cs}})$
is contained in the strong unstable manifold of $x$ tangent to $E^{uu}(x)$,
\item[--] the graph $\{(x,\theta(x))\}$ of any $1$-Lipschitz map
$\theta\colon (-1,1)^{d-d_{cs}}\to (-1,1)^{d_{cs}}$ meets the
local stable set of the hyperbolic continuation
$\Lambda_g$ of $\Lambda$ for each diffeomorphism $g$ that is $C^1$-close to $f$.
\end{itemize}
\end{definition}
This definition asserts that the stable set of $\Lambda$ behaves as a $d_{cs}$-dimensional
space transverse to the strong unstable direction.
We define similarly the notion of \emph{$d_{cu}$-unstable blender}.
We say that $\Lambda$ is a {\em stable superblender} if it is a $k$-stable blender for all
$k\in [\dim(E^s), d-1]$.    Equivalently, $\Lambda$ is a stable superblender if it is a $(d-1)$-stable blender and moreover its unstable bundle splits as dominated sum of one-dimensional subbundles:
$$E^u=E^u_1\oplus \cdots \oplus E^u_{d-d_{s}}.$$
Analogously, $\Lambda$ is an  {\em unstable superblender} if it is a $k$-unstable blender for all 
$k\in [d_u, d-1]$. Finally, $\Lambda$ is a {\em superblender} if it is both a stable and unstable superblender.

Blenders are obtained with the following theorem, proved in Section~\ref{s.birth}.
\begin{nThm*}{{\bf Theorem C.}}
Consider an integer $k\geq 1$, a $C^k$-diffeomorphism $f$  and an affine horseshoe
$\Lambda$ of $f$ with constant linear part $A\in \GL(d,\RR)$ such that:
\begin{itemize}
\item[--] $A$ preserves a dominated decomposition $\RR^d=E^u\oplus E^s=(E^{uu}\oplus E^c)\oplus E^s$.
\item[--] $A^{-1}$ is a contraction on $E^{uu}\oplus E^c$
and $A$ is a contraction on $E^s$.
\item[--] The measure of maximal entropy on $\Lambda$ ``almost satisfies" the Pesin formula:
\begin{equation}\label{e.almost-pesin}
h_{top}(\Lambda,f) > \log\Jac_{E^u}(A) -  \frac{1}{2k} \chi^u_{inf}(A),
\end{equation}
where $\chi^u_{inf}(A)$ is the smallest positive Lyapunov exponent of $A$.
\end{itemize}
Then there exists a $C^k$-perturbation $g$ of $f$ supported in a small neighborhood of $\Lambda$
such that the hyperbolic continuation $\Lambda_g$ is a $d_{cs}$-stable blender.
If $f$ preserves the volume $m$, then one can choose $g$ to preserve it also.
\end{nThm*}

We elaborate on the final hypothesis of Theorem C. In \cite{Pe}  Pesin proved that the Ruelle's inequality becomes equality in the case where $f$ is $C^2$ and the invariant measure $\mu$ is the volume $m$:
\begin{equation}\label{e=Pesineq}
h_m(f) =  \int_M  \sum_{\chi_i(x) \geq 0} \dim(E_i(x)) \chi_i(x) \, dm(x).
\end{equation}
More generally, equality (\ref{e=Pesineq}) holds precisely when the invariant measure $m$ has absolutely continuous disintegration along Pesin unstable manifolds \cite{LedYou}.   In particular, if $m$ is supported on a (proper) $C^2$ horseshoe, (\ref{e=Pesineq}) will never hold.  We can nonetheless quantify how close $m$ comes to satisfying
(\ref{e=Pesineq}); the final hypothesis of Theorem C requires that the measure of maximal entropy for the horseshoe $\Lambda$ (whose entropy is equal to $h_{top}(\Lambda,f)$)  be ``fat" along unstable manifolds.
Since for $m-a.e.$ point $x$
the sum of the positive Lyapunov exponents $\chi_i(x)$ counted with their multiplicity $\dim(E_i(x))$
coincides with $\log\Jac_{E^u}(A)$, the equality (\ref{e=Pesineq}) {\em almost} holds.

The topological entropy and the positive Lyapunov exponents are related
to unstable dimensions $d_i$ through the Ledrappier-Young formula~\cite{LedYou2}:
$$h_{top}(f)= \sum_{\chi_i(x) \geq 0} d_i \chi_i(x).$$
Condition~\eqref{e.almost-pesin} implies that the sum of unstable dimensions $d_i$ is larger than $d^u-\frac 1 {2k}$.
In the case $d^c=1$, Moreira and Silva have obtained~\cite{MS} a much stronger result, valid in the $C^\infty$ topology and
holding even for non-affine horseshoes, with the slightly different assumption that the ``upper-unstable dimension" of $\Lambda$ is larger than $1$.
Perturbations tend to increase the dimensions associated to the lower Lyapunov exponents
and to decrease the others. Consequently we expect that an optimal hypothesis in Theorem C should be:
$$h_{top}(\Lambda,f) > \log\Jac_{E^c}(A).$$

\medskip

Denote by $\DS$ the set of diffeomorphisms in $\Diff^1_m(M)$ with a nontrivial dominated splitting.
From Theorems 1,  B' and C, we obtain (see Section~\ref{s.stable}):
\begin{nThm*}{{\bf Corollary D.}}
Any diffeomorphism $f$ in a dense set of $\DS$ has a  superblender $\Lambda$.

Moreover, there exists a dominated splitting $TM=E\oplus F$
such that $\dim(E)$ coincides with the unstable dimension of $\Lambda$ and,
for any diffeomorphism $C^1$-close to $f$, the set of points having $\dim(E)$
positive Lyapunov exponents and $\dim(F)$ negative Lyapunov exponents has positive volume.
\end{nThm*}

Theorem A' is proved in Section~\ref{s.stable} by combining Corollary D and the criterion for ergodicity
obtained in~\cite{RRTU}.

%\section{A dichotomy for conservative diffeomorphisms}

%\input{dichotomy.tex}

%\section{Stable Ergodicity}

\section{Stable Ergodicity}\label{s.stable}
We now build blenders (proving Corollary D) assuming Theorems~\ref{t=ACW1}, B', and C.
We then obtain Theorem A' using the following criterion:
(similar to~\cite{RRTU}):
\begin{equation*}
\begin{split}&\text{\it Partial hyperbolicity + accessibility + stable/unstable blenders}\\
&\text{\it + positive measure sets of points with large stable (resp. unstable) dimension}\\
&\quad\quad\quad\quad\quad\quad \Longrightarrow \quad \text{\it Ergodicity.}
\end{split}
\end{equation*}
We detail this argument.

\subsection{Regularization of $C^1$-diffeomorphisms}
The proof of Theorem A' uses Theorem 1, and hence forces us to work with diffeomorphisms that are only $C^1$. To recover results for $C^r$-diffeomorphisms, $r>1$, we will use:
\begin{theorem}[Avila \cite{A}]\label{t.smoothing}
$\Diff^\infty_m(M)$ is dense in $\Diff^1_m(M)$.
\end{theorem}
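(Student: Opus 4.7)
Given $f \in \Diff^1_m(M)$ and $\eps > 0$, I would aim to construct $g \in \Diff^\infty_m(M)$ with $\|g-f\|_{C^1} < \eps$ via a two-step \emph{smooth-then-correct} strategy: first approximate $f$ in the $C^1$ topology by a smooth (but not volume-preserving) diffeomorphism $\tilde g$, and then compose with a smooth diffeomorphism $h$ close to the identity so that $g = \tilde g \circ h$ restores the volume preservation.

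For the smoothing step, I would cover $M$ by a finite atlas, lift $f$ to each chart, and regularize by convolution with a standard mollifier of scale $\delta$, reassembling the pieces with a smooth partition of unity. Because $f$ is already $C^1$, the resulting $\tilde g_\delta$ converges to $f$ in $C^1$; in particular it is a smooth diffeomorphism of $M$ for $\delta$ small. The smooth function $\rho_\delta := \det(D\tilde g_\delta)$ is then $C^0$-close to $1$, since $\det(Df) = 1$ almost everywhere and convolution commutes with the $C^0$ norm up to lower order terms controlled by the modulus of continuity of $Df$.

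The correction step reduces to the Dacorogna--Moser problem: given a smooth positive function $\rho$ with $\int_M \rho\,dm = m(M)$ and $\rho$ close to $1$, produce a smooth diffeomorphism $h$ close to the identity with $\det(Dh) = \rho$. Moser's classical trick produces $h$ as the time-$1$ map of a smooth time-dependent vector field $X_t$ solving $\operatorname{div}(X_t) = 1-\rho$ via a potential. The \emph{main obstacle} is that this construction loses one derivative: to make $\|h-\id\|_{C^1}$ small one naively needs $\|\rho-1\|_{C^1}$ small, whereas smoothing only yields $\|\rho_\delta-1\|_{C^0}\to 0$, with $\|\rho_\delta-1\|_{C^1}$ possibly of order $1$ (convolution at scale $\delta$ amplifies derivatives by $\delta^{-1}$).

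To close this gap, I would exploit interpolation: the Hölder seminorm $[\rho_\delta-1]_{C^{\alpha}}$ can be quantitatively bounded using that $\rho_\delta-1$ is a convolution at scale $\delta$ of a $C^0$-small quantity, trading $C^0$ decay against $C^{1+\alpha}$ blow-up in $\delta$. Combined with sharp elliptic/Schauder estimates for the divergence equation on $M$ in Hölder spaces, a judicious choice of $\delta=\delta(\eps)$ keeps $\|h-\id\|_{C^1}$ (and thus $\|g-f\|_{C^1}$) below $\eps$. Handling this globally on $M$ (using an auxiliary potential and patching), and verifying that no nonstandard cohomological obstruction appears on a closed manifold, is the technical heart of Avila's argument in \cite{A}; I would expect the delicate quantitative balance between the smoothing scale and the derivative loss to be by far the most difficult part to make precise.
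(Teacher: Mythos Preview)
The paper does not contain a proof of this theorem: it is stated with attribution to Avila~\cite{A} and used as a black box in Section~\ref{s.stable}. There is therefore no ``paper's own proof'' to compare your proposal against.

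That said, your sketch correctly isolates the well-known obstruction that kept the $C^1$ case open after Zehnder's work~\cite{Z} in higher regularity: mollification gives only $C^0$ control on the Jacobian defect, while the Moser--Dacorogna correction costs a derivative. Your proposed resolution via interpolation and a balanced choice of the mollification scale is plausible in spirit, but it is not quite how~\cite{A} proceeds. Avila first reduces to diffeomorphisms $C^1$-close to the identity (writing $f$ as a finite composition of such), and then, rather than smoothing $f$ directly and correcting afterward, he exploits this closeness to the identity to set up the regularization so that the volume constraint is respected with the right estimates from the outset; the argument is organized around local charts and a careful use of the linear structure there. The derivative-loss issue is thus circumvented by the reduction and the specific form of the local regularization, not by an interpolation inequality as you suggest. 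If you want to flesh out an actual proof, you will need to consult~\cite{A} directly---the present paper offers no help beyond the citation.
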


\subsection{Non-uniform hyperbolicity}
Recall that there exists a measurable invariant splitting $T_xM=E^+(x)\oplus E^0(x)\oplus E^-(x)$
defined over points $x$ in a set (called \emph{set of Oseledets regular points})
with full $m$-measure, obtained by summing spaces having positive, zero and negative Lyapunov exponents.
The Pesin stable manifold theorem asserts that for $\varepsilon>0$ small,
$$W^{-}(x):=\{z: \; \limsup_{n\to +\infty} \frac 1 n\log d(f^n(x),f^n(z))\leq -\varepsilon\}$$
is an injectively immersed submanifold tangent to $E^-(x)$.
Symmetrically, one obtains an injectively immersed submanifold $W^+(x)$ tangent to $E^+(x)$.
The dimensions $\dim(E^+(x))$, $\dim(E^-(x))$ are called \emph{unstable} and \emph{stable dimensions}
of $x$.

Let us denote by $\Nuh_f$ the set of Oseledets regular points of $f$ such that $E^0(x)=\{0\}$.
As a consequence of Theorem D in~\cite{AB}, we have:
\begin{theorem}\label{t.continuity}
For any diffeomorphism $f$ in a dense $G_\delta$ set
of $\diff^1_m(M)$ and for any $\varepsilon>0$, there exists a neighborhood $\cU$
of $f$ in $\Diff^1_m(M)$ such that each $g\in \cU$ satisfies:
$$m(\Nuh_f\setminus \Nuh_g)\leq \varepsilon.$$
\end{theorem}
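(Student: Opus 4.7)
The plan is to deduce Theorem~\ref{t.continuity} from Theorem~D of~\cite{AB}, which provides a continuity property, at diffeomorphisms $f$ in a dense $G_\delta$ set $\cR \subset \Diff^1_m(M)$, of the integrated Oseledets spectrum. More precisely, I will use that for such an $f$, the ordered partial-sum functions
\[
L_k(g,x) := \lambda_1(g,x) + \cdots + \lambda_k(g,x), \qquad k=1,\ldots,d,
\]
depend continuously on $g$ in the $L^1(m)$ topology as $g \to f$ in $\Diff^1_m(M)$. The underlying upper semicontinuity comes from writing $g \mapsto \int L_k(g,\cdot)\,dm$ as a decreasing limit of the continuous functionals $\tfrac{1}{n}\int \log\|\Lambda^k Dg^n\|\,dm$; the residual set $\cR$ is then the set of continuity points.

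Given $f \in \cR$ and $\varepsilon > 0$, I would first choose $\eta > 0$ so small that
\[
X_\eta := \{\, x \in \Nuh_f : |\lambda_i(f,x)| \geq \eta \text{ for all } i\,\}
\]
satisfies $m(\Nuh_f \setminus X_\eta) < \varepsilon/2$; this is possible since almost every $x \in \Nuh_f$ has all Lyapunov exponents nonzero. It therefore suffices to bound $m(X_\eta \setminus \Nuh_g) \leq \varepsilon/2$ for every $g$ in a suitable $C^1$-neighborhood $\cU$ of $f$.

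The key step is a rearrangement observation: if $(\lambda_1 \geq \cdots \geq \lambda_d)$ satisfies $|\lambda_i| \geq \eta$ for all $i$ and $\sum_i \lambda_i = 0$, and $(\mu_1 \geq \cdots \geq \mu_d)$ satisfies $\sum_i \mu_i = 0$ with $\mu_j = 0$ for some $j$, then
\[
\max_{1 \leq k \leq d-1} \Bigl| \textstyle\sum_{i \leq k} (\mu_i - \lambda_i) \Bigr| \geq \eta/2
\]
(indeed, $|\mu_j - \lambda_j| = |\lambda_j| \geq \eta$ produces a jump of size $\geq \eta$ between two consecutive partial sums $P_{j-1}, P_j$, forcing one of $|P_{j-1}|, |P_j|$ to be $\geq \eta/2$). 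Applied pointwise with the $f$- and $g$-Lyapunov spectra on $X_\eta \setminus \Nuh_g$, this gives the inclusion
\[
X_\eta \setminus \Nuh_g \;\subset\; \bigcup_{k=1}^{d-1} \bigl\{\, x : |L_k(g,x) - L_k(f,x)| \geq \eta/2 \,\bigr\}.
\]
A Chebyshev bound makes each set on the right-hand side of measure at most $(2/\eta)\,\|L_k(g,\cdot) - L_k(f,\cdot)\|_{L^1(m)}$, and the $L^1$-convergence provided by~\cite{AB} drives this to zero as $g \to f$; choosing $\cU$ small enough gives the desired bound $m(X_\eta \setminus \Nuh_g) \leq \varepsilon/2$.

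The main obstacle is thus the need for genuine $L^1$-convergence of the pointwise partial-sum functions $L_k(g,\cdot)$, not merely continuity of their integrals. Continuity of $\int L_k(g,\cdot)\,dm$ alone is insufficient: pointwise positive and negative oscillations in $L_k(g,x) - L_k(f,x)$ could cancel on integration while leaving $m(X_\eta \setminus \Nuh_g)$ large. The stronger $L^1$-statement, which is the content of Theorem~D of~\cite{AB}, is precisely what upgrades the elementary rearrangement step into the required quantitative measure estimate.
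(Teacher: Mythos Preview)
Your proposal is correct and is precisely the intended derivation: the paper does not give an independent proof of this statement but simply records it as ``a consequence of Theorem~D in~\cite{AB}'', and you have spelled out the (standard) details of that deduction. Your final paragraph also correctly isolates the crucial point, namely that one needs the $L^1$-continuity of the functions $L_k(g,\cdot)$ at residual $f$, not merely continuity of their integrals; this is exactly what Theorem~D of~\cite{AB} supplies.
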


\subsection{Criterion for ergodicity}
For $O$ a hyperbolic periodic orbit, we define the following sets:
$$\phcu(O) = \{x\hbox{ Oseledets regular }: W^+(x)\transverse W^s(O) \neq \emptyset\},$$
$$\phcs(O) = \{x\hbox{ Oseledets regular }: W^-(x)\transverse W^u(O) \neq \emptyset\},$$
where $W_1\transverse W_2$ denotes the set of transverse intersection between manifolds $W_1,W_2$,
i.e. the set of points $x$ such that $T_xW_1+T_xW_2=T_xM$.
The \emph{Pesin homoclinic class} is $\phc(O) := \phcu(O)\cap \phcs(O)$.
We stress the fact that $\phcs(O)$ can contain points $x$ whose stable dimension $\dim(E^-(x))$
is strictly larger than the stable dimension of $O$. However the set $\phc(O)$
only contains non-uniformly hyperbolic points whose stable/unstable dimensions are the same as $O$.

As a consequence of~\cite{katok} (see also~\cite[section 20]{katok-hasselblatt}), we have:
\begin{theorem}[Katok]\label{t.spectral}
Let $r>1$ and $f\in \Diff^{r}(M)$.  Let $\mu$ be a hyperbolic invariant probability
($\mu$-almost every point has no zero Lyapunov exponent).
Then there exist (at most) countably many Pesin homoclinic classes $\phc(O_n)$ whose
union has full $\mu$-measure.
\end{theorem}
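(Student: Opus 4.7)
My plan is to combine Pesin's nonuniform hyperbolicity theory with the horseshoe approximation from Theorem~\ref{t.katok}. By ergodic decomposition, it suffices to handle the case that $\mu$ is ergodic and to produce a single hyperbolic periodic orbit $O$ whose Pesin homoclinic class carries $\mu$: since hyperbolic periodic orbits of $f$ are isolated and hence at most countably many, a general hyperbolic $\mu$ will then be carried by a countable union of such classes.

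Assume henceforth that $\mu$ is ergodic. Classical Pesin theory yields an increasing family of compact \emph{Pesin blocks} $B_n \subset M$ such that on each $B_n$ the local Pesin stable and unstable manifolds have diameter $\geq 1/n$, the angle between $E^+$ and $E^-$ is $\geq 1/n$, the Oseledets splitting and Pesin charts depend continuously on the base point, and $\mu(\bigcup_n B_n) = 1$. Fix $n$ with $\mu(B_n)>0$ and apply Theorem~\ref{t.katok} with $\delta \ll 1/n$ and a narrow weak-$*$ neighborhood $\cV$ of $\mu$, producing a horseshoe $\Lambda$ Hausdorff-close to $\supp(\mu)$, with all invariant probability measures on $\Lambda$ lying in $\cV$, and carrying a dominated splitting $T_\Lambda M = F_1 \oplus \cdots \oplus F_\ell$ matching the multiplicities and approximate numerical values of the Lyapunov exponents of $\mu$. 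Pick a periodic orbit $O \subset \Lambda$; its stable and unstable dimensions then coincide with those of $\mu$-typical points. Since $\phc(O)$ is $f$-invariant and $\mu$ is ergodic, it suffices to prove $\mu(\phc(O)\cap B_n) = \mu(B_n) > 0$, which forces $\mu(\phc(O)) = 1$.

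So let $x \in B_n$ be Birkhoff-generic for $\mu$. Its forward and backward orbits both visit any prescribed small neighborhood $U$ of $\Lambda$ with positive frequency. Item~(5) of Theorem~\ref{t.katok}, together with continuity of the Oseledets splitting on $B_n$ and the uniqueness of dominated splittings with prescribed multiplicities, forces the Oseledets summand $E^+(x)$ to have small angle with the continuous unstable bundle of $\Lambda$ whenever $x$ lies close to $\Lambda$. Consequently, once $f^m(x) \in U$ the iterated Pesin unstable disk $f^m(W^+(x))$ is $C^1$-close to a local unstable plaque of $\Lambda$ and has uniform diameter, so it meets a local stable plaque of $\Lambda$ transversally. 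Since $W^s(O)$ is dense inside the local stable lamination of the horseshoe (by transitivity of $\Lambda$), an inclination-lemma argument yields $W^+(x) \transverse W^s(O) \neq \emptyset$. The symmetric argument with $f^{-1}$ gives $W^-(x) \transverse W^u(O) \neq \emptyset$, so $x \in \phc(O)$, as desired.

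The main obstacle is the angle control between the \emph{measurable} Oseledets splitting at $\mu$-typical points and the \emph{continuous} dominated splitting on the approximating horseshoe. This is precisely what item~(5) of Theorem~\ref{t.katok} is designed to provide: by matching multiplicities and controlling exponent values uniformly, the dominated splitting on $\Lambda$ is forced, by the uniqueness of dominated splittings with prescribed multiplicities, to align with $\mu$'s Oseledets splitting at nearby Pesin regular points. Without this alignment, a typical Pesin disk $W^+(x)$ returning near $\Lambda$ could fail to be $C^1$-close to an unstable plaque and the desired transverse intersections with $W^s(O)$ might not exist. With this control in place, the remainder of the argument is a routine combination of Poincar\'e recurrence, the inclination lemma, and the exhaustion of $\mu$ by Pesin blocks.
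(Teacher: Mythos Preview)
The paper does not prove this statement; it is quoted from Katok and cited to \cite{katok} and \cite[Section~20]{katok-hasselblatt}. So there is no in-paper argument to compare against, and the question is whether your proposal stands on its own.

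Your reduction to the ergodic case and the countability observation are fine. The genuine gap is the alignment step. You assert that item~(5) of Theorem~\ref{t.katok}, continuity of the Oseledets splitting on $B_n$, and ``uniqueness of dominated splittings with prescribed multiplicities'' force $E^+(x)$ to make a small angle with the unstable bundle of $\Lambda$ at nearby points. But item~(5) controls growth rates \emph{on $\Lambda$}, not at $\mu$-typical points near $\Lambda$; the uniqueness statement concerns the splitting \emph{on $\Lambda$} and says nothing about its relation to the Oseledets splitting on $B_n$; and $\Lambda$ has $\mu$-measure zero and need not meet $B_n$, so continuity of the Oseledets splitting on $B_n$ gives no information at points of $\Lambda$. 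Hausdorff proximity of $\Lambda$ to $\supp(\mu)$ carries no tangent-space information either. The alignment you want is in fact true, but it comes from the \emph{construction} in Section~\ref{s.katok} (the horseshoe lies in a Pesin regular neighborhood of a point of the uniformity block, and its hyperbolic splitting is read off from the same cone fields that encode the Oseledets splitting there), not from the statement of Theorem~\ref{t.katok}. Without this, the claim that $f^m(W^+(x))$ is $C^1$-close to an unstable plaque of $\Lambda$ is unjustified, and the transverse intersection with $W^s(O)$ does not follow.

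Katok's actual argument bypasses the auxiliary horseshoe. For ergodic $\mu$, fix a Pesin block $B$ of positive measure and small diameter on which the local Pesin manifolds have uniform size and uniformly bounded angles. For a typical $x\in B$, Poincar\'e recurrence gives returns $f^N(x)\in B$ arbitrarily close to $x$; the Pesin closing lemma then produces a hyperbolic periodic point $p$ close to $x$ \emph{in the same Pesin chart}, whose local stable and unstable manifolds are automatically $C^1$-close to $W^-_{loc}(x)$ and $W^+_{loc}(x)$. The uniform transversality on $B$ then gives $W^+_{loc}(x)\transverse W^s_{loc}(p)\neq\emptyset$ and $W^-_{loc}(x)\transverse W^u_{loc}(p)\neq\emptyset$ directly, with no separate alignment argument. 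The local product structure on $B$ shows any two periodic points so obtained are homoclinically related, so a single $O$ captures $\mu$-a.e.\ point by ergodicity. This is both shorter and avoids the gap in your route.
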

In the previous statement the restriction $\mu|\phc(O_n)$ is not ergodic in general.
In the case $\mu$ is smooth this is however always the case.
\begin{theorem}[Rodriguez-Hertz\;-\;Rodriguez-Hertz\;-\;Tahzibi\;-\;Ures~\cite{RRTU}]\label{t.criterion}
Let $f\in \Diff^{r}_m(M)$ with $r>1$
and let $O$ be a hyperbolic periodic point such that $m(\phcu(O))$ and $m(\phcs(O))$
are positive.
Then $\phcs(O),\phcu(O),\phc(O)$ coincide $m$-almost everywhere
and $m|\phc(O)$ is  ergodic.  
\end{theorem}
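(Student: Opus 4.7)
My plan is to adapt the classical Hopf argument to the nonuniformly hyperbolic (Pesin) setting; the regularity hypothesis $r>1$ is needed precisely to invoke absolute continuity of the Pesin stable and unstable laminations. Given any bounded measurable $\phi$, Birkhoff's theorem produces an $f$-invariant function $\phi^*$ equal $m$-a.e.\ to both the forward and backward Birkhoff averages $\phi^\pm$. Since $d(f^n x, f^n y)\to 0$ when $y\in W^-(x)$, the function $\phi^+$ is leafwise constant on Pesin stable manifolds wherever defined; combined with absolute continuity of the foliation, this yields an $m$-a.e.\ modification of $\phi^*$ that is leafwise constant along Pesin stable manifolds, and symmetrically along Pesin unstable manifolds.

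The crucial step is to transfer this leafwise constancy through the transverse intersections with $W^u(O)$ and $W^s(O)$. For $m$-a.e.\ $x\in\phcs(O)$, the Pesin stable manifold $W^-(x)$ meets $W^u(O)$ transversally; absolute continuity of the Pesin stable holonomy onto the transversal $W^u(O)$ identifies the a.e.\ leaf-value of $\phi^*$ on $W^-(x)$ with the a.e.\ leaf-value of $\phi^*$ along $W^u(O)$, which equals the orbit average $\overline{\phi}(O)$ by periodicity. Hence $\phi^*(x)=\overline{\phi}(O)$ for $m$-a.e.\ $x\in \phcs(O)$, and by the symmetric argument using $W^+(x)\transverse W^s(O)$, also for $m$-a.e.\ $x\in\phcu(O)$. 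Applying this to the characteristic function $\phi=\mathbf{1}_{\phcu(O)}$ -- which is $f$-invariant, so $\phi^*=\mathbf{1}_{\phcu(O)}$ $m$-a.e.\ -- and noting that $O\in\phcu(O)$ because $W^u(O)\transverse W^s(O)\ni O$, we obtain $\overline{\phi}(O)=1$, hence $\phcs(O)\subset\phcu(O)$ modulo $m$; the symmetric argument closes the loop, proving that $\phcs(O)$, $\phcu(O)$, $\phc(O)$ all coincide $m$-a.e.

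Once the three sets coincide $m$-a.e., ergodicity of $m|\phc(O)$ is immediate: for any bounded measurable $\phi$, the transfer step gives $\phi^*\equiv\overline{\phi}(O)$ $m$-a.e.\ on $\phc(O)$, which is precisely ergodicity. The main technical obstacle is the value-transfer through a transverse intersection point that need not be leaf-generic: the standard resolution, which is the Pesin-theoretic heart of the argument, is that absolute continuity of the Pesin stable holonomy saturates any leaf-measure-zero ``bad set'' on $W^u(O)$ to an $m$-measure-zero set in a neighborhood, so the identity $\phi^*=\overline{\phi}(O)$ on $W^u(O)$ a.e.\ transports to an $m$-a.e.\ identity on the $W^-$-saturation of $W^u(O)$, which contains $\phcs(O)$. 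This absolute continuity is the Pesin-theoretic ingredient that requires $r>1$.
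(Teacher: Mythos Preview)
The paper does not prove this theorem; it is quoted from~\cite{RRTU} and used as a black box. Your outline is the Hopf--Pesin scheme that~\cite{RRTU} carries out, and you correctly locate the crux at the transverse intersection with $W^u(O)$. But your resolution of that crux has a genuine gap.

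You write ``the identity $\phi^*=\overline{\phi}(O)$ on $W^u(O)$ a.e.\ transports\ldots''. What you actually know on $W^u(O)$ is that $\phi^-\equiv\overline{\phi}(O)$ everywhere (backward orbits converge to $O$); you have \emph{not} shown that $\phi^+=\phi^-$ Lebesgue-a.e.\ on $W^u(O)$. The $m$-a.e.\ identity $\phi^+=\phi^-$ says nothing about the $m$-null submanifold $W^u(O)$, and absolute continuity of the Pesin \emph{stable} holonomy runs the wrong way for this: it lets you push a Lebesgue-null subset of the transversal $W^u(O)$ outward to an $m$-null set, but it does not let you pull the $m$-null bad set $\{\phi^+\neq\phi^-\}$ down to a Lebesgue-null trace on $W^u(O)$. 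Filling this step is exactly where the \emph{second} positivity hypothesis $m(\phcu(O))>0$ is used (and, symmetrically, $m(\phcs(O))>0$ on the other side): via the $\lambda$-lemma, Pesin \emph{unstable} discs $W^+(f^n x)$ of $m$-typical $x\in\phcu(O)$ accumulate in $C^1$ on $W^u_{loc}(O)$, and absolute continuity of the Pesin unstable lamination then forces $\{\phi^+\neq\phi^-\}\cap W^u(O)$ to be Lebesgue-null. As written, your argument for the $\phcs$ side never invokes $m(\phcu(O))>0$, and without it the transfer step does not go through.

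A smaller point: applying the transfer step to $\phi=\mathbf{1}_{\phcu(O)}$ is not legitimate as stated, since leafwise constancy of $\phi^\pm$ along $W^\mp$ uses continuity of $\phi$. The clean route is: once $\phi^*\equiv\overline{\phi}(O)$ on $\phcs(O)\cup\phcu(O)$ for every \emph{continuous} $\phi$, this union sits in a single ergodic component, and then the invariant subsets $\phcs(O)$ and $\phcu(O)$ must each have full measure there.
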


%\begin{figure}[h]\label{f=pesin}
%\includegraphics[scale=.35]{PesinHomo.pdf}
%\caption{The Pesin homoclinic class}
%\end{figure}

From Theorem~\ref{t.criterion}  we obtain a criterion for the global ergodicity of the volume, which we will use to prove Theorem A'.

\begin{corollary}\label{c.criterion}
Let $f\in \Diff^{r}_m(M)$ with $r>1$ such that:
\begin{itemize}
\item $f$ preserves a partially hyperbolic splitting $TM=E^{uu}\oplus E^c\oplus E^{ss}$
and a dominated splitting $TM=E_1\oplus E_2$ such that $E^{uu}\subset E_1 \subset (E^{uu}\oplus E^c)$.

\item There exists a horseshoe $\Lambda$ with unstable bundle $E_1|\Lambda$ and
which is both a $(d_{uu}+d_c)$-unstable and $(d_c+d_{ss})$-stable blender, where $d_*=\dim E^*$,
\item The orbit of $m$-almost every point is dense in $M$.
\item There exist a positive $m$-measure set of regular points $x$
having unstable dimension $\dim(E^+(x))\geq \dim(E_1)$ and a positive $m$-measure set of regular points
having stable dimension $\dim(E^-(x))\geq \dim(E_2)$.
\end{itemize}
Then $f$ is ergodic.
\end{corollary}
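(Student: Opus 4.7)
The strategy is to apply Theorem~\ref{t.criterion} (the RRTU criterion) to a periodic orbit $O\subset \Lambda$ in the blender horseshoe, and then to promote the resulting ergodic component $\phc(O)$ to full $m$-measure using the simultaneous $\cW^{uu}$- and $\cW^{ss}$-saturation of this set together with metric transitivity. Observe first that, since $E^{uu}$ is uniformly expanded, $W^{uu}(x)\subset W^+(x)$ at every Oseledets regular point; consequently $\phcu(O)$ is $\cW^{uu}$-saturated and, symmetrically, $\phcs(O)$ is $\cW^{ss}$-saturated. Both sets are obviously $f$-invariant, since the orbit of $O$ is itself $f$-invariant and the transverse intersection condition is preserved by $Df$.

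The central step is to show $m(\phcu(O))>0$. Let $A^+$ denote the invariant positive-measure set of regular points $x$ with $\dim E^+(x)\geq \dim E_1$. For $m$-a.e.\ $x\in A^+$, metric transitivity allows us to choose $n$ so that $f^n(x)$ sits in the blender chart and, by uniform expansion of $E^{uu}$, the image of a small strong-unstable disk through $x$ has grown into a macroscopic disk $W^{uu}(f^n x)\subset W^+(f^n x)$ tangent to $E^{uu}$. In the linearizing coordinates of the blender this disk is the graph of a $1$-Lipschitz map from a $d_{uu}$-disk into the $(d_c+d_{ss})$-dimensional complementary direction, so the defining property of the $(d_c+d_{ss})$-stable blender produces an intersection point with the local stable set of $\Lambda$. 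The hypothesis $\dim E^+(x)\geq \dim E_1$ supplies the dimension count $\dim W^+(f^n x) + \dim W^s(O)\geq d$, so the intersection may be upgraded to a transverse one; and since $\Lambda$ is transitive, $W^s(O)$ is dense in $W^s(\Lambda)$, so an inclination-lemma argument moves the intersection point onto $W^s(O)$ itself. Thus $f^n(x)\in \phcu(O)$, and by $f$-invariance $x\in \phcu(O)$, so $A^+\subset \phcu(O)$ modulo null sets and $m(\phcu(O))>0$. The symmetric argument using the $(d_{uu}+d_c)$-unstable blender and the positive-measure set $A^-=\{x:\dim E^-(x)\geq \dim E_2\}$ gives $m(\phcs(O))>0$.

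Theorem~\ref{t.criterion} then yields that $\phcu(O)$, $\phcs(O)$, and $\phc(O)$ coincide modulo $m$ and that $m|\phc(O)$ is ergodic. It remains to show $m(\phc(O))=1$. Because $\phc(O)$ agrees with $\phcu(O)$ and with $\phcs(O)$ up to null sets, it is simultaneously $f$-invariant, $\cW^{uu}$-saturated, and $\cW^{ss}$-saturated; such a set is a union of accessibility classes. Its complement would be another $f$-invariant, $\cW^{uu}\cup\cW^{ss}$-saturated set, and if it had positive measure the blender construction of paragraph two would apply inside that complement as well, producing a second ergodic component with dense typical orbits; combining this with the hypothesis that $m$-a.e.\ orbit is already dense in $M$ contradicts the mutual singularity of the two ergodic components. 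Therefore $m(\phc(O))=1$ and $f$ is ergodic.

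The hardest step is the middle one: packaging the iterated strong-unstable disk as a $1$-Lipschitz graph in the blender chart (so that the blender hypothesis is literally applicable) and, more delicately, converting the blender's non-transverse intersection with $W^s(\Lambda)$ into a genuine transverse intersection with $W^s(O)$ by combining the extra central dimensions in $W^+$ with a $\lambda$-lemma argument based on transitivity of $\Lambda$. The final saturation-to-full-measure step, by contrast, is a by-now-standard consequence of the double saturation of $\phc(O)$ and the metric transitivity assumption.
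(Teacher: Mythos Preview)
Your blender step is essentially the paper's argument, and it is correct: for $m$-a.e.\ $x$ with $\dim E^+(x)\ge\dim E_1$ you land $x$ in $\phcu(O)$ (and symmetrically for $\phcs(O)$), then invoke Theorem~\ref{t.criterion}. The divergence, and the gap, is in your final ``saturation to full measure'' paragraph.

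The paper avoids that paragraph entirely by a one-line observation you are missing: the global dominated splitting $TM=E_1\oplus E_2$ forces, at \emph{every} Oseledets regular point, either $\dim E^+(x)\ge\dim E_1$ or $\dim E^-(x)\ge\dim E_2$. (If some vector in $E_1$ has nonpositive exponent then by domination every vector in $E_2$ has strictly smaller, hence negative, exponent.) Thus $A^+\cup A^-$ already has full measure, so $\phcu(O)\cup\phcs(O)$ has full measure \emph{before} you apply Theorem~\ref{t.criterion}; once that theorem identifies $\phcu(O)$, $\phcs(O)$ and $\phc(O)$ modulo null sets, $m(\phc(O))=1$ is immediate.

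Your alternative route has real problems. First, you invoke ``accessibility classes'', but accessibility is not among the hypotheses---only metric transitivity is---so you cannot use a $0$--$1$ law for bi-saturated sets. Second, the claim that $\phc(O)$ is simultaneously $\cW^{uu}$- and $\cW^{ss}$-saturated is delicate: $\phcu(O)$ is $\cW^{uu}$-saturated and $\phcs(O)$ is $\cW^{ss}$-saturated, and they agree modulo null sets, but producing a single representative saturated by both foliations is exactly the kind of measure-zero bookkeeping that the Hopf argument has to work hard for. Third, the ``second ergodic component'' contradiction is not what your paragraph-two argument produces: applying the blender to a point $x$ in the putative complement with $x\in A^+$ shows $x\in\phcu(O)$, i.e.\ $x\in\phc(O)$ modulo null sets---it does not manufacture a new Pesin homoclinic class. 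What this really proves is that the complement meets $A^+$ (and $A^-$) in a null set, and to turn that into a contradiction you again need $A^+\cup A^-$ to have full measure, which brings you back to the paper's observation.

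So: keep your middle step, drop the saturation paragraph, and insert the domination dichotomy $A^+\cup A^-=M$ (mod $0$) right after you have established $A^+\subset\phcu(O)$ and $A^-\subset\phcs(O)$.
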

\begin{proof}
Let us consider two charts $\varphi^{u},\varphi^{s}$
centered at two points $x^{u},x^{s}\in \Lambda$ as in the definition
of unstable and stable blenders given at Section~\ref{ss.blender}.
Let $O$ be a periodic orbit in $\Lambda$.

By assumption the orbit of $m$-almost every point $x\in M$ is dense in $M$, and accumulates on $x^u$.
By continuity of the leaves of the strong stable foliation in the $C^1$ topology,
the strong stable manifold $W^{ss}_{loc}(f^n(x))$ for some $n\in \ZZ$
is arbitrarily $C^1$-close to $W^{ss}_{loc}(x^u)$. From the blender property, we deduce that $W^{ss}_{loc}(f^n(x))$
intersects $W^u(y)$ for some point $y\in \Lambda$.
Since $M$ has a global dominated splitting $E_1\oplus E_2$,
if the stable dimension $\dim(E^-(x))$ of $x$ is greater than or equal to the stable dimension $\dim(E_2)$ of $\Lambda$,
the stable manifold of $x$ intersects $W^u(y)$ transversely.
Since the unstable manifold of $O$ is dense in the unstable set of $\Lambda$,
this implies that $x$ belongs to $\phcs(O)$.

Similarly, $m$-almost every point whose unstable dimension is greater than or equal to $E_1$
belongs to $\phcu(O)$. Note that $m$-almost every point has either stable dimension $\geq \dim(E_2)$
or unstable dimension $\geq \dim(E_1)$. Consequently the union $\phcu(O)\cup \phcs(O)$
has full volume. By our last assumption, $\phcu(O)$ and $\phcs(O)$ both have positive $m$-measure.
Theorem~\ref{t.criterion} thus applies and $\phcu(O),\phcs(O), M$ coincide up to a set of zero-volume.
Moreover $m=m|\phc(O)$ is ergodic.
\end{proof}

\subsection{Proof of Corollary D}
Consider a diffeomorphism $f\in \Diff^1_m(M)$
that preserves a non-trivial dominated splitting
$TM= E\oplus F$.
For diffeomorphisms $C^1$-close to $f$ this splitting persists,
and in particular the first case of Theorem~\ref{t=ACW1} does not hold.
It follows that there exists $f_1\in \Diff^1_m(M)$ close to $f$
that is ergodic and non-uniformly Anosov. We can thus change
the dominated splitting so that $\dim(E)$ coincides with the stable dimension
of $m$-almost every point.

We can furthermore require that $f_1$ belongs to the dense $G_\delta$ sets
provided by Theorem~\ref{t.continuity}.
In particular,
for any diffeomorphism $f_2$ in a $C^1$-neighborhood $\cU\subset \diff^1_m(M)$
of $f_1$, the set of non-uniformly hyperbolic points whose unstable dimensions
coincide with $\dim(E)$ has positive volume.
By Theorems~\ref{t.smoothing}, \ref{t.spectral} and~\ref{t.criterion},
one thus can choose
\begin{itemize}
\item a $C^2$ diffeomorphism $f_2$ that is $C^1$-close to $f_1$,
\item a hyperbolic periodic orbit $O$ for $f_2$, such that $m(\phc(O))>0$ and
the unstable dimension of $O$ is $\dim(E)$.
\end{itemize}

Pesin's formula~\cite{Pe} now applies to the normalization $\mu$ of $m|\phc(O)$:
\begin{theorem}[Pesin]
If $f\in \diff^1(M)$ and $\mu$ is an ergodic invariant probability measure absolutely continuous
with respect to a volume of $M$, then the Lyapunov exponents
$\lambda_\mu^1\geq \dots \geq \lambda^d_\mu$ of $\mu$ counted with multiplicity satisfy:
$$h_\mu(f)=\sum_i \max(\lambda^i_\mu, 0).$$
\end{theorem}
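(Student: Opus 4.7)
The plan is to prove Pesin's entropy formula by combining two matching inequalities. The first, Ruelle's inequality $h_\mu(f)\leq \sum_i \max(\lambda^i_\mu,0)$, holds in full generality for any $C^1$ diffeomorphism and any invariant probability measure, with no absolute continuity assumption. I would prove it by taking a sufficiently fine finite partition $\cP$ of $M$ and bounding the number of atoms of $\bigvee_{k=0}^{n-1} f^{-k}\cP$ meeting a fixed atom of $\cP$: this count is controlled by the $n$-step volume expansion of $Df$ along the positive Oseledets subspace $E^+$, which by Oseledets' theorem grows like $\exp(n\sum_i \max(\lambda^i_\mu,0))$. Passing $n\to\infty$ through the Shannon--McMillan--Breiman theorem yields the desired bound.

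For the reverse inequality I would use the hypothesis $\mu\ll m$. Pesin's stable manifold theorem provides, at $\mu$-a.e.\ Oseledets regular point $x$, an injectively immersed unstable manifold $W^+(x)$ tangent to $E^+(x)$. I would then build a measurable partition $\xi$ subordinate to this lamination, meaning each atom is a relatively compact open piece of some $W^+(x)$ and $\xi \prec f\xi$. Rokhlin's formula gives $h_\mu(f,\xi)=H_\mu(\xi\mid f\xi)$, and a change of variables on unstable leaves converts this conditional entropy into $\int \log\Jac_{E^+}(Df)\, d\mu$, which equals $\sum_i \max(\lambda^i_\mu,0)$ by Birkhoff and Oseledets. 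Since the quotient dynamics on $M/\xi$ contracts along the stable and center directions, an Abramov--Pinsker argument gives $h_\mu(f)=h_\mu(f,\xi)$, matching the Ruelle bound from above.

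The main obstacle is the change of variables step under $C^1$ regularity. Classical Pesin theory establishes absolute continuity of unstable holonomies only in $C^{1+\alpha}$, and at $C^1$ such holonomies may fail to be absolutely continuous. Here, however, the absolute continuity of $\mu$ itself on $M$ is hypothesized, which is what is really needed: disintegrating $\mu$ via $\xi$ and applying Fubini on a Pesin block where the unstable lamination has uniform size and geometric estimates shows that for $\mu$-a.e.\ $x$ the conditional measure $\mu_x^\xi$ is absolutely continuous with respect to the induced Riemannian volume on $W^+(x)$. This is the key analytic input that lets the Rokhlin--Jacobian identity go through without invoking holonomy absolute continuity, allowing the argument to conclude at the $C^1$ regularity level required here.
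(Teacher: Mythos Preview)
The paper does not prove this statement; it quotes it as Pesin's theorem with a citation to \cite{Pe} and then applies it, in the proof of Corollary~D, to the $C^2$ diffeomorphism $f_2$. So there is no in-paper proof to compare against, and your outline is essentially the classical Ledrappier--Pesin route (Ruelle's inequality for the upper bound, a measurable partition subordinate to the unstable lamination plus Rokhlin's formula for the lower bound).

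That said, there is a genuine gap in your attempt to run the argument at the $C^1$ regularity written in the displayed statement. Two of the ingredients you invoke are not available in bare $C^1$. First, Pesin's unstable manifold theorem itself requires $C^{1+\alpha}$: without H\"older control on $Df$ the graph-transform estimates that produce $W^+(x)$ do not close up. Second, and more to the point of your final paragraph, the ``Fubini on a Pesin block'' step you describe is precisely the absolute continuity of the unstable lamination in disguise. Knowing $\mu\ll m$ gives you a density against volume on $M$, but to conclude that the conditionals $\mu^\xi_x$ are absolutely continuous with respect to leaf volume you must disintegrate $m$ along the leaves of $\xi$, and that disintegration is governed by the holonomy of the lamination between transversals. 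In $C^{1+\alpha}$ this holonomy is absolutely continuous (Pesin); in $C^1$ it need not be, and there are examples where it is singular. So the hypothesis $\mu\ll m$ does not, by itself, let you bypass holonomy absolute continuity as you claim.

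In the paper's actual use the diffeomorphism is $C^2$, so the classical $C^{1+\alpha}$ version of Pesin's formula suffices; the ``$\diff^1(M)$'' in the displayed hypothesis should be read as an imprecision rather than a claim you are expected to establish at that regularity.
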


For any $\delta>0$, Theorem B' provides us with a $C^2$ diffeomorphism $f_3$ that is $C^1$-close to
$f_2$ and with an affine horseshoe $\Lambda$ whose linear part is constant and equal
to a diagonal matrix $A=\operatorname{Diag}(\exp(\lambda^1),\dots,\exp(\lambda^d))$,
such that
$$h_{top}(\Lambda, f_3)\geq \sum_i \max(\lambda^i, 0)-\delta.$$

Theorem C then implies that there exists $f_4\in\diff^1_m(M)$ that is $C^1$-close to $f_3$
such that the hyperbolic continuation of $\Lambda$ is a $(d-1)$-unstable blender.
Applying again Theorems B' and C to the measure of maximal entropy of $\Lambda$ for $f_4$,
one constructs a diffeomorphism $g$ that is $C^1$-close to $f_4$
(hence to the initial diffeomorphism $f$) such that the continuation of $\Lambda$ is a $(d-1)$-dimensional stable blender $\Lambda$, proving Corollary D.

\subsection{Metric transitivity}
Using that accessibility of the strong distributions for partially hyperbolic diffeomorphisms
is $C^1$-open and dense~\cite{DW}, Brin's argument~\cite{Br} gives:

\begin{theorem}[Brin, Dolgopyat-Wilkinson]\label{t.transitivity}
For any partially hyperbolic diffeomorphisms in an open and dense subset of $\diff^1_m(M)$,
$m$-almost every point has a dense orbit in $M$.
\end{theorem}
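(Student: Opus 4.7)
The plan is to combine two results: the $C^1$ density of stable accessibility due to Dolgopyat--Wilkinson~\cite{DW} and Brin's argument~\cite{Br} that accessibility implies metric transitivity for volume-preserving partially hyperbolic systems. First I would invoke \cite{DW}: the set $\cA\subset\Diff^1_m(M)$ of $C^1$-partially-hyperbolic, stably accessible volume-preserving diffeomorphisms is $C^1$-open and $C^1$-dense, so it suffices to prove metric transitivity for each $f\in\cA$.

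Fix $f\in\cA$ and a countable basis $\{V_n\}_{n\geq 1}$ of the topology of $M$. I would set
\[
C_n := \{ x \in M : f^k(x) \notin V_n \text{ for all } k\geq 0 \}.
\]
The set of points with non-dense forward orbit is $\bigcup_n C_n$, so the theorem reduces to showing $m(C_n)=0$ for every $n$. Each $C_n$ is closed with $f(C_n)\subset C_n$; since $f$ preserves $m$, this inclusion is an equality modulo null sets, and by iterating $C_n$ coincides modulo null sets with the set of points whose full bi-infinite orbit avoids $V_n$. In particular $C_n$ is essentially $f$-invariant in both directions.

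The core of the proof is Brin's bi-saturation argument. For the $\cW^s$-saturation: if $y\in\cW^s(x)$ and $x$ lies in the full-measure subset $C_n^*$ of $C_n$ on which $f^j(x)\notin V_n$ for \emph{every} $j\in\ZZ$, then $d(f^k(y),f^k(x))\to 0$ forces $f^k(y)\notin V_n$ for all sufficiently large $k$, and the essential $f$-invariance of $C_n$ pulls this back to $y\in C_n$ modulo null sets. The $\cW^u$-case is symmetric, obtained by running the same argument for $f^{-1}$ and using that $C_n$ agrees modulo null sets with the analogous set defined by the backward orbit. Accessibility of $f$ then promotes this essential bi-saturation to measure triviality: $C_n$ must have $m$-measure $0$ or $1$, and since $V_n\cap C_n=\emptyset$ with $m(V_n)>0$, we conclude $m(C_n)=0$.

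I expect the main obstacle to be precisely this last promotion step, from essential bi-saturation to measure triviality. In the $C^{1+\alpha}$ setting the absolute continuity of $\cW^s$ and $\cW^u$ would let a Fubini-type argument close the gap immediately, but at the $C^1$ level these foliations are only H\"older and absolute continuity may fail. Brin's argument sidesteps this issue entirely by being purely topological: it exploits only the exponential contraction/expansion along leaves together with the topological density of accessibility classes (which is exactly what~\cite{DW} delivers at the $C^1$ level), so no regularity beyond $C^1$ is required.
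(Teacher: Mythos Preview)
Your high-level plan matches the paper exactly: invoke \cite{DW} for the $C^1$-open-density of stable accessibility among partially hyperbolic maps in $\Diff^1_m(M)$, then apply Brin's argument \cite{Br} that accessibility plus volume preservation yields metric transitivity. The paper itself gives no further detail beyond these two citations.

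Your sketch of Brin's argument, however, contains an error and leaves the decisive step unjustified. The error: you claim that $f^j(x)\notin V_n$ for all $j$ together with $d(f^k(y),f^k(x))\to 0$ force $f^k(y)\notin V_n$ for large $k$. This is false, since $V_n$ is \emph{open}: proximity to a point outside an open set does not keep you outside (the orbit of $x$ may hug $\partial V_n$ while that of $y$ sits just inside). The implication only runs the other way, so one must argue saturation of the open invariant set $\hat V_n=\bigcup_k f^k(V_n)$, using Poincar\'e recurrence to get infinitely many visits of $f^k(x)$ to $V_n$. More seriously, even with this repair your route leads to ``essential $su$-saturation of a measurable set $+$ accessibility $\Rightarrow$ measure $0$ or $1$,'' which is precisely the Hopf-type step that requires absolute continuity of $\cW^s,\cW^u$; you assert Brin handles it ``purely topologically'' but supply no mechanism. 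Brin's argument is in fact organized differently: for each bi-recurrent point $x$ (a full $m$-measure set by Poincar\'e recurrence) he shows directly that the closed invariant set $\overline{O(x)}$ is $\cW^s$- and $\cW^u$-saturated---a pointwise topological statement using only recurrence of $x$ and the uniform exponential behavior along leaves---and then accessibility forces $\overline{O(x)}=M$. No measurable-saturation or Fubini step ever enters.
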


\subsection{Proof of Theorem A'}
For $r>1$, consider the $C^1$-open set $\cP\cH_m^r(M)$ of diffeomorphisms $f\in \diff^r_m(M)$
that preserve a partially hyperbolic decomposition $TM=E^s\oplus E^c\oplus E^u$.
By Theorems~\ref{t.smoothing}, \ref{t.transitivity} and Corollary D,
there exists a $C^1$-dense and $C^1$-open subset $\cU\subset \cP\cH_m^r(M)$
of diffeomorphisms $f$ having a horseshoe $\Lambda$ that is both a $(d_{uu}+d_c)$-dimensional unstable blender and
a $(d_c+d_{ss})$-dimensional stable blender  and such that $m$-almost every orbit is dense.
Moreover there exists a dominated splitting $TM=E\oplus F$ such that $\dim(E)$
coincides with the stable dimension of $\Lambda$ and
the set of non-uniformly hyperbolic points whose unstable dimension equals $\dim(E)$ has positive volume.
By Corollary~\ref{c.criterion}, any diffeomorphism in the open set $\cU$ is ergodic.
Since the set $\Nuh_f$ has positive volume, the measure $m$ is hyperbolic and the diffeomorphism $f$
is non-uniformly Anosov. By~\cite[Theorem 8.1]{Pe}, the system $(f,m)$ is Bernoulli.

%\section{Approximation of hyperbolic measures by linear horseshoes with simple dominated spectrum}

\section{Horseshoes with simple dominated spectrum}
\label{s.diagonalize}

Our goal in this section is to prove Theorem~\ref{t.diagonalize},
which allows us to extract from a horseshoe $\Lambda$ a subhorseshoe $\widetilde \Lambda$
that has a dominated splitting into one-dimensional subbundles, after an arbitrarily $C^1$-small perturbation.  

Here is the scheme of the proof.
After a $C^1$-small perturbation, we  can assume that the given diffeomorphism is smooth
 in a neighborhood of $\Lambda$. The initial step the proof of this result is to apply Katok's Theorem~\ref{t.katok}
to the measure of maximal entropy of $\Lambda$.
This immediately implies Theorem \ref{t.diagonalize}
when the Lyapunov spectrum of $\mu$ is simple, so our basic task will be
to eliminate multiplicities in the Lyapunov spectrum.

\subsection{Non-triviality of the Lyapunov spectrum for cocycles over subshifts}

In this section we recall some basic results of \cite {BGV}.

Let $\sigma:\Sigma \to \Sigma$ be a \emph{subshift}, i.e., the restriction of the
shift on $\cA^\Z$ (where $\cA$ a finite set) to a transitive invariant compact subset.
For $l \leq r$ integers, we define the
\emph{$(l,r)$-cylinder} containing $x \in \Sigma$ as the set of all $y \in \Sigma$
such that $\pi_j(y)=\pi_j(x)$ for $l \leq j \leq r$, where $\pi_j:\Sigma \to
\cA$ are the coordinate projections.
We say that $x$ and $y$ have the same \emph{stable set}
(resp. \emph{local stable set})
if $\pi_i(x)=\pi_i(y)$ for any large integer $i$
(resp. for any $i\geq 0$).

A subshift is called \emph{Markovian} if there exists a
directed graph $\cG$ with vertices in $\cA$ such that $\Sigma$ consists of all
sequences corresponding to directed bi-infinite paths in $\cG$.
A \emph{subshift of finite type} is a subshift which is
topologically conjugate to a Markovian subshift.

Let $\sigma:\Sigma \to \Sigma$ be a Markovian subshift and let
$A:\Sigma \to \GL(d,\R)$ be continuous.  We say that the cocycle
$(\sigma,A)$ has \emph{stable holonomies} if for every $x,y$ in the same
stable set there exists $H_s(x,y) \in \GL(d,\R)$ such that
\begin{enumerate}
\item $H_s(y,z)\circ H_s(x,y)=H_s(x,z)$,
\item $H_s(\sigma(x),\sigma(y))\circ A(x)=A(y)\circ H_s(x,y)$,
\item $(x,y) \mapsto H_s(x,y)$ is a continuous function restricted to the set of $(x,y) \in
\Sigma \times \Sigma$ such that $y$ belongs to the local stable manifold of
$x$.
\end{enumerate}
We define analogously the unstable holonomies $H_u(x,y)$.
\medskip

We now give a condition for deducing the existence of stable
holonomies.

\begin{proposition}[Lemme 1.12 in~\cite{BGV}]\label{p.bunched}
If there exists $C,\epsilon>0$
such that whenever $x,y\in \Sigma$ belong to the same local stable manifold we have
for every $n \geq 0$,
\begin{equation}\label{e.bunched}
\|A_n(x)\| \|A_n(x)^{-1}\| \|A(\sigma^n(x))-A(\sigma^n(y))\|<
C e^{-\epsilon n},
\end{equation}
then the cocycle admits stable holonomies which satisfy:
\begin{equation}\label{e.holonomy}
H_s(x,y)=\lim_{n\to +\infty} A^{-1}(y)\dots A(\sigma^{n}(y))^{-1}A(\sigma^{n}(x))\dots A(x).
\end{equation}
\end{proposition}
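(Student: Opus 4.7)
The plan is to show that the partial products
$$H_n(x,y) := A_n(y)^{-1} A_n(x), \qquad \text{where } A_n(x) := A(\sigma^{n-1}x)\cdots A(x),$$
form a Cauchy sequence whose limit is the candidate holonomy: the expression on the right-hand side of \eqref{e.holonomy} is precisely $\lim_n H_{n+1}(x,y)$, so it suffices to establish convergence of $H_n$ and then verify the three defining properties of a stable holonomy.

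Using $A_{n+1}(x) = A(\sigma^n x)\,A_n(x)$ and the analogous identity at $y$, a direct computation gives the telescoping identity
$$H_{n+1}(x,y) - H_n(x,y) = A_n(y)^{-1}\, A(\sigma^n y)^{-1}\bigl[A(\sigma^n x) - A(\sigma^n y)\bigr] A_n(x).$$
By continuity of $A$ on the compact set $\Sigma$, pick $K$ with $\|A(z)^{-1}\| \leq K$ for all $z$. To apply the bunching hypothesis \eqref{e.bunched}, one must compare $\|A_n(y)^{-1}\|$ to $\|A_n(x)^{-1}\|$; the cheap identity $A_n(y)^{-1} = H_n(x,y)\,A_n(x)^{-1}$ does exactly this and yields
$$\|H_{n+1}(x,y) - H_n(x,y)\| \leq K\,\|H_n(x,y)\|\cdot \|A_n(x)\|\,\|A_n(x)^{-1}\|\, \|A(\sigma^n x)-A(\sigma^n y)\| \leq KC\, e^{-\epsilon n}\, \|H_n(x,y)\|.$$

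This is a closed multiplicative estimate $\|H_{n+1}\| \leq (1+KCe^{-\epsilon n})\|H_n\|$ starting from $H_0 = I$, so a Gronwall-type product argument gives a uniform bound $\|H_n(x,y)\| \leq M := \exp\bigl(KC\sum_{j\geq 0} e^{-\epsilon j}\bigr) < \infty$. Reinjecting, $\|H_{n+1}-H_n\| \leq KCM\, e^{-\epsilon n}$ is summable, so $H_n(x,y)$ converges to some $H_s(x,y)$. Running the same argument after swapping $x$ and $y$ controls $H_n(y,x) = H_n(x,y)^{-1}$, so the limit $H_s(x,y)$ lies in $\GL(d,\R)$.

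Finally, the three defining properties pass to the limit. Property (1) is immediate since $H_n(y,z)H_n(x,y) = A_n(z)^{-1} A_n(y) A_n(y)^{-1} A_n(x) = H_n(x,z)$ already at each finite $n$. Property (2) follows from
$$H_s(\sigma x,\sigma y)\,A(x) = \lim_n A_n(\sigma y)^{-1} A_n(\sigma x)\,A(x) = A(y)\lim_n A_{n+1}(y)^{-1} A_{n+1}(x) = A(y)\,H_s(x,y),$$
using the two reindexing identities $A_n(\sigma x)A(x) = A_{n+1}(x)$ and $A_n(\sigma y)^{-1} = A(y)\,A_{n+1}(y)^{-1}$. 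Property (3) (continuity on the compact local-stable relation) follows because all the constants in the Cauchy estimate are uniform there --- the modulus of continuity of $A$ controls $\|A(\sigma^n x)-A(\sigma^n y)\|$ uniformly in $(x,y)$ --- so the convergence $H_n \to H_s$ is uniform and $H_s$ is continuous. The main subtle point in the whole argument is the apparent circular dependency between $\|A_n(y)^{-1}\|$ and $\|A_n(x)^{-1}\|$, since the bunching hypothesis only directly controls the condition number of $A_n(x)$; this loop is broken by the Gronwall product bound on $\|H_n\|$.
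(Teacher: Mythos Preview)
Your proof is correct and follows the standard approach; the paper itself does not prove this proposition but simply cites it as Lemme~1.12 of \cite{BGV}, and your argument is essentially the one found there. The key step---bootstrapping the bound on $\|H_n\|$ via the product inequality $\|H_{n+1}\|\leq (1+KCe^{-\epsilon n})\|H_n\|$ to break the apparent circularity between $\|A_n(y)^{-1}\|$ and $\|A_n(x)^{-1}\|$---is exactly right.
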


The following is a particular case (for the measure of maximal entropy)
of the criterion for non-degenerate Lyapunov spectrum in \cite{BGV}.

\begin{theorem}[Bonatti\;-\;Gom\'ez-Mont\;-\;Viana] \label{criterion}

Assume that the cocycle $(\sigma,A)$ has stable and unstable holonomies and that its Lyapunov
exponents with respect to the measure of maximal entropy of $(\Sigma,\sigma)$ are all the same. 
Then there exists a continuous family $\mu_x$, $x \in \Sigma$, of probability measures
on $P\R^d$ such that
$$A(x)_*( \mu_{\sigma(x)})=\mu_{x},\text{ } H_s(x,y)_*
(\mu_y)=\mu_x \text{ and } H_u(x,y)_* (\mu_y)=\mu_x.$$

\end{theorem}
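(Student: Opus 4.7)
The plan is to follow the classical invariance-principle approach developed by Ledrappier and later Avila--Viana. Let $\nu$ denote the measure of maximal entropy on $\Sigma$, and consider the projective skew-product $F\colon\Sigma\times P\R^d\to\Sigma\times P\R^d$ defined by $F(x,[v])=(\sigma(x),[A(x)v])$. First I would produce an $F$-invariant probability $\widetilde\mu$ on $\Sigma\times P\R^d$ projecting onto $\nu$ via a standard Krylov--Bogolyubov argument (start from any measure whose $\Sigma$-marginal is $\nu$, form Cesàro averages under $F$, extract a weak-$*$ limit, and use that $F$ acts continuously and fiberwise on the compact space $P\R^d$). Disintegrating $\widetilde\mu=\int\mu_x\,d\nu(x)$ then yields a family $\{\mu_x\}$ satisfying the cocycle equivariance $A(x)_*\mu_x=\mu_{\sigma(x)}$ at $\nu$-almost every $x$, which after the standard reindexing is the relation written in the statement.

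The heart of the argument is the invariance principle: the hypothesis that all Lyapunov exponents of $(\sigma,A)$ with respect to $\nu$ coincide means the induced cocycle on the projective bundle $\Sigma\times P\R^d$ has vanishing fiberwise Lyapunov exponents. Following Ledrappier's martingale argument, I would consider the decreasing $\sigma$-algebras $\mathcal F_n$ generated by the cylinders $[\pi_{-n},\dots,\pi_0]$, and the conditional expectations of $\widetilde\mu$ on $\mathcal F_n$. The vanishing of the projective exponents together with the uniform convergence in the holonomy formula~\eqref{e.holonomy} identifies the almost-sure martingale limit as the push-forward of $\mu_y$ by the stable holonomy $H_s(x,y)$; this yields the $s$-invariance $H_s(x,y)_*\mu_y=\mu_x$ on a full-measure set whenever $y$ lies in the local stable set of $x$. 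Applying the same argument to the cocycle $(\sigma^{-1},A^{-1})$ gives the analogous $u$-invariance.

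To promote the almost-everywhere defined family $\{\mu_x\}$ to a continuous family on all of $\Sigma$, I would exploit the Markovian local product structure: any two points in a small cylinder can be joined by a point lying in the local stable set of the one and the local unstable set of the other. Combining the two holonomy invariances with the continuity of $H_s$, $H_u$, and $A$, the a.e.-defined disintegration extends unambiguously to a continuous assignment on the whole space, and all three equivariance identities persist by continuity.

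The main obstacle is the invariance principle in the second paragraph. The delicate point is ruling out asymptotic concentration of $\mu_{\sigma^n(x)}$ on proper projective subspaces, which could in principle generate a discrepancy between the conditional expectation and the holonomy push-forward. The equality of \emph{all} Lyapunov exponents—rather than merely a dominated sub-block—is exactly what prevents any net contraction on $P\R^d$, and the bunching condition~\eqref{e.bunched} behind \eqref{e.holonomy} provides the quantitative control required to make the martingale identification go through.
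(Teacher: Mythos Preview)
The paper does not prove this theorem; it is quoted from \cite{BGV} as a black box (``a particular case of the criterion for non-degenerate Lyapunov spectrum''), so there is no in-paper proof to compare against. Your sketch follows the standard invariance-principle route (Ledrappier, and later Avila--Viana in greater generality), which is indeed the mechanism behind the cited result: build a projectively invariant lift of $\nu$, use vanishing of the fibered exponents to force $s$- and $u$-holonomy invariance of the disintegration, then propagate to a continuous family via the local product structure of $\Sigma$. The outline is sound; the only caveat is that the second paragraph compresses the genuinely hard step --- the martingale or entropy argument that converts ``all exponents equal'' into ``disintegration is $s$-holonomy invariant'' --- into a sentence, and this is precisely where the work of \cite{BGV} lies. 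If you were to write this out in full you would need the explicit product structure of the measure of maximal entropy (or, in the Avila--Viana version, an entropy computation along the stable partition), not just the existence of holonomies.
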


The following lemma will allow us to show that the conclusion of Theorem~\ref{criterion}
is not satisfied (hence that the Lyapunov exponents do not coincide).

\begin{lemma} \label{generic}
For $d \geq 2$ and $(B,B')$ in a dense $G_\delta$ subset of $PGL(d,\R)\times PGL(d,\R)$,
there is no probability measure on $P\R^d$ which is
invariant by both $B$ and $B'$.
\end{lemma}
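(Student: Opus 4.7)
The plan is to show that the set $F \subset PGL(d,\R)^2$ of pairs admitting a common invariant probability measure on $P\R^d$ is closed with empty interior; since its complement is then open and dense (hence a dense $G_\delta$), this proves the lemma. Closedness of $F$ is immediate from weak-$*$ compactness of $\mathcal{P}(P\R^d)$ and continuity of pushforwards: a subsequential limit of common invariant measures is common invariant for the limiting pair.

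For the emptiness of interior I will exhibit a dense $G_\delta$ subset of $PGL(d,\R)^2$ disjoint from $F$. Start with the dense $G_\delta$ set $\mathcal{B} \subset PGL(d,\R)$ of matrices $B$ whose eigenvalues in $\CC$ are all distinct (open dense, via non-vanishing of the discriminant) and whose complex conjugate pairs have irrational rotation numbers on the corresponding projective lines (a dense $G_\delta$, complement of a countable family of rational-rotation loci). For $B \in \mathcal{B}$, the Poincar\'e recurrent points of $B$ on $P\R^d$ are exactly the $r$ fixed points $p_i(B)$ from real eigenvalues and the $c$ invariant projective lines $C_j(B) \cong \R P^1$ from complex conjugate pairs ($r+2c=d$). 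The ergodic $B$-invariant probability measures are therefore the Diracs $\delta_{p_i(B)}$ and the Haar measures $h_j(B)$ on $C_j(B)$, so $\mathcal{M}(B)$ is the finite-dimensional simplex on these vertices. I then impose on $B'$ the following generic conditions relative to $B$: (G1) $B'(p_i(B)) \notin \{p_1(B),\ldots,p_r(B)\}$; (G2) $B'(p_i(B)) \notin C_1(B) \cup \cdots \cup C_c(B)$; (G3) $B'_* h_j(B) \neq h_{j'}(B)$ for all admissible indices. Each of (G1)--(G3) is an open dense condition on $(B,B')$ within $\mathcal{B} \times PGL(d,\R)$, using transitivity of $PGL(d,\R)$ on $P\R^d$ (and on point--line incidence) for (G1)--(G2), and for (G3) either the codimension-$2(d-2)$ subvariety $\{B'(V_j)=V_{j'}\}$ of 2-plane mappings (when $d \geq 3$) or the codimension-$2$ Haar-preserving subgroup $PO(2) \subset PGL(2,\R)$ (when $d=2$). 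The intersection $\widetilde{\mathcal{U}}$ is thus a dense $G_\delta$ in $PGL(d,\R)^2$.

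To conclude that $\widetilde{\mathcal{U}} \cap F = \emptyset$, suppose $(B,B') \in \widetilde{\mathcal{U}}$ admits a common invariant measure $\mu = \sum_i \alpha_i \delta_{p_i(B)} + \sum_j \beta_j h_j(B)$. Matching the supports of $\mu$ and $B'_*\mu$: conditions (G1) and (G2) force $\alpha_i = 0$ for every $i$, since otherwise $B'(p_i(B))$ would be an extra point of $\supp(B'_*\mu)$ lying outside $\supp(\mu)$. Then one has $\bigcup_{j : \beta_j > 0} B'(C_j(B)) = \bigcup_{j : \beta_j > 0} C_j(B)$; since each $B'(C_j(B))$ is an irreducible $1$-dimensional projective subvariety, irreducibility and dimension force $B'(C_j(B)) = C_{j'}(B)$ as projective lines for some $j,j'$ with $\beta_j, \beta_{j'} > 0$, and then comparing the measures on $C_{j'}(B)$ yields $B'_* h_j(B) = h_{j'}(B)$, contradicting (G3). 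The main technical subtlety is this final support-matching step, which relies on the irreducibility of the projective lines $C_j(B)$ to upgrade containment in a finite union of such subvarieties to a set-theoretic equality.
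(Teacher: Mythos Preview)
Your overall strategy is sound and close in spirit to the paper's: both arguments classify the ergodic $B$-invariant measures on $P\R^d$ for generic $B$ as Dirac masses at the real eigen\-directions together with the Haar measures on the projective lines coming from complex conjugate pairs, and then impose generic conditions on the pair $(B,B')$ to preclude any common invariant measure. The organizational difference is that the paper's conditions are symmetric (both $B$ and $B'$ have simple Oseledets spectrum and their Oseledets subspaces are pairwise distinct, with a separate treatment of the $d=2$ elliptic case), whereas you work asymmetrically, fixing the ergodic simplex $\mathcal{M}(B)$ and then requiring $B'$ to displace each of its vertices. Your framing via closedness of $F$ and weak-$*$ compactness is a clean addition the paper does not make explicit.

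There is, however, a genuine gap in your definition of $\mathcal{B}$. Requiring only that the eigenvalues of $B$ be distinct over $\CC$ (plus irrational rotation angles for the complex pairs) does \emph{not} force the Poincar\'e recurrent set to be the union of the fixed points $p_i(B)$ and the circles $C_j(B)$. For instance, $B=\operatorname{diag}(1,-1)\in PGL(2,\R)$ lies in your $\mathcal{B}$ (two distinct real eigenvalues, no complex pairs), yet it acts on $P\R^1$ as an involution, so every point is recurrent and there are uncountably many ergodic invariant measures; similarly, two complex pairs of equal modulus yield an orthogonal action on a $3$-sphere in $P\R^3$. What you need, and what the paper does impose (``the Oseledets spaces are one or two-dimensional''), is that the \emph{moduli} of the eigenvalues be pairwise distinct. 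This is an additional open dense condition; once you add it to $\mathcal{B}$, your description of the recurrent set and of $\mathcal{M}(B)$ becomes correct, and the rest of your argument (support matching, irreducibility of the $C_j(B)$, the permutation forced on the circles, and the contradiction with (G3)) goes through as written.
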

\begin{proof} For $(B,B')$ in a dense $G_\delta$ subset of $PGL(d,\R)\times PGL(d,\R)$,
\begin{itemize}
\item the Oseledets spaces one or two-dimensional,
\item the argument of complex eigenvalues is not a rational multiple of $2\pi$,
\item if the Oseledets splitting of $B$ or $B'$ is not trivial, then
$B$ and $B'$ have distinct Oseledets subspaces,
\item if $d=2$ and $B,B'$ have complex eigenvalues,
they do not belong to the same compact subgroup of $PGL(2,\R)$.
\end{itemize}
The two first items imply that the ergodic $B$-invariant measure on $P\R^d$ are Dirac measures along the
one-dimensional Oseledets subspaces and smooth measures along the
$2$-dimensional Oseledets spaces. The same holds for $B'$.
By the third item, there is no probability measure simultaneously invariant by $B$ and $B'$
if the Oseledets splitting of $B$ or $B'$ is not trivial.

If the Oseledets splitting of $B$ and $B'$ is trivial, then $d=2$ and $B,B'$ have complex eigenvalues.
The set of elements of $PGL(2,\R)$ that preserve the (unique) probability measure on $P\R^2$
that is $B$-invariant  is precisely the compact subgroup of $PGL(2,\R)$ containing $B$.
Consequently $B$ and $B'$ do not preserve the same measure on $P\R^2$.
\end{proof}

\subsection{Eliminating multiplicities in the Lyapunov spectrum}
Let $f:M \to M$ be a $C^1$ diffeomorphism, and let $\Lambda$ be a horseshoe
with a dominated splitting $T_\Lambda M=E_1\oplus \dots \oplus E_\ell$.
Recall~\cite{Anosov-horseshoe} that $\Lambda$ is topologically conjugate to a subshift of finite type $\Sigma$
by a homeomorphism $h\colon \Lambda\to \Sigma$.
Using local smooth charts, the restriction of the derivative cocycle $Df$
to any subbundle $E_j$ can be represented by a continuous
$GL(d,\R)$-cocycle $A_j$ on $\Sigma$.

We say that the bundle $E_j$ is \emph{$\alpha$-pinched} if
there is $n\geq 1$ such that for any $x\in K$
$$\|Df^n(x)|E_j(x)\| \|(Df^n(x)|E_{j+1}(x))^{-1}\|\|Df^n(x)\|^{\alpha}<1,$$
$$\|(Df^{n}(x)|E_j(x))^{-1}\|\|Df^{n}(x)|E_{j-1}(x)\| \|(Df^{n}(x))^{-1}\|^\alpha<1.$$
It has the following well-known consequence (see for instance~\cite{PSW}).
\begin{proposition}
If $E_j$ is $\alpha$-pinched and $f$ is $C^2$, then $E_j$ is $\alpha$-H\"older.
\end{proposition}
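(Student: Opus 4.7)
Plan: I would follow the standard Hirsch--Pugh--Shub graph transform argument, realizing the invariant subbundle $E_j$ as the unique $C^0$-fixed point of a graph transform operator on a bundle of linear maps, and then using the $\alpha$-pinching hypothesis to promote this to a strict contraction on the closed ball of $\alpha$-Hölder sections with some fixed Hölder constant. The conclusion will then follow from Banach's fixed point theorem together with the uniqueness of the fixed section.

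First, using the continuity of the dominated splitting, I form the continuous $Df$-invariant subbundles $F^-_j := E_1 \oplus \cdots \oplus E_{j-1}$ and $F^+_j := E_{j+1} \oplus \cdots \oplus E_\ell$, and fix a continuous (not necessarily invariant) reference complement $G_j$ of $F^-_j \oplus F^+_j$ in $T_\Lambda M$ with $\dim G_j = \dim E_j$. Any $\dim(E_j)$-dimensional subspace of $T_xM$ close to $E_j(x)$ is the graph of a small-norm linear map $\sigma(x)\colon G_j(x) \to F^-_j(x) \oplus F^+_j(x)$, and $Df$ induces a graph transform $\mathcal{G}$ acting on such sections $\sigma$ of $\operatorname{Hom}(G_j, F^-_j \oplus F^+_j)$, whose unique $C^0$ fixed point $\sigma_*$ represents $E_j$. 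The domination among $E_{j-1}, E_j, E_{j+1}$ guarantees that some iterate $\mathcal{G}^n$ is already a $C^0$-contraction in a small neighborhood of $\sigma_*$.

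The key step is to upgrade this to a contraction on the $\alpha$-Hölder sections. Expanding $\mathcal{G}^n \sigma$ in block form relative to $F^-_j \oplus G_j \oplus F^+_j$ and estimating the Hölder seminorm of $\mathcal{G}^n\sigma$, the leading coefficient in front of the Hölder constant of $\sigma$ is the product of two kinds of factors: expansion-rate ratios of the form $\|Df^n|E_j\| \cdot \|(Df^n|E_{j\pm 1})^{-1}\|$ coming from the linearization of the graph transform along $\sigma_*$, and a factor $\|Df^n\|^\alpha$ (respectively $\|(Df^n)^{-1}\|^\alpha$) arising from the change of variable in the Hölder increment, $d(f^{\pm n}y, f^{\pm n}z)^\alpha \leq \|Df^{\pm n}\|^\alpha \, d(y,z)^\alpha$. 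The $\alpha$-pinching hypothesis is precisely the assertion that, for some $n$, this leading coefficient is strictly less than $1$. The $C^2$ assumption enters here to ensure that $Df$ is Lipschitz, which is what is needed to keep the lower-order correction terms in the expansion of $\mathcal{G}^n$ bounded independently of the Hölder constant.

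Applying Banach's fixed point theorem to $\mathcal{G}^n$ on an invariant ball in the Banach space of $\alpha$-Hölder sections then yields a unique $\alpha$-Hölder fixed point, which by $C^0$ uniqueness must coincide with $\sigma_*$, proving that $E_j$ is $\alpha$-Hölder. The main technical obstacle is the non-autonomous bookkeeping of the Hölder estimates under iteration, since every coefficient of $\mathcal{G}$ depends continuously on the base point and is only Lipschitz rather than smooth. In practice I would not redo this bookkeeping by hand but invoke the Hirsch--Pugh--Shub invariant section theorem (or its refinement by Pugh--Shub--Wilkinson cited as \cite{PSW}), whose hypotheses are tailored precisely to the pinching inequalities in the statement.
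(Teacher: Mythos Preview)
Your proposal is correct and matches the paper's treatment: the paper does not give its own proof but simply records the proposition as a ``well-known consequence'' and refers to \cite{PSW}, which is exactly the Pugh--Shub--Wilkinson invariant section argument you outline and ultimately invoke. Your sketch of the graph-transform mechanism and the role of the $\alpha$-pinching inequalities is accurate and goes somewhat beyond what the paper itself says.
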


We say that the bundle $E_j$ is \emph{$\alpha$-bunched} if
there is $n\geq 1$ such that for any $x\in K$
$$\|Df^n(x)|E_j(x)\| \|(Df^n(x)|E_j(x))^{-1}\|\|Df^n(x)|E^s(x)\|^{\alpha}<1,$$
$$\|Df^{-n}(x)|E^u(x)\|^{\alpha}\|Df^{-n}(x)|E_j(x)\| \|(Df^{-n}(x)|E_j(x))^{-1}\|<1.$$

Note that if $f$ is $C^2$ and if $E_j$ is $\alpha$-pinched and $\alpha$-bunched,
then the condition~\eqref{e.bunched} is satisfied and by Proposition~\ref{p.bunched},
$A_j$ has stable and unstable holonomies.

\begin{theorem} \label{pertu}
Let $f$ be a $C^k$ diffeomorphism
and $\Lambda$ a horseshoe
with a dominated splitting $TM=E_1 \oplus \cdots \oplus E_\ell$.
If $E_j$ is $\alpha$-pinched and $\alpha$-bunched with $\dim E_j\geq 2$,
then in every $C^k$-neighborhood of $f$, there exists $g$ with the following
property. 
The Lyapunov exponents of $Dg$ along $E_j(g)$ with respect to the measure of maximal entropy of
the continuation $\Lambda_g$ are not all equal.

Moreover,  if $f$ is volume preserving, $g$ can be chosen volume preserving as well.
\end{theorem}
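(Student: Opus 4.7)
The plan is to argue by contradiction: suppose there exists a $C^k$-neighborhood $\cU$ of $f$ such that for every $g\in\cU$ all Lyapunov exponents of $Dg|E_j$ with respect to the measure of maximal entropy on $\Lambda_g$ coincide. After an initial arbitrarily $C^k$-small perturbation we may assume $f$ is $C^\infty$ near $\Lambda$. Coding $\Lambda$ as a subshift of finite type $(\Sigma,\sigma)$ via a Markov partition, let $A_j$ denote the $\GL(n_j,\R)$-cocycle on $\Sigma$ representing $Df|E_j$, where $n_j=\dim E_j\geq 2$. The $\alpha$-pinching of $E_j$ (with $f$ now $C^2$) yields that $E_j$ is $\alpha$-H\"older, and together with the $\alpha$-bunching it gives precisely condition~\eqref{e.bunched} of Proposition~\ref{p.bunched}, so $A_j$ admits stable and unstable holonomies $H^s_j,H^u_j$. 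All these features vary continuously under $C^k$-small perturbations of $f$, so the same set-up is available for every $g\in\cU$, and Theorem~\ref{criterion} furnishes a continuous family of probability measures $\{\mu^g_x\}_{x\in\Lambda_g}$ on $P(E_j(x))$ that is invariant under the cocycle and under both holonomies.

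Next, fix two periodic points $p,q\in\Lambda$ of periods $\pi_p,\pi_q$ with disjoint orbits, together with heteroclinic orbits realizing transverse intersections $W^u(p)\transverse W^s(q)$ and $W^u(q)\transverse W^s(p)$. Composing the stable and unstable holonomies along these heteroclinic transitions yields an isomorphism $H_g\in \GL(E_j(p_g))$ for every $g\in\cU$, and the cocycle-and-holonomy invariance of $\mu^g_{p_g}$ forces both $B_p^g:=Dg^{\pi_p}(p_g)|E_j(p_g)$ and $\widetilde B_q^g:=H_g^{-1}\circ B_q^g\circ H_g$ (with $B_q^g$ defined analogously at $q_g$) to preserve the \emph{same} probability measure on $P(E_j(p_g))$.

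The final step is a Franks-type $C^k$-small perturbation localized in a small ball around a single non-periodic iterate of the orbit of $p$ (respectively $q$), disjoint from the rest of the two orbits and from the heteroclinic orbits. Such a perturbation alters only the $1$-jet of $f$ at that iterate and hence moves $B_p^g$ (respectively $B_q^g$) by an arbitrary small linear increment, while $H_g$ and all other ambient data vary continuously. Since the set in Lemma~\ref{generic} is dense, arbitrarily small increments suffice to drive $(B_p^g,\widetilde B_q^g)$ into the generic $G_\delta$ subset of pairs with no common invariant probability on $P\R^{n_j}$, contradicting the invariance of $\mu^g_{p_g}$. In the volume-preserving case the admissible perturbations constrain $B_p,B_q$ to cosets of fixed determinant; Lemma~\ref{generic} transposes verbatim to that setting since the obstructions it uses (dimensions of Oseledets subspaces, irrationality of the arguments of complex eigenvalues, non-commensurability of maximal compact subgroups in dimension two) are generic inside any such coset.

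The main obstacle is controlling the Franks-type perturbation in the $C^k$ topology for $k\geq 2$: a $1$-jet bump of radius $r$ realizing a linear increment $L$ has $C^k$-norm of order $|L|/r^{k-1}$, so only arbitrarily small prescribed changes to $B_p$ and $B_q$ are available, which is why we exploit the \emph{density} rather than the range of Lemma~\ref{generic}. One must also verify that $H_g$ depends continuously on $g$ throughout the perturbation, so that the composite conjugation $\widetilde B_q^g$ moves predictably with $B_q^g$; this follows from the uniform convergence in~\eqref{e.holonomy} under $C^k$-perturbations that preserve the bunching hypothesis, together with the continuous persistence of the heteroclinic orbits.
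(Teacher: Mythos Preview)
Your overall strategy coincides with the paper's: invoke Theorem~\ref{criterion} to obtain a holonomy-invariant family of projective measures whenever the exponents along $E_j$ coincide, and then use a Franks-type perturbation together with Lemma~\ref{generic} to produce two linear maps that cannot share an invariant projective measure. The difference lies in the implementation, and your version has a genuine gap.

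You work with \emph{two} periodic points $p,q$ connected by heteroclinic orbits, and you perturb at a point of each periodic orbit. The claim that the support of such a perturbation can be chosen ``disjoint from the heteroclinic orbits'' is false: the forward (resp.\ backward) orbit of any heteroclinic point in $W^s(q)$ (resp.\ $W^u(q)$) accumulates on the entire orbit of $q$, so every neighborhood of every point of the orbit of $q$ meets the heteroclinic orbits infinitely often. Consequently the holonomy $H_g$ \emph{does} change under your perturbation, and the mere continuity of $g\mapsto H_g$ is not enough to conclude that the pair $(B_p^g,\widetilde B_q^g)$ can be driven into the dense $G_\delta$ set of Lemma~\ref{generic}: you would need the map from the perturbation parameter to $(B_p^g,\widetilde B_q^g)$ to be open (a submersion), and that requires the additional observation that the effect of the perturbation on $H_g$ is of higher order than its direct effect on $B_q^g$. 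This can be argued (the tail contributions in~\eqref{e.holonomy} are exponentially small), but you do not do so.

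The paper sidesteps this entirely by using a single periodic point $p$ and a \emph{homoclinic} point $q$ of $p$. The perturbation is supported near the single point $h^{-1}(q)$, away from the closure of the rest of its orbit (which accumulates only on the orbit of $p$, not on $q$). Formula~\eqref{e.holonomy} then shows that $H_u(p,\sigma^{-N}(q))$ and $H_s(\sigma^N(q),p)$ are \emph{unchanged} by this perturbation, since they involve only the cocycle along the tails of the homoclinic orbit. Hence the composed map $B'=H_s\circ A_{2N}(q)\circ H_u$ depends on the perturbation only through the middle factor $A(q)$, giving direct, surjective control of $B'$ while $B$ (fixed by a prior perturbation at $p$) remains untouched. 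This is what makes the density of $\cG_B$ immediately applicable.
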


\begin{proof}
The $\alpha$-pinching and $\alpha$-bunching are robust.
Up to a $C^k$-small perturbation, one can thus assume that $f$ is smooth, and
hence the cocycle $A_j$ associated to $Df|E_j$ admits stable and unstable holonomies $H_s,H_u$.
Let $p\in \Sigma$ be a $n$-periodic point
and $q\in \Sigma$ a homoclinic point of $p$
(so that $\sigma^{\pm \ell n}(q) \to p$ as $\ell \to \infty$).
We set $d_j=\dim(E_j)$.

The G$_\delta$ set $\cG\subset PGL(d_j,\R)\times PGL(d_j,\R)$ of Lemma \ref {generic}
can be obtained as a union
$$\cG=\bigcup_{B\in \cG_1} \{B\}\times \cG_B,$$
where $\cG_1$ and each $\cG_B$ is a dense G$_\delta$ subset of $PGL(d_j,\R)$.
Perturbing $f$ near $h^{-1}(p)$, if necessary,
we may assume that $B=A(\sigma^{n-1}(p))\circ\dots\circ A(p)$ belongs to $\cG_1$.
Consider another perturbation of
$f$ near $h^{-1}(q)$ and away from the closure of
$\{f^\ell(h^{-1}(q))\}_{\ell \in \Z \setminus \{0\}}$, such that $g\circ h^{-1}(q)=f\circ h^{-1}(q)$.
Consider $N\geq 1$ large such that
$\sigma^N(q)$ and $\sigma^{-N}(q)$ belong to the local stable manifold 
and to the local unstable manifold of $p$ respectively.
The formula~\eqref{e.holonomy} shows that the holonomies
$H_u(p,\sigma^{-N}(q))$ and $H_s(\sigma^N(q),p)$ are not modified by the perturbation near $q$.
One can thus assume that after the perturbation the following map belongs to $\cG_B$:
$$B'=H_s(\sigma^N(q),p)\circ \big(A(\sigma^{N-1}(q))\circ\dots A(q)
\circ\dots A(\sigma^{-N}(q))\big)\circ H_u(p,\sigma^{-N}(q)).$$
Since there is no probability measure on $P\R^{d_j}$ that is simultaneously preserved by $B$ and $B'$,
 Theorem~\ref{criterion} shows that the Lyapunov exponents along $E_j$ for the measure of maximal entropy
on $\Lambda$ cannot all coincide.
\end{proof}

\subsection{Proof of Theorem \ref{t.diagonalize}}
Up to an arbitrarily small perturbation, we can assume that $f$ is smooth in a neighborhood of $\Lambda$.
Let $\ell$ be the number of distinct Lyapunov exponents of $\mu$.
Using Theorem~\ref{t.katok}, we can replace $\Lambda$ by a subhorseshoe $\Lambda_1$
endowed with a dominated splitting into $\ell$ subbundles and whose topological entropy
is arbitrarily close to the entropy of $\Lambda$.
If $\ell=\dim(M)$, we are done.

If $\ell<\dim(M) = d$, we apply Theorem~\ref{pertu} to obtain a perturbation $f_1$ of $f$
for which the measure of maximal entropy on the
continuation $\Lambda'_1$ of $\Lambda_1$ has $\ell_1>\ell$ distinct Lyapunov
exponents.  Theorem~\ref{t.diagonalize} follows after repeating this procedure at most $d-\ell$ times.
\qed

%\inpu{Linearization of horseshoes}

\section{A linear horseshoe by perturbation}\label{s.linearize}

In this section we prove Theorem~B.

\subsection{Partially hyperbolic horseshoes with essential center bundle}

If $\Lambda$ is a horseshoe, it admits a unique measure of maximal entropy $\mu$.
We refer to~\cite{bowen} for its properties. In particular:
\begin{itemize}
\item The measure $\mu$ may be disintegrated along \emph{every} unstable manifold $W^u(x)$, $x\in \Lambda$ as a (non-finite) measure $\mu^u$, which is well-defined up to a multiplicative constant.
Hence the notion of measurable sets $A,B\subset W^u(x)$ with positive
$\mu^u$-measure is well defined, as is their ratio $\mu^u(A)/\mu^u(B)$.
\item With respect to the disintegration $\mu^u$, the map
$f$ has constant Jacobian along unstable leaves:
for any measurable sets $A,B\subset W^u(f(x))$, the ratios
$\mu^u(A)/\mu^u(B)$ and $\mu^u(f^{-1}(A))/\mu^u(f^{-1}(B))$ are equal.
\item The disintegration $\mu^u$ is invariant under stable holonomy.
If $\rho$ is small enough,
for any $x,y\in \Lambda$ with $y\in W^s(x,\rho)$ the stable holonomy defines
a map $\Pi^s_{x,y}$ from $W^u(x,\rho)$ to $W^u(y)$: the point
$\Pi^s_{x,y}(z)$ is the unique intersection point between $W^s_{loc}(z)$
and $W^u_{loc}(y)$.
Then for any two measurable sets $A,B\subset W^u(x,\rho)$, the ratios 
$\mu^u(A)/\mu^u(B)$ and $\mu^u(\Pi^s_{x,y}(A))/\mu^u(\Pi^s_{x,y}(B))$ are equal.
\end{itemize}

In general, the strong unstable leaves have zero $\mu^u$-measure, as the next proposition makes precise.

\begin{proposition} \label{p=dimension}
Let $\Lambda$ be a horseshoe for a $C^1$-diffeomorphism $f$ with a partially hyperbolic splitting:
$$
T_\Lambda  M = E^{uu} \oplus E^{c} \oplus E^s,
$$
where $E^s$ is the stable bundle and $E^u = E^{uu}\oplus E^{c}$  is the unstable
bundle in the hyperbolic splitting for $f\vert_{\Lambda}$.
Then the following dichotomy holds.
\begin{enumerate}   
\item Either $\mu^u(W^{uu}(x)) = 0$, for every $x\in \Lambda$, where $\mu^u$ is the disintegration of the measure of maximal entropy along $W^{u}(x)$, or
\item $\Lambda\cap W^u(x)\subset W^{uu}(x)$ for every $x\in \Lambda$.\end{enumerate}
\end{proposition}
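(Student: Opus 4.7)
The plan is to prove the dichotomy by showing that if case (1) fails then case (2) must hold. Assume there exists $x_0 \in \Lambda$ with $\mu^u(W^{uu}(x_0) \cap \Lambda) > 0$; the goal is to deduce $\Lambda \cap W^u(x) \subset W^{uu}(x)$ for every $x \in \Lambda$. The setup is as follows: because $W^{uu}$ is a sub-foliation of $W^u$, the measure $\mu^u_x$ restricted to a local slice $W^u(x) \cap R$ (for $R$ a small Markov rectangle of the horseshoe) disintegrates along the $W^{uu}$-slices, yielding a transverse quotient measure $\bar\mu^u_x$ on the space of $W^{uu}$-leaves meeting $R$. The hypothesis is equivalent to saying that $\bar\mu^u_{x_0}$ has a nontrivial atomic part.

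First I would propagate the atomicity along the $f$-orbit of $x_0$: since the foliation $W^{uu}$ is $f$-invariant and $\mu^u$ has constant Jacobian under $f$ (one of the properties of the Margulis-type disintegration listed before the proposition), the quotient measure $\bar\mu^u_{f^n(x_0)}$ carries a nontrivial atomic part for every $n$. Combining this with the topological transitivity of $f|_\Lambda$ and the continuous dependence of $W^{uu}$ on its base point, I would conclude that $\bar\mu^u_x$ carries a nontrivial atomic part for every $x \in \Lambda$.

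Next, I would localize at a periodic point $p \in \Lambda$ of some period $\tau$. The iterate $f^\tau$ is then a hyperbolic expanding self-map of $W^u(p,\rho)$; it descends to a map $\bar f^\tau$ on the quotient by $W^{uu}$ that is expanding in the transverse (center) direction, and $\bar\mu^u_p$ is $\bar f^\tau$-invariant (up to a uniform scalar) and ergodic, as a measurable factor of the ergodic system $(\mu,f)$. An ergodic invariant measure with a nontrivial atomic part must be purely atomic and supported on a single periodic orbit. Ruling out multi-point orbits (see below), $\bar\mu^u_p$ must be a single atom, giving $\supp(\mu^u_p) \subset W^{uu}(p)$; since $\mu^u_p$ has full support on $\Lambda \cap W^u(p)$, this yields $\Lambda \cap W^u(p) \subset W^{uu}(p)$. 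Density of periodic points in $\Lambda$, together with continuity of the $W^{uu}$-foliation and $f$-invariance, then transfers the inclusion to every $x \in \Lambda$.

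The hard part will be showing that the atomic part of $\bar\mu^u_p$ is a \emph{single} atom rather than a finite $\bar f^\tau$-periodic orbit of atoms, and that no continuous part coexists. To rule out a multi-atom orbit, I would use the invariance of $\mu^u$ under stable holonomy together with the self-similarity of the Parry measure of the SFT coding of $\Lambda$; to rule out a coexisting continuous part, I would use ergodicity of $\bar\mu^u_p$ applied to the atomic-continuous decomposition. The delicate point is that stable holonomy does not preserve individual $W^{uu}$-leaves, so one must work with $\mu^u$-masses of small unstable neighborhoods rather than with the leaves themselves, and combine this with the Markov product structure of $\mu$ in order to pin down the support of the atomic part.
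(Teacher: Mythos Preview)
Your proposal attacks the contrapositive: assume some $W^{uu}$-leaf has positive $\mu^u$-mass and deduce case (2). The paper goes the other way: it assumes (2) fails, so there exist $z_1,z_2\in\Lambda$ in the same unstable plaque but on distinct $W^{uu}$-leaves, and shows directly that $\mu^u(W^{uu}_{loc}(x))=0$ for every $x$. Concretely, the paper proves a combinatorial lemma: there is an $n$ such that for every $x\in\Lambda$, some level-$n$ Markov sub-rectangle of $C(x)$ is disjoint from $W^{uu}_{loc}(x)$. Since $\mu^u$ has constant Jacobian along unstable leaves and is stable-holonomy invariant, the $\mu^u$-mass of the union $T_{jn}(x)$ of level-$jn$ rectangles meeting $W^{uu}_{loc}(x)$ decays by a fixed factor $(1-\theta)$ at each step, giving exponential decay and hence zero mass in the limit. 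This is entirely elementary Markov combinatorics plus the two listed properties of $\mu^u$, with no ergodicity or quotient construction needed.

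Your approach has a genuine gap at the ergodicity step. You claim that the quotient measure $\bar\mu^u_p$ on the space of local $W^{uu}$-leaves through $W^u(p)$ is ergodic for the induced map $\bar f^\tau$ ``as a measurable factor of the ergodic system $(\mu,f)$''. But this is not a factor in the measure-theoretic sense: you have first disintegrated $\mu$ along unstable leaves (passing to a conditional on a $\mu$-null set), and only then quotiented by $W^{uu}$. Ergodicity of $(\mu,f)$ says nothing directly about ergodicity of a conditional measure under a return map on a single leaf. One would have to prove ergodicity of $\bar\mu^u_p$ from scratch, e.g.\ by identifying it as the measure of maximal entropy of an auxiliary expanding system on the center quotient; this is nontrivial and, in the $C^1$ setting with no a priori regularity of the $W^{uu}$-holonomy inside $W^u$, not obviously available. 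Without ergodicity you cannot rule out a coexisting continuous part or a non-trivial finite orbit of atoms, which is exactly what you flag as the ``hard part''. The paper's Markov-rectangle argument bypasses all of this: it never quotients by $W^{uu}$ and never invokes ergodicity of any derived system, using only the constant-Jacobian and holonomy-invariance properties of $\mu^u$ together with a pigeonhole on sub-rectangles.
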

In the second case, note that the local stable and strong unstable laminations are jointly integrable
and that the Hausdorff dimension of $\Lambda\cap W^{u}(x)$ is therefore less than or equal to $\dim(E^{uu})$.

When the first case holds, we say that the center bundle $E^{c}$ of
$\Lambda$ is \emph{essential}.
\bigskip

\begin{proof}[Proof of Proposition~\ref{p=dimension}]
Consider a Markov partition $\cC=\{C_0,\dots,C_s\}$ of $\Lambda$
into small compact disjoint rectangles: in particular, there exists $\rho$ such that
the local manifolds $W^u(x,\rho)$ and $W^s(y,\rho)$
intersect at a unique point whenever $x,y$ belong to the same rectangle $C_i$.
We denote by $C(x)$ the rectangle containing the point $x\in \Lambda$.
For $n\geq 1$, we also introduce the iterated Markov partition $\cC^n$, which is the
collection of rectangles of the form $C_{i_0}\cap f^{-1}(C_{i_1})\cap\dots\cap
f^{-(n-1)}(C_{i_{n-1}})$.

\begin{lemma}
If the second condition of the proposition does not hold, then
there exists $n\geq 2$ satisfying the following.
For any $x\in \Lambda$ there exists a sub rectangle $C'\subset C(x)$ in $\cC^n$
such that $W^{uu}_{loc}(x)\cap C'=\emptyset$.
\end{lemma}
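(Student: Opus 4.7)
The plan is to show that the set $V := \{x \in \Lambda : \Lambda \cap W^u_{loc}(x) \not\subset W^{uu}_{loc}(x)\}$ coincides with $\Lambda$, with a uniform positive lower bound $\delta_0 > 0$ on the ``gap'' from $W^{uu}_{loc}(x)$, and then to choose $n$ so large that every $\cC^n$-sub-rectangle has diameter below $\delta_0$. Non-emptiness of $V$ is immediate from the hypothesis: taking $x_0 \in \Lambda$ and $z_0 \in \Lambda \cap W^u(x_0) \setminus W^{uu}(x_0)$, the contraction of $W^u$ under $f^{-1}$ places $f^{-m}(z_0)$ into $W^u_{loc}(f^{-m}(x_0))$ for $m$ large, while the $f$-invariance of $W^{uu}$ keeps $f^{-m}(z_0)$ off $W^{uu}(f^{-m}(x_0))$. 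Openness of $V$ in $\Lambda$ follows from a bracket argument: if $z$ witnesses $x \in V$ and $x' \in \Lambda$ lies close to $x$ in the same Markov rectangle $C(x)$, then $[x',z] = W^u_{loc}(x') \cap W^s_{loc}(z)$ is a $\Lambda$-point of $W^u_{loc}(x')$ whose transverse (central) coordinate is close to that of $z$ by continuity of the stable holonomy; since $W^{uu}_{loc}(x')$ also varies continuously with $x'$, $[x',z]$ remains off $W^{uu}_{loc}(x')$ for $x'$ near enough $x$.

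The crux of the argument is that $V^c$ is closed, fully $f$-invariant (by $f$-invariance of the strong unstable lamination), and moreover $W^u_{loc}$-saturated inside $\Lambda$: for $y \in V^c$ and $y' \in \Lambda \cap W^u_{loc}(y)$, the defining property of $V^c$ forces $y' \in W^{uu}_{loc}(y)$, which gives $W^u_{loc}(y') = W^u_{loc}(y)$ and $W^{uu}_{loc}(y') = W^{uu}_{loc}(y)$, hence $y' \in V^c$. Combining this $W^u_{loc}$-saturation with $f$-invariance shows that $V^c$ is saturated by global unstable leaves of its points. Since $\Lambda$ is a transitive basic set, each global unstable manifold of a point of $\Lambda$ meets $\Lambda$ in a set dense in $\Lambda$; consequently any non-empty closed $W^u$-saturated subset of $\Lambda$ is $\Lambda$ itself. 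Given $V \neq \emptyset$, we conclude $V^c = \emptyset$, that is, $V = \Lambda$.

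Finally, set $\delta(x) := \sup\{d(z, W^{uu}_{loc}(x)) : z \in \Lambda \cap W^u_{loc}(x)\}$. This function is lower semicontinuous (from joint continuity of $d$ and the two foliations together with the local product structure of $\Lambda$) and is positive on all of $\Lambda$; compactness then yields a uniform lower bound $\delta_0 > 0$. By uniform hyperbolicity, one may choose $n$ large enough that every $\cC^n$-sub-rectangle has diameter $< \delta_0$. For each $x \in \Lambda$ pick a witness $z_x \in \Lambda \cap W^u_{loc}(x)$ with $d(z_x, W^{uu}_{loc}(x)) \geq \delta_0$; the $\cC^n$-sub-rectangle $C' \subset C(x)$ containing $z_x$ then has diameter $< \delta_0 \leq d(z_x, W^{uu}_{loc}(x))$, so $C' \cap W^{uu}_{loc}(x) = \emptyset$. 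The main obstacle is the second paragraph: it rests on the subtle interplay between the $W^u_{loc}$-saturation of $V^c$ and the density of global unstable leaves in the transitive horseshoe.
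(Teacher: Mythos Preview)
Your argument has a real gap in the second paragraph. You assert that $V^c$ is \emph{fully} $f$-invariant ``by $f$-invariance of the strong unstable lamination,'' but this conflates global and local objects. What one can check directly is only \emph{forward} invariance of $V^c$: if $\Lambda\cap W^u_{loc}(x)\subset W^{uu}_{loc}(x)$ and $y\in\Lambda\cap W^u_{loc}(f(x))$, then $f^{-1}(y)\in\Lambda\cap f^{-1}(W^u_{loc}(f(x)))\subset\Lambda\cap W^u_{loc}(x)\subset W^{uu}_{loc}(x)$, whence $y\in W^{uu}_{loc}(f(x))$. Backward invariance does not follow: to place $f^{-1}(x)$ in $V^c$ you must control $\Lambda\cap W^u_{loc}(f^{-1}(x))$, but $f(W^u_{loc}(f^{-1}(x)))$ strictly contains $W^u_{loc}(x)$, and points of $\Lambda$ lying in the overhang $W^u_{loc}(f^{-1}(x))\setminus f^{-1}(W^u_{loc}(x))$ are not constrained by the hypothesis on $x$. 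Without backward invariance your passage from $W^u_{loc}$-saturation to global $W^u$-saturation breaks down: given $y\in V^c$ and $y'\in W^u(y)\cap\Lambda$, the natural move is to take $n$ large with $f^{-n}(y')\in W^u_{loc}(f^{-n}(y))$, but you cannot place $f^{-n}(y)$ in $V^c$. Forward invariance alone, combined with $W^u_{loc}$-saturation, does not force a nonempty closed $V^c$ to be all of $\Lambda$.

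The paper avoids this entirely by a direct transport argument. One fixes $z_1,z_2$ on the same unstable plaque with $W^{uu}_{loc}(z_1)\neq W^{uu}_{loc}(z_2)$, refines the Markov partition so that small rectangles $C_1\ni z_1$, $C_2\ni z_2$ satisfy: any point of $C_1\cap W^u_{loc}(z_1)$ and any point of $C_2\cap W^u_{loc}(z_1)$ lie on disjoint $W^{uu}_{loc}$-leaves (and this persists for nearby plaques). Then, for each $x\in\Lambda$, transitivity and local maximality produce a point near $z_1$ with a backward iterate in $W^u_{loc}(x)\cap C(x)$; pulling back $C_1,C_2$ yields two $\cC^{m+p}$-sub-rectangles of $C(x)$ whose traces on $W^u_{loc}(x)$ lie on disjoint strong unstable leaves. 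Hence $W^{uu}_{loc}(x)$ can meet at most one of them, and the other is the desired $C'$. Compactness gives a uniform $n$. The key idea you are missing is to manufacture, at every $x$, a \emph{pair} of sub-rectangles separated by the strong unstable foliation, so that one of them is automatically disjoint from $W^{uu}_{loc}(x)$; this replaces the abstract invariance/saturation dichotomy.

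A smaller point: the $\cC^n$-rectangles $C_{i_0}\cap f^{-1}(C_{i_1})\cap\cdots\cap f^{-(n-1)}(C_{i_{n-1}})$ do not have small ambient diameter (their stable extent is that of $C_{i_0}$); only their trace on each unstable plaque shrinks. Your last step still goes through since $W^{uu}_{loc}(x)\subset W^u_{loc}(x)$, but the claim ``every $\cC^n$-sub-rectangle has diameter $<\delta_0$'' must be read inside $W^u_{loc}(x)$.
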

\begin{proof}
Assume that the second condition of the proposition does not hold:
there exist two points $z_1,z_2$ in the same unstable manifold
such that $W^{uu}(z_1)$ and $W^{uu}(z_2)$ are different.
Taking a negative iterate if necessary, we may assume that
$z_1,z_2$ belong to the same local unstable manifold and the same rectangle $C$.
In particular, there exists $m>1$ and two subrectangles
$C_1,C_2\in \cC^m$ with $z_1\in C_1$ and $z_2\in C_2$
such that the plaque $W^u_{loc}(z_1)$ satisfies the following property:
for any $x_1\in C_1\cap W^{u}_{loc}(z_1)$ and $x_2\in C_2\cap W^{u}_{loc}(z_1)$,
the manifolds $W^{uu}_{loc}(x_1)$ and $W^{uu}_{loc}(x_2)$ are disjoint.
By compactness, the local unstable manifold of any point $z$ close to $z_1$
satisfies the same property.

Now consider any point $x\in \Lambda$.
Since $\Lambda$ is locally maximal and transitive,
there exists a point $z$ close to $z_1$ having a backward iterate
$f^{-p}(z)$ in $W^u_{loc}(x)\cap C(x)$.
It follows that $C(x)$ contains two rectangles $C'_1,C'_2\in \cC^{m+p}$
satisfying: for any $x_1\in C'_1\cap W^{u}_{loc}(x)$ and $x_2\in C'_2\cap W^{u}_{loc}(x)$,
the manifolds $W^{uu}_{loc}(x_1)$ and $W^{uu}_{loc}(x_2)$ are disjoint.
In particular, $W^{uu}_{loc}(x)$ is disjoint from $C'_1$ or $C'_2$
and the lemma holds for the point $x$ and any integer $n$ larger than $m+p$.
Note that it also holds for any point $x'\in \Lambda$ close to $x$.
By compactness we obtain that there exists $n\geq 1$ such that
the lemma holds for all $x\in \Lambda$.
\end{proof}

We now continue with the proof of the proposition. We assume that the second case does not hold
and consider an integer $n$ as in the previous lemma.
For any $x\in \Lambda$ and $m\geq 1$ we denote by $T_m(x)$
the union of the rectangles in $\cC^m$ that are contained in $C(x)$ and meet
$W^{uu}_{loc}(x)$.

Since $\mu$ has full support $\Lambda$ and since its disintegration is invariant under
stable holonomy,
there exists $\tau>0$ such that for any $x\in\Lambda$
and any $C'\in \cC^n$ contained in $C(x)$, we have
$$\mu^u(C'\cap W^u_{loc}(x))>\theta.\mu^u(C(x)\cap W^u_{loc}(x)).$$
It follows that
$$\mu^u(T_n(x)\cap W^u_{loc}(x))<(1-\theta).\mu^u(C(x)\cap W^u_{loc}(x)).$$
Since $\mu^u$ has constant jacobians along the unstable leaves,
we have
$$\mu^u(T_{(j+1).n}(x)\cap W^u_{loc}(x))<(1-\theta).\mu^u(T_{j.n}(x)\cap W^u_{loc}(x)).$$
The measure $\mu^u(W^{uu}_{loc}(x)\cap C(x))$ is the limit of
$\mu^u(T_{j.n}(x)\cap W^u_{loc}(x))$ which is exponentially small.
Thus $\mu^u_{loc}(W^{uu}(x))$ has zero $\mu^u$-measure, for all $x\in \Lambda$.
\end{proof}

\bigskip

\subsection{A reverse doubling property of partially hyperbolic horseshoes}
In this subsection and the following ones, we will consider a diffeomorphism
$f$ and a horseshoe $\Lambda$ satisfying the following hypothesis:
\begin{itemize}
\item[(H)] $f$ is a $C^{1+\alpha}$-diffeomorphism for some $\alpha>0$
and the horseshoe $\Lambda$ has a partially hyperbolic splitting:
$$
T_\Lambda  M = E^{uu} \oplus E^{c} \oplus E^s,
$$
where $E^s$ and $E^u = E^{uu}\oplus E^{c}$ are the stable and unstable
bundles in the hyperbolic splitting for $f\vert_{\Lambda}$ and where $E^{c}$
is one-dimensional and essential.
\end{itemize}
\bigskip

\subsubsection{The reverse doubling property}
We prove a geometric inequality of independent interest
for the disintegration of the  measure  of maximal entropy along unstable leaves.
A measure satisfying this inequality is sometimes said to have
the \emph{reverse doubling property} -- for a discussion of this property,  see~\cite{HMY}.
We will later need a more technical version of this property
(see Lemma~\ref{l.density}) that will be proved analogously.

\begin{theorem}[Reverse doubling property]\label{t.doubling}
For any diffeomorphism $f$ and any horseshoe $\Lambda$
satisfying the property (H), there exist $\rho,\eta>0$ such that
for any $x\in \Lambda$ and $r\in (0,\rho)$,
\begin{equation}\label{e.reversedoubling}
\mu^u\left(W^{u}(x,\eta r)\right)<\frac 1 2 \mu^u\left(W^{u}(x,r)\right).
\end{equation}
\end{theorem}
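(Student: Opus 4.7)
The plan is to derive the reverse doubling inequality from the essentialness of the center bundle established in Proposition~\ref{p=dimension} by combining a uniform non-concentration estimate at a fixed reference scale with a dynamical rescaling argument based on the constant Jacobian property of $\mu^u$. For each $x\in\Lambda$ I will fix adapted coordinates $(u^{uu},u^c)$ on $W^u(x,\rho_0)$ reflecting the subsplitting $E^{uu}\oplus E^c$, where $\rho_0>0$ is a small reference scale. Under hypothesis (H), $E^c$ is one-dimensional and essential, so Proposition~\ref{p=dimension} yields $\mu^u(W^{uu}_{loc}(x))=0$ for every $x\in\Lambda$. My first task will be to upgrade this pointwise statement to a uniform quantitative one. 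Revisiting the argument of Proposition~\ref{p=dimension} (which uses the Markov partition $\cC$, its refinement $\cC^N$, the constant Jacobian of $f$ along unstable leaves, and the stable-holonomy invariance of $\mu^u$), I expect to obtain constants $N\geq 1$, $\eta_0\in(0,1)$, and $\theta\in(0,1)$ such that for every $x\in\Lambda$ there is a sub-rectangle $C'(x)\in\cC^N$ with $C'(x)\subset C(x)$, $\mu^u(C'(x)\cap W^u_{loc}(x))\geq\theta\,\mu^u(C(x)\cap W^u_{loc}(x))$, and whose $c$-distance to $W^{uu}_{loc}(x)$ is at least $\eta_0$. Translated into slabs, this gives the reference-scale estimate
\begin{equation*}
\mu^u\bigl(\{y\in W^u_{loc}(x):|u^c(y)|<\eta_0\}\bigr)\leq (1-\theta)\,\mu^u(W^u_{loc}(x))
\end{equation*}
uniformly in $x\in\Lambda$.

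Second, I would transport this estimate to arbitrary small scales. Let $\gamma_{uu}<\gamma_c<1$ be uniform upper bounds for the contractions of $Df^{-1}$ on $E^{uu}$ and $E^c$ over $\Lambda$ (the strict inequality coming from domination). Given $x\in\Lambda$ and $0<r<\rho\ll\rho_0$, I pick the smallest $n\geq 0$ for which $f^n(W^u(x,r))$ has $c$-extent comparable to $\rho_0$; by domination, its $uu$-extent is then at least $\rho_0$, so $f^n(W^u(x,r))$ covers a reference-scale box centred at $y:=f^n(x)\in\Lambda$. Simultaneously $f^n(W^u(x,\eta r))$ is confined to a $c$-slab of thickness on the order of $\eta\rho_0$ about $W^{uu}_{loc}(y)$. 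The constant Jacobian of $f$ along unstable leaves gives
\begin{equation*}
\frac{\mu^u(W^u(x,\eta r))}{\mu^u(W^u(x,r))}=\frac{\mu^u(f^n W^u(x,\eta r))}{\mu^u(f^n W^u(x,r))},
\end{equation*}
and taking $\eta$ proportional to $\eta_0$ makes the right-hand ratio inherit the $(1-\theta)$ bound of the first step; iterating the slab estimate a bounded number of times, or equivalently passing to a finer Markov refinement $\cC^{kN}$, brings the ratio strictly below $\tfrac12$.

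The main obstacle will be the rescaling step, because $f^n(W^u(x,r))$ is not a round ball but a rectangle that is much longer in the $uu$-direction than in the $c$-direction, so one cannot compare the two sides of the ratio against objects of a common shape without care. The cleanest way to handle this, I believe, is to bypass balls entirely and work inside the Markov framework: write $W^u(x,r)$ as a finite union of iterated Markov sub-rectangles in $\cC^m$ with $m$ chosen according to $r$, control their $\mu^u$-masses via the constant Jacobian along unstable leaves, and apply the first-step non-concentration fibre-by-fibre along $W^{uu}$. Bounded distortion of the unstable Jacobian (using the $C^{1+\alpha}$ assumption in (H)) together with the stable-holonomy invariance of $\mu^u$ will then allow the rectangle-based bound to be converted into the ball-based statement of the theorem, uniformly in $x$ and for all sufficiently small $r$.
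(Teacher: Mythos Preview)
Your reference-scale slab estimate is correct and is exactly what one extracts from the proof of Proposition~\ref{p=dimension}. The forward-iteration transport in your second paragraph, however, does not close: once the $c$-extent of $f^n(W^u(x,r))$ reaches $\rho_0$, domination forces its $uu$-extent to be exponentially larger than $\rho_0$, so $f^n(W^u(x,r))$ is a long strip in the global unstable manifold, not contained in any single local plaque where your slab inequality applies. You cannot bound $\mu^u(f^nW^u(x,\eta r))/\mu^u(f^nW^u(x,r))$ by $\mu^u(\text{slab in box})/\mu^u(\text{box})$, because the numerator lives in the whole strip while the denominator only captures one box. You recognise this and retreat to iterated Markov rectangles, which is the correct move, but the decisive point you leave implicit is \emph{why} the pulled-back slab stays thin. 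One must take, for each $\zeta\in B(x,\eta r)\cap\Lambda$, not a cylinder of a single fixed depth $m$ but the \emph{maximal} cylinder $\Delta_\zeta=\Delta_{\zeta,k}\subset B(x,r)$, and use bounded distortion along the one-dimensional center (not merely of the full unstable Jacobian) to show that its $c$-length is at least $\kappa r$; only then does $f^k(B(x,\eta r)\cap\Delta_\zeta)$ land in a slab of proportional width $O(\eta)$ around $W^{uu}_{loc}(f^k\zeta)$, where your first-step estimate applies.

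The paper executes this with a sharper device that you are missing. Exploiting that $E^c$ is one-dimensional and essential, it builds (Proposition~\ref{p.split}) a \emph{split Markov partition}: each rectangle $R_i$ decomposes as $R_{i,-}\sqcup R_{i,+}$, the two pieces separated inside every unstable plaque by a strong-unstable leaf and carrying \emph{equal} $\mu^u$-mass. For each $\zeta$ as above, the maximal cylinder $\Delta_\zeta$ then splits into halves $\Delta_{\zeta,\pm}$ of equal mass whose $c$-separation, by bounded distortion along the central curve $f^{-k}(\gamma)$, is at least $C^{-2}(L/\hat L)\kappa r$; choosing $\eta$ below this constant, only one half can meet $B(x,\eta r)$, which yields the factor $\tfrac12$ directly for each $\Delta_\zeta$ and hence for the sum. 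Your $(1-\theta)$-plus-iteration route reaches the same conclusion once the maximal-cylinder $c$-length bound is supplied, but it is the split partition---rather than a generic sub-rectangle at positive $c$-distance---that lets the paper obtain $\tfrac12$ in one stroke and that also feeds into the finer estimates (Lemma~\ref{l.density}) needed later.
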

\bigskip

We will at times be interested in replacing $\Lambda$ by subhorseshoes that have a large period; i.e.,
that have a partition $K\cup f(K)\cup \dots f^{N-1}(K)$ into disjoint compact sets, for some
large integer $N$.  The next result states that these subhorseshoes can be extracted to have large entropy and a uniform reverse doubling property.

\begin{theorem}[Reverse doubling property and large period]\label{t.doubling-uniform}
For any diffeomorphism $f$ and any horseshoe $\Lambda$
satisfying  (H)  and for any $\varepsilon>0$,
there exist $\eta,\rho>0$ with the following property.

For any $N_0\geq 1$ there exist $N\geq N_0$ and a subhorseshoe $\Lambda_N\subset \Lambda$ that:
\begin{itemize}
\item admits a partition into compact subsets $\Lambda_N=K\cup\dots f^{N-1}(K)$,
\item has entropy larger than $h_{top}(\Lambda,f)-\varepsilon$,
\item for any $x\in \Lambda_N$ and $r\in (0,\rho)$, we have:
\begin{equation*}
\mu^u\left(W^{u}(x,\eta r)\right)<\frac 1 2 \mu^u\left(W^{u}(x,r)\right).
\end{equation*}
\end{itemize}
\end{theorem}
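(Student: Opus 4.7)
The strategy is to apply Theorem~\ref{t.doubling} directly to $\Lambda$ to obtain baseline reverse doubling constants, and then to extract $\Lambda_N$ from the symbolic coding of $\Lambda$ in a way that preserves both high entropy and the reverse doubling property with uniform constants. Let $\rho_0,\eta_0$ be produced by Theorem~\ref{t.doubling}; the final $\rho,\eta$ will be obtained from $\rho_0,\eta_0$ by shrinking by a factor depending only on the partial hyperbolicity data.

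For the construction of $\Lambda_N$, I would fix a Markov partition of $\Lambda$, pass to the conjugate subshift of finite type on an alphabet $\cA$, and choose a distinguished symbol $a\in\cA$. Let $\cW_N$ denote the set of admissible first-return words at $a$ of length $N$, i.e., words $w_0 w_1\cdots w_{N-1}$ with $w_0=a$, $w_i\neq a$ for $1\leq i\leq N-1$, and transition $w_{N-1}\to a$ admissible. Let $\Sigma_N$ be the subshift of finite type given by bi-infinite concatenations of words from $\cW_N$, and $\Lambda_N\subset \Lambda$ its realization. Setting $K$ to be the realization of the cylinder $[a]\cap\Sigma_N$ yields the desired partition $\Lambda_N=K\cup f(K)\cup\cdots\cup f^{N-1}(K)$. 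Standard first-return asymptotics for mixing subshifts of finite type give $N^{-1}\log\#\cW_N\to h_{top}(\Lambda,f)$, so $h_{top}(\Lambda_N,f)\geq h_{top}(\Lambda,f)-\varepsilon$ for all $N$ sufficiently large.

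The main step is to transfer reverse doubling from $\mu$ to the measure of maximal entropy $\mu_N$ of $\Lambda_N$, with constants independent of $N$. The subhorseshoe $\Lambda_N$ inherits the partially hyperbolic splitting $E^{uu}\oplus E^c\oplus E^s$ from $\Lambda$ with the same geometric constants, so hypothesis (H) is preserved. The disintegration $\mu_N^u$ along unstable manifolds has constant Jacobian under $f$ and is stable-holonomy invariant, as for any measure of maximal entropy on a horseshoe. For essentiality of $E^c$ for $\mu_N$ (Proposition~\ref{p=dimension}), one can refine the Markov partition so as to capture, inside $\Lambda_N$, a witness pair $z_1,z_2$ on a common unstable manifold with distinct strong unstable manifolds; such a pair exists in $\Lambda$ by essentiality and persists through the refinement. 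Rerunning the argument of Theorem~\ref{t.doubling} for $(\Lambda_N,\mu_N)$ with these ingredients produces reverse doubling constants depending only on $(f,\Lambda,\varepsilon)$.

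The main obstacle is the uniform quantitative control of essentiality of $E^c$ for the measures $\mu_N$, which in turn governs the uniformity of $\eta$ in $N$. A priori, even when the geometric witness pair survives, the $\mu_N^u$-mass lying off a given strong unstable plaque could shrink as $N$ grows, since $\mu_N$ differs substantially from $\mu|_{\Lambda_N}$. Overcoming this requires a quantitative version of Proposition~\ref{p=dimension}: a uniform lower bound on the proportion of an unstable ball's $\mu_N^u$-mass lying off any given strong unstable plaque, depending only on the ambient data and on the entropy gap. Such a bound should follow from uniform mixing properties of $\Sigma_N$—controlled by the entropy lower bound on $\Lambda_N$—combined with a careful tracking of constants in the proof of Theorem~\ref{t.doubling}.
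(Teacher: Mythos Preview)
Your proposal has a genuine gap, which you partly acknowledge in your final paragraph: the reverse doubling constants obtained from Theorem~\ref{t.doubling} for the measure $\mu$ on $\Lambda$ do not transfer to the measure of maximal entropy $\mu_N$ on $\Lambda_N$, and your suggested route through ``uniform mixing properties of $\Sigma_N$'' is not a proof. The measure $\mu_N$ is not the restriction of $\mu$ to $\Lambda_N$; it is a new Gibbs measure on a thinner set, and the proportion of $\mu_N^u$-mass lying off a given strong unstable plaque could in principle degenerate as $N\to\infty$. Nothing in your construction of $\Lambda_N$ via first-return words prevents this.

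The paper's approach is structurally different and sidesteps this difficulty entirely. Rather than first proving reverse doubling for $(\Lambda,\mu)$ and then trying to transfer it, the paper builds a \emph{split Markov partition} (Proposition~\ref{p.split}): each rectangle $R_i$ is decomposed as $R_{i,-}\cup R_{i,+}$, the two halves being separated inside each unstable plaque by a strong unstable leaf, and the subhorseshoes $\Lambda_N$ are extracted so that $\mu_N^u$ assigns \emph{equal} weight to $R_{i,+}$ and $R_{i,-}$ in every unstable plaque. This symmetry is engineered into the symbolic construction (the forward itinerary sets from $R_-$ and $R_+$ agree after a fixed time), and it holds for every $N$. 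Given the split structure, the reverse doubling argument is direct: for any $x\in\Lambda_N$ and small $r$, each maximal Markov domain $\Delta_\zeta\subset B(x,r)$ splits into $\Delta_{\zeta,-}\cup\Delta_{\zeta,+}$ of equal $\mu_N^u$-measure, and a distortion estimate (using $f\in C^{1+\alpha}$ and the one-dimensionality of $E^c$) shows that if $\eta$ is small enough then $B(x,\eta r)$ can meet at most one of the two halves. Summing gives the factor $\tfrac12$ with constants depending only on the geometry of the split partition, hence uniform in $N$.

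In short, the missing idea in your proposal is precisely the split Markov partition with its built-in measure symmetry; without it, uniformity in $N$ remains unproved.
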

\bigskip

\subsubsection{Split Markov partitions}
The reverse doubling property will be obtained from a special construction of Markov partitions
satisfying a geometrical property that we introduce now.
This construction uses strongly that the unstable bundle
splits as a sum $E^u=E^{uu}\oplus E^{c}$ with $\dim(E^{c})=1$.

Let us fix $\rho_0>0$ small. Since $\Lambda$ has a local product structure,
for any $x,y\in \Lambda$ close, the intersection
$[x,y]:=W^u(x,\rho_0)\cap W^s(y,\rho_0)$ is transverse,  and consists of a single point that belongs to $\Lambda$.
A \emph{rectangle} of $\Lambda$ is a closed and open subset with diameter smaller than $\rho_0$
that is saturated by the local product: for any $x,y$ in a rectangle, $[x,y]$ also belongs to the rectangle.
\medskip

\begin{definition}
A \emph{split rectangle} of $\Lambda$ is a rectangle $R$ that can be expressed as the disjoint union of two subrectangles
$R_-,R_+$ such that (see Figure~\ref{f.split}):
\begin{itemize}
\item the decomposition is saturated by local stable manifolds:
any two points $x,y\in R$ with $y\in W^s(x,\rho_0)$ belong to a same subrectangle
$R_{-}$ or $R_{+}$; and
\item inside unstable leaves, the subrectangles $R^-,R^+$ are bounded by strong unstable leaves: for any $x\in R$, there exists $z\in W^u(x,\rho_0)$ such that
$R^-\cap W^u(x,\rho_0)$ and $R^+\cap W^u(x,\rho_0)$ are contained in two different connected
components of $W^{u}(x,\rho_0)\setminus W^{uu}(z,2\rho_0)$.
\end{itemize}
A \emph{split Markov partition} of $\Lambda$ is a collection of pairwise disjoint split rectangles
$\{R_i=R_{i,-}\cup R_{i,+},\; i=1,\dots,m\}$ such that both $\{R_i\}$
and $\{R_{i,-}\}\cup \{R_{i,+}\}$ are Markov partitions.
\end{definition}

\begin{figure}
\includegraphics[scale=0.32]{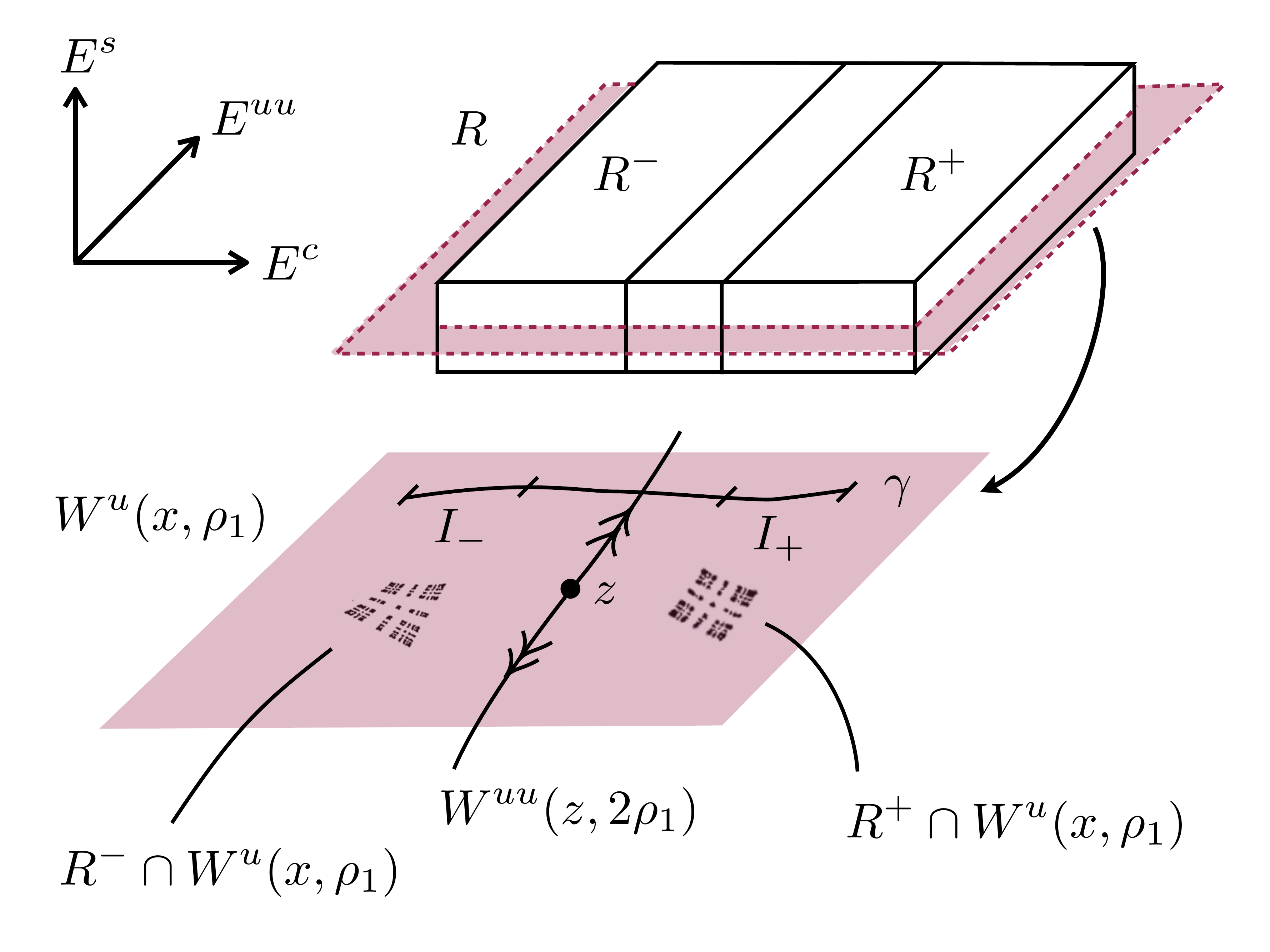}
\caption{\label{f.split} A split rectangle and its intersection with an unstable plaque.}
\end{figure}

The next result says that one can extract subhorseshoes with large period and with a split Markov partition.

\begin{proposition}\label{p.split}
For any diffeomorphism $f$ and any horseshoe $\Lambda$
satisfying (H), for any $\rho',\varepsilon>0$,
there exists a horseshoe $\Lambda'\subset \Lambda$ endowed with a split Markov partition $\{R_{1,\pm},\dots,R_{m,\pm}\}$ whose rectangles have diameter smaller than $\rho'$
and such that the following property holds.

For every $N_0\geq 0$, there exist $N\geq N_0$
and a horseshoe $\Lambda_N\subset \Lambda'$ such that:
\begin{itemize}
\item $\Lambda_N$ admits a decomposition into compact subsets
$$\Lambda_N=K\cup f(K)\cup\dots\cup f^{N-1}(K),$$
\item the topological entropy $h_{top}(\Lambda_N,f)$ is larger than $h_{top}(\Lambda,f)-\varepsilon$,
\item for each $i$ and $x\in \Lambda_N\cap R_i$, the disintegration $\mu^u$ of the measure of maximal entropy
of $\Lambda_N$ along the unstable leaves gives the same weight to
$W^{u}(x,\rho_0)\cap R_{i,+}$ and $W^{u}(x,\rho_0)\cap R_{i,-}$.
\end{itemize}
\end{proposition}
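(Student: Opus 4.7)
My plan is to start from a standard Markov partition $\{C_1,\dots,C_s\}$ of $\Lambda$ with diameter $<\rho'$. For each $C_i$, pick a point $z_i\in C_i$ and define
\[
\Sigma_i\;:=\;\bigcup_{y\in W^s_{loc}(z_i)\cap C_i}W^{uu}_{loc}(y),
\]
the saturation of the local strong-unstable leaf through $z_i$ by the local stable foliation inside $C_i$. By construction $\Sigma_i$ is saturated by local stable manifolds, and by Proposition~\ref{p=dimension} and the stable-holonomy invariance of $\mu^u$ every strong-unstable leaf has zero $\mu^u$-mass on every unstable plaque of $\Lambda$. A generic choice of each $z_i$ (placed in the unstable middle of $C_i$) therefore splits $C_i$ into two subrectangles $R_{i,-}$ and $R_{i,+}$, saturated by local stable manifolds, of positive $\mu^u$-mass, and separated inside each unstable plaque by an arc of strong-unstable leaf -- matching the definition of a split rectangle. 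The same generic choice arranges the forward/backward orbits of the $\Sigma_j$'s to be Markov-compatible with $\{R_{i,\pm}\}$ after passage to a suitable sub-horseshoe $\Lambda'\subset\Lambda$ of the same topological entropy.

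\textbf{Stage 2: cyclic sub-horseshoe with $\tau$-symmetric Parry measure.} Let $\Sigma'$ be the SFT on the split alphabet $\{(i,\epsilon)\}$ coding $\Lambda'$, and let $\tau:(i,+)\leftrightarrow(i,-)$ be the involution on symbols. The Parry measure of $\Sigma'$ is generally not $\tau$-invariant. For a given $N_0$, my goal is to extract a cyclic sub-SFT $\Sigma_N\subset\Sigma'$ of period $N\geq N_0$, entropy $>h_{top}(\Lambda,f)-\varepsilon$, and $\tau$-invariant. The mechanism: fix an irreducible sub-SFT of the un-split coding on $\{1,\dots,s\}$ of entropy close to $h_{top}(\Lambda,f)$, extract from it a cyclic sub-SFT $\Sigma^{(0)}_N$ of period $N$ and entropy $>h_{top}(\Lambda,f)-\varepsilon$ (possible for all large $N$ by density of periods in an irreducible SFT), then lift to the split alphabet by declaring \emph{every} sign sequence at $\Sigma^{(0)}_N$-positions admissible. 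This lift succeeds provided the cutting surfaces $\Sigma_j$ have been placed so that $f(R_{i,\epsilon})\cap R_j$ spans across $\Sigma_j$ for every admissible transition $(i,j)$ and every $\epsilon$ -- equivalently, so that the split transition matrix takes the tensor form $M\otimes J$ with $J=\bigl(\begin{smallmatrix}1&1\\1&1\end{smallmatrix}\bigr)$. The resulting $\Sigma_N$ is tautologically $\tau$-invariant, the corresponding $\Lambda_N$ has cyclic decomposition $K\cup f(K)\cup\cdots\cup f^{N-1}(K)$, and the $\tau$-invariance of its Parry measure, combined with the constant-Jacobian and stable-holonomy invariance of $\mu^u$ recalled at the start of the section, yields $\mu^u(R_{i,+}\cap W^u(x,\rho_0))=\mu^u(R_{i,-}\cap W^u(x,\rho_0))$ on every unstable plaque.

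\textbf{Main obstacle.} The hardest step is arranging the geometric factorization that makes the ``all sign sequences admissible'' lift of $\Sigma^{(0)}_N$ work: ensuring $f(R_{i,\epsilon})\cap R_j$ spans $\Sigma_j$ is not automatic, because the $\epsilon$-half of $f(R_i)$ is itself only a partial strip inside $R_j$ and may end up entirely on one side of $\Sigma_j$. I expect that overcoming this will require first passing to a high iterate of $f$ and refining the initial Markov partition so that the images $f^n(R_i)$ are strongly stretched in the unstable direction relative to the scale of the $\Sigma_j$, and then placing the cutting curves deep inside their rectangles. Without this geometric coordination between Stages 1 and 2 the split transition matrix is asymmetric, the Parry measure of $\Sigma'$ is asymmetric under $\tau$, and any naive large-deviations argument counting ``balanced'' $N$-loops loses positive entropy; so the subtle point of the whole proposition is really the joint design of the cutting surfaces and the symbolic extraction.
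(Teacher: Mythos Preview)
Your target—a $\tau$-symmetric subshift whose Parry measure then gives equal mass to $R_{i,+}$ and $R_{i,-}$ on every unstable plaque—is the right one, and the deduction of the equal-mass property from that symmetry via the constant-Jacobian property of $\mu^u$ is correct. But your route to the symmetry is harder than necessary, and the obstacle you flag in Stage~1 is real: cutting every $C_i$ by a generic strong-unstable hypersurface and then trying to force the $M\otimes J$ transition structure is exactly the difficulty, and your suggested fix (high iterate, refined partition, deep placement of cuts) is plausible but not carried out.

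The paper avoids the obstacle entirely by building the split \emph{symbolically at a single rectangle} rather than geometrically at all of them. Fix a Markov partition $C_0,\dots,C_s$; by essentiality of $E^c$ there exist $x^-,x^+\in C_0$ on the same local unstable plaque but on distinct strong-unstable leaves. Their $[-\ell,\ell]$-itineraries share the backward segment $(i_{-\ell},\dots,i_{-1},0)$ and differ in the forward segments $(i_1^\pm,\dots,i_\ell^\pm)$; the key device is to adjust $x^\pm$ and $\ell$ so that the terminal symbols agree, $i_\ell^-=i_\ell^+$. The cylinders $R_\pm$ determined by these itineraries then form a split rectangle $R=R_-\cup R_+$, are automatically Markov (being cylinders), and—because both branches land in the same symbol at time $\ell$—admit \emph{identical} sets of forward continuations. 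One then defines $\Lambda'$ by concatenating length-$L$ blocks $(i_1^\pm,\dots,i_\ell^\pm,w,i_{-\ell},\dots,i_{-1},0)$ with $w$ an admissible word in $\{1,\dots,s\}$; the $\pm$ choice is free and independent block-by-block, so the $\tau$-symmetry is present by construction. The split partition on all of $\Lambda'$ is obtained by propagating $R_\pm$ along the dynamics, and the periodic subhorseshoe $\Lambda_N$ is extracted by forcing a visit to one fixed rectangle exactly every $N$ steps.

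In short: you try to impose the split everywhere and then recover the symmetry; the paper imposes the split once with the symmetry built in via the matched-tail condition $i_\ell^-=i_\ell^+$ and propagates it. Your diagnosis of the main obstacle is accurate; the matched-tail trick is precisely what dissolves it.
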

\begin{proof}
Consider a Markov partition $C_0,\dots,C_s$ of $\Lambda$ into disjoint compact rectangles
with diameter smaller than $\rho'$.
We can furthermore require that
the maximal invariant set $A$ in the smaller collection
$C_1\cup \dots\cup C_s$ has entropy larger than $h_{top}(\Lambda,f)-\varepsilon/4$.
A word $i_1,\dots,i_n$ in $\{0,\dots,s\}^n$ is \emph{admissible} if
$f^{n-1}(C_{i_1})\cap f^{n-2}(C_{i_2})\cap\dots\cap C_{i_n}\neq \emptyset$.
\medskip

 Fix some point $x_0\in C_0$.
Since $E^{c}$ is essential, there exist $x^-,x^+\in C_0\cap W^u(x_0,\rho_0)$
such that $W^{uu}(x^\pm,2\rho_0)$ are disjoint.
For $\ell\geq 1$ let
$$(i_{-\ell},\dots, i_{-1},0, i_{1}^-,\dots,i_{\ell}^-),\;
(i_{-\ell},\dots, i_{-1},0, i_{1}^+,\dots,i_{\ell}^+)\; \in \{0,\ldots,s\}^{2\ell+1}$$
denote the itinerary of $f^{-\ell}(x^+),\dots,f^{\ell}(x^+)$
and $f^{-\ell}(x^-),\dots,f^{\ell}(x^-)$ in the partition $C_0,\dots,C_s$.
If $\ell$ is large enough, the rectangles
$$R_{-}=f^{\ell}(C_{i_{-\ell}})\cap\dots\cap C_0\cap f^{-1}(C_{i_1^-})\cap f^{-\ell}(C_{i_\ell^-}),$$
$$R_{+}=f^{\ell}(C_{i_{-\ell}})\cap\dots\cap C_0\cap f^{-1}(C_{i_1^+})\cap f^{-\ell}(C_{i_\ell^+}),$$
are small neighborhoods of $x^-,x^+$ in $\Lambda$.
The union $R=R_-\cup R_+$ is a split rectangle.
Note that one can modify $x^-,x^+$ and take $\ell$ so that $i_\ell^-=i_\ell^+$.
\medskip

We then consider some integer $L$ large and the admissible words of length $L$
of the form $i_1^-,\dots,i_\ell^-,w,i_{-\ell},\dots,i_{-1},0$ or $i_1^+,\dots,i_\ell^+,w,i_{-\ell},\dots,i_{-1},0$
with admissible words $w$ in $\{1,\dots,s\}$ of length $L-2\ell-1$.
The bi-infinite words obtained by concatenation of these words define the horseshoe $\Lambda'$.
There exists $L$ arbitrarily large such that the entropy of $\Lambda'$ is arbitrarily close to the entropy of $\Lambda$, and hence is larger than $h_{top}(\Lambda,f)-\varepsilon/3$.

If $L$ has been chosen large enough, we note that $R\cap \Lambda'$ is disjoint from its $L-1$
first iterates. Hence, any segment of orbit of $\Lambda'$ of length $L$ meets $R$ at one point exactly.
One can thus define a Markov partition by taking rectangles of the form
$$f^{-n}(R\cap \Lambda')\cap f^{-n+1}(C_{j_1})\cap \dots \cap f^{-1}(C_{j_{n-1}})\cap C_{j_n},$$
with $n$ varying between $0$ and $L$.
These rectangles are compact, disjoint and naturally split by $f^{-n}(R_{\pm})$.
We thus get a collection of disjoint split rectangles $R_{1,\pm},\dots,R_{m,\pm}$ of $\Lambda'$.
Both partitions $\{R_i\}$ and $\{R_{i,-}\}\cup \{R_{i,+}\}$ are Markov by construction.

Since $i_\ell^-=i_\ell^+$, the set of  itineraries of the forward orbits from
$R_-$ and $R_+$ are equal after time $\ell$.
This implies that the disintegration of the maximal entropy measure of $\Lambda'$
gives the same weight to $W^{u}(x,\rho_0)\cap R_{+}$ and $W^{u}(x,\rho_0)\cap R_{-}$
for each $x\in R\cap \Lambda'$. Similarly, it gives the same weight to $W^{u}(x,\rho_0)\cap R_{i,+}$ and $W^{u}(x,\rho_0)\cap R_{,i-}$
for each $x\in R_i$.
\medskip

Fix a Markov rectangle $R'$ that meets $R$ after $L-\ell$ iterates:
if $L$ is large enough, the set of points of $\Lambda'$ whose orbit does not intersect $R'$
has entropy larger than $h_{top}(\Lambda,f)-\varepsilon/2$.
For $N$ large (and a multiple of $L$), 
the subhorseshoe $\Lambda_N$ corresponds to itineraries
that visit $R'$ exactly every $N$-iterates. Since $N$ is large,
the entropy of $\Lambda_N$ is larger than $h_{top}(\Lambda,f)-\varepsilon$.
This horseshoe is $N$-periodic: setting $K:=R'\cap \Lambda_N$,
it can be decomposed as
$$\Lambda_N=K\cup f(K)\cup\dots\cup f^{N-1}(K).$$
Again by symmetry of the construction the disintegration of the maximal entropy measure on $\Lambda'$
gives the same weight to $W^{u}(x,\rho')\cap R_{i,+}$ and $W^{u}(x,\rho')\cap R_{i,-}$
for each $i$ and each $x\in R_i\cap \Lambda_N$.
This gives the proposition.
\end{proof}
\bigskip

\subsubsection{Proof of the reverse doubling property}
We now prove Theorem~\ref{t.doubling-uniform} (the proof of Theorem~\ref{t.doubling} is similar).
\begin{proof}[Proof of Theorem~\ref{t.doubling-uniform}]
Since there is a dominated splitting between $E^{uu}$ and $E^{c}$,
there exists a thin cone field $\cC^{c}$ defined on a small neighborhood of $\Lambda$
and containing the bundle $E^{c}$ such that for any $x\in \Lambda$, and any small $C^1$-curve $\gamma$
in $W^{u}(x)$ containing $x$ and tangent to $\cC^{c}$, the backward iterates
$f^{-n}(\gamma)$ are still tangent to $\cC^{c}$. By a classical distortion argument,
since $\gamma$ is uniformly contracted
by backward iterations, and since $f$ is $C^{1+\alpha}$, there exists some uniform constant $C>0$ such that
for any unit vectors $v,v'$ tangent to $\gamma$ at  points $y,y'\in \gamma$ we have
$$C^{-1}\|Df^{-n}(y')(v')\|\leq\|Df^{-n}(y)(v)\|\leq C \|Df^{-n}(y')(v')\|.$$
Let's apply  Proposition~\ref{p.split} to $f$, $\Lambda$, $\varepsilon$
and a small constant $\rho'$:
we obtain a split Markov partition $\{R_{i,\pm}\}$ of $\Lambda'\subset \Lambda$.
For each $R_i$ and each $x\in R_i$, there exists a small curve $\gamma\subset W^u(x,\rho_0)$
tangent to $\cC^{c}$ and two disjoint compact intervals $I_+,I_-\subset \gamma$ such that
for any $y\in R_i\cap W^u(x,\rho_0)$ the strong manifold $W^{uu}(y,\rho_0)$
intersects $\gamma$ at a point of $I_+$ if $y\in R_{i,+}$ and at a point of $I_-$ otherwise.
(See Figure~\ref{f.split}.) If $\rho'$ is small, the curve $\gamma$ is also small.

There exist two constants $\widehat L,L>0$, which do not depend on $x\in \Lambda'$
or on $\gamma\in W^u(x,\rho_0)$, such that:
\begin{itemize}
\item the length of $\gamma$ is smaller than $\widehat L$,
\item the distance between $I_-$ and $I_+$ in $\gamma$ is larger than $L$.
\end{itemize}
Choose constants $\rho, \eta>0$ small.
For any a horseshoe $\Lambda_N$ given by Proposition~\ref{p.split},
we have to prove the reverse doubling property for these constants.

We fix $x\in \Lambda_N$ and $r\in (0,\rho)$.
For each $\zeta\in B(x,\eta r)\cap W^u(x,\rho_0)\cap \Lambda'$
we consider the sequence $(i_{k})$ such that $f^{k}(\zeta)\in R_{i_{k}}$
for each $k\geq 0$ and then define the domain
$$\Delta_{\zeta,k}:=
f^{-k}(R_{i_k})\cap f^{-k+1}(R_{i_{k-1}})\cap\dots\cap R_{i_0}\cap W^u(x,\rho_0);$$
it splits in two pieces $\Delta_{\zeta,k,\pm}:=\Delta_{\zeta,k}\cap f^{-k}(R_{i_k,\pm})$.
We also consider $\Delta_\zeta$, the largest domain
$\Delta_{\zeta,k}$ contained in $B(x,r)$, and $\Delta_{\zeta,\pm}$,
the corresponding domains $\Delta_{\zeta,k,\pm}$.
Note that if $\rho$ has been chosen small, then $k$ is large.

Next choose $\gamma, I_-,I_+$ associated to $R_{i_k}$ and $W^{u}(f^k(\zeta),\rho_0)$ as above.
From the domination between $E^{uu}$ and $E^{c}$,
the domains $\Delta_{\zeta,k}$ and $\Delta_{\zeta,k,\pm}$ are contained in the unions
\begin{equation}\label{e.thickedcurve}
\Delta_{\zeta,k}\subset \bigcup_{z\in \gamma}f^{-k}(W^{uu}(z,\rho_0)),\quad
\Delta_{\zeta,k,\pm}\subset \bigcup_{z\in I_\pm}f^{-k}(W^{uu}(z,\rho_0)).
\end{equation}
The diameter of the strong unstable disk $f^{-k}(W^{uu}(z,\rho_0))$
is much smaller than the length of the curves $f^{-k}(\gamma)$, $f^{-k}(I_\pm)$.
Hence the domains $\Delta_{\zeta,k}$, $\Delta_{\zeta,k,\pm}$ are
strips that are very thin in the strong unstable direction.

From the distortion estimate,
the distance between $f^{-k}(I^-)$ and $f^{-k}(I^+)$
is larger than 
$$C^{-2}\frac L {\widehat L} \length(f^{-k}(\gamma)).$$
The holonomy along the strong-unstable foliation
between central curves inside the unstable plaques
is uniformly Lipschitz.
As a consequence there exists a uniform constant $\kappa>0$ such that
if $\gamma_k,\gamma_{k+1}$ are central curves
associated to the domains $\Delta_{\zeta,k}$ and $\Delta_{\zeta,k+1}$, then
we have
\begin{equation}\label{e.length}
\length(f^{-(k+1)}(\gamma_{k+1}))\geq (2\kappa)\length(f^{-k}(\gamma_{k})).
\end{equation}
By maximality of $\Delta_\zeta\subset B(x,r)$, the length of the associated curve
$f^{-k}(\gamma)$ is thus larger than $\kappa r$.
It follows that the distance between $f^{-k}(I^-)$ and $f^{-k}(I^+)$
is larger than $C^{-2} \kappa r L/\widehat L$.

If $\eta$ is chosen smaller
than $C^{-2}\kappa L/\widehat L$, then only one domain
$\Delta_{\zeta,-}$ or $\Delta_{\zeta,+}$ can intersect $B(x,\eta r)$.
Since $f$ has constant Jacobian along the unstable leaves for the measure $\mu^u$,
both preimages have the same $\mu^u$-measure. This proves that
$$\mu^u(\Delta_\zeta\cap B(x,\eta r))\leq \frac 1 2 \mu^u(\Delta_\zeta).$$
By construction the domains $\Delta_\zeta$ for $\zeta\in B(x,\eta r)\cap W^u(x,\rho_0)\cap \Lambda'$
are disjoint or equal. We thus obtain a finite family $Y$
such that the domain $\Delta_\zeta$, $\zeta\in Y$ are pairwise disjoint, and
their union contains $B(x,\eta r)\cap W^u(x,\eta r)\cap \Lambda'$
and is contained in $B(x,r)$. This proves that
\begin{equation*}
\begin{split}
\mu^u(B(x,\eta r)\cap W^u(x,\rho_0))&= \sum_{\zeta\in Y} \mu^u(\Delta_\zeta\cap B(x,\eta r))\\
&\leq \sum_{\zeta\in Y} \frac 1 2 \mu^u(\Delta_\zeta)\leq \frac 1 2 \mu^u(B(x, r)\cap W^u(x,\rho_0)).
\end{split}
\end{equation*}
This gives the desired estimate.
\end{proof}

\bigskip

\subsection{Extraction of sparse horseshoes and proof of Theorem~B }
We first prove the existence of a subhorseshoe with large entropy
and nice geometry.

\begin{theorem}[Sparse subhorseshoe]\label{t=ballselection}
For any diffeomorphism $f$, any horseshoe $\Lambda$
satisfying (H), and any $\varepsilon>0$, there exist $\chi, \lambda \in (0,1)$
with the following property.

For any $\rho>0$ there exist compact subsets $X_1,\ldots, X_n$
such that
\begin{itemize}
\item each $X_i$ has diameter in $[\lambda\rho,\rho]$ and meet $\Lambda$;
\item the neighborhoods $\widehat X_i$ of size  $\chi.\diam(X_i)$ of the $X_i$
are pairwise disjoint and homeomorphic to balls;
\item the entropy of the restriction of $f$ to the maximal invariant set in
$\bigcup_{i=1}^n X_i$ is larger than $h_{top}(\Lambda,f)-\varepsilon$.
\end{itemize}
\end{theorem}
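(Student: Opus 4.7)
The plan is to extract a sparse subhorseshoe by a greedy selection in a fine Markov partition, using the reverse doubling property (Theorem~\ref{t.doubling-uniform}) to bound the entropy lost during the selection.

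First, I apply Theorem~\ref{t.doubling-uniform} with parameter $\varepsilon/2$ (and $N_0=1$) to obtain constants $\eta\in(0,1)$, $\rho_1>0$ and a subhorseshoe $\Lambda'\subset\Lambda$ with $h_{top}(\Lambda',f)>h_{top}(\Lambda,f)-\varepsilon/2$, whose measure of maximal entropy $\mu$ satisfies the reverse doubling $\mu^u(W^u(x,\eta r))<\tfrac12\mu^u(W^u(x,r))$ for all $x\in\Lambda'$ and $r\in(0,\rho_1)$. Iterating gives $\mu^u(W^u(x,\eta^k r))<2^{-k}\mu^u(W^u(x,r))$ for every $k\geq 1$. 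I then fix $k$ large enough that a $\mu$-measure loss of order $2^{-k}$ produces an entropy loss below $\varepsilon/2$, and set $\chi:=\eta^k$. The parameter $\lambda\in(0,1)$ will be fixed by bounded distortion of $f$ along unstable leaves of $\Lambda'$.

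Next, using bounded distortion of the unstable expansion, I refine the Markov partition of $\Lambda'$ into a finite collection $\{R_\alpha\}$ of pairwise disjoint dynamical Markov sub-rectangles covering $\Lambda'$, each of diameter in $[\lambda\rho,\rho]$. A greedy algorithm then selects $X_1,\dots,X_n$: iterate through the $R_\alpha$ in arbitrary order, including $R_\alpha$ as a new $X_i$ exactly when its $\chi\rho$-neighborhood $\widehat R_\alpha$ is disjoint from all previously selected $\widehat X_j$. For $\rho$ smaller than the injectivity radius of $M$, each $\widehat X_i$ is contained in a small metric ball and is hence homeomorphic to a ball; by construction the $\widehat X_i$ are pairwise disjoint and each $X_i$ meets $\Lambda$.

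The unselected $R_\alpha$ all lie within $2\chi\rho$ of some $X_i$. Using the local product structure of $\Lambda'$ and the invariance of $\mu^u$ under stable holonomy, the iterated reverse doubling property implies that the $\mu$-measure of the union of the unselected $R_\alpha$ is at most $C\cdot 2^{-k}$, where $C$ is a uniform geometric constant. Because the measure of maximal entropy on a horseshoe has constant Jacobian along unstable leaves, it essentially equidistributes on atoms of a dynamical refinement, so the variational principle together with a combinatorial Brin–Katok type estimate shows that the topological entropy of the maximal $f$-invariant subset of $\bigcup X_i$ exceeds $h_{top}(\Lambda',f)-\varepsilon/2>h_{top}(\Lambda,f)-\varepsilon$, as desired.

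The main obstacle is this last step: precisely translating the small $\mu^u$-loss (provided by reverse doubling, only in the unstable direction) into a small loss of topological entropy. The expected mechanism is the symbolic estimate that if a subshift of finite type is obtained from another by forbidding cylinders of total $\mu$-measure $\delta$, then the entropy drops by at most a quantity of order $-\delta\log\delta$; this uses that the measure of maximal entropy is essentially uniform on $n$-cylinders thanks to the constant unstable Jacobian. Choosing $k$ so that $-2^{-k}\log 2^{-k}$ is much smaller than $\varepsilon/2$ then yields the claimed entropy bound.
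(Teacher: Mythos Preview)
Your greedy--selection argument has a genuine gap: the reverse doubling property is applied in the wrong direction. Reverse doubling says $\mu^u(W^u(x,\eta r))<\tfrac12\,\mu^u(W^u(x,r))$, i.e.\ \emph{small} balls carry little mass relative to \emph{large} ones. In your greedy step, an unselected rectangle $R_\alpha$ (of diameter $\sim\rho$) lies within $2\chi\rho$ of some selected $X_j$ (also of diameter $\sim\rho$); to conclude that the unselected mass is small you would need that the $\mu^u$-mass of the $2\chi\rho$-collar around $X_j$ is small relative to $\mu^u(X_j)$. Reverse doubling gives exactly the opposite inequality: it is perfectly consistent with most of the mass of $W^u(x,\rho)$ sitting in $W^u(x,\rho)\setminus W^u(x,\eta\rho)$, so unselected rectangles could carry a fixed positive fraction of the total mass regardless of how small $\chi=\eta^k$ is. Your symbolic entropy-loss estimate is also not correct as stated: forbidding a single $1$-cylinder of $\mu$-measure $1/2$ in the full $2$-shift drops the entropy from $\log 2$ to $0$, much more than $-\tfrac12\log\tfrac12$.

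The paper's proof is structurally different and explains why a static geometric selection cannot work. One passes to a high iterate $f^N$ and builds a family $\cQ$ of \emph{sheared} cubes at scale $2^{-k}$; the shear is essential. One then defines a notion of ``transition'' $Q\to Q'$ (meaning $f^N(Q)$ meets $Q'$ away from the unstable boundary of $Q$ and the stable boundary of $Q'$) and proves the one-step inequality of Proposition~\ref{p=selectcubes}: the $\mu^u$-mass of $(1-\beta)Q$ is, up to $e^{-\varepsilon}$, recovered by the preimages of the $(1-2\beta)Q'$ with $Q\to Q'$. The obstruction to this is the ``s-bad'' cubes, those whose stable boundary meets $f^N(Q)$; the shear forces the s-bad cubes to lie in a few thin strong-unstable strips (Lemma~\ref{l.strip}), and a refined reverse-doubling estimate for \emph{boundaries} of cubes (Lemma~\ref{l.density}, not the raw Theorem~\ref{t.doubling-uniform}) bounds their $\mu^u$-mass. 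Iterating the transition inequality under $f^N$ then yields the entropy lower bound directly via a growth-rate count, with no need for a separate ``entropy loss from forbidden cylinders'' lemma. Reverse doubling enters only to control cube boundaries, not to separate selected pieces.
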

The proof of this theorem is postponed until Section~\ref{ss.proof-extraction}
\begin{proof}[Proof of Theorem~B  from Theorem~\ref{t=ballselection}]
Theorem~B  will be proved in several steps:
we will first show that $\Lambda$ admits an extracted horseshoe that can be locally linearized; we 
then show that this subhorseshoe admits a dominated decomposition
into one-dimensional subbundles and a global chart where these bundles are constants.
At last we prove that after another extraction there is a subhorseshoe which is globally
linear.
\medskip

Fix $\varepsilon>0$.
Note that by Theorem~\ref{t.diagonalize},  we can perform if necessary an arbitrarily $C^r$-small perturbation and replace $\Lambda$
by a subhorseshoe whose topological entropy is arbitrarily close to the initial entropy in order to ensure that
$\Lambda$ has a dominated splitting $T_\Lambda M=E_1\oplus \dots \oplus E_d$ into one dimensional subbundles.

One can by a small $C^1$-perturbation replace $f$ by a diffeomorphism
that is $C^{\infty}$ in a small neighborhood of $\Lambda$.
This is more delicate in the conservative setting:
one uses~\cite{Z} for symplectic diffeomorphisms, \cite{DM} for volume preserving ones in the case $r>1$ and~\cite[Theorem 5]{A}  for volume preserving ones in the case $r=1$.

By an arbitrarily $C^r$-small perturbation we can also assume that
$E^{c}$ is essential, so that the property (H) holds.
Indeed if $p,q$ are two periodic
points close in $\Lambda$, it is possible to  break the intersection between the local stable manifold of $p$
and the local strong unstable manifold of $q$ (tangent to $E^{uu}(q)$); the intersection
between $W^u_{loc}(q)$ and $W^s_{loc}(p)$ belongs to $\Lambda\cap (W^{u}(q)\setminus W^{uu}(q))$, proving that $\Lambda\cap W^u(q)$ is not contained in $W^{uu}(q)$.
\medskip

Now apply Theorem~\ref{t=ballselection}, using $\varepsilon/4$: 
we obtain $\chi\in (0,1)$.
We also fix a global chart $\varphi\colon U\to \RR^d$ from a neighborhood $U$ of $\Lambda$,
so that we can work in $\RR^d$; if one considers a volume or a symplectic form,
one can take it to be constant in the chart.
For $\rho>0$ that will be chosen small enough, we get a family of compact sets
$X_1,\dots, X_n$ with diameter in $[\lambda.\rho,\rho]$
which meet $\Lambda$ and we introduce for each $X_i$
its $\chi.\diam(X_i)/2$-neighborhood $Q_i$ and its $\chi.\diam(X_i)$- neighborhood
$\widehat Q_i$.
The sets $\widehat Q_{i}$ are pairwise disjoint, have a diameter
smaller than $(1+2\chi).\rho$
and $\widehat Q_{i}$ contains the
$\frac \chi 4.\diam(Q_{i})$-neighborhood of $Q_{i}$.
\medskip

For each $i$, we choose a linear map $A_{i}$ in $GL(\RR,d)$ close to
the $Df(x)$ for $x\in \widehat Q_{i}$.
We will perturb $f$ in the sets $\widehat Q_i$ with the following lemma.

\begin{lemma}[Local linearization]\label{local-linearize}
For any $d\geq 1$ and any $\theta, \chi'>0$, there exists $\eta>0$
with the following property. Consider:
\begin{itemize}
\item a compact set $Q$ of the unit ball $B(0,1)\subset \RR^d$ which is homeomorphic
to a ball and has diameter equal to $1$,
\item a $C^\infty$-map $f\colon B(0,2)\to \RR^d$ which is a diffeomorphism onto its image
and satisfies $\|Df-\id\|<\eta$,
\end{itemize}
then,  there exists a $C^\infty$-map $g\colon B(0,2)\to \RR^d$ which is a diffeomorphism onto its image and:
\begin{itemize}
\item coincides with $f$ outside
the $\chi'$-neighborhood $\widehat Q$ of $Q$,
\item coincides with a translation $z\mapsto z+b$ in $Q$,
\item satisfies $\|Dg-Df\|\leq \theta$ everywhere.
\end{itemize}
If $f$ preserves the standard symplectic form or if $f$ preserves the standard Lebesgue volume
and satisfies $\|D^2f\|\leq \eta$, then $g$ can be chosen to preserve it as well.
\end{lemma}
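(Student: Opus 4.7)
The plan is to straight-line interpolate between $f$ and a suitable translation via a smooth bump function, and then, in the conservative settings, to correct or reconstruct the result so as to preserve the ambient form.

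\textbf{Setup and non-conservative case.} Pick any $x_0\in Q$ and set $b:=f(x_0)-x_0$, $T(z):=z+b$, and $e(z):=f(z)-T(z)$. Then $e(x_0)=0$ and $\|De\|=\|Df-\id\|<\eta$, so the mean value inequality yields $\|e\|_{C^0(\widehat Q)}\le\eta(1+2\chi')$ while $\|De\|_{C^0}\le\eta$. Fix a $C^\infty$ bump $\beta:\RR^d\to[0,1]$ with $\beta\equiv 1$ on $Q$, $\beta\equiv 0$ outside $\widehat Q$, and $\|d\beta\|_{C^0}\le C_d/\chi'$ (by standard mollification). Define $g(z):=f(z)-\beta(z)\,e(z)$. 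Then $g\in C^\infty$, $g\equiv T$ on $Q$ and $g\equiv f$ outside $\widehat Q$, and from
\[
Dg-Df \;=\; -\beta\,De \;-\; e\otimes d\beta
\]
one gets $\|Dg-Df\|_{C^0}\le\eta+\eta(1+2\chi')C_d/\chi' =: \eta\,K(\chi',d)$. Taking $\eta\le\theta/K$ and small enough that $\|Dg-\id\|_{C^0}<1/2$ secures both the derivative bound and the fact that $g$ is a diffeomorphism onto its image; this settles the non-conservative case.

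\textbf{Conservative correction via Moser.} In the volume-preserving or symplectic setting, the $g$ above need not preserve the ambient form $\omega$, but since $g$ coincides with the form-preserving maps $T$ and $f$ on $Q$ and outside $\widehat Q$ respectively, the defect $g^*\omega-\omega$ is supported in the annulus $A:=\widehat Q\setminus Q$ and has $C^0$-size $O(\eta K)$. The plan is to find a diffeomorphism $\psi$ equal to the identity outside $A$ with $\psi^*(g^*\omega)=\omega$, and then replace $g$ by $g\circ\psi$. Moser's trick produces such a $\psi$: by the Poincar\'e lemma write $g^*\omega-\omega=d\alpha$ with $\alpha$ supported in $A$ (this requires the integral compatibility $m(g(\widehat Q))=m(\widehat Q)$, which follows from $T$ and $f$ being form-preserving on $Q$ and near $\partial\widehat Q$), then let $\psi:=\phi_1^{X_t}$ where $\iota_{X_t}\omega_t=-\alpha$ and $\omega_t:=\omega+t(g^*\omega-\omega)$.

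\textbf{Main obstacle and the symplectic case.} The hard part is to ensure $\|D\psi-\id\|_{C^0}$ is of order $\theta$ so that $\|D(g\circ\psi)-Df\|\le\theta$. Standard flow estimates bound $\|D\psi-\id\|_{C^0}$ by a constant times $\|\alpha\|_{C^1}$, which in turn is controlled by $\|g^*\omega-\omega\|_{C^1}$, and this latter norm generally involves $D^2 g$ and hence $D^2 f$ --- this is precisely why the hypothesis $\|D^2 f\|\le\eta$ is imposed in the volume-preserving case. The symplectic case evades this by bypassing Moser: represent $f$ by a type-$S(q,P)$ generating function $S_f(q,P)=qP+s_f(q,P)$ (valid for $f$ symplectic and $C^1$-close to the identity); the linearization at $s_f=0$ of the implicit map $s_f\mapsto Df-\id$ is an invertible linear operator on second-derivative data, so $\|s_f\|_{C^2}=O(\|Df-\id\|_{C^0})=O(\eta)$ with no a priori $C^2$ bound on $f$. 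The interpolated generating function $S_g:=qP+(1-\beta)s_f$ then produces by construction a symplectic $g$ that coincides with $T$ on $Q$, with $f$ outside $\widehat Q$, and satisfies $\|Dg-Df\|=O(\eta/\chi'^2)$, no Moser correction required.
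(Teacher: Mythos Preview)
Your approach is essentially the same as the paper's: bump-function interpolation in the non-conservative case, a Moser-type correction for the volume-preserving case (where the $C^2$ hypothesis enters exactly as you identify), and interpolation of generating functions in the symplectic case. One cosmetic difference: you obtain uniformity of $\eta$ over all admissible $Q$ directly from the explicit constant $K(\chi',d)$, whereas the paper argues via compactness of the space of diameter-one subsets of $B(0,1)$ in the Hausdorff topology. A small point to tighten: your symplectic paragraph needs $f$ to be $C^0$-close to the identity (both for the generating-function representation to exist and for the bound $\|s_f\|_{C^2}=O(\eta)$), which the paper arranges by first reducing to $f(0)=0$; also note that your cutoff $\beta$ lives on $(q,p)$-space while $s_f$ lives on $(q,P)$-space, so a harmless change of variables is implicit.
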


\begin{proof}
Note that it is enough to prove the lemma for functions $f$ such that $f(0)=0$,
hence that are $C^1$-close to the identity.
Let us fix $d,\theta,\chi'$ and a compact set $Q_0\subset B(0,1)$ which is homeomorphic to a ball and
has diameter equal to $1$.
One chooses two disjoint, closed connected neighborhoods $\Delta_0,\Delta_1$ of
$\{y: d(y,Q_0)\geq \chi'\}$ and $Q_0$ respectively.

We consider a smooth map $\sigma\colon \RR^d \to [0,1]$ that coincides with $0$
on $\Delta_0$ and with $1$ on $\Delta_1$.

For any $C^\infty$-map $f\colon B(0,2)\to \RR^d$ which is a diffeomorphism onto its image,
one defines $g=\sigma\cdot f + (1-\sigma)\id$. Since $f(0)=0$,
if $Df$ is close to the identity, $g$ is a diffeomorphism onto its image
and $\|Dg-Df\|\leq \theta$ everywhere.
It coincides with $f$ on $\Delta_0$ and with the identity on $\Delta_1$,
hence satisfies the conclusions of the lemma.

When $f$ preserves Lebesgue, and $\|D^2f\|$ is small,
one builds a first diffeomorphism $g_0$ as before, which is $C^2$-close to the identity.
In particular the volume form $\widetilde m:=(g_0)_* \vol$ is $C^1$-close to the standard Lebesgue
volume $\vol$ and coincides with it on $g_0(\Delta_0)$ and on $g_0(\Delta_1)$.
By applying~\cite{Moser}, one obtains a conservative diffeomorphism
$h$ of $\RR^d$ which sends $\widetilde \vol$ on $\vol$.
Consequently the diffeomorphism $g:=h\circ g_0$ preserves the volume.
Since $h$ is obtained by integration of the form $\widetilde m-m$,
one can check that $h$ is a translation on each connected component of $g_0(\operatorname{Interior}(\Delta_0\cup \Delta_1))$ and can be chosen to coincide with the identity outside $B(0,1)$.
Since $m$ and $\widetilde m$ are $C^1$-close, $\|Dh-\id\|$ is small,
hence $g$ is $C^1$-close to the identity as well.
Since $Q_0$ is contained in a connected component of
$\operatorname{Interior}(\Delta_1)$, the diffeomorphism $g$ is a translation on $Q_0$
by construction.

In the symplectic case, $g$ is built in a different way.
The diffeomorphism $f$ is obtained from a generating function $h_f$ that is $C^2$-close to the generating function $h_I$ of the identity (since $f$ is $C^1$-close to the identity).
The map $g$ can be obtained from the generating function
$\sigma.h_f+(1-\sigma).h_I$.

Note that the triple $(\Delta_0,\Delta_1,\eta)$ is still valid for any connected compact set $Q$
that is Hausdorff close to $Q_0$.
Since the set of compact subsets of $B(0,1)$ whose diameter is equal to $1$
is compact in the Hausdorff topology,
there exists $\eta>0$ which is valid for all compact sets with diameter equal to $1$.
\end{proof}

Lemma~\ref{local-linearize} may be restated as follows.
\begin{corollary}\label{c.local-linearize}
For any $C^\infty$-diffeomorphism $f$ of the ball $B(0,2)\subset \RR^d$,
and any $\theta, \chi'>0$, there exists $\rho,\eta>0$
with the following property.

For any compact set $Q\subset  B(0,1)$ homeomorphic to the ball and with diameter smaller than $\rho$,
for any $A\in GL(\RR, d)$ such that $\|A-Df(x)\|<\eta$ for some $x\in Q$,
there exists $b\in \RR^d$ and a $C^\infty$-diffeomorphism $g$ which:
\begin{itemize}
\item coincides with $f$ outside
the $\chi'.\diam(Q)$-neighborhood $\widehat Q$ of $Q$,
\item coincides with $z\mapsto A\cdot z+b$ in $Q$,
\item satisfies $\|Dg-Df\|\leq \theta$ everywhere.
\end{itemize}
If $f$ and $A$ preserve a given volume or symplectic form, then $g$ can be chosen to preserve it as well.
\end{corollary}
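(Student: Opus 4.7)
The plan is to reduce the corollary to Lemma~\ref{local-linearize} by an affine change of coordinates in both source and target: rescaling turns $Q$ into a set of unit diameter, and post-composing with $A^{-1}$ turns $f$ into a diffeomorphism $C^1$-close to the identity, which is exactly the regime where the Lemma applies.

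First I would fix $M := \sup_{B(0,2)}(\|Df\| + \|Df^{-1}\|)$, set $\widetilde\theta := \theta/(M+1)$, and feed $\widetilde\theta, \chi'$ into Lemma~\ref{local-linearize} to obtain a threshold $\eta_0 > 0$. Then I would choose $\eta > 0$ small enough so that $\|A - Df(x)\| < \eta$ forces $\|A^{-1}\| \le M+1$ and $\|A^{-1}Df(x) - \id\| < \eta_0/2$, and $\rho > 0$ small enough so that whenever $r \le \rho$ and $x \in B(0,1)$ one has $\|Df(y) - Df(x)\| < \eta_0/(2(M+1))$ for $y \in B(x, 2r)$ (uniform continuity of $Df$) together with $r \cdot \|D^2 f\|_{B(x,2r)} \le \eta_0/(M+1)$ (for the conservative version).

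Given the data $Q, x, A$ of the corollary, set $r := \diam(Q)$ and introduce the affine charts $\Phi(z) := (z-x)/r$ in the source and $\Psi(w) := A^{-1}(w - f(x))/r$ in the target, along with the conjugated map $F := \Psi \circ f \circ \Phi^{-1}$. One computes $F(0) = 0$, $DF(u) = A^{-1} Df(x + ru)$, and $D^2 F(u) = r\,A^{-1} D^2 f(x + ru)$, so by the choice of $\eta$ and $\rho$ the map $F$ satisfies $\|DF - \id\| < \eta_0$ and (in the conservative case) $\|D^2 F\| \le \eta_0$ on $B(0,2)$. The set $\widetilde Q := \Phi(Q)$ is a compact set containing $0$, homeomorphic to a ball, of diameter one, and contained in $B(0,1)$. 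Apply Lemma~\ref{local-linearize} to $F$ and $\widetilde Q$ to obtain $\widetilde g$ that agrees with $F$ outside the $\chi'$-neighborhood of $\widetilde Q$, equals some translation $u \mapsto u + b'$ on $\widetilde Q$, and satisfies $\|D\widetilde g - DF\| \le \widetilde\theta$. Setting $g := \Psi^{-1} \circ \widetilde g \circ \Phi$ (extended by $f$ elsewhere) gives $g(z) = Az + b$ on $Q$ for $b := f(x) - Ax + rAb'$, $g = f$ outside the $\chi' \diam(Q)$-neighborhood of $Q$, and
\[
\|Dg(z) - Df(z)\| = \|A\,(D\widetilde g - DF)(\Phi(z))\| \le (M+1)\widetilde\theta = \theta.
\]

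The conservative cases require one additional observation, which I expect to be the main subtle point: if $A$ preserves the standard volume $m$ (resp.\ the standard symplectic form $\omega_0$), then so does $F$, because the two conjugating maps $\Phi$ and $\Psi$ each scale $m$ by $r^{-d}$ (resp.\ $\omega_0$ by $r^{-2}$), and the extra factor $\det A^{-1} = 1$ (resp.\ $A^*\omega_0 = \omega_0$) produces the miraculous cancellation. The $C^2$-smallness needed for the volume version of the Lemma is exactly the bound $\|D^2 F\| \le \eta_0$ arranged above. The conservative version of the Lemma then yields a conservative $\widetilde g$, and conjugation by the (affine) volume- or symplectic-preserving maps $\Phi, \Psi$ preserves the structure. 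All remaining steps are routine rescaling arguments.
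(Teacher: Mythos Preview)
Your proposal is correct and follows essentially the same approach as the paper: conjugate $f$ by an affine rescaling in the source and by $A^{-1}$ composed with a rescaling in the target, so that the resulting map is $C^1$-close to the identity on the unit ball, then invoke Lemma~\ref{local-linearize} and pull the result back. Your treatment is in fact more explicit than the paper's (which writes the conjugated map as $\widetilde f(z)=\tfrac{1}{r}(A^{-1}\circ f)(r(z+z_0))$ and asserts the needed estimates), and you correctly identify the key point in the conservative case, namely that the two affine charts scale the volume or symplectic form by the same factor so that $F$ remains conservative and the $C^2$-smallness hypothesis is met because $\|D^2F\|$ picks up a factor of $r$.
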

\begin{proof}
It is enough to apply the previous lemma
to the diffeomorphism $\widetilde f\colon z\mapsto \frac 1 r (A^{-1}\circ f)(r\cdot (z+z_0))$,
where $z_0$ belongs to $Q$ and $r=\diam(Q)$.
If $\rho,\eta$ have been chosen small enough, $D\widetilde f(x)$ is close to the identity for $x\in B(0,2)$.

When $f$ preserves a given volume or symplectic form, one can change the coordinates so that
the volume or the symplectic form coincide with the standard ones.
Moreover one notices that $\|D^2\widetilde f\|$ uniformly converges to $0$ on $B(0,2)$
when the diameter $r$ goes to $0$.
The previous lemma can thus still be apply
in these settings provided $\rho$ is small enough.
\end{proof}

Applying the previous corollary independently for each $Q_i$
provides us with a diffeomorphism $g$ which coincides with an affine map
$z\mapsto A_{i}\cdot z+b_i$ on each $Q_{i}$ and with $f$ outside the union of the
$\widehat Q_i$.
The tangent maps of $f$ and $g$ are $C^0$-close, and since the size of the
connected components of the support of the perturbation is smaller than $(1+2\chi).\rho$,
the $C^0$ distance between $f$ and $g$ is also small.
Consequently $g$ belongs to the neighborhood $\cU$ of $f$.
Moreover if $f$ preserves a volume or a symplectic form,
by choosing the linear maps $A_i$ to be conservative, the perturbation $g$ still preserves the form
and is locally affine in the union $V:=\bigcup_{i}Q_{i}$.

Consider a transitive hyperbolic set $\Lambda'$ with entropy larger than $h_{top}(\Lambda, f)-\varepsilon/4$
and contained in $\bigcup_i X_i$ as given by Theorem~\ref{t=ballselection}.
Since $f$ and $g$ are $(1+2\chi).\rho$-close in the $C^0$ topology, the shadowing lemma
implies that the hyperbolic continuation $\Lambda'_g$ of $\Lambda'$ for $g$ is contained in the
$\gamma\rho$-neighborhood of $\Lambda'$, where $\gamma$ is arbitrarily close to $0$
if the distance between $Df$ and $Dg$ is chosen small enough. In particular,
$\Lambda'_g$ is contained in the union $V$, hence is locally affine.
Moreover $\Lambda'_g$
has entropy larger than $h_{top}(\Lambda,f)-\varepsilon/4$.
By Theorem~\ref{t.katok}, there exists a horseshoe $\widetilde \Lambda$
with entropy larger than $h_{top}(\Lambda,f)-\varepsilon/2$
contained in $V$, hence locally affine as required.
This gives the first part of Theorem~B .
\bigskip

We now explain how to modify the previous construction so that the splitting
$T_{\widetilde \Lambda}M=E_1\oplus\dots\oplus E_d$
 is locally constant.
We first introduce a family of disjoint open sets $\Delta_{1},\dots,\Delta_{s}$ which
cover $\Lambda$ and have small diameters.
We choose a point $x_k\in \Delta_{k}\cap\Lambda$ in each of them.
The sets $X_1,\dots,X_n$ given by  Theorem~\ref{t=ballselection}
are only chosen after,
with diameter small enough so that
each set $Q_{i}$ and each image $f(Q_{i})$ is contained in one of the sets $\Delta_k$.
We linearize in each domain as before but require that
$g$ coincide on $Q_{i}$ with the affine map
$z\mapsto A_{i}\cdot z+b_i$ where
$A_{i}=\Pi_{i,k,\ell}\circ Df(x_k)$, such that
\begin{itemize}
\item $k$ and $\ell$ are defined by the conditions $Q_{i}\subset \Delta_{k}$ and $f(Q_{i})\subset \Delta_{\ell}$,
\item the linear maps $\Pi_{i,k,\ell}$ is close to the identity and sends the splitting
$E_1(x_k)\oplus\dots\oplus E_d(x_k)$ to the splitting
$E_1(x_\ell)\oplus\dots\oplus E_d(x_\ell)$.
\end{itemize}
When $f$ preserves the volume or the symplectic form, we choose
$\Pi_{i,k,\ell}$ to preserve it as well. In the symplectic case this is possible
since the two planes of the form $E_m\oplus E_{d-m}$ are pairwise symplectic-orthogonal
(see~\cite{BV-Howfrequently}).

The end of the construction is unchanged.
After perturbation, the dominated splitting on $\widetilde \Lambda$
for the map $g$,
coincides in each set $\Delta_{k}$ with
$E_1(x_{k})\oplus\dots\oplus E_d(x_{k})$ and hence  is locally constant.
\bigskip

At this step we have reduced the proof of Theorem~B  to the case of a local diffeomorphism $f$ of 
a hyperbolic set $\Lambda$ in a subset $U$ of $\RR^d$, such that
$f$ is locally affine on $U$, and the splitting of $\RR^d$ into the coordinates axes
is $f$-invariant.
In the conservative case, the volume, or the symplectic form is chosen to coincide with the standard
Lebesgue volume of $\RR^d$
or with the standard symplectic form of $\RR^{2\times \frac d 2}$.
It remains to prove that after a new extraction and
a new perturbation, the linear part can be made constant.

Choose a Markov partition of $\Lambda$ into small disjoint rectangles
$R_0,\dots,R_\ell$. Since the $R_i$ are small, the diffeomorphism $f$ is affine on a neighborhood of $R_i$: there exists a linear map
$A_i$ such that the diffeomorphism $f$ has the form
$z\mapsto A_i.(z-x)+f(x)$.
Since the coordinates axes are preserved, $A_i$ is diagonal
with diagonal coefficients $a_{i,1},\dots,a_{i,d}$.

For $N\geq 1$ large enough, consider the sub-horseshoe
$\Lambda_N$ of points of $\Lambda$ which visit $R_0$ exactly every $N$ iterates:
its topological entropy is larger than $h_{top}(\Lambda,f)-\varepsilon/4$.
Moreover the return map $f^N$ on $R_0\cap \Lambda_N$
decomposes into branches labelled by compatible itineraries in $R_1,\dots,R_\ell$,
which are affine. Each itinerary is associated to
a diagonal matrix whose coefficients are sums of $N$ numbers chosen among
$a_{i,k}$, with $0\leq i\leq \ell$ and $1\leq k\leq d$.
The number of such matrices grows at most polynomially in $N$.
On the other hand, the number of itineraries of length $N$
starting in $R_0\cap \Lambda_N$ grows faster than
$e^{N(h_{top}(\Lambda,f)-\varepsilon/4)}$.

It follows from the pigeonhole principle that there exists a diagonal matrix $A$
and a set containing at least $e^{N(h_{top}(\Lambda,f)-\varepsilon/2)}$
itineraries of length $N$ which start in $R_0\cap \Lambda_N$ and 
which all have the same associated diagonal matrix $A^N$.
The set of points in $\Lambda_N$ whose orbit follows these itineraries
is a sub horseshoe $\widetilde \Lambda$ whose topological entropy is larger
than $h_{top}(\Lambda,f)-\varepsilon/2$ as required.
It may be decomposed as a disjoint union of compact sets:
$\widetilde \Lambda=K\cup f(K)\cup\dots\cup f^{N-1}(K)$.
The diffeomorphism $f^N$ on $K$ is affine with the constant linear part $A^N$.

Now cover $K$ by small disjoint open sets $\Delta_1,\dots,\Delta_m$
is such a way that $f^\ell$ is affine on each $\Delta_j$, with $0\leq \ell\leq N$
and $1\leq j\leq m$, and has a linear part $B_{j,\ell}$.
We change the chart on each set $f^\ell{\Delta_j}$
by the composition of $A^\ell B_{j,\ell}^{-1}$ with a translation.
For this new chart, the diffeomorphism $f$ has the form
$z\mapsto A.(z-x)+f(x)$ near each point $x\in \widetilde \Lambda$.
This completes the proof of  Theorem~B .
\end{proof}

\bigskip

\subsection{Cube families}
In this subsection and the following ones we prove Theorem~\ref{t=ballselection}.
We thus consider a diffeomorphism $f$ and a horseshoe $\Lambda$
satisfying (H). We describe some preliminary constructions.

\subsubsection{The affine chart $\varphi$ - the scale $\rho_0$ - the metric on $\Lambda$.}
For each point $x\in \Lambda$, we will use smooth charts
$\phi_x$ from a neighborhood $U_x$ of $x$ in $M$ to $\RR^d$
whose derivative $D_{x}\phi_i$ at $x$ sends the splitting
$E^{uu}(x)\oplus E^{c}(x)\oplus E^{s}(x)$ to the splitting
\begin{equation}\label{e=rsplit}
\RR^d = \RR^{d_u-1} \oplus \RR \oplus \RR^{d_{s}}.
\end{equation}
Since the angle between the spaces $E^{uu}$, $E^c$, $E^s$ is bounded away from zero,
one can assume that the $C^1$-norm of the charts $\phi_x$ is uniformly bounded by some $D>0$.

In the case $f$ preserves a volume $m$ or a symplectic form $\omega$,
we may assume that $\phi_*m$ (resp. $\phi_*\omega$) is the standard symplectic form
on $\RR^{2\times \frac d2}$. Indeed, for the volume preserving case
one uses~\cite{DM}.
In the symplectic case, we first use Darboux's theorem in order to rectify the
symplectic form. By~\cite{BV-Howfrequently}, the horseshoe $\Lambda$ admits
a finer dominated splitting into Lagrangian subbundles
$$T_\Lambda=E^{uu}\oplus E^c_1\oplus E^c_2\oplus E^{ss}$$
such that $E^c=E^c_1$, $E^s=E^c_2\oplus E^{ss}$, $\dim(E^{uu})=\dim(E^{ss})$
and $\dim(E^c_1)=\dim(E^c_2)$.
The subspaces $E^{uu}\oplus E^{ss}$ and $E^c_1\oplus E^c_2$ are symplectic orthogonal.
All this implies that the splitting $T_xM=E^{uu}(x)\oplus E^{c}(x)\oplus E^{s}(x)$
may be sent by a bounded symplectic linear map to the standard splitting~\eqref{e=rsplit}.

Since $\Lambda$ is totally disconnected,
for any scale $\rho_0>0$ there exist a neighborhood $U$
of $\Lambda$ having finitely many connected components $U_k$,
each of them has diameter smaller than $\rho_0$, contains
a point $x_k\in\Lambda$, and is included in the domain of the chart
$\phi_{x_k}$. We may also assume that the images $\phi_{x_k}(U_k)$ are pairwise disjoint,
so that the map $\varphi\colon U\to \RR^d$ which coincides with $\phi_{x_k}$ on $U_k$
is also a chart.
\medskip

Two metrics appear: the initial metric on $M$
and the chart metric induced by $\varphi$ from the standard metric in $\RR^d$.
At this stage, the chart $\varphi$ depends on $\rho_0$ and has not been fixed.
However the initial and charts metrics are comparable up to the constant $D$ which does not depend on $\varphi$. In the following we will mainly consider the chart metric.
\medskip

\subsubsection{First extraction - the Lyapunov exponents - the split Markov partition.}
\label{ss.firstextraction}
Replacing if necessary $\Lambda$ by a subhorseshoe whose entropy is arbitrarily close to the entropy of the initial horseshoe, one can assume that:
\begin{itemize}
\item (Theorem~\ref{t.katok}.) There exist constants
$-\lambda \leq -\nu < 0 <  \gamma < \widehat\gamma < \widehat\nu \leq \widehat\lambda$
such that for any invariant probability measure on $\Lambda$ the Lyapunov exponents
along $E^{uu}$, $E^{c}$, and $E^{s}$ belong to $(\widehat\nu,\widehat\lambda)$,
$(\gamma, \widehat\gamma)$ and $(-\lambda, -\nu)$ respectively.

One chooses $N_0$ large enough so that for any $N\geq N_0$,
if $u$ is a unit vector in $E^{uu}$, $E^{c}$ or $E^{s}$, then
$\frac 1 N \log \|Df^N(u)\|$ belongs to the corresponding interval.

\item (Proposition~\ref{p.split}.)
For any $\varepsilon'>0$,
there are subhorseshoes that are disjoint unions of the form
$\Lambda_N=K\cup\dots f^{N-1}(K)$ with $N\geq N_0$
and entropy larger than $h_{top}(\Lambda,f)-\varepsilon'$,
and there exists a split Markov partition $\{R_{1,\pm},\dots R_{m,\pm}\}$
whose rectangles have a small diameter
and such that $\mu^u$ gives the same weight to $R_{i,-}$ and $R_{i,+}$
inside each unstable plaque.
\end{itemize}

\subsubsection{Sheared cubes - the shear $\sigma$ - the scale $2^{-k}$.}\label{ss.cube}
Let $\{e_1,\dots,e_{(d_u-1)}\}$, $\{e_{d_u}\}$, $\{f_1,\dots,f_{d_s}\}$
be orthonormal bases for the three factors of the decomposition
\eqref{e=rsplit}.  For a shear $\sigma\in (0,1)$ we define the linear transformation
$L_\sigma\colon \RR^d\to \RR^d$ by
$L_\sigma(e_j)=e_j$ if $1\leq j<d_u$, $L_\sigma(f_j)=f_j$ if $1\leq j\leq d_s$ and
$$L_\sigma(e_{d_u})=\sum_{j=1}^{d_u} e_j+ \sigma\cdot \sum_{j=1}^{d_s} f_j.$$
We denote by $P_\sigma$ the image $L_\sigma(P)$ of the unit parallelepiped centered at $0$:
$$P=\{t_1e_1+\dots+t_{d_u}e_{d_u}+t_{(d_u+1)}f_1+\dots+t_df_{d_s},\;
t_i\in[-1/2,1/2]\}.$$
The reason we need a shear will appear
in Lemmas~\ref{l.density} and~\ref{l.strip}.

\begin{figure}[h]
\includegraphics[scale=0.25]{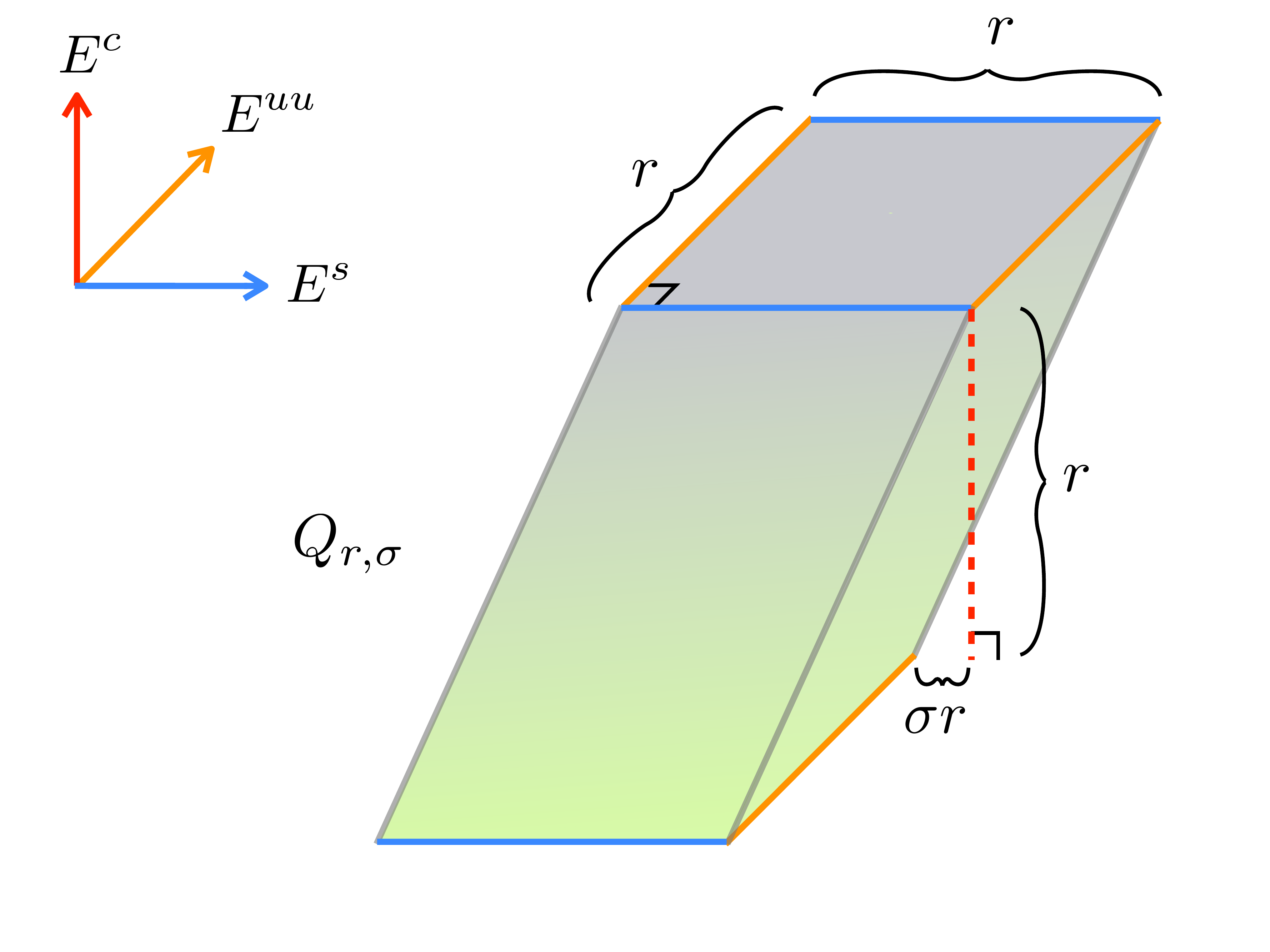}
\caption{\label{f.cube} The dimensions of a cube $Q_{r,\sigma}(y)$ of diameter $r$ and shear $\sigma$.}
\end{figure}

The \emph{unstable faces} are the faces of $P_\sigma$ spanned by the same vectors as $P_\sigma$,
but one among $e_1,\dots,e_{d_u-1},L_\sigma(e_{d_u})$.
The other faces are the \emph{stable faces} and are
spanned by the same vectors as $P_\sigma$, but one among $f_1,\dots,f_{d_s}$.
The \emph{unstable boundary} $\partial^uP_\sigma$ (resp. \emph{stable boundary}) is the union of the unstable (resp. stable) faces.

Fix $\rho_1\ll\rho_0$. For $r\in (0,\rho_1)$  and $y\in U$ at distance less than $\rho_1$ from $\Lambda$, we define the \emph{(sheared) cube} (see Figure~\ref{f.cube})
$$Q_{r,\sigma}(y) = \varphi^{-1}\left( r P_\sigma + \varphi(y)\right) =\varphi^{-1}\left( r L_\sigma P + \varphi(y)\right).
$$
The point $y$ is called the {\em center}  of the {\em cube} $Q_{r,\sigma}(y)$ and $r$ is its {\em diameter}.  For any cube $Q$ with center $y$ and diameter $r$, and any $\kappa \in (0,2)$,
we denote by $\kappa Q$ the cube centered at  $y$ of diameter $\kappa r$.
The stable and unstable boundaries of $Q$ are defined analogously to the boundaries of $P_\sigma$.

For $\eta \in (0,1)$, we define linear transformations $H^s_\eta, H^u_\eta: \RR^d\to \RR^d$
by $H^s_\eta(f_i) = (1-\eta) f_i$, $H^s_\eta (e_i) = e_i$, $H^u_\eta(f_i) = f_i$ and $H^u_\eta(e_i) = (1-\eta) e_i$.
For $Q = Q_{\rho,\sigma}(y)$, we define the {\em stable  and unstable $\eta$-boundaries} of
$Q$ as follows:
$$\partial^s_\eta Q=Q\setminus \varphi^{-1}\left( r L_\sigma H^s_\eta P + \varphi(y)\right)
\text{ and }\partial^u_\eta Q=Q\setminus \varphi^{-1}\left( r L_\sigma H^u_\eta P + \varphi(y)\right).$$
This partitions $Q$ into $(1-\eta)  Q$ and $\partial^s_\eta Q \cup \partial^u_\eta Q$.
Note also that $\partial ^s Q=\bigcap_{\eta>0} \partial ^s_\eta Q$.

Each cube $Q = Q_{r,\sigma}(y)$ has a set of  {\em unstable neighbor cubes} $\cN^u(Q)$
of cardinality $3^{d_u}$.
The set $\cN^u(Q)$ consists of the cubes of diameter $r$ and shear $\sigma$
that are produced from $Q$
by a translation by
$$r\;L_\sigma\left(n_1\;e_1\;+\;\dots\;+\;n_{d_u-1}\;e_{d_u-1}\;+\;n_{d_u}\;e_{d_u} \right),$$
where each $n_i$ is taken in $\{-1,0,1\}$ (see Figure~\ref{f.cube-neighbor}).

\begin{figure}[h]
\includegraphics[scale=0.25]{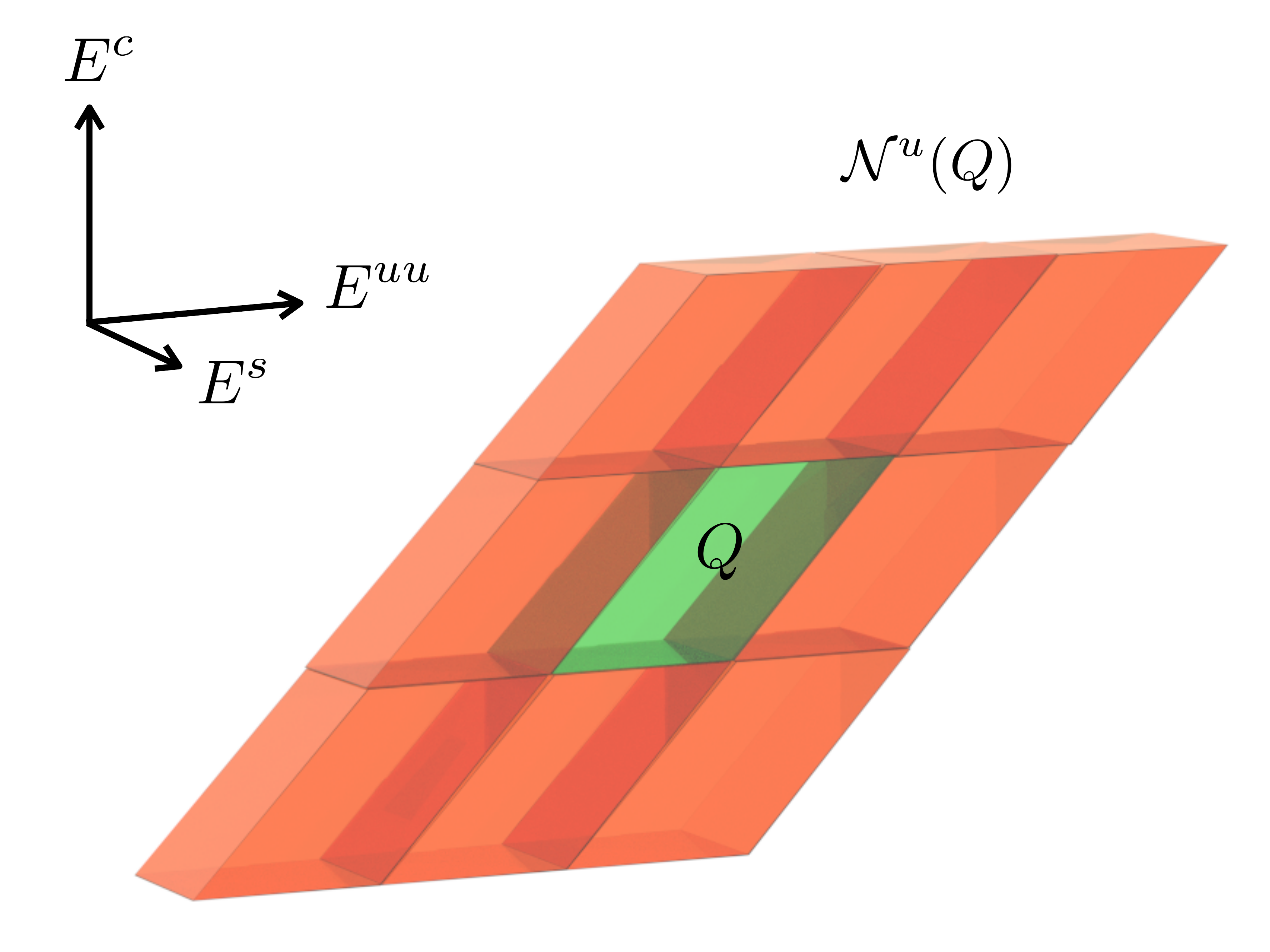}
\caption{\label{f.cube-neighbor} The unstable neighbor cubes of $Q$ (which include $Q$ itself).}
\end{figure}

If $\rho_0$ is small, then the local unstable manifolds $W^u(z,\rho_0)$
are $C^1$-close to planes spanned by $e_1,\dots,e_{d_u}$, so that
if $(1-3\sigma)  Q\cap W^u(z,\rho_0)\neq \emptyset$,
then the local unstable manifold
$W^u(z,\rho_0)$ does not meet the stable boundary of the $Q'\in \cN^u(Q)$,
and
$$
3  Q\cap W^u(z,\rho_0) = \bigcup_{Q'\in \cN(Q)} Q'\cap W^u(z,\rho_0).
$$

Choose $k>0$ large.
We construct the {\em cube family}  $\cQ = \cQ_{\varphi,k,\sigma}$
(at scale $2^{-k}$ with shear $\sigma$) as the collection:
$$\cQ_{\varphi,k,\sigma} =
\left\{ Q_{2^{-k},\sigma } (x) : 
\quad d(x, \Lambda)<\rho_1 \text{ and }
\varphi(x)\in2^{-k}\; L_\sigma(\ZZ^d)\right\}.$$\medskip

\subsubsection{Cube transitions - the boundary size $\beta$.}
Fix some $\beta>0$ close to $0$.
For $Q,Q'\in \cQ $, we say that \emph{there is a transition from $Q$ to $Q'$} (which we denote by
$Q\rightarrow Q'$) if
$f^N(Q)$ intersects $Q'$, whereas
$Q'\cap f^N(\partial^u_{\beta/2} Q)$ and $\partial^s _{\beta/2} Q' \cap f^N(Q)$
are empty (see Figure~\ref{f.sbad}).
\medskip

In the following, we consider the disintegration $\mu^u$ of the measure of maximal entropy
of a horseshoe $\Lambda_N$ along the unstable manifolds. We then define the measure $\mu^u_x$ induced by $\mu^u$
on the plaque $W^u(x,\rho_0)$ for each $x\in K$.
We will reduce the proof of Theorem~\ref{t=ballselection} to the following proposition, which is proved in Section~\ref{ss.proof-selectcube}.

\begin{proposition}\label{p=selectcubes}
Consider $f$ and $\Lambda$ as introduced in Section~\ref{ss.firstextraction}.

For all $\varepsilon > 0$, there exist $\rho_0,\sigma,\beta,N_0>0$ and a chart $\varphi\colon U\to \RR^d$ with $\Lambda\subset U$,
such that if $\Lambda_N\subset \Lambda$
is a subhorseshoe associated to an integer $N\geq N_0$ and if $\mu^u_x$ denotes the measure induced on $W^u(x,\rho_0)$ by the disintegration of its measure of maximal entropy along the unstable leaves, then the following holds.

There exists $k_0$ such that for all $k\geq k_0$, any cube $Q$ in the family $\cQ = \cQ_{\varphi,k,\sigma}$
of the chart $\varphi$ and any point $x\in \Lambda_N\cap (1-2\beta)  Q$, we have:
$$
\sum_{Q \rightarrow Q' } \mu^u_x\left( f^{-N}((1-2\beta)  Q')\right ) 
\geq e^{-\varepsilon} \cdot \mu^u_x\left((1-\beta)  Q\right).
$$
\end{proposition}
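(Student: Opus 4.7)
The plan is to show that the ``bad'' set
$$B \;:=\; \bigl((1-\beta)Q\cap W^u(x,\rho_0)\bigr)\setminus\bigcup_{Q\to Q'} f^{-N}\bigl((1-2\beta)Q'\bigr)$$
has $\mu^u_x$-measure at most $(1-e^{-\varepsilon})\mu^u_x((1-\beta)Q)$. For $y\in(1-\beta)Q$, let $Q'(y)$ denote the unique cube of $\cQ$ containing $f^N(y)$; then $y\in B$ iff at least one of three obstructions occurs, giving a covering $B\subset B_A\cup B_{B,u}\cup B_{B,s}$ with
$B_A=\{y:f^N(y)\notin(1-2\beta)Q'(y)\}$,
$B_{B,u}=\{y:Q'(y)\cap f^N(\partial^u_{\beta/2}Q)\neq\emptyset\}$, and
$B_{B,s}=\{y:\partial^s_{\beta/2}Q'(y)\cap f^N(Q)\neq\emptyset\}$. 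Throughout, the constant Jacobian of $f^N$ acting on $\mu^u$ along unstable leaves converts $\mu^u_x$-measure of a preimage into $\mu^u_{f^N(x)}$-measure of its image, up to a single multiplicative constant, so all three terms can be analyzed on the image side on $W^u(f^N(x))$.

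The bounds on $B_A$ and $B_{B,u}$ follow from refinements of reverse doubling (Theorem~\ref{t.doubling-uniform}) combined with the split-Markov equidistribution from Proposition~\ref{p.split}. For $B_A$, a cube-shell version of reverse doubling gives $\mu^u(\partial_{2\beta}Q'')\leq \psi_A(\beta)\,\mu^u(Q'')$ for every cube $Q''$ of the grid meeting $\Lambda_N$, with $\psi_A(\beta)\to 0$; summing on the image side over cubes visited by $f^N((1-\beta)Q)$ and pulling back by constant Jacobian gives $\mu^u_x(B_A)\leq \psi_A(\beta)\mu^u_x((1-\beta)Q)$. For $B_{B,u}$, applying reverse doubling directly to the unstable shell $\partial^u_{\beta/2}Q$ inside an unstable plaque, and then transferring through $f^N$ (with cube-diameter slack absorbed by a uniform constant), gives $\mu^u_x(B_{B,u})\leq \psi_B(\beta)\mu^u_x(Q)$ with $\psi_B(\beta)\to 0$.

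The main obstacle is $B_{B,s}$. Two geometric facts are used. First, by taking $N_0$ large, the slab $f^N(Q)$ has stable extent $\ll \beta\cdot 2^{-k}$, so $f^N(Q)\cap Q'$ lies in a single thin ``stable layer'' of $Q'$; the bad event is that this layer is contained inside the stable $\beta/2$-shell of $Q'$. Second, the grid points $2^{-k}L_\sigma(\ZZ^d)$ make adjacent cubes shift by $2^{-k}L_\sigma(e_{d_u})=2^{-k}(\sum e_j+\sigma\sum f_j)$, so moving by one cube in the center direction of the grid shifts the stable position of the slab inside the destination cube by $\sigma\cdot 2^{-k}$. Consequently, among the $e^{N\gamma}$ or so cubes $Q'$ traversed by $f^N(Q)$ in the center direction, only a proportion $\beta/\sigma$ have the slab sitting in the stable $\beta/2$-shell. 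Combining this with the split-Markov symmetry of $\mu^u$ across the halves $R_{i,\pm}$ of each split rectangle (Proposition~\ref{p.split}) and a reverse-doubling estimate applied in the $1$-dimensional center direction, one concludes $\mu^u_x(B_{B,s})\leq \psi_C(\beta/\sigma)\mu^u_x((1-\beta)Q)$ with $\psi_C\to 0$ as $\beta/\sigma\to 0$.

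Adding the three bounds gives $\mu^u_x(B)\leq\bigl(\psi_A(\beta)+\psi_B(\beta)+\psi_C(\beta/\sigma)\bigr)\mu^u_x((1-\beta)Q)$. Parameters are then fixed in the order: $\sigma$ first; then $\beta\ll\sigma$ so that the total error is at most $1-e^{-\varepsilon}$; then $\rho_0$ and the chart $\varphi$ standardizing the splitting $E^{uu}\oplus E^c\oplus E^s$; then $N_0$ large enough to ensure stable thinness of $f^N(Q)$ for all $N\geq N_0$; and finally $k_0$ large enough that all local geometric constructions go through. The genuinely hard step is the control of $B_{B,s}$: the shear $\sigma$ is essential, because without it the slab $f^N(Q)$ could systematically align with the stable walls of the cube grid and no such estimate would hold.
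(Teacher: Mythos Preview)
Your decomposition into $B_A$, $B_{B,u}$, $B_{B,s}$ and the identification of $B_{B,s}$ as the hard case are sound, and the shear is indeed what makes the s-bad estimate possible. However, the argument you sketch for $B_{B,s}$ has a genuine gap, and your parameter hierarchy disagrees with what actually works.

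The claim that ``only a proportion $\beta/\sigma$'' of the cubes traversed in the center direction are s-bad is a \emph{counting} statement; it does not by itself bound the $\mu^u$-measure of those cubes. The disintegration $\mu^u$ is a fractal measure and can concentrate on a sparse collection of cubes. Invoking ``split-Markov symmetry and reverse doubling in the center direction'' is the right instinct, but you have not supplied a mechanism, and in fact the paper does \emph{not} take $\beta\ll\sigma$: it sets $\sigma=\beta/10$, so $\beta/\sigma$ is a fixed constant and cannot be the small parameter governing $\psi_C$.

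What actually happens is this. Projecting the s-bad locus to the $e_{d_u}$-axis gives, modulo the period $(2^k\sigma)^{-1}$, a set of at most $d_s$ points (one per stable face). An elementary covering lemma on the circle then produces an intermediate scale $r$ with $r\in[c\,2^{-k}/\sigma,\,2^{-k}/\sigma]$ at which these points are covered by short intervals well separated by gaps of length $\gg r$. Lifting back, the s-bad cubes lie in finitely many strong-unstable strips of width $<\eta r/4$ in $f^N(W^u(x))$, separated by distance $>4r$. One then covers these strips by a disjoint family of sheared cubes $C_i$ of the larger size $r$, arranged so that each strip meets only the $\eta$-boundary of some $C_i$. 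The density lemma (the cube-shell refinement of reverse doubling, proved from the split Markov structure) bounds $\mu^u(C_i\setminus(1-\eta)C_i)$ by $\delta\sum_{C\in\cN^u(C_i)}\mu^u((1-\eta)C)$, and a second application at scale $2^{-k}$ converts this into a bound by $\sum_{Q\to Q'}\mu^u((1-2\beta)Q')$. The small parameter is $\delta$ (equivalently $\eta=\eta(\delta)$), chosen first from $\varepsilon$; then $\beta$ is taken small relative to $\eta$; then $\sigma=\beta/10$; and $N_0$ is chosen last so that (i) $\beta e^{N_0\hat\nu}>10$, which makes your $B_{B,u}$ \emph{empty} for cubes meeting $f^N((1-\tfrac34\beta)Q)$ rather than merely small, and (ii) $\eta/\sigma\ll\beta e^{N_0\hat\nu}$, so the intermediate-scale cubes $C_i$ fit inside $f^N(Q)$ away from its unstable boundary.

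In short: the missing idea is the intermediate scale $r\sim 2^{-k}/\sigma$ and the application of the density lemma \emph{at that scale} to the covering cubes $C_i$; a direct proportion argument in the center index does not control the measure.
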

\bigskip

\subsection{Proof of Theorem~\ref{t=ballselection} from Proposition~\ref{p=selectcubes}}
\label{ss.proof-extraction}
Consider a horseshoe $\Lambda$ as in the assumptions of Theorem~\ref{t=ballselection}
and $\varepsilon>0$.
Fix a Markov partition $R_0,\dots,R_s$ of $\Lambda$
such that any point $x\in \Lambda$ is uniquely determined by its
itinerary in the collection of rectangles $R_i$.

 Proposition~\ref{p=selectcubes} applied with this value of $\varepsilon$
provides us with a chart $\varphi\colon U\to \RR^d$ satisfying $\Lambda\subset U$,
with a boundary size $\beta>0$ and a shear $\sigma$.
We also obtain an arbitrarily large integer $N\geq 1$
and a subhorseshoe $\Lambda_N$
such that $h_{top}(\Lambda_N,f)>h_{top}(\Lambda',f)-\varepsilon/2$
and which decomposes as a disjoint union $\Lambda_N=K\cup f(K)\cup\dots\cup f^{N-1}(K)$.

The initial and chart metrics are equivalent up to a uniform constant.
One can thus end the proof with the initial metric on $M$.
We will choose $\chi>0$ such that for any cube $Q$ in a family
$\cQ_{\varphi,k,\sigma}$, the $\chi.\diam(Q)$-neighborhood of the cube $(1-\beta/2)Q$
is contained in $Q$.
There exists also $\lambda\in (0,1)$ such that
for any $k$ large and any two cubes $Q,Q'\in \cQ_{\varphi,k,\sigma}$,
the quantity $10\lambda\diam(Q)$ is bounded by $\diam(Q')$.
In this way, for any $\rho>0$ sufficiently small, we can choose $k\geq 1$ large such that the cubes
$Q\in \cQ=\cQ_{\varphi,k,\sigma}$ have diameter in $[\lambda.\rho,\rho]$.
Note also that all points in the same cube have the same itinerary during $N$ iterates
with respect to the Markov partition.
Let $X_1,\dots,X_n$ be the collection of cubes $(1-\beta/2)  Q$ for $Q\in \cQ$ that meet $K$
and let $\widehat X_i$ denote the $\chi\rho$-neighborhood.
If $\rho$ has been chosen sufficiently small, then
the sets $f^\ell(\widehat X_j)$ for $0\leq \ell<N$ and $1\leq j\leq n$ are pairwise disjoint.

Let $V$ be the union of the cubes
$X_1,\dots,X_n$,
and let $V_\ell=\bigcap_{j=1}^{\ell} f^{-j.N}(V)$.
Fix any point $x\in K$ and $Q\in \cQ$ such that
$x\in (1-2\beta)  Q$. Since $\mu^u$ has full support, we have
$$\mu^u_x((1-\beta)  Q)>0.$$

Note that if there is a transition $Q\rightarrow Q'$ then for any point
$x'\in K$ belonging to $Q'\cap f^N(W^u(x,\rho_0))$
we have $W^{u}(x',\rho_0)\subset f^N(W^{u}(x,\rho_0))$ and
moreover, the non-empty connected set
$f^{-N}(Q')\cap W^u(x,\rho_0)$ is contained in
$(1-\beta/2)  Q$.
Since $f$ has constant Jacobian along the unstable leaves for
the measures $\mu^u$, by applying inductively the proposition we obtain that for each $\ell\geq 1$,
\begin{equation*}
\begin{split}
\sum_{Q \rightarrow Q_1\rightarrow\dots\rightarrow Q_\ell}
\mu^u_x&\left( f^{-\ell N}({\textstyle (1-\frac \beta 2)}  Q_\ell)\cap
f^{-(\ell-1) N}((1-{\textstyle\frac \beta 2})  Q_{(\ell-1)})
\dots\cap (1-{\textstyle\frac \beta 2})  Q
\right )\\
&\geq e^{-\varepsilon\ell} \cdot \mu^u_x\left((1-\beta)  Q\right).
\end{split}
\end{equation*}

Hence
$$
\mu^u_x\left(V_\ell\cap (1-\beta)  Q)\right)>
e^{-\ell\varepsilon}\mu^u_x\left((1-\beta)  Q\right).
$$
Integrating over the different plaques $W^u(x,\rho_0)$
with $x\in (1-2\beta)  Q\cap \Lambda_N$,
there exists $C_1>0$ uniform in $\ell$ such that the measure of maximal entropy $\mu$
on $\Lambda_N$ satisfies:
\begin{equation}\label{e.measure-bound}
\mu\left(V_\ell\cap (1-\beta)  Q)\right)>
C_1e^{-\ell\varepsilon}.
\end{equation}

\begin{figure}
\includegraphics[scale=0.32]{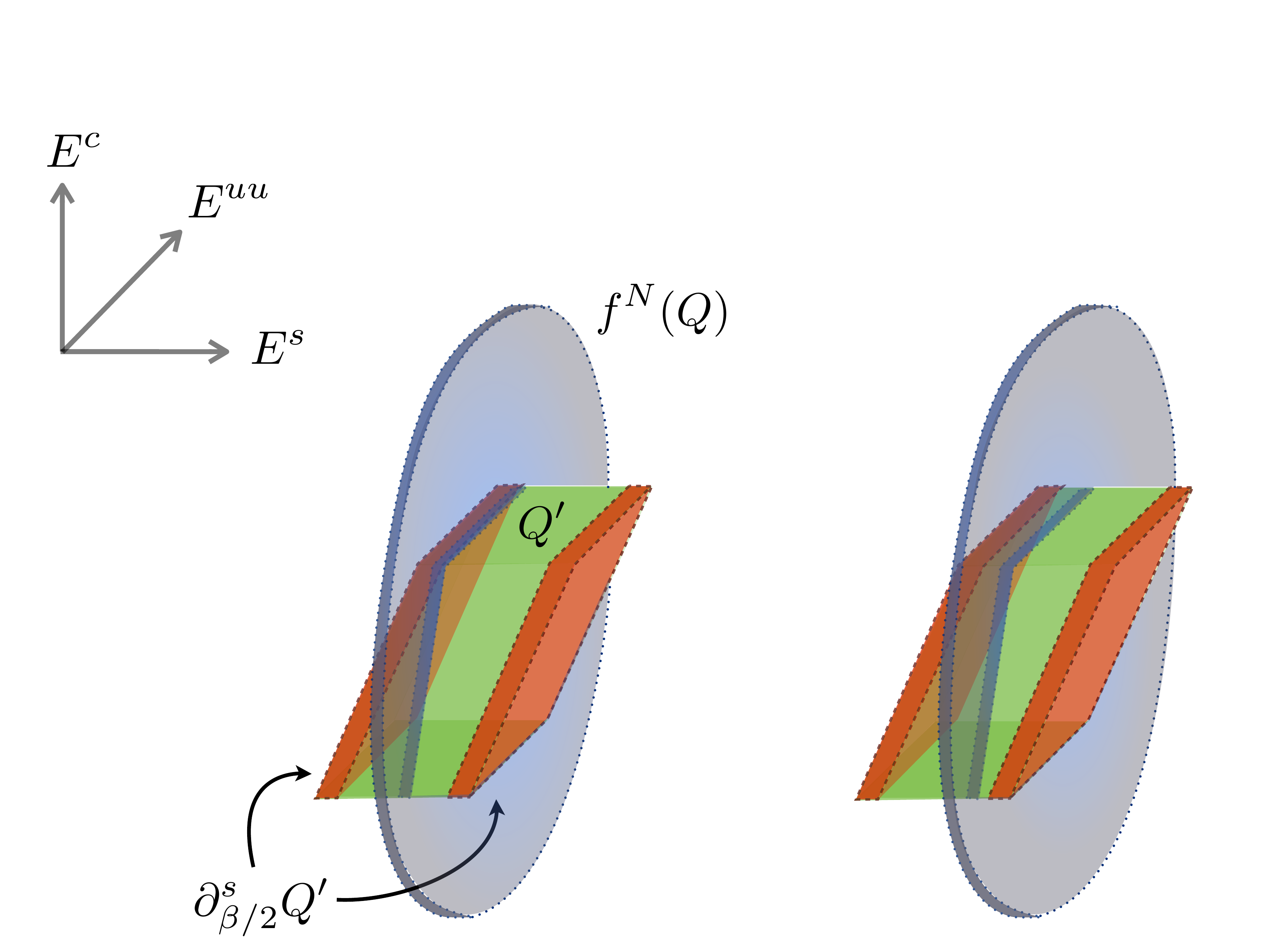}
\caption{\label{f.sbad} An s-bad cube $Q'$ (left) and a transition $Q\to Q'$ (right).}
\end{figure}

Since $\mu$ is the Gibbs state for the potential $\psi=0$ on $\Lambda_N$,
the measure of points that follow a fixed itinerary of length $q$ (with respect to the
Markov partition)
is smaller than $C_2\exp(-q\cdot h_{top}(\Lambda_N,f))$ for some $C_2>0$ uniform in $q$,
see~\cite{bowen}.
Using~\eqref{e.measure-bound}, we deduce that the number
of different itineraries of length $q=\ell\cdot N$ starting from $(1-\beta)  Q$
and contained in $V_{\ell}$ is larger than $C_3\exp(q\cdot (h_{top}(\Lambda_N,f)-\varepsilon/N))$.
Passing to the limit as $q$ goes to $+\infty$  proves that the topological
entropy of the maximal invariant set in $V_\ell\cup f(V_\ell)\cup\dots\cup f^{N-1}(V_\ell)$
is larger than $h_{top}(\Lambda_N,f)-\varepsilon/N$, hence larger than $h_{top}(\Lambda,f)-\varepsilon$.

We have proved that, for each $\varepsilon>0$,
there is a decomposition $\Lambda=K\cup f(K)\cup\dots\cup f^{N-1}(K)$
such that the conclusion of the theorem holds for the horseshoe $K$ and the diffeomorphism
$f^N$.
Since $K, f(K),\dots, f^{N-1}(K)$ are pairewise disjoint, the conclusion holds also
for $\Lambda$ and $f$ by reducing $\lambda,\chi\in (0,1)$: considering
a family of connected cubes $X_1,\dots,X_n$ with small diameter associated to $K$ and $f^N$,
one gets a family $f^\ell(X_j)$, $0\leq \ell<N$, $1\leq j\leq n$, which satisfies
the required properties for $f$ and $\Lambda$.
This completes the proof of Theorem~\ref{t=ballselection}. \eproof
\medskip

\subsection{Proof of Proposition~\ref{p=selectcubes}}\label{ss.proof-selectcube}
The proof uses the chart metric.
Consider any horseshoe $\Lambda_N$ with $N$ larger than some $N_0\geq 1$,
a chart $\varphi$ whose connected components have  diameter smaller than some $\rho_0>0$,
a cube $Q\in \cQ=\cQ_{\varphi,\sigma,k}$ and any $x\in \Lambda_N\cap (1-2\beta)  Q$.

We say that a cube $Q'\in \cQ$ is \emph{s-bad} (with respect to the image $f^N(Q)$)
if  its stable boundary $\partial^s_{\beta/2} Q'$ intersects $f^N(Q)$
(see Figure~\ref{f.sbad}).

We will assume that  the images of the unstable $\beta$-boundaries
is larger than the size of the cubes:
\begin{equation}\label{e.boundbeta1}
\beta\exp(N_0\widehat \nu)>10.
\end{equation}

Any cube $Q'\in \cQ$ that intersects
$f^N((1-\frac 3 4\beta)  Q\cap W^u(x,\rho_0))\cap \Lambda_N$
satisfies one of the following cases:
\begin{itemize}
\item either there exists a transition $Q\rightarrow Q'$,
\item or $Q'$ is s-bad.
\end{itemize}
(See Figure~\ref{f.transitions}.)
Indeed $Q'$ has size $2^{-k}$ and by~\eqref{e.boundbeta1}
cannot intersect $f^N(\partial^u_{\beta/2}Q)$,
which is at distance larger than $\exp(N\widehat \nu)\frac \beta {10}2^{-k}$ from $f^N((1-\frac 3 4 \beta)  Q)$.
\medskip

\begin{figure}
\includegraphics[scale=0.38]{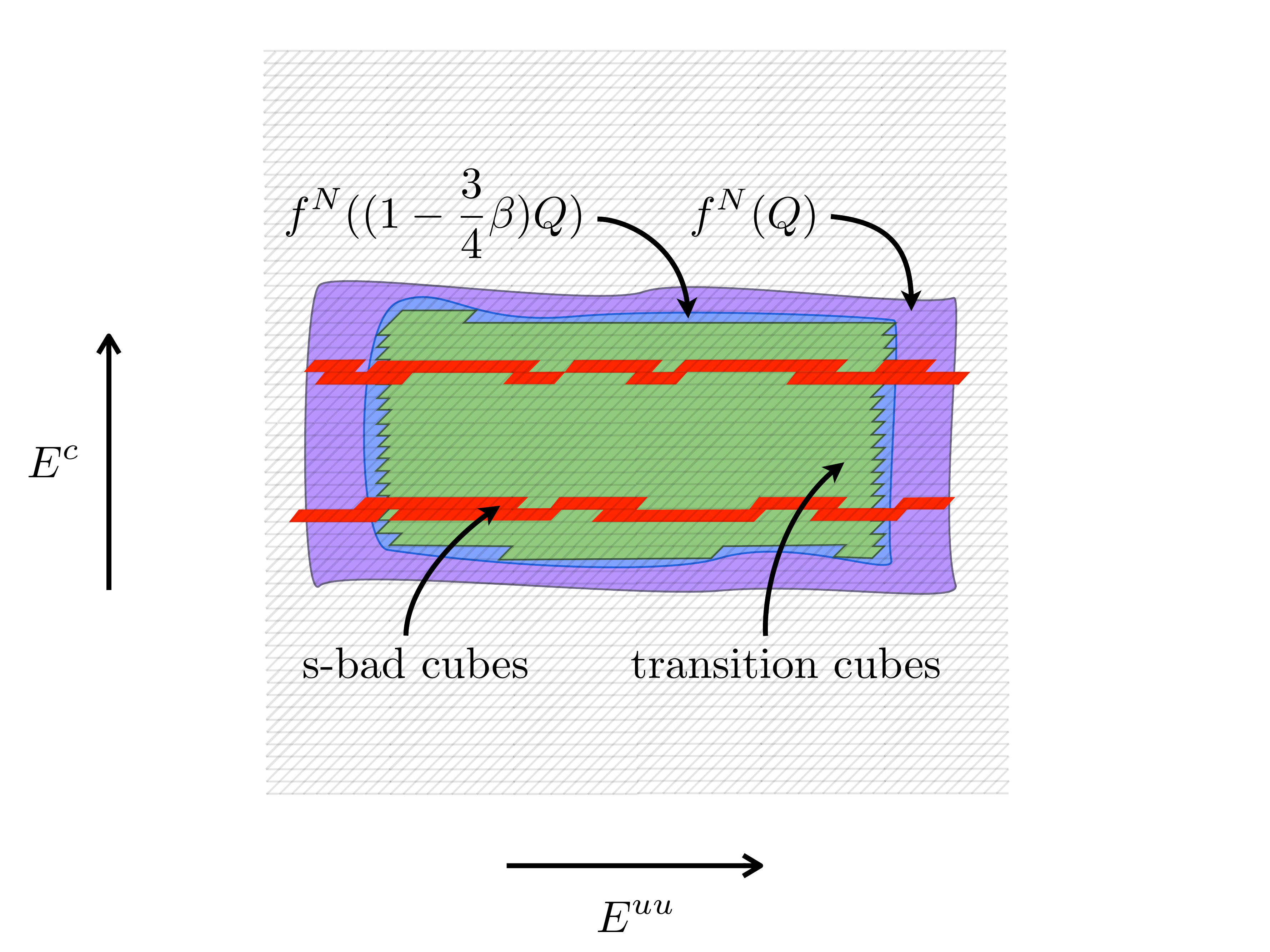}
\caption{\label{f.transitions} Transition cubes in $f^N(W^u(x,\rho_0))$.}
\end{figure}

In the following the main point is to bound the measure of s-bad cubes
that intersect $f^N((1-\beta)  Q\cap W^u(x,\rho_0))\cap \Lambda_N$
\medskip

\subsubsection{Measure of cube boundaries.} For the particular geometry of the cubes that we've defined,
the reverse doubling inequality~\eqref{e.reversedoubling} can be improved.

\begin{lemma}\label{l.density}
For every $\delta>0$, there is $\eta\in(0,1)$ such that for any $\sigma\in (0,1)$
the following property holds if $\rho_0$ is sufficiently small.

For any $N\geq N_0$, any $x\in\Lambda_N$ and
any cube $Q=Q_{r, \sigma}(y)$ such that  $(1-3\sigma)  Q$ and $W^u(x,\rho_0)$ intersect,
the measure $\mu^u_x$ induced on $W^u(x,\rho_0)$
by the disintegration of the measure of maximal entropy of $\Lambda_N$  satisfies:
$$\mu^u_x(Q\setminus (1-\eta)  Q)\leq
\delta \sum_{Q'\in \cN^u(Q)} \mu^u_x((1-\eta)  Q').$$
\end{lemma}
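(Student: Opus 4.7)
The plan is to adapt the reverse-doubling argument of Theorem~\ref{t.doubling-uniform} to the geometry of the sheared cube. Two ingredients will be needed beyond the reverse-doubling proof: the shear $\sigma$ will eliminate the contribution of the stable-face boundary of the shell, and an iterated version of the split Markov descent will produce the arbitrarily small constant $\delta$.

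First, I would use the shear to show that the stable portion of the boundary shell does not meet the plaque. A direct coordinate calculation in the chart $\varphi$, comparing the sheared-cube parametrization $z = \varphi^{-1}(y + r L_\sigma(s_1, \ldots, s_d))$ to the plaque (which is $C^1$-close to the $(e_1, \ldots, e_{d_u})$-plane through $x$), should show that for $x \in (1-3\sigma) Q$ the $f_j$-coordinates of any point of the plaque inside $Q$ stay in the $(1-\eta)$-stable range of $Q$, provided $\eta < \sigma$. Consequently $(Q \setminus (1-\eta) Q) \cap W^u(x, \rho_0) \subset \partial^u_\eta Q \cap W^u(x, \rho_0)$, and it suffices to bound the measure of the unstable shell.

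Next, for each point $\zeta \in \partial^u_\eta Q \cap W^u(x, \rho_0) \cap \Lambda_N$, I would build the nested sequence of Markov preimage domains $\Delta_{\zeta, k}$ as in the proof of Theorem~\ref{t.doubling-uniform}, using the split Markov partition from Proposition~\ref{p.split}. Each $\Delta_{\zeta, k}$ splits as $\Delta_{\zeta, k, \pm}$ of equal $\mu^u_x$-mass, with the two halves separated along the contracted central curve $f^{-k}(\gamma)$ by a definite fraction of its length. The selection will be to take $k=k(\zeta)$ as the largest index for which $\Delta_{\zeta, k}$ is still contained in $\bigcup_{Q' \in \cN^u(Q)} (1-\eta/2) Q'$; at this scale the central-curve separation exceeds the shell thickness $\eta r$, so at most one of the two halves meets the unstable shell. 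Iterating within the shell-intersecting half, after $n$ iterations one gets $\mu^u_x(\Delta_\zeta \cap (Q \setminus (1-\eta) Q)) \leq 2^{-n}\, \mu^u_x(\Delta_\zeta)$. Summing over pairwise-disjoint $\Delta_\zeta$ covering the shell part of the plaque, which lie in $\bigcup_{Q'\in\cN^u(Q)}(1-\eta/2) Q'$, and absorbing the enlargement factor between $(1-\eta/2)$ and $(1-\eta)$ into a uniform constant, yields $\mu^u_x(Q \setminus (1-\eta)Q) \leq C \cdot 2^{-n} \sum_{Q'\in\cN^u(Q)} \mu^u_x((1-\eta) Q')$. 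Choosing $n$ large (depending on $\delta$) and then $\eta$ small enough that the Markov-domain separations at all $n$ levels of descent exceed the local shell thickness will then yield the desired inequality.

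The hard part will be the geometric claim underlying the iteration: the central-curve split separates the two halves of each Markov domain in only one direction (the center $E^c$), whereas the unstable shell $\partial^u_\eta Q$ has faces in every unstable direction. The resolution should rely on the fact that the Markov domains $\Delta_{\zeta, k}$ contract exponentially in \emph{every} unstable direction under backward iteration (by uniform expansion of $E^u$), so that the descent eventually produces sub-domains small enough that each $\Delta_{\zeta, k, \pm}$ lies either entirely in the interior $(1-\eta) Q'$ of some neighbor cube or in the shell---which is what is required to activate the $\tfrac{1}{2}$-reduction at each step.
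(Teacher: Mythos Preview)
Your proposal correctly identifies the core machinery (split Markov domains, equal-mass halves separated along the central curve), and you correctly isolate the hard part: the central split is one-dimensional while the unstable shell has faces in every unstable direction.  But your proposed resolution of this hard part is wrong, and in fact you have inverted the role of the shear.

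The shear $L_\sigma$ is \emph{not} there to eliminate the stable-face part of the shell.  Its actual purpose is precisely to resolve the difficulty you flag at the end.  Because $L_\sigma(e_{d_u}) = e_1 + \cdots + e_{d_u} + \sigma(f_1 + \cdots + f_{d_s})$ has a nonzero component in \emph{every} $e_i$ direction, each unstable face of $Q$ (including the ones ``normal'' to the strong-unstable directions $e_1,\dots,e_{d_u-1}$) is transverse to the central direction $e_{d_u}$.  Intersected with the plaque, the complement $(1+\eta_0)Q \setminus \bigcup_{Q'\in\cN^u(Q)}(1-\eta_1)Q'$ is therefore a union of $2d_u$ thickened hypersurfaces, each of \emph{bounded width along $E^c$} (proportional to $\eta_1 r$).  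This is what allows the central split to escape each face.  Your alternative---``the domains contract in every unstable direction so eventually each half lies entirely inside or entirely in the shell''---does not give what you need: the dichotomy allows both halves to lie in the shell, and contraction alone gives no mechanism forcing one half out.  Without the shear, an $e_1$-face would be \emph{parallel} to $e_{d_u}$ inside the plaque, so no amount of central splitting would ever separate a half from it.

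Once the shear is used correctly, the paper's argument runs in two nested iterations.  The inner step (a sublemma) fixes $\eta_0$ and finds $\eta_1 \ll \eta_0$: for each $\zeta$ in the $\eta_1$-shell, one chooses $k_0$ so the central-curve separation at level $k_0$ exceeds the $E^c$-width $\sim \eta_1 r$ of the slabs; then the nesting property of the split Markov partition (one of $\Delta_{\zeta,k,+},\Delta_{\zeta,k,-}$ contains all of $\Delta_{\zeta,k+1}$) guarantees that within the finite range $k_0,\dots,k_0+2d_u$ some half avoids all $2d_u$ slabs and hence sits in a neighbour interior $(1-\eta_1)Q'$, while the containing domain stays in the $\eta_0$-shell.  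This yields one factor $\tfrac12$.  The outer iteration then constructs a sequence $\eta_0 > \eta_1 > \cdots > \eta_k$ with $2^{-k} < \delta$ and telescopes these one-step inequalities; the final $\eta$ is $\eta_k$.  Your single-$\eta$, $n$-fold iteration ``within the shell-intersecting half'' does not work as stated, because the central separation shrinks at each Markov descent while the shell width $\eta r$ stays fixed: you cannot sustain the ``at most one half meets the shell'' claim for $n$ consecutive levels without letting the shell width shrink between steps, which is exactly what the paper's sequence of $\eta_\ell$'s does.
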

\begin{proof} The proof is similar to that of Theorem~\ref{t.doubling-uniform}.
We first introduce for each point $\zeta\in \Lambda_N$ and each $k\geq 1$
the domains $\Delta_{\zeta,k}$ and $\Delta_{\zeta,k,\pm}$
as intersections of the $k$-th backward image of rectangles
$f^{-k}(R_i)$ or $f^{-k}(R_{i,\pm})$ with the unstable plaque of $\zeta$.

\begin{sublemma}
For every $\eta_0>0$, there exists $\eta_1\in(0,\eta_0)$
such that for any $\sigma\in (0,1)$
the following property holds if $\rho_0$ is sufficiently small.

For any $N\geq N_0$, any cube $Q=Q_{r, \sigma}(y)$ and any $z\in (1-3\sigma)  Q\cap \Lambda_N$,
it holds that any point $\zeta$ in
$\Lambda_N\cap (1+\eta_1)  Q\setminus (1-\eta_1)  Q$
belongs to some preimage $\Delta_{\zeta,k}$ such that:
\begin{enumerate}
\item\label{l.density1} $\Delta_{\zeta,k}\subset (1+\eta_0)  Q\setminus (1-\eta_0)  Q$, and
\item\label{l.density2} $\Delta_{\zeta,k,-}$ or $\Delta_{\zeta,k,+}$ is contained in
a cube $(1-\eta_1)  Q'$ with $Q'\in \cN^u(Q)$.
\end{enumerate}
\end{sublemma}
\begin{proof}
Assuming that $\rho_0$ is small enough, the bundles
$E^s$, $E^{c}$, $E^{uu}$
(viewed in the charts) are $C^0$-close to constant bundles and
the unstable plaques are $C^1$-close to affine spaces.
The sets $\Delta_{\zeta,k}$ are 
 inside the unstable plaque of $z$,
which is stretched along a central curve $\gamma_k$ and very thin in the strong unstable direction (see~\eqref{e.thickedcurve}).

Since $z\in (1-3\sigma)  Q$, the plaque $W^u(z,\rho)$ intersects
any $Q'\in \cN^u(Q)$ along its unstable boundary. The intersection with each
unstable face of $Q'$ is transverse.
It follows that  the set $(1+\eta_0)  Q\setminus \bigcup_{Q'\in \cN^u(Q)} (1-\eta_1)  Q'$
is a union of $2 {d^u}$ thickened hypersurfaces $S_1,\dots,S_{2 {d^u}}$
of the unstable plaque of $z$ whose width along the central direction $E^{c}$
belongs to $[\eta_1r/2,2\eta_1r]$.

By~\eqref{e.length},
for any $\zeta\in \Lambda_N\cap (1+\eta_1)  Q\setminus (1-\eta_1)  Q$,
one can consider a domain $\Delta_{\zeta,k_0}$ associated to a central curve
$f^{-k_0}(\gamma_{k_0})$ of length contained in
$[2\eta_1rC^2\frac {\widehat L} L, \kappa^{-1}\eta_1rC^2\frac {\widehat L} L]$ (using the constants
$C,L,\widehat L,\kappa>0$ introduced in the proof of Theorem~\ref{t.doubling-uniform}).
The two domains $\Delta_{\zeta,k_0,+}$ and $\Delta_{\zeta,k_0,-}$ are associated to
subintervals $I^-,I^+\subset \gamma_{k_0}$ whose $k_0$-th preimages are separated by
$C^{-2}\frac L{\widehat L}\length(f^{-k_0}(\gamma_{k_0}))$, which is larger than $2\eta_1r$.
It follows that only one domain $\Delta_{\zeta,k_0,+}$ or $\Delta_{\zeta,k_0,-}$
can intersect each thickened hypersurface $S_i$.
By the definition of split Markov partition, either
$\Delta_{\zeta,k_0,+}$ or $\Delta_{\zeta,k_0,-}$ must contain both
$\Delta_{\zeta,k_0+1,+}$ and $\Delta_{\zeta,k_0+1,-}$.
We thus deduce that
in the family $\Delta_{\zeta,k_0}$, \dots $\Delta_{\zeta,k_0+2{d_u}}$,
there exists a domain $\Delta_{\zeta,k}$ such that
either $\Delta_{\zeta,k,-}$ or $\Delta_{\zeta,k,+}$ is disjoint from the thickened hypersurfaces
$S_1,\dots,S_{2 {d^u}}$, and hence is contained in a cube
$(1-\eta_1)  Q'$ with $Q'\in \cN^u(Q)$.

By~\eqref{e.length}, the larger domain $\Delta_{\zeta,k_0+2^{d_u}}$
is associated to a curve $\gamma_{k+2^{d_u}}$ whose length is smaller
than $\left(\frac 1 {2 \kappa}\right)^{2^{d_u}} \kappa^{-1}\eta_1rC^2\frac {\widehat L} L$.
It is thus contained in $(1+\eta_0)  Q\setminus (1-\eta_0)  Q$, provided
$\eta_1$ is chosen so that
$$\eta_0>\left(\frac {1} {2 \kappa}\right)^{2 {d_u}}  C^2\frac {\widehat L} L\eta_1.$$
\end{proof}

For $\eta_1<\eta_0$ as in the previous sublemma,
any point $\zeta$ belongs to a maximal set $\Delta_\zeta$ satisfying  conditions~\ref{l.density1}
and~\ref{l.density2}.
In particular, the domains $\Delta_\zeta$ are disjoint or equal, they cover
$\Lambda_N\cap (1+\eta_1)  Q\setminus (1-\eta_1)  Q$,  and they satisfy
$$  \mu_x^u(\Delta_{\zeta})\leq \frac 1 2 \sum_{Q'\in \cN^u(Q)} \mu^u_x(\Delta_{\zeta}\cap (1-\eta_1)  Q').$$
We obtain that
$$\mu^u_x((1+\eta_1)  Q\setminus (1-\eta_1)  Q)\leq
\frac 1 2 \sum_{Q'\in \cN^u(Q)} \mu^u_x((1-\eta_1)  Q'\cap (1+\eta_0)  Q
\setminus (1-\eta_0)  Q).$$
Applying the sublemma inductively, we construct a sequence
$0<\eta_k<\eta_{k-1}<\dots<\eta_0<1$ such that
$2^{-k}<\delta$ and
$$\mu^u_x((1+\eta_\ell)  Q\setminus (1-\eta_\ell)  Q)\leq
 2^{-\ell} \sum_{Q'\in \cN^u(Q)} \mu^u_x((1-\eta_\ell)  Q'\cap (1+\eta_{\ell-1})  Q
\setminus (1-\eta_{\ell-1})  Q),$$
for $\ell =  0 ,\ldots, k$. Thus for  $\eta=\eta_k$, we have:
$$\mu^u_x(Q\setminus (1-\eta)  Q)\leq
\delta \sum_{Q'\in \cN^u(Q)} \mu^u_x((1-\eta_1)  Q'),$$
which implies the conclusion of  Lemma~\ref{l.density}.
\end{proof}
\bigskip

\subsubsection{Localization of s-bad cubes.}
In order to control the image $f^N(Q)$ of a cube, we require that it be smaller than $\rho_1$
(and contained in the domain of the chart $\varphi$):
\begin{equation}\label{e.boundN}
\exp(N_0\widehat \lambda)\;2^{-k_0}<\rho_1,
\end{equation}
and that its diameter (in the chart) along the coordinate space $\{0\}^{d_u}\times \RR^{d^s}$ be less than $\frac \beta 2 2^{-k}$:
\begin{equation}\label{e.boundbeta2}
\exp(-N_0\nu)<\frac \beta 2.
\end{equation}
\medskip

We define a \emph{strong unstable strip} of an unstable plaque
$W^u(z,\rho_0)$, $z\in \Lambda$, as the region bounded by two strong unstable leaves
$W^{uu}(z_1,2\rho_0)$, $W^{uu}(z_2,2\rho_0)$ with $z_1,z_2\in W^u(z,\rho_0)$:
this is the set of points $\zeta\in W^u(z,\rho_0)$ that belong to a central curve
in $B(z,2\rho_0)$ joining $W^{uu}(z_1,2\rho_0)$ to $W^{uu}(z_2,2\rho_0)$.

Fix a continuous central cone field, that is, at each point $x$ close to $\Lambda$
a cone in $T_xM$ that is a small neighborhood of $E^{c}(x)$.
The distance between two different $W^{uu}$-leaves in $W^u(y,\rho_0)$
is the infimum of the length of curves joining the two leaves and tangent to the
central cone field. This allows us to define the width of a strong unstable strip and the distance between two strong unstable strips.
Note that the strong unstable manifolds $W^{uu}$  form a $C^{1+\alpha}$ subfoliation of any unstable manifold $W^u(y)$, so if $\rho_0$ is small and the central
cones are thin, the length of the central curves differ by a multiplicative constant, which we could take to be as close to $1$ as we want.
\medskip

The next statement asserts that s-bad cubes are contained in a union of strips $L_1,\dots,L_q$
(see Figure~\ref{f.split}) that are well separated by a distance $r>0$ and all have the same width $\eta r$ where $\eta\in (0,1/2)$ is chosen small, but large enough relative to $\beta$ so that that the following condition is satisfied:
\begin{equation}\label{e.boundbeta3}
\beta <d_s^{-2d_s}\left(\frac \eta {100}\right)^{(d_s+1)}.
\end{equation}

\begin{lemma}\label{l.strip}
For any $\beta,\sigma,\eta\in (0,1)$, $N_0\geq 1$ satisfying ~\eqref{e.boundbeta2} and~\eqref{e.boundbeta3}, and
such that $\sigma < \beta/2$, 
the following property holds if $\rho_0$ is sufficiently small.

For any $N\geq N_0$, choose $k_0$ satisfying~\eqref{e.boundN}.
For any $k\geq k_0$, $Q\in \cQ_{\varphi,k,\sigma}$ and any $x\in Q\cap \Lambda_N$,
there exist strong unstable strips $L_1,\dots,L_q$
in $W^u(f^N(z),\rho_0)$ and $r\in [d_s^{-2d_s}\;(\frac \eta {100})^{d_s}\; \frac {2^{-k}}\sigma ,\frac {2^{-k}}\sigma]$
such that:
\begin{itemize}
\item each strip has width less than $\frac {\eta r} 4$;
\item the distance between two strips is greater than $4r$;
\item the intersection of each s-bad cube $Q'\in \cQ_{\varphi,k,\sigma}$
with $f^N(W^u(x,\rho_0))$ is contained in the union of the strips $\bigcup_j L_j$.
\end{itemize}
\end{lemma}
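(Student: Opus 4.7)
The strategy is to pull the $s$-bad condition back to a constraint on the central coordinate $\zeta_{d_u}$ of points in $f^N(W^u(x,\rho_0))$, and then cluster the resulting constraints using a pigeonhole argument on the $d_s$ offsets induced by the stable directions. Set $R:=2^{-k}/\sigma$.

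Begin with geometry: by~\eqref{e.boundbeta2}, the contraction $\exp(-N\nu)$ of the stable bundle makes the extent of $f^N(Q)$ along every stable coordinate $f_j$ at most $(\beta/2)2^{-k}$. Parametrize $W^u(f^N(x),\rho_0)$ by $(e_1,\dots,e_{d_u})$ in the chart, with graph functions $f_j=g_j(e)$; by choice of chart and for $\rho_0$ sufficiently small, the $g_j$ are as close to the base-point values $f_j^0$ as needed. For a cube $Q'=Q_{2^{-k},\sigma}(y')$ centered at $\varphi(y')=2^{-k}L_\sigma(n)$, the stable $\beta/2$-boundary in direction $j$ is cut out by
\[|\zeta_{d_u+j}-\varphi(y')_{d_u+j}-\sigma(\zeta_{d_u}-\varphi(y')_{d_u})|\in (2^{-k}(1-\beta/2)/2,\ 2^{-k}/2],\]
while the cube condition forces $|\zeta_{d_u}-\varphi(y')_{d_u}|\leq 2^{-k}/2$, so $Q'\cap f^N(W^u(x,\rho_0))$ already has central extent at most $2^{-k}$. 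Substituting $\zeta_{d_u+j}\approx f_j^0$ pins the central coordinates of this intersection, for an $s$-bad cube, to lie near one of the values $\sigma^{-1}f_j^0-R(n_{d_u+j}\pm 1/2)$. Unioning over $n_{d_u+j}\in\ZZ$ and over $j=1,\dots,d_s$ gives $d_s$ (nearly) periodic ``bad'' families of central levels with spacing $R$ and offsets $c_j:=(\sigma^{-1}f_j^0)\bmod R$ in $[0,R)$.

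The heart of the proof is the following pigeonhole claim: there exists $r\in[d_s^{-2d_s}(\eta/100)^{d_s}R,R]$ such that every gap between consecutive offsets $c_j$ on $\RR/R\ZZ$ is either $\leq \eta r/8$ or $\geq 4r$. Writing the sorted gaps $g_1\leq\dots\leq g_{d_s}$ with $\sum g_i=R$, valid scales populate the free windows $(0,g_1/4]$, $(4g_i/\eta,g_{i+1}/4]$ (whenever $g_{i+1}/g_i>16/\eta$), and $(4g_{d_s}/\eta,\infty)$. Assuming none meets the target range gives $g_1<4d_s^{-2d_s}(\eta/100)^{d_s}R$, $g_{d_s}\geq\eta R/4$, and that the chain of indices with $g_i\in[4d_s^{-2d_s}(\eta/100)^{d_s}R,\eta R/4]$ has all consecutive ratios $\leq 16/\eta$; telescoping forces $16^{d_s}\geq d_s^{2d_s}100^{d_s}$, which is false since $16<100$.

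Finally, cluster the $c_j$ at scale $r$, propagate periodically, and define $L_1,\dots,L_q$ as the strong unstable strips in $W^u(f^N(x),\rho_0)$ whose central extents equal the cluster ranges enlarged by $\beta R/8$ on each side. By~\eqref{e.boundbeta3} we have $\beta R/4\leq \eta r/400$, so each strip has width $<\eta r/4$ and consecutive strips remain $\geq 4r$ apart. By the central-coordinate step, every $s$-bad cube meets $f^N(W^u(x,\rho_0))$ inside one of the $L_i$. The main obstacle is the pigeonhole step: the somewhat unusual lower bound $d_s^{-2d_s}(\eta/100)^{d_s}$ on $r$ is calibrated precisely so that this contradiction closes uniformly in $d_s$.
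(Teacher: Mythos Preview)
Your overall architecture matches the paper's proof: reduce ``$s$-bad'' to a constraint on the central coordinate $\zeta_{d_u}$, observe that modulo $R=2^{-k}/\sigma$ this constraint produces at most $d_s$ offsets $c_j$, and then select a scale $r$ at which these offsets cluster well. The paper carries out the scale selection by an inductive sublemma (collapse gaps shorter than a threshold, rescale to a circle with $\leq d_s-1$ points, recurse), whereas you attempt a direct telescoping on the sorted gap sequence. The direct route is attractive and the arithmetic does close against the bound $d_s^{-2d_s}(\eta/100)^{d_s}$, but your execution has a genuine gap.

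The problem is the width of your clusters. Your pigeonhole claim only guarantees that each gap between consecutive $c_j$ is either $\leq\eta r/8$ or $\geq 4r$. A cluster can therefore contain all $d_s$ offsets, with total spread up to $(d_s-1)\eta r/8$; for $d_s\geq 3$ this already exceeds $\eta r/4$, so your final assertion ``each strip has width $<\eta r/4$'' does not follow from what you have proved---you only bounded the $\beta R/4$ widening, not the cluster diameter. The paper's inductive construction avoids this because it outputs intervals of a \emph{fixed} length $\kappa$ (independent of how many points fall in each), and sets $r=\kappa R/(5a)$; the width bound then reads $\tfrac32\kappa R=\tfrac{15}{2}ar<\eta r/4$.

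Your approach is easily repaired: strengthen the dichotomy to ``each gap is $\leq \eta r/(8d_s)$ or $\geq 4r$''. Cluster diameters are then at most $(d_s-1)\eta r/(8d_s)<\eta r/8$, and adding $\beta R/4\leq\eta r/400$ still gives width $<\eta r/4$. The bad interval for a gap $g$ becomes $(g/4,\,8d_s g/\eta)$, with log-length $\log(32d_s/\eta)$; covering $[L,R]$ with $d_s$ such intervals forces $(32d_s/\eta)^{d_s}\geq R/L=d_s^{2d_s}(100/\eta)^{d_s}$, i.e.\ $32\geq 100d_s$, which fails for every $d_s\geq 1$. (Incidentally, even in your original version the endpoint should be $8g_i/\eta$, not $4g_i/\eta$, since $g\leq\eta r/8$ means $r\geq 8g/\eta$.)
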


\begin{figure}
\includegraphics[scale=0.36]{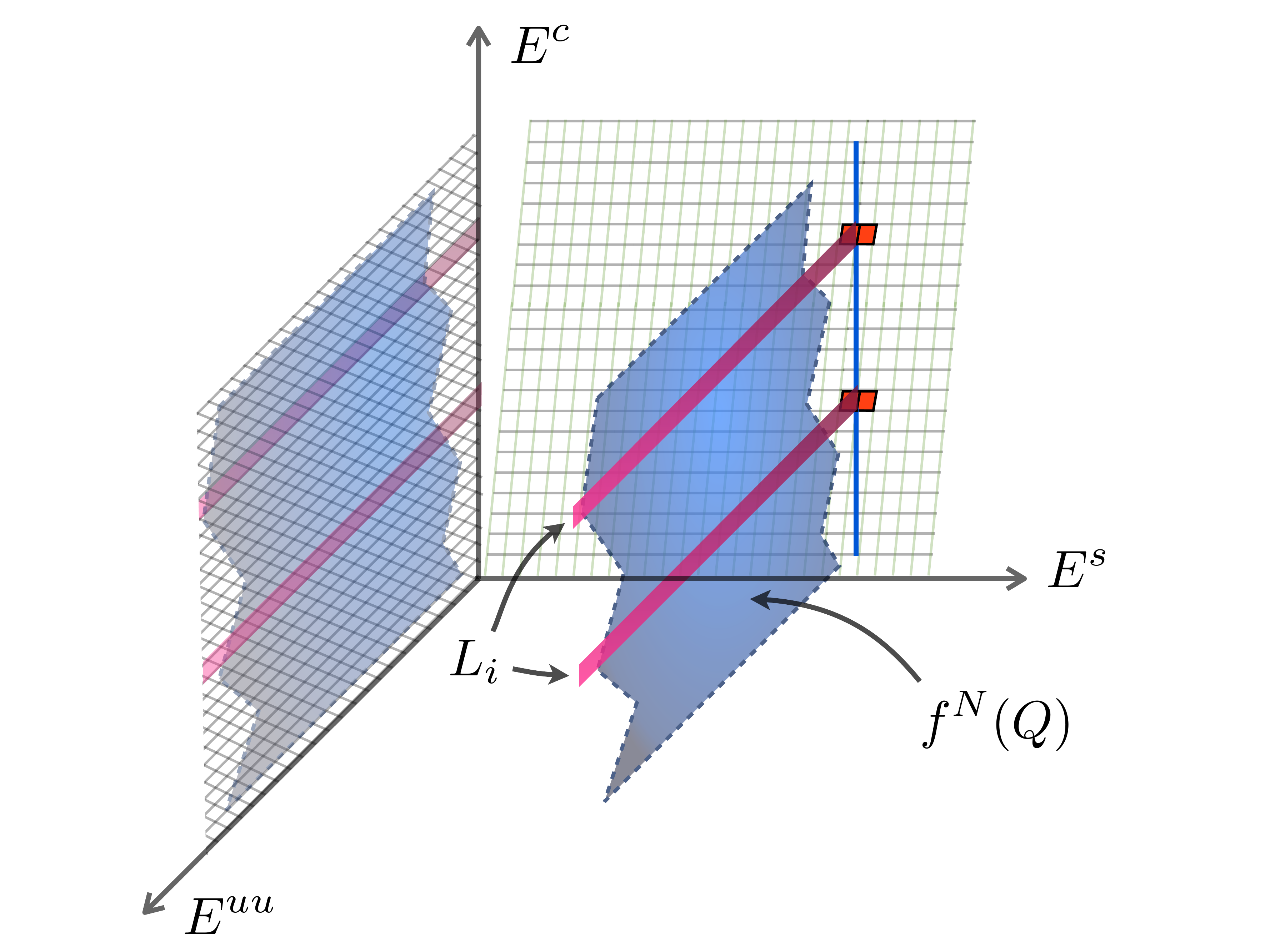}
\caption{\label{f.covering-sbad} Covering s-bad cubes by strong unstable strips.}
\end{figure}

\begin{proof}
By~\eqref{e.boundN},
the image $f^N(Q)$ has diameter smaller than $\rho_1$,
and the projection of $f^N(Q)$ on the space $\{0\}^{d_u}\times \RR^{d_s}$
(inside the chart $\varphi$)
has diameter less than $\frac \beta 2 2^{-k}$.
In particular, for any point $y\in f^N(Q)$, there exist $z\in W^u(f^N(x),\rho_0)$
such that $\varphi(z)-\varphi(y)$ belongs to $\{0\}^{d_u}\times \RR^{d_s}$
and has norm less than $\frac \beta 2 2^{-k}$.
It follows that
the s-bad cubes $Q'$ that intersect the plaque
$f^N(W^u(x,\rho_0))$ are a subset of those $Q'\in \cQ$ satisfying: 
$$
\partial_{\beta}^s Q' \cap W^u(f^N(x),\rho_0) \neq \emptyset.
$$
In the charts, the plaque $W^u(f^N(x),\rho_0)$ is close to a plane $H$ parallel to $\RR^{d_u}\times \{0\}^{d_s}$.
The intersection $H\cap \bigcup_{Q'\in\cQ}\partial^s Q'$ is contained in
a finite union $Z$ of hyperplanes parallel to $\RR^{d_u-1}\times \{0\}^{d_s+1}$ in $H$.
By projecting these hyperplanes on the $e_{d_u}$-axis, one obtains
a set $X$ of points  that is contained in the  translates of (at most) $d_s$ points under 
$(2^k\sigma)^{-1}\cdot \ZZ$. One can thus
project $X$ to a subset $\bar X$ of $\RR/(2^k\sigma)^{-1}\cdot \ZZ$ of cardinality $d_s$ and then
apply the following elementary lemma with $a=\frac \eta {50}$.

\begin{sublemma} \label{l=elem} Let $X \subset \RR/\ZZ$ be a finite subset of the circle
and $d=\#X$.
Then for every $a\in (0,1/2)$,  there exists
$\kappa\in [d^{-2d}(a/2)^{d},a/2]$ and a collection $I_1,\ldots, I_n \subset \RR/\ZZ$ of open intervals with the following properties:
\begin{itemize}
\item $X\subset I_1\cup\cdots\cup I_n$;
\item $\length(I_j) = \kappa$, for $j=1,\ldots, n$;
\item the length of the connected components of $(\RR/\ZZ)\setminus (I_1\cup\dots\cup I_n)$
is larger than $\kappa/a$.
\end{itemize}
\end{sublemma}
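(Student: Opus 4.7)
The plan is to choose the scale $\kappa$ just small enough that the largest consecutive gap of $X$ on the circle becomes strictly ``far,'' and then cover each cluster of nearby points by a single contiguous arc formed from overlapping length-$\kappa$ open intervals. The choice of $\kappa$ is dictated by the largest gap, and the $d^{-2d}(a/2)^d$ lower bound will turn out to be very generous.

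Let $g_1,\dots,g_d$ denote the lengths of the arcs between consecutive points of $X$ on $\RR/\ZZ$; since $\sum_i g_i = 1$, the largest gap satisfies $g^* := \max_i g_i \geq 1/d$. Fix a small $\varepsilon \in (0,1)$ and set
\[
\kappa := \min\!\Bigl(\tfrac{a}{2},\; (1-\varepsilon)\,g^*\cdot\tfrac{a}{a+1}\Bigr).
\]
Then $\kappa \leq a/2$ and by construction $\kappa(1+1/a) \leq (1-\varepsilon)g^* < g^*$, so at least one gap $g$ satisfies $g > \kappa(1+1/a)$ (call such gaps \emph{far}). Declare two consecutive points of $X$ to belong to the same cluster iff the gap between them is at most $\kappa(1+1/a)$; clusters are then separated by far gaps, and at least one cluster boundary exists.

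For each cluster with leftmost point $y_1$, rightmost point $y_m$, and diameter $D = y_m - y_1$, cover the cluster by $n=\lceil D/\kappa\rceil + 2$ open intervals of length $\kappa$, placed as overlapping tiles whose union is exactly the open arc $(y_1-\kappa/2,\, y_m+\kappa/2)$ of length $D+\kappa$; consecutive tiles overlap on a positive-length arc, so no complement component appears inside the cluster cover. The total covering is a disjoint collection of such cluster arcs, and the complement of the total covering therefore consists of exactly one arc per far inter-cluster gap. For a far gap $g$, since the cluster cover extends by $\kappa/2$ on each side of the cluster, the corresponding complement arc has length $g - \kappa > \kappa(1+1/a) - \kappa = \kappa/a$, giving the third required property.

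It remains to verify $\kappa \geq d^{-2d}(a/2)^d$. When the minimum in the definition of $\kappa$ equals $a/2$ (this always occurs for $d=1$, since $g^* = 1$), the inequality is immediate because $d^{-2d}(a/2)^{d-1}\leq 1$ for $a \in (0,1/2)$. Otherwise $\kappa = (1-\varepsilon) g^* a/(a+1) \geq (1-\varepsilon) a/(d(a+1)) \geq a/(4d)$ for $\varepsilon$ small and $a \leq 1$, and a direct estimate using $a^{d-1}\leq 2^{1-d}$ shows that $a/(4d) \geq d^{-2d}(a/2)^d$ for every $d\geq 2$ and $a\in(0,1/2)$. The main technical subtlety is the tiling bookkeeping inside each cluster: arranging the length-$\kappa$ open intervals so that they form a single contiguous open arc of length exactly $D+\kappa$ with no zero- or short-length internal complement components; this is a routine construction using, for instance, tiles with centers spaced by a common increment slightly less than $\kappa$.
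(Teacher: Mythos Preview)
Your proof is correct and takes a genuinely different route from the paper's. The paper argues by induction on $d$: it sets $\ell = d^{-2d}(a/2)^d$, and if all pairwise gaps exceed $(1+a^{-1})\ell$ it simply centers an interval of length $\kappa=\ell$ at each point; otherwise it collapses all short components, obtains a shorter circle carrying at most $d-1$ points, applies the inductive hypothesis with the slightly smaller parameter $a' = \tfrac{d-2}{d-1}a$, and then pulls the intervals back and enlarges them to absorb the collapsed portions. The delicate part is checking that the separation inequality survives the pullback-and-enlargement step, and this bookkeeping is precisely what forces the factor $d^{-2d}$ in the lower bound.

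Your argument is direct: you choose $\kappa$ as a fixed fraction of the largest gap $g^*\ge 1/d$, cluster points across the non-far gaps, and cover each cluster by a single contiguous arc built from overlapping length-$\kappa$ intervals. This avoids induction and the collapsing construction entirely, and in fact yields the much sharper bound $\kappa \gtrsim a/d$, so that the stated lower bound $d^{-2d}(a/2)^d$ is satisfied with enormous room to spare (as you observe). The paper's inductive scheme is what naturally produces bounds of the shape $d^{-2d}(a/2)^d$; your single-scale approach is both simpler and quantitatively stronger for this lemma, and since the downstream application only needs \emph{some} lower bound depending on $d$ and $a$, nothing is lost.
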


\begin{proof} The proof is by induction on $d$.
The cases $d=1$ and $d=2$ are easy.
For $d>2$, let us set $\ell=d^{-2d}(a/2)^{d}$.
If the minimum distance between distinct points in $X$ is at least $(1+a^{-1})\ell$, then we put an interval of diameter $\kappa:=\ell$ centered at each point of $X$ and the conclusion holds.

Otherwise we collapse all the connected components of $\RR/\ZZ$ of length less or equal to
$(1+a^{-1})\ell$. (There are at most $d-1$ many such components.)
We get a circle of length $L<1$ with at most $d-1$ points.
Rescaling the quotient circle to unit length and applying the inductive hypothesis with
$a'=\frac {d-2}{d-1}a$, we obtain a collection of intervals $I'_1,\dots,I'_n$
of length $\kappa'$ and separated by intervals of length larger than $\kappa'/a'$.

Pulling back the intervals $I'_j$ to the initial circle, we obtain intervals $\widetilde I_j$
of length $\kappa' L+\ell_j$ where $\ell_j$ is the sum of the lengths of the intervals
contained in $\widetilde I_j$ that have been collapsed.
One can enlarge the intervals $\widetilde I_j$ and get intervals
$I_1,\dots,I_n$ of length $\kappa:=\kappa' L+(d-1)(1+a^{-1})\ell$ and separated by distances larger than $\frac {\kappa'}  {a'} L -(d-1)(1+a^{-1})\ell$.

By definition, $\kappa\geq \ell$.
Using $L\leq 1$, $\kappa'\leq a'/2$, $(1+a^{-1})a/2<1$ and the definitions of $\kappa$, $\ell$ and $a'$, one gets easily $\kappa \leq a/2$.

In order to check that the intervals $I_j$ are separated by $\kappa/a$, we estimate
\begin{equation}\label{e.separate}
\begin{split}
&\left(\frac {\kappa'}  {a'} L  -(d-1)(1+a^{-1})\ell\right)-\kappa/a=\\
&\mbox{}\quad\quad\quad\quad
\frac {2(d-1)L\ell}{a^2}
\left(\frac{\frac a 2 \kappa'}{(d-1)(d-2)\ell}-\frac{(1+a)^2}{2L}\right).
\end{split}
\end{equation}
The induction assumption gives $\kappa'\geq (d-1)^{-2(d-1)}\left(\frac {a'} 2\right)^{d-1}$
and together with the definitions of $a'$ and $\ell$ we get:
$$\frac{\frac a 2 \kappa'}{(d-1)(d-2)\ell}\geq \frac{d^{2d}(d-2)^{d-2}}{(d-1)^{3d-2}}>\exp(1).$$
On the other hand,
$\frac{(1+a)^2}{2L}$ is smaller than $3/2$, since $a<1/2$. It follows  that~\eqref{e.separate} is positive, which concludes the proof.
\end{proof}

From the Sublemma~\ref{l=elem} 
we obtain $\kappa\in [d_s^{-2d_s}(a/2)^{d_s},a/2]$ and some subintervals $I_1,\dots,I_n$
of the circle.
Pulling back the intervals $I_i$ to the $e_{d_u}$-axis,
one can extract a finite collection of intervals $J_1,\dots, J_q$ of length $\frac \kappa \sigma2^{-k}$,
separated by $\frac \kappa {a\sigma} 2^{-k}$ and whose union contains $X$.

\begin{figure}[h]
\includegraphics[scale=0.32]{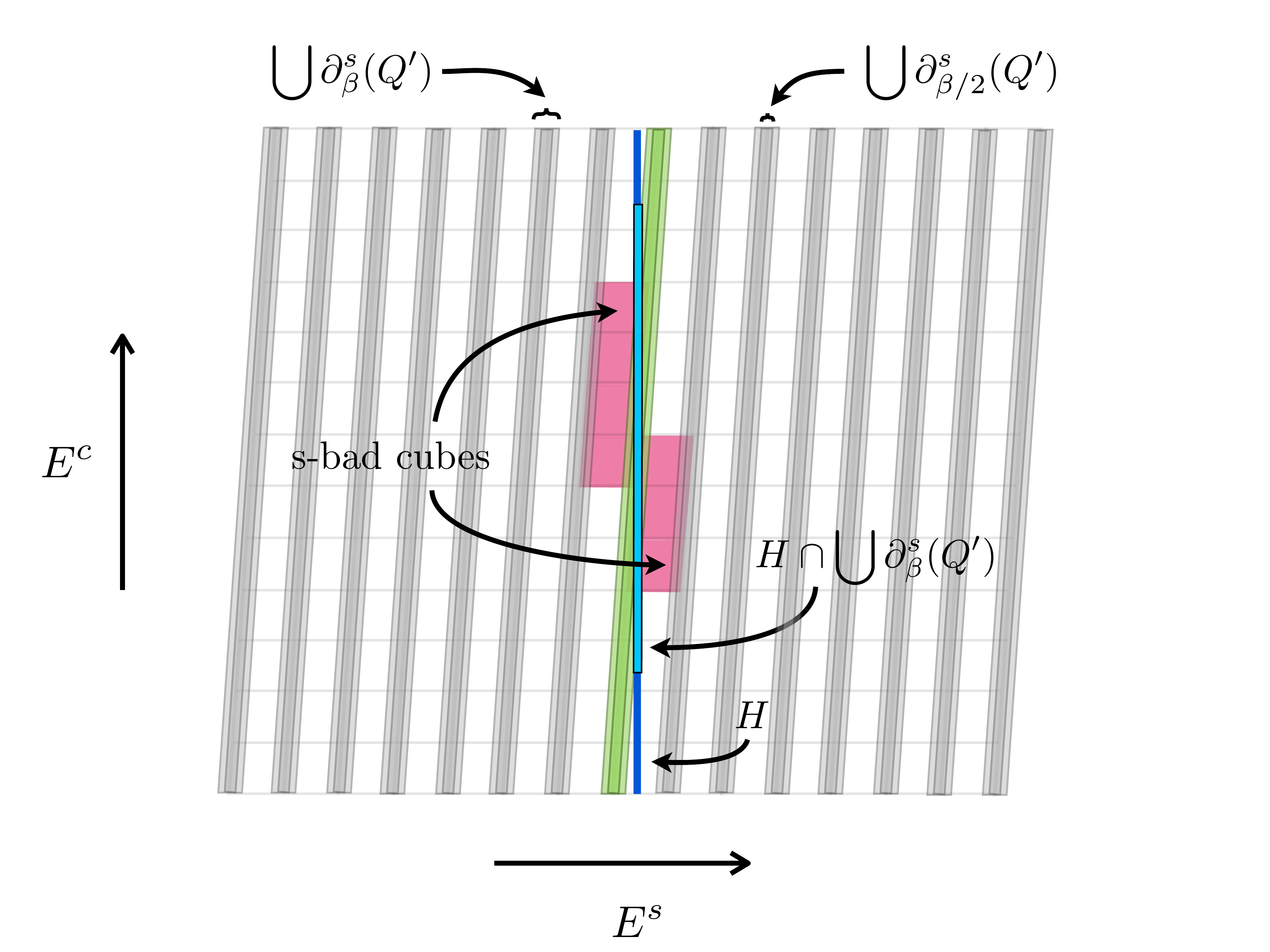}
\caption{\label{f.sbadcover} $H\cap \bigcup_{Q'\in\cQ}\partial^s_{\beta} Q'$ contains the intersection of $H$ with the s-bad cubes for $Q$.}
\end{figure}

Consider the intersection $H\cap \bigcup_{Q'\in\cQ}\partial^s_{\beta} Q'$ (see Figure~\ref{f.sbadcover}).  On the one hand,
since $\sigma < \beta/2$, it contains the intersection of $H$ with the s-bad cubes for $Q$.  On
the other hand,  $H\cap \bigcup_{Q'\in\cQ}\partial^s_{\beta} Q'$ is contained
in the $\frac {2\beta} \sigma 2^{-k}$-neighborhood of $Z$, hence in $q$
strips of $H$ of width smaller than $\frac 3 2 \frac { \kappa} \sigma2^{-k}$
(since by our choice of $\beta$ we have $\beta<\kappa/5$) and separated by $\frac 9 {10} \frac {\kappa} {a\sigma}2^{-k}$.
We obtain the strips $L_1,\dots,L_q$
by projection of the strips of $H$ to the plaque
$W^u(f^N(x),\rho_0)$ and set $r=\frac {\kappa} {5a\sigma}2^{-k}$:
the strips are separated by a distance larger than $4r$ and have width smaller
than $10a r$, which is smaller than $\frac {\eta r} 4$ by our choice of $a$.
We deduce the estimates on $r$ from the bounds on $\kappa$.
\end{proof}
\medskip

We then cover the strips obtained from Lemma~\ref{l.strip}
by a collection of unstable $\eta$-boundaries of cubes $C$ of size $r$
(see Figure~\ref{f.stripcover}).
We require the cubes $C$ to be smaller than the unstable $\beta$-boundary of the
image cube $f^N(Q)$:
\begin{equation}\label{e(u)ppera}
\frac \eta\sigma <{\textstyle \frac 1 {100}} \beta \exp(N_0\widehat \nu).
\end{equation}

\begin{corollary}\label{c.strip}
In the setting of Lemma~\ref{l.strip}, assume that~\eqref{e(u)ppera} holds.
Then
there exists a collection of cubes $C_1=Q_{r,\sigma}(y_1),\dots,C_n=Q_{r,\sigma}(y_n)$ such that:\begin{enumerate}
\item\label{E.1} each cube $C_i$ is disjoint from $f^N(\partial^u_{\beta/2}Q)$;
\item\label{E.2} each set $C_i\cap W^u(f^N(x),\rho_0)$ is non-empty and contained in
$f^N(Q)$;
\item\label{E.3} $(L_1\cup\dots\cup L_m)\cap f^N((1-\frac 3 4\beta)  Q)\subset \bigcup_i C_i$;
\item\label{E.4} if $C_i$ intersects $(L_1\cup\dots\cup L_m)\cap f^N((1-\frac 3 4\beta)  Q)$,
then $\cN^u(C_i)\subset\{C_1,\dots,C_n\}$;
\item\label{E.5} the interiors of the cubes $C_i$ are pairwise disjoint;
\item\label{E.6} the inner cubes $(1-\eta/2)  C_i$ and the strips $L_j$ do not intersect.
\end{enumerate}
\end{corollary}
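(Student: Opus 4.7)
The plan is to construct the $C_i$ as a finite subcollection of a translated lattice of $r$-cubes $rL_\sigma(\ZZ^d)$ in the chart $\varphi$, with one such translated lattice adapted to each strip $L_j$. Since distinct strips are separated by more than $4r$ while each has width strictly less than $\eta r/4$, cubes selected near one strip cannot interfere with those selected near another, and the unstable-neighbor closure required by (\ref{E.4}) will remain confined to a single tube around each $L_j$.

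Fix a strip $L_j$. In the chart it is approximately a slab inside the plaque $W^u(f^N(x),\rho_0)$, thin (width $<\eta r/4$) in the central coordinate $e_{d_u}$ and extended in the strong-unstable coordinates $e_1,\dots,e_{d_u-1}$. Let $a_j$ denote the $e_{d_u}$-coordinate of the midline of $L_j$. I would translate the lattice so that one of its layers has cube centers with $e_{d_u}$-coordinate exactly $a_j+r/2-\eta r/8$, placing $L_j$ in the lower stable boundary of those cubes. A direct calculation using the strict inequality on the width of $L_j$ shows that $L_j$ is then strictly disjoint from the inner cube $(1-\eta/2)C_i$ of every cube $C_i$ in that layer, giving (\ref{E.6}); meanwhile that single layer covers $L_j$ in the central direction since the strip's full $e_{d_u}$-range sits inside the cubes' $e_{d_u}$-ranges. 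I would then keep only those cubes whose plaque footprint meets $L_j\cap f^N\bigl((1-\tfrac{3}{4}\beta)Q\bigr)$ and adjoin every unstable neighbor $\cN^u(C_i)$ of each such selected cube. Since $\cN^u$-shifts are by vectors among $re_1,\dots,re_{d_u-1},rL_\sigma(e_{d_u})$, they stay inside the lattice $rL_\sigma(\ZZ^d)$, so property (\ref{E.4}) is enforced by construction and (\ref{E.5}) follows from the disjoint-interior property of lattice cubes (the cross-strip contribution being handled by the $>4r$ separation). Taking the union over $j=1,\dots,q$ of the selected collections produces the final family $\{C_1,\dots,C_n\}$.

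Properties (\ref{E.1}) and (\ref{E.2}) are the quantitative content of~\eqref{e(u)ppera}. By Lemma~\ref{l.strip} each cube has diameter $r\le \sigma^{-1}2^{-k}$, which by~\eqref{e(u)ppera} is much smaller than the $\beta$-scaled unstable extent of $f^N(Q)$; this bound absorbs a single $\cN^u$-displacement of size $r$. Consequently the cubes remain inside $f^N(Q)$ and disjoint from $f^N(\partial^u_{\beta/2}Q)$, which gives (\ref{E.1}) and the containment half of (\ref{E.2}). Non-emptiness of $C_i\cap W^u(f^N(x),\rho_0)$ in (\ref{E.2}) is built in, since each selected cube is centered near the plaque and a single $\cN^u$-step displaces the center off the plaque by only $O(r\sigma)$, safely inside the cube's stable half-width.

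The main obstacle is the behavior at the ends of the strip: a cube meeting $L_j\cap f^N\bigl((1-\tfrac{3}{4}\beta)Q\bigr)$ near the boundary of the trimmed strip forces, by (\ref{E.4}), unstable neighbors into the collection that do not themselves meet the trimmed strip. These extra cubes must still satisfy (\ref{E.1})--(\ref{E.2}) and not collide with cubes coming from another strip's lattice. The separation $>4r$ rules out cross-strip overlap because one $\cN^u$-step displaces a cube by at most $r$, while the margin left by~\eqref{e(u)ppera} absorbs the extra displacement without reaching $f^N(\partial^u_{\beta/2}Q)$ or exiting $f^N(Q)$.
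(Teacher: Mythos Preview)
Your approach is essentially the paper's: per-strip translated lattices of $r$-cubes, closed under unstable neighbors, with~\eqref{e(u)ppera} yielding (\ref{E.1})--(\ref{E.2}), the $>4r$ strip separation handling cross-strip disjointness in (\ref{E.5}), and the width bound $<\eta r/4$ giving (\ref{E.6}). The only differences are cosmetic. The paper shifts the lattice by $\tfrac12 e_{d_u}$ so that a chosen point $y_0\in L_j$ sits at the \emph{center} of a shared unstable face between two adjacent cube layers (both layers then meet $L_j$ near that face), whereas you offset so that $L_j$ lies just inside one face of a single layer; both placements make $(1-\eta/2)C_i$ miss $L_j$ and are otherwise interchangeable. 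One terminology slip: the face at fixed $e_{d_u}$-height is an \emph{unstable} face in the paper's convention (it is obtained by dropping the vector $L_\sigma(e_{d_u})$ from the spanning set), not part of the stable boundary $\partial^s Q$.
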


\begin{figure}[h]
\includegraphics[scale=0.38]{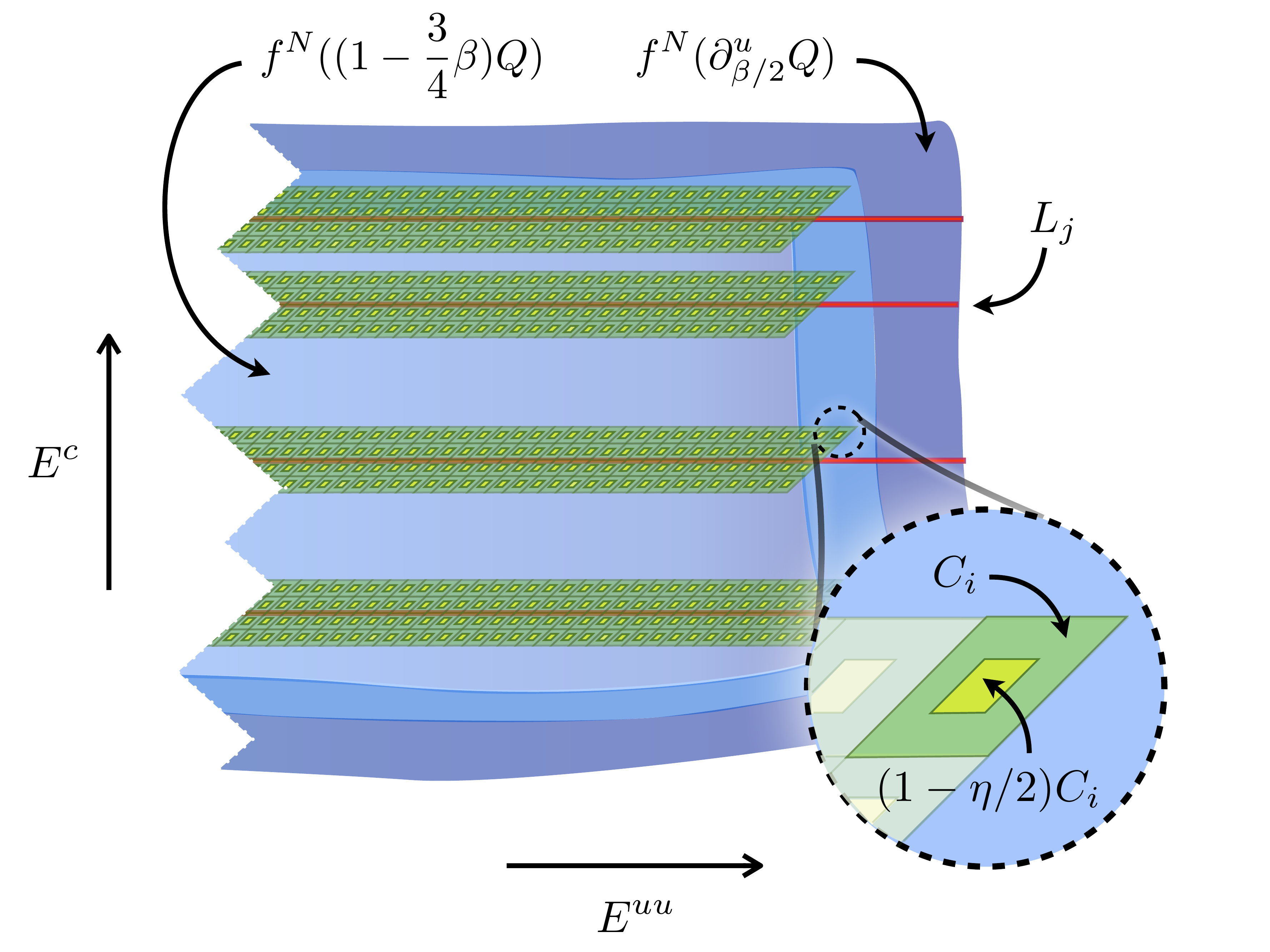}
\caption{\label{f.stripcover} Covering strips in $f^N(Q)$ with (larger) cubes $C_i$ in order to bound their measure.}
\end{figure}

\begin{proof}
For each strip $L_j$, we choose a point $y_0$ in $L_j$
and (in the chart $\varphi$) we consider the cubes 
of the form
$$C=Q_{r,\sigma}(y) ,\, \text{ with }
y\in \varphi^{-1}(\varphi(y_0)+rL_\sigma(\ZZ^d+{\textstyle \frac 1 2} e_{d_u}))$$
that either intersect the strip $L_j\cap f^N((1-\textstyle \frac 3 4 {\beta})  Q)$
or that have an unstable neighbor cube that intersects this strip.
By construction, the union of these cubes $C$ contains
$L_j\cap f^N((1-\frac 3 4\beta)  Q)$.

By the inequality~\eqref{e(u)ppera}, the size $r\leq \frac \eta{2\sigma}2^{-k}$
of the cubes $C$ is much smaller than 
the distance between $f^N((1-\frac 3 4\beta)  Q)$ and $f^N(\partial^u_{\beta/2} Q)$ in
the plaque $W^u(f^N(z),\rho_0)$. This implies item~(\ref{E.1}). Moreover the intersection of each cube $C$ with $W^u(f^N(z),\rho_0)$ is contained in
$f^N(Q)$.

Recall that $L_j$ is $C^1$-close to a plane spanned
by $e_1,\dots,e_{(d_u-1)}$ and has width smaller than $\eta r/4$.
Note also that $y_0$ belongs to the center of an unstable face of some adjacent cubes $C_0,C_0'$.
Any cube $C$ that intersects $L_j$ is the image of $C_0$ or $C_0'$ by a translation
by a bounded vector in $\ZZ e_1+\dots+\ZZ e_{(d_u-1)}$:
it also intersects $L_j$ near the center of an unstable face.
The other cubes $C$ are unstable neighbor cubes of those who intersect $L_j$.
In particular, item~(\ref{E.2}) holds.
Since the width of $L_j$ is smaller than $\eta r/4$, we deduce that the inner cubes $(1-\eta/2)  C$ do not intersect the strip $L_j$.
\medskip

The collection $\{C_1,\dots, C_n\}$ is the union of the collection of cubes $C$
associated to each strip $L_j$. Since the distance between the strips is larger than $4r$,
the cubes associated to $L_j$ do not intersect the cubes associated to $L_{j'}$ if $j\neq j'$.
The other items follow.
\end{proof}

\subsubsection{End of the proof of Proposition~\ref{p=selectcubes}}
Given $\varepsilon>0$, we make the following choices:
\begin{itemize}
\item $\delta \in \big(0, 3^{-(d_u+1)}(e^{\varepsilon/2}-1)\big)$ controls the $\mu^u$-measure of poorly crossed cubes.
\item The boundary size $\eta=\eta(\delta)$ is chosen according to Lemma~\ref{l.density}.
\item The boundary size $\beta$ 
is chosen small in order to satisfy~\eqref{e.boundbeta3}, and the shear  is fixed to be $\sigma=\beta/10$.
\item The iterate $N$ is larger than a bound $N_0$ satisfying
the properties of Section~\ref{ss.firstextraction}, and the inequalities~\eqref{e.boundbeta1},~\eqref{e.boundbeta2} and~\eqref{e(u)ppera}.
\item The scales $\rho_0$ and $\rho_1\ll \rho_0$ are chosen (independently from $N_0$ and $N$) so that the variations of the splitting $E^{uu}\oplus E^{c}\oplus E^{s}$
are small (see Section~\ref{ss.cube}, Lemmas~\ref{l.density} and~\ref{l.strip}).
\item A lower bound $k_0$ on the scales $k$, chosen so that $f^N$ is nearly linear at scale
$2^{-k_0}$ and such that~\eqref{e.boundN} holds.
\end{itemize}

\begin{lemma}\label{l.boundsbad} Under these assumptions, the union of the s-bad cubes in $\cQ$ that intersect $f^N((1-\frac 3 4 \beta)  Q)$
has $\mu^u_{f^N(x)}$ measure smaller than
$$
3^{(d_u+1)}\delta\sum_{Q\rightarrow Q'}\mu^u_{f^N(x)}((1-\eta)  Q').$$
\end{lemma}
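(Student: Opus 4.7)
The plan is to combine the three tools just developed: Lemma~\ref{l.strip}, which locates the s-bad cubes inside strong unstable strips; Corollary~\ref{c.strip}, which covers these strips by sheared cubes $C_i$ whose inner cores $(1-\eta/2)C_i$ avoid the strips; and Lemma~\ref{l.density}, which gives a reverse-doubling bound for $\mu^u$ near cube boundaries. The three-line structure of the argument will be: s-bad cubes $\subset$ annuli of the $C_i$ $\longrightarrow$ Lemma~\ref{l.density} $\longrightarrow$ conversion from the $C_j$'s to transition cubes.

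First, by Lemma~\ref{l.strip} the intersection of every s-bad cube with $f^N((1-\tfrac{3}{4}\beta)Q)\cap W^u(f^N(x),\rho_0)$ lies in $L_1\cup\dots\cup L_q$; by items~(\ref{E.3}) and (\ref{E.6}) of Corollary~\ref{c.strip} this set is contained in $\bigcup_i(C_i\setminus (1-\eta/2)C_i)\subset\bigcup_i(C_i\setminus (1-\eta)C_i)$. So the left-hand side of the lemma is at most $\sum_i\mu^u_{f^N(x)}(C_i\setminus(1-\eta)C_i)$. Then I apply Lemma~\ref{l.density} to each $C_i$ based at $f^N(x)$: the hypothesis $(1-3\sigma)C_i\cap W^u(f^N(x),\rho_0)\neq\emptyset$ follows from item~(\ref{E.2}) since $\sigma$ is small. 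Using item~(\ref{E.5}) (the interiors of the $C_i$'s are disjoint) so that each neighbor $C'$ occurs in at most $3^{d_u}$ neighbor lists, and item~(\ref{E.4}) to keep these neighbors inside $\{C_1,\dots,C_n\}$, summation yields
$$\sum_i\mu^u_{f^N(x)}\!\left(C_i\setminus(1-\eta)C_i\right)\;\leq\;3^{d_u}\,\delta\sum_j\mu^u_{f^N(x)}\!\left((1-\eta)C_j\right).$$

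The crux is the final conversion from the auxiliary cubes $C_j$ (at scale $r\approx2^{-k}/\sigma$) to transition cubes $Q'\in\cQ$ (at scale $2^{-k}$). By construction $(1-\eta)C_j$ avoids the strips (hence every s-bad cube) by item~(\ref{E.6}), is disjoint from $f^N(\partial^u_{\beta/2}Q)$ by item~(\ref{E.1}), and has plaque intersection contained in $f^N(Q)$ by item~(\ref{E.2}). Consequently any $Q'\in\cQ$ meeting $(1-\eta)C_j$ automatically satisfies the three conditions defining a transition $Q\to Q'$. Choosing $k_0$ large enough that $2^{-k_0}\ll\eta r$ forces every such $Q'$ to lie well inside $C_j$. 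A final invocation of Lemma~\ref{l.density} applied at the finer scale $2^{-k}$ lets me replace $\mu^u(Q')$ by $\mu^u((1-\eta)Q')$ at the cost of a multiplicative factor bounded by $3$ (this is where the choice $\delta\in\bigl(0,3^{-(d_u+1)}(e^{\varepsilon/2}-1)\bigr)$ fixed at the start of Section~\ref{ss.proof-selectcube} is used). This produces
$$\sum_j\mu^u_{f^N(x)}\!\left((1-\eta)C_j\right)\;\leq\;3\sum_{Q\to Q'}\mu^u_{f^N(x)}\!\left((1-\eta)Q'\right),$$
and chaining the three estimates gives the desired bound $3^{d_u+1}\delta\sum_{Q\to Q'}\mu^u_{f^N(x)}((1-\eta)Q')$.

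The main obstacle is the scale conversion in the last step: one has to simultaneously avoid double-counting transition cubes $Q'$ that could touch several $C_j$ (handled by the disjointness of cube interiors from item~(\ref{E.5})) and pay a controlled price when replacing $\mu^u(Q')$ by $\mu^u((1-\eta)Q')$ (handled by reverse doubling at the small scale). Verifying the hypothesis of Lemma~\ref{l.density} at each $C_i$ is a minor secondary point, but it requires that the small shear $\sigma$ and the small scale $\rho_0$ cooperate with the position of $f^N(x)$ inside $C_i$ provided by item~(\ref{E.2}).
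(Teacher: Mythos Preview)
Your approach is the same as the paper's: s-bad cubes sit in the strips, hence in the annuli $C_i\setminus(1-\eta)C_i$; Lemma~\ref{l.density} at scale $r$ converts to $3^{d_u}\delta\sum_i\mu^u((1-\eta)C_i)$; then a second application of Lemma~\ref{l.density} at scale $2^{-k}$ converts each $(1-\eta)C_i$ to a sum over transition cubes.

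Two points need tightening. First, the sentence ``choosing $k_0$ large enough that $2^{-k_0}\ll\eta r$'' is not the right mechanism: since $r\in[d_s^{-2d_s}(\eta/100)^{d_s}\,2^{-k}/\sigma,\,2^{-k}/\sigma]$ from Lemma~\ref{l.strip}, the ratio $2^{-k}/r$ is pinned to roughly $\sigma$ independently of $k$. The smallness of $2^{-k}$ relative to the $\eta$-boundary of $C_i$ comes from the standing choices $\sigma=\beta/10$ and~\eqref{e.boundbeta3}, not from increasing $k_0$.

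Second, and more substantively, when you invoke Lemma~\ref{l.density} at the fine scale you do not get ``$\mu^u(Q')\le 3\,\mu^u((1-\eta)Q')$'' for the same cube; the lemma bounds the annulus of $Q'$ by $\delta$ times the sum over its \emph{unstable neighbours} $Q''\in\cN^u(Q')$. For the final inequality to land inside $\sum_{Q\to Q'}\mu^u((1-\eta)Q')$ you must check that every such neighbour $Q''$ is itself a transition cube and that it sits inside a single $(1-\eta/2)C_i$ (so that item~(\ref{E.5}) prevents double-counting across different $C_i$). The paper spends five bullet points verifying exactly this for $Q''$: that $Q''\subset(1-\eta/2)C_i$, that $Q''$ avoids the strips (hence is not s-bad), that $Q''$ meets the plaque and $f^N(Q)$, and that $Q''$ misses $f^N(\partial^u_{\beta/2}Q)$. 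Your text only checks these conditions for the $Q'$ meeting $(1-\eta)C_j$, not for their neighbours, so the last displayed inequality is asserted rather than proved. Once you add that verification, your factor $3$ becomes the paper's $(1+3^{d_u}\delta)$ and the argument closes (using $\delta<3^{-d_u}$).
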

\begin{proof}
By Lemma~\ref{l.strip}, and~\eqref{e.boundbeta1} (the size of the cube in $\cQ$ is much smaller than
the unstable $\beta$-boundary of $f^N(Q)$),
we aim to bound the
$\mu^u_{f^N(x)}$-measure of
$$\Delta:=(L_1\cup\dots\cup L_m)\cap f^N((1-{\textstyle \frac 3 4} \beta)  Q).$$
By Corollary~\ref{c.strip}, items~(\ref{E.3}) and~(\ref{E.6}), it is bounded by:
$$\mu^u_{f^N(x)}(\Delta)
\leq \sum_{C_i\cap \Delta\neq\emptyset}\mu^u_{f^N(x)}(C_i\setminus (1-\eta)  C_i).$$
By Lemma~\ref{l.density}, the measure of the $\eta$-boundary of each $C_i$ is smaller than
$$\mu^u_{f^N(x)}(C_i\setminus (1-\eta) C_i)\leq \delta \sum_{C\in \cN^u(C_i)} \mu^u_{f^N(x)}((1-\eta)  C).$$
If $C_i\cap \Delta\neq \emptyset$, then the cubes $C\in \cN^u(C_i)$
still belong to the family $\{C_1,\dots,C_n\}$
(by Corollary~\ref{c.strip} item~\ref{E.4}); moreover
each cube $C$ belongs to no more than $3^{d_u}$ sets $\cN^u(C_i)$.
This gives
$$\mu^u_{f^N(x)}(\Delta)
\leq
3^{d_u}\;\delta\sum_{i=1}^n\mu^u_{f^N(x)}((1-\eta)  C_i).$$
\medskip

Consider any cube $Q'\in \cQ$ that intersects $(1-\eta)  C_i\cap W^u(f^N(x),\rho_0)$
and take $Q''\in \cN^u(Q')$. Then we claim that $Q''\subset (1-\frac \eta 2)  C_i$ and
$Q\rightarrow Q''$. Indeed:
\begin{itemize}
\item $Q''\subset (1-\frac \eta 2)  C_i$, since from $\sigma=\frac \beta {10}$ and~\eqref{e.boundbeta3} the size of the cubes in $\cQ$ is smaller than the $\eta$-boundary size of the cubes $C_i$.
\item $Q''$ is thus disjoint from $L_i$ and is not an s-bad cube by Corollary~\ref{c.strip} item~\ref{E.6}.
\item $Q''$ and $W^u(f^N(x),\rho_0)$ intersect: since $Q'$ intersects $W^u(f^N(x),\rho_0)$, is not an s-bad cube, and since $\sigma=\beta/10$, the inner cube $(1-3\sigma)  Q'$ and $W^u(f^N(x),\rho_0)$ intersect;
this implies that the unstable neighbor cube $Q''$ also intersect $W^u(f^N(x),\rho_0)$.
\item $Q''$ intersects $f^N(Q)$, since $Q''\cap W^u(f^N(x),\rho_0)$
is non-empty and contained in $C_i\cap W^u(f^N(x),\rho_0)$, which is contained in
$f^N(Q)$ by Corollary~\ref{c.strip} item~\ref{E.2}.
\item $Q''\subset C_i$ does not meet $f^N(\partial^u_{\beta/2} Q)$ by corollary~\ref{c.strip} item~\ref{E.1}.
\end{itemize}

Applying Lemma~\ref{l.density} to the cubes $Q'\in \cQ$ that intersect $(1-\eta)  C_i\cap W^u(f^N(x),\rho_0)$ it follows that
$$\mu^u_{f^N(x)}((1-\eta)  C_i)\leq (1+3^{d_u}\;\delta)\sum_{Q''\in \cQ(C_i)} \mu^u_{f^N(x)}((1-\eta)  Q''),$$
where $\cQ(C_i)$ is the family of cubes $Q''\in \cQ$ such that $Q\rightarrow Q''$
and whose intersection with
$W^u(f^N(x),\rho_0)$ is non-empty and contained in $(1-\frac \eta 2)  C_i$.

By Corollary~\ref{c.strip} item~\ref{E.5}, the sets $(1-\frac \eta 2)  Q_i$ are disjoint.
This gives
$$\mu^u_{f^N(x)}(\Delta)
\leq
3^{d_u}\; \delta (1+3^{d_u}\; \delta)\sum_{Q\rightarrow Q'}\mu^u_{f^N(x)}((1-\eta)  Q'),$$
which implies the result since $\delta$ is smaller than $3^{-d_u}$.
\end{proof}
\bigskip

We now bound the measure:
$$\mu^u_{f^N(x)}(f^N((1-\beta)  Q))\leq \sum_{Q'\in \cQ(x)} \mu^u_{f^N(x)}(Q')$$
where $\cQ(x)$ is the collection of $Q'\in \cQ$ that intersect
$f^N((1-\beta)  Q)\cap W^u(f^N(x),\rho_0)$.
\medskip

The last sum may be decomposed in three parts:
\begin{enumerate}
\item We first consider the cubes $Q'$ that are s-bad: Lemma~\ref{l.boundsbad} implies
$$\sum_{Q'\in \cQ(x) \text{ s-bad}} \mu^u_{f^N(x)}(Q')\leq 3^{(d_u+1)}\delta\sum_{Q\rightarrow Q'}\mu^u_{f^N(x)}((1-2\beta)  Q').$$
\item We then consider the inner part $(1-2\beta)  Q'$ of the cubes that are not s-bad.
As explained at the beginning of Section~\ref{ss.proof-selectcube} we necessarily have $Q\rightarrow Q'$.
$$\sum_{Q'\in \cQ(x) \text{ not s-bad}} \mu^u_{f^N(x)}((1-2\beta)  Q')
\leq \sum_{Q\rightarrow Q'}\mu^u_{f^N(x)}((1-2\beta)  Q').$$
\item We finally consider the boundary part $Q'\setminus (1-2\beta)  Q'$ of the cubes that are not s-bad.
Since $3\sigma<2\beta$,  Lemma~\ref{l.density} applies and gives
$$\sum_{\substack{Q'\in \cQ(x) \\ \text{ not s-bad}}} \mu^u_{f^N(x)}(Q'\setminus (1-2\beta)  Q')
\leq \delta \sum_{\substack{Q'\in \cQ(x) \\ \text{ not s-bad}}}\sum_{Q''\in\cN^u(Q')}
\mu^u_{f^N(x)}((1-2\beta)  Q'').$$
Note that a cube $Q''$ appears at most $3^{d_u}$ times in the last sum.
Moreover since it is an unstable neighbor of a cube $Q'$ that is not s-bad and intersects
$f^N((1-\beta)  Q)\cap W^u(f^N(x),\rho_0)$, the cube $Q''$  must also intersect $f^N((1-\frac 3 4\beta)  Q)\cap W^u(f^N(x)\rho_0)$. One may thus again distinguish those that are s-bad, where we use Lemma~\ref{l.boundsbad}, from those that are not s-bad and satisfy $Q\rightarrow Q''$.
This gives:
$$\sum_{\substack{Q'\in \cQ(x) \\ \text{ not s-bad}}} \mu^u_{f^N(x)}(Q'\setminus (1-2\beta)  Q')
\;\leq\; 3^{d_u} \delta\; (1+3^{(d_u+1)}\delta) \sum_{Q\rightarrow Q'}\mu^u_{f^N(x)}((1-2\beta)  Q').$$
\end{enumerate}
Combining these estimates  gives:
$$\mu^u_{f^N(x)}(f^N((1-\beta)  Q))\leq (1+3^{(d_u+1)}\delta)^2
\sum_{Q\rightarrow Q'}\mu^u_{f^N(x)}((1-2\beta)  Q').$$

By our choice of $\delta$ we have $(1+3^{(d_u+1)}\delta)^2<e^\varepsilon$.
One gets the estimate of Proposition~\ref{p=selectcubes} since $f^N$ has a constant Jacobian
along the unstable leaves for the measure $\mu^u$.

%\section{Birth of blenders inside large horseshoes}

\section{Birth of blenders inside large horseshoes}\label{s.birth}

In this section we prove Theorem C.

\subsection{The recurrent compact criterion for horseshoes}

Let $\Lambda$ be a horseshoe for a $C^1$ diffeomorphism $f$
whose unstable bundle has a dominated splitting $E^u=E^{uu}\oplus E^c$.
Let $d_{uu},d_u,d_s$ be the strong unstable, unstable and stable dimensions
and let $d_{cs}=(d_u-d_{uu})+d_s$ the dimension of the bundle $E^c\oplus E^s$.
\medskip

\noindent
\emph{The local strong unstable lamination $\mathcal{W}^{uu}_{loc}$.}
For any $g$ that is $C^1$-close to $f$ and any $x$ in the continuation $\Lambda_g$, there exist
local unstable manifolds $W^u_{loc}(g,x)$, which depend continuously on $(x,g)$
in the $C^1$ topology and which satisfy
$$g(W^u_{loc}(g,x))\supset \overline{W^u_{loc}(g,g(x))}.$$
Each local unstable manifold supports a strong unstable foliation tangent to $E^{uu}$
and for each $z\in W^u_{loc}(g,x)$ we denote by $W^{uu}_{loc}(g,z)$ the (connected)
leaf containing $z$.
The collection of all local strong unstable leaves defines the local strong unstable lamination $\mathcal{W}^{uu}_{loc}(g)$
associated to $\Lambda_g$.
Since $\Lambda$ is totally disconnected
we may assume furthermore that for any $x,y\in \Lambda_g$
the plaques $W^u_{loc}(g,x),W^u_{loc}(g,y)$ are either disjoint or equal.
\medskip

\noindent
\emph{The transversal $\mathcal{D}$.}
We then consider a submanifold $\mathcal{D}\subset M$ such that
for each $x\in \Lambda$, the intersection between the closures of $\mathcal{D}$ and
$W^u_{loc}(x)$ is contained in $\mathcal{D}\cap W^u_{loc}(x)$, is transverse to the strong unstable foliation
and contains at most one point of each strong unstable leaf.
In particular  these properties still hold for $\mathcal{D}$ and the local strong unstable lamination
$\mathcal{W}^{uu}_{loc}(g)$ if $g$ is $C^1$-close to $f$.
\medskip

The following definition comes from~\cite{MS} and generalizes a property
introduced in~\cite{MY} for producing robust intersections of regular Cantor sets.
\begin{definition}\label{d.recurrent}
A compact set $K\subset \mathcal{D}$ is \emph{recurrent}
(with respect to $\mathcal{W}_{loc}^{uu}$)
if for any points $x\in \Lambda$ and $z\in W^u_{loc}(x)\cap K$,
there exists $n\geq 1$, $x'\in \Lambda$
and some point $z'\in W^u_{loc}(x')$ contained in the interior of $K$ (in the topology of $\mathcal{D}$),
such that $f^{-n}(x')\in W^u_{loc}(x)$ and $f^{-n}(\overline{W^{uu}_{loc}(z')})\subset W^{uu}_{loc}(z)$.
\end{definition}

This property is robust and implies that $\Lambda$ is a $d_{cs}$-stable blender,
as stated in the two next propositions (which also come from~\cite{MS}):

\begin{proposition}
The compact set $K$ is still recurrent with respect to the strong unstable leaves
$\mathcal{W}_{loc}^{uu}(g)$ for the diffeomorphisms $g$ that are $C^1$-close to $f$.
\end{proposition}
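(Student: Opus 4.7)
My plan is to combine compactness with the openness of each condition defining recurrence.

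The starting point is to catalog, for every admissible pair $(x,z)$ with $x\in\Lambda$ and $z\in W^u_{loc}(x)\cap K$, an $f$-witness $(n,x',z')$ furnished by Definition~\ref{d.recurrent}. Two features are crucial for robustness. First, $z'$ lies in the \emph{interior} of $K$ in $\mathcal{D}$, so nearby points of $\mathcal{D}$ are still in $K$. Second, the inclusion $f^{-n}(\overline{W^{uu}_{loc}(z')})\subset W^{uu}_{loc}(z)$ is compact-in-open, hence survives a small $C^0$ perturbation of the two leaves involved.

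Next I would transfer a single witness to a $C^1$-close diffeomorphism $g$. The continuation $\tilde x'\in\Lambda_g$ of $x'$ and the plaques $W^u_{loc}(g,\cdot)$, $W^{uu}_{loc}(g,\cdot)$ vary continuously with $g$. Given a pair $(\tilde x,\tilde z)$ with $\tilde x\in\Lambda_g$ near $x$ and $\tilde z\in W^u_{loc}(g,\tilde x)\cap K$ near $z$, I will define $\tilde z'$ as the unique point of $\mathcal{D}$ lying on the strong unstable leaf $g^n(W^{uu}_{loc}(g,\tilde z))$ close to $z'$ (uniqueness by the transversality of $\mathcal{D}$ to $\mathcal{W}^{uu}$). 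Then I must check four things: (a) $\tilde z'\in K$ because $\tilde z'$ is close to $z'\in K^{\circ}$; (b) $\tilde z'\in W^u_{loc}(g,\tilde x')$ because the local unstable plaques of $\Lambda_g$ are pairwise disjoint or equal, so the plaque near $W^u_{loc}(x')$ is precisely $W^u_{loc}(g,\tilde x')$; (c) $g^{-n}(\overline{W^{uu}_{loc}(g,\tilde z')})\subset W^{uu}_{loc}(g,\tilde z)$, which follows from the strict $f$-inclusion by continuity since $g^{-n}(\tilde z')\in W^{uu}_{loc}(g,\tilde z)$ by construction; and (d) $g^{-n}(\tilde x')\in W^u_{loc}(g,\tilde x)$, by the disjoint-or-equal property applied to the continuation of $f^{-n}(x')\in W^u_{loc}(x)$. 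Thus $(n,\tilde x',\tilde z')$ is a $g$-witness for $(\tilde x,\tilde z)$.

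I would then close the argument by a compactness reduction. The space of admissible pairs $\{(x,z):x\in\Lambda,\;z\in W^u_{loc}(x)\cap K\}$ is compact (closed in the compact set $\Lambda\times K$). Each $f$-witness is valid on an open neighborhood of $(x,z)$ in this space and for $g$ in an open $C^1$-neighborhood of $f$; extracting a finite subcover and intersecting the corresponding neighborhoods of $f$ yields a single $C^1$-neighborhood $\cU$ of $f$ that witnesses recurrence of $K$ for every $g\in\cU$, since every $(\tilde x,\tilde z)$ of the $g$-type sits, via continuation, inside one of the cover elements.

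The main obstacle I anticipate is the simultaneous construction of $\tilde z'$: it must land on $W^u_{loc}(g,\tilde x')$, lie in $K$, and satisfy the strong-unstable inclusion relative to $\tilde z$. The inclusion forces $\tilde z'$ to be the holonomy image of $\tilde z$ through $n$ iterates of the $g$-strong-unstable foliation; the plaque condition is handled by the disjoint/equal structure; and the $K$-membership uses crucially the hypothesis $z'\in K^{\circ}$, which supplies the slack needed to absorb both the perturbation $f\rightsquigarrow g$ and the motion $z\rightsquigarrow\tilde z$. Once this triple consistency is set up, every other step is a continuity and compactness bookkeeping.
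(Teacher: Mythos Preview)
Your approach is essentially the paper's: package the admissible pairs into the compact set $I_f=\{(x,z):x\in\Lambda,\ z\in W^u_{loc}(x)\cap K\}$, use interiority of $z'$ and the compact-in-open inclusion to make each witness stable under small motions of $(x,z)$ and of $f$, and finish by a finite subcover. The paper phrases the last step as ``$I_g$ lies in a small neighborhood of $I_f$'', which is your observation that every $g$-pair sits near some cover element.

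There is one slip in step~(d). Taking $\tilde x'$ to be the hyperbolic continuation of $x'$ does \emph{not} force $g^{-n}(\tilde x')\in W^u_{loc}(g,\tilde x)$. The point $\tilde x\in\Lambda_g$ is only assumed close to $x$, not equal to $h_g(x)$; its local unstable plaque may be a distinct plaque sitting right next to the continuation of $W^u_{loc}(f,x)$, and the disjoint-or-equal dichotomy gives no mechanism to merge two nearby but distinct plaques. The paper avoids this by \emph{choosing} $x'\in\Lambda_g$ close to $x'_0$ so that $g^{-n}(x')\in W^u_{loc}(g,x)$ holds by construction: using the local product structure of $\Lambda_g$, pick $y\in\Lambda_g\cap W^u_{loc}(g,\tilde x)$ near $f^{-n}(x')$ (this slice is Hausdorff-close to $\Lambda\cap W^u_{loc}(f,x)$ via continuation plus stable holonomy) and set $\tilde x':=g^n(y)$. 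With this replacement your steps (a)--(c) go through exactly as written, and your construction of $\tilde z'$ as the $\mathcal{D}$-intersection of $g^n(W^{uu}_{loc}(g,\tilde z))$ near $z'$ then automatically lands on $W^u_{loc}(g,\tilde x')$.
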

\begin{proof}
Consider the (closed) set $I_g=\{(x,z)\in \Lambda_g\times K, z\in W^u_{loc}(g,x)\}$:
it is contained in a small neighborhood of $I_f$ if $g$ is $C^1$-close to $f$.
For any $(x_0,z_0)\in I_f$,  consider $(x'_0,z'_0)\in I_f$ with $z'_0\in \operatorname{interior}(K)$ and $n\geq 1$
such that $f^{-n}(x'_0)\in W^u_{loc}(x_0)$ and $f^{-n}(W^{uu}_{loc}(z'_0))\subset W^{uu}_{loc}(z_0)$.
Then for any $g$ that is $C^1$-close to $f$ and any $(x,z)\in I_g$
that is close to $(x_0,z_0)$, one can consider $x'\in \Lambda_g$ that is close to $x'_0$
such that $g^{-n}(x')\in W^u_{loc}(g,x)$. Since $z'_0$ belongs to $\operatorname{interior}(K)$,
by continuity there exists $z'\in \operatorname{interior}(K)\cap W^u_{loc}(g,x')$
such that $g^{-n}(W^{uu}_{loc}(g,z'))\subset W^{uu}_{loc}(g,z)$.
By compactness, there exists a small $C^1$ neighborhood $\cU$ of $f$
such that this property holds for any $(x_0,z_0)\in I_f$ for any $g\in \cU$.
\end{proof}

\begin{proposition}[Recurrent compact criterion]\label{p.criterion}
Consider a horseshoe $\Lambda$ with a strong unstable bundle of codimension $d_{cs}$.
If $\Lambda$ admits a recurrent compact set $K$
(with respect to $\mathcal{W}^{uu}_{loc}$) which intersects at least one plaque $W^u_{loc}(x)$
then it is a $d_{cs}$-stable blender.
\end{proposition}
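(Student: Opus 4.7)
The plan is to prove the recurrent compact criterion by a nested pull-back argument in the spirit of Moreira-Yoccoz~\cite{MY} and Moreira-Silva~\cite{MS}. Fix $g$ in a $C^1$-neighborhood of $f$: by the preceding proposition, $K$ remains recurrent for $\cW^{uu}_{loc}(g)$. Let $\Gamma$ be the graph of a $1$-Lipschitz map $\theta\colon (-1,1)^{d-d_{cs}}\to (-1,1)^{d_{cs}}$ in a chart $\varphi$ as in the definition of a $d_{cs}$-stable blender; the goal is to find a point of $\Gamma$ lying in $W^s_{loc}(\Lambda_g)$. One chooses the chart so that its origin is a point $x_0\in\Lambda$ reachable by backward iteration from a point $z_0\in K\cap W^u_{loc}(x_0)$ supplied by hypothesis, with the first $d-d_{cs}$ coordinates aligned with $E^{uu}(x_0)$. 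Since $\Gamma$ is tangent to a narrow cone around $E^{uu}$, it meets each local strong unstable leaf $W^{uu}_{loc}(g,z)$ of the lamination in exactly one point; in particular $\Gamma\cap W^{uu}_{loc}(g,z_0)$ is nonempty.

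The iteration proceeds as follows. Recurrence of $K$ applied to $(x_0,z_0)$ yields $n_1\geq 1$, $x_1\in\Lambda$, and $z_1$ in the interior of $K\cap W^u_{loc}(x_1)$, with $f^{-n_1}(x_1)\in W^u_{loc}(x_0)$ and $f^{-n_1}(\overline{W^{uu}_{loc}(z_1)})\subset W^{uu}_{loc}(z_0)$. For $g$ close enough to $f$, the analogous inclusion holds for the $g$-continuations of these objects. Set
$$\Gamma^{(1)}:=\{p\in\Gamma : g^{n_1}(p)\in \overline{W^{uu}_{loc}(g,z_1)}\},$$
a nonempty compact subset of $\Gamma$; expansion along $E^{uu}$ together with the cone structure guarantees that $g^{n_1}(\Gamma^{(1)})$ is itself a $1$-Lipschitz graph through $z_1$ tangent to the propagated $E^{uu}$-cone. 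Applying recurrence at $(x_1,z_1)$, then $(x_2,z_2)$, and so on, produces a decreasing sequence of nonempty compact sets $\Gamma\supset\Gamma^{(1)}\supset\Gamma^{(2)}\supset\cdots$, each encoding an admissible forward itinerary through a sequence of centers $z_1,z_2,\dots$ in $K$.

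Any $p_\infty\in\bigcap_k \Gamma^{(k)}$ satisfies $g^{n_1+\cdots+n_k}(p_\infty)\in \overline{W^{uu}_{loc}(g,z_k)}$ for every $k$, so its forward orbit repeatedly returns to a fixed compact neighborhood of $\Lambda_g$. Uniform hyperbolicity of the horseshoe then forces $p_\infty$ to lie in the local stable set of a unique point $y_\infty\in\Lambda_g$, giving $\Gamma\cap W^s_{loc}(\Lambda_g)\neq\emptyset$, and hence $\Lambda_g$ is a $d_{cs}$-stable blender. The main obstacle is showing that the nested sets $\Gamma^{(k)}$ remain nonempty and continue to be graphs transverse to the strong unstable lamination at every step. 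This uses in an essential way that recurrence produces $z_k$ in the \emph{interior} of $K$: interiority provides open room around each new strong unstable leaf, ensures that the intersection $g^{n_1+\cdots+n_k}(\Gamma^{(k)})\cap \overline{W^{uu}_{loc}(g,z_k)}$ is stable under further pull-back and robust under perturbation, and is precisely what lets the induction propagate uniformly across a $C^1$-neighborhood of $f$.
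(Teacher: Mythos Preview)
Your overall strategy --- nested pull-backs producing a decreasing sequence of compact subsets of $\Gamma$, whose intersection point has forward orbit trapped near $\Lambda_g$ and hence lies in its local stable set --- is exactly the paper's approach. However, there is a genuine geometric slip in your inductive step that would make the argument fail as written. You assert that $\Gamma$, being tangent to a narrow $E^{uu}$-cone, ``meets each local strong unstable leaf $W^{uu}_{loc}(g,z)$ in exactly one point'', and then define $\Gamma^{(1)}=\{p\in\Gamma:g^{n_1}(p)\in\overline{W^{uu}_{loc}(g,z_1)}\}$. But $\Gamma$ and the strong unstable leaves have the \emph{same} dimension $d_{uu}=d-d_{cs}$ and are both tangent to the $E^{uu}$-cone: they are nearly parallel $d_{uu}$-disks, not transverse objects, and in general do not intersect at all. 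Your set $\Gamma^{(1)}$ may therefore be empty, and the induction does not start.

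The correct formulation, which is the one in the paper, is that $\Gamma$ is $C^1$-\emph{close} to a strong unstable leaf (tangent to the cone, at small distance from $W^u_{loc}(x_0)$ along an auxiliary foliation $\cF$ close to $E^s$, and projecting to $W^u_{loc}(x_0)$ near $z_0$), not that it intersects one. The inductive claim is then: $g^{n_1}(\Gamma)$ contains a subdisk $\Gamma'$ that is $C^1$-close to $W^{uu}_{loc}(g,z'')$ for some $z''\in\operatorname{interior}(K)$ near $z_1$. This uses contraction along $E^s$ (to keep the distance to $W^u_{loc}$ small), forward invariance of the cone (to keep tangencies under control), and --- exactly as you observe --- the interior condition on $z_1$ (so that the nearby projected point $z''$ still lies in $K$). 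With $\Gamma^{(k)}$ redefined as the pull-back of such a $\Gamma'$, your nested-intersection conclusion goes through unchanged.
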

\begin{proof}
We start with preliminary considerations:
\begin{enumerate}
\item By compactness, the integer $n\geq 1$ in Definition~\ref{d.recurrent}
may be chosen to be bounded by some $n_0$.

\item\label{i.close2} Changing the metric, we may assume that $\|Df_{|E^{s}}(x)\|<1$ at each point $x$ of $\Lambda$.
Iterating backwards, we may also assume that the local unstable manifolds are arbitrarily small.
Since $\Lambda$ is totally disconnected, there exists a smooth foliation $\cF$ in a neighborhood
whose leaves have tangent spaces close to the bundle $E^{s}$. This defines local projections onto
local unstable manifolds.

\item\label{i.close3} The dominated splitting $E^{uu}\oplus (E^c\oplus E^s)$ gives the existence of an invariant cone field
$\cC^{uu}$: for each $x$ in a neighborhood of $\Lambda$, the cone $\cC^{uu}_x\setminus \{x\}$
is open in $T_xM$, and $Df_x(\overline{\cC^{uu}(x)})\subset \cC^{uu}(f(x))$.
Moreover, $\cC^{uu}_x$ contains $E^{uu}_x$ and is transverse to $E^c\oplus E^s$.
Replacing the cone field by a forward iterate, we obtain an arbitrarily thin cone field around the bundle $E^{uu}$
and defined on a neighborhood of the lamination $\cW^{uu}_{|K}$.
For any $g$ that is $C^1$-close to $f$, the cone field is still invariant.
\end{enumerate}

Let $\Gamma$ be a \emph{submanifold $C^1$-close to a local strong unstable manifold
$W^{uu}_{loc}(z)$} where $z\in K\cap W^u_{loc}(x)$ and $x\in \Lambda$.
More precisely, this means that there exists a small constant $\varepsilon>0$ such that
$\Gamma$ is tangent to $\cC^{uu}$,
its distance to $W^u_{loc}(x)$ (measured along the leaves of $\cF$)
is smaller than $\varepsilon$
and its projection to $W^u_{loc}(x)$ contains a point $\varepsilon$-close to $z$.

\begin{claim}
Let $g$ be a diffeomorphism $C^1$-close to $f$.
Assume that $\Gamma$ is close to the local strong unstable manifold
$W^{uu}_{loc}(z)$ where $z\in K\cap W^u_{loc}(x)$ and $x\in \Lambda$.
Consider $x'\in \Lambda$, $n\in \{1,\dots,n_0\}$, and $z'\in W^u_{loc}(x')\cap \operatorname{interior}(K)$
such that $f^{-n}(\overline{W^{uu}_{loc}(z')})\subset W^{uu}_{loc}(z)$.
Then $g^{n}(\Gamma)$ contains an open submanifold $\Gamma'$
$C^1$-close to $\overline{W^{uu}_{loc}(z'')}$, for some $z''\in W^u_{loc}(x')\cap \operatorname{interior}(K)$.
\end{claim}
\begin{proof}
Since $f^{-n}(\overline{W^{uu}_{loc}(z')})\subset W^{uu}_{loc}(z)$ and $n$ is bounded,
there exists $\Gamma'\subset g^{n}(\Gamma)$ whose closure is close to $\overline{W^{uu}_{loc}(z')}$in the Hausdorff topology. From~(\ref{i.close3}), it is tangent to the cone field $\cC^{uu}$
and from~(\ref{i.close2}), its distance to $W^u_{loc}(x')$ is smaller than $\varepsilon$.
The projection of $\Gamma'$ to $W^u_{loc}(x')$ intersects $\cD$ at a point $z''$ close to $z'$.
Since $z'$ belongs to the interior of $K$, the point $z''$ belongs to $\operatorname{interior}(K)$ also.
Consequently $\Gamma'$ is $C^1$-close to $\overline{W^{uu}_{loc}(z'')}$ and
the claim is proved.
\end{proof}

Now consider $g$ and $\Gamma$ as in the claim.
Applying the claim inductively,  we get a decreasing sequence of submanifolds $\Gamma_k$
and an increasing sequence of integers $n_k\to +\infty$ such that each
$g^{n_k}(\Gamma_k)$ is close to a local strong unstable manifold. This proves that
the forward orbit of the point $y=\cap_k \Gamma_k$ remains in a small neighborhood of $\Lambda_g$.
Consequently, $y$ belongs to the intersection of $\Gamma$ and a local stable manifold of
a point of $\Lambda_g$. This proves that $\Lambda$ is a $d_{cs}$-blender.
\end{proof}
\bigskip

\subsection{The recurrent compact criterion for affine horseshoes}\label{ss.affine-recurrent}
Let $B^u=[0,1]^{d_u}$, $B^s=[0,1]^{d_s}$. 
We now assume that $\Lambda$ satisfies the following additional property.

\begin{definition}\label{d.standard}
We say that $\Lambda$ is a \emph{standard affine horseshoe} if there exist:
\begin{itemize}
\item a chart $\varphi\colon U\to \RR^{d_u+d_s}$
with $[0,1]^{d_u+d_s}\subset \varphi(U)$,
\item a linear map $A\in \GL(d_u+d_s,\RR)$ which is a product $A^u\times A^s$ where
$(A^u)^{-1}$ and $A^s$ are contractions of $\RR^{d_u}$ and $\RR^{d_s}$,
\item pairwise disjoint translated copies $B^s_1$,\dots, $B^s_\ell$ of $A^s(B^s)$
in $\operatorname{interior}(B^s)$,
\item pairwise disjoint translated copies $B^u_1$,\dots, $B^u_\ell$ of $(A^u)^{-1}(B^u)$
in $\operatorname{interior}(B^u)$,
\end{itemize}
with the following properties:
\begin{itemize}
\item $f$ sends $\varphi^{-1}(B^u_j\times B^s)$ to $\varphi^{-1}(B^u\times B^s_j)$ for each $1\leq j\leq \ell$,
\item $\varphi\circ f\circ \varphi^{-1}$ agrees on ${B^u_j\times B^s}$  with the map $z\mapsto A(z)+v_j$, for some $v_j\in \RR^d$, and
\item $\Lambda$ is the maximal invariant set in $\bigcup \varphi^{-1}(B^u_j\times B^s)$.
\end{itemize}
\end{definition}
Note that, denoting $R:=\varphi^{-1}([0,1]^{d_u+d_s})$,
the cubes $R_j:=\varphi^{-1}(B^u_j\times B^s)$ are the connected components of $R\cap f^{-1}(R)$
that intersect $\Lambda$. See Figure~\ref{f.horseshoe}.
\begin{figure}[ht]
\begin{center}
\mbox{}\hspace{-1.4cm}\includegraphics[width=4cm,height=8cm,angle=90]{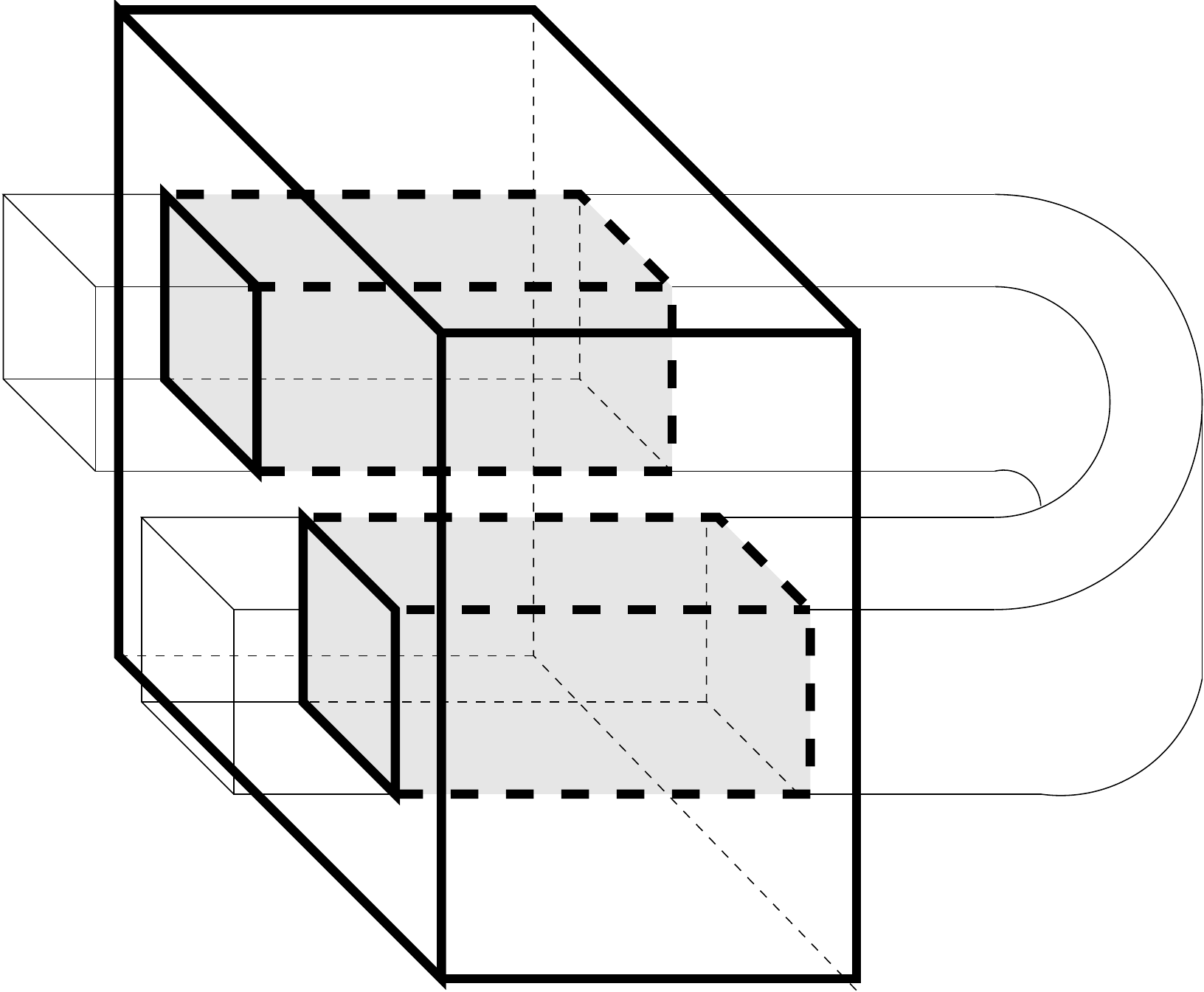}\end{center}
\begin{picture}(0,0)
\put(-155,50){$B^u_1\times B^s$}
\put(-180,90){$B^u_2\times B^s$}
\put(37,10){$R$}
\put(100,60){$f^{-1}(R)$}
\end{picture}
\caption{A standard affine horseshoe.\label{f.horseshoe}}
\end{figure}
\medskip

We will assume furthermore that $\Lambda$ is partially hyperbolic:
$A$ preserves
the dominated splitting $\RR^{d_s+d_u} = \RR^{d_{uu}}\oplus \RR^{d_c}\oplus \RR^{d_s}$,
where $d_u=d_{uu}+d_c$.
In particular, $A$ and $B^u$ are products $A=A^{uu}\times A^c\times A^s$, $B^u=B^{uu}\times B^c$
and each $B^u_j$ is a product $B^{uu}_j\times B^c_j$ where $B^c=A^c(B^c_j)+\pi^c(v_j)$
and $\pi^c\colon \RR^{d_u+d_s}\to \RR^{d^c}$ is the projection onto the center coordinate.
We thus obtain a family of affine contractions $L_j$, $1\leq j\leq \ell$, which send $B^c$ to
$B_j^c$ respectively and coincide with $z\mapsto (A^c)^{-1}(z-\pi^c(v_j))$. This defines a (finitely generated)
iterated function system (IFS) in $B^c$.

Denoting $\beta=|\det(A^{uu})|$, we have that each strong unstable plaque $B^{uu}\times \{z\}$ with $z\in B^c\times B^s$
can intersect at most $\beta$ distinct rectangles $B^u_j\times B^s$.
This remark applied to any iterate $f^n$ shows that
for any point $x\in B^c$,
\begin{equation}\label{e.bound-images}
\operatorname{Card}\left\{(j_1,\dots,j_n)\in \{1,\dots,\ell\}^n, \; x\in L_{j_n}\circ\dots \circ L_{j_1}(B^c)\right\}\leq \beta^n.
\end{equation}

We state a definition for IFS which is the analogue of Definition~\ref{d.recurrent}.
\begin{definition} The IFS $\mathcal{L}=\{ L_j \colon B^c \to B^c : j=1,\ldots, \ell\}$ satisfies the
{\em recurrent compact condition} if there exists a nonempty compact set $K^c \subset \bigcup_j B^c_j$
such that for every $z \in K^c$ there exist $z'\in \operatorname{interior}(K^c)$ and $1\leq j\leq \ell$ with $L_j(z')=z$.
\end{definition}

\begin{proposition}\label{p.standard}
If the central IFS associated to a partially hyperbolic standard affine horseshoe $\Lambda$ as above satisfies the recurrent compact condition,
then $\Lambda$ admits a recurrent compact set.
\end{proposition}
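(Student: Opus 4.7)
The plan is to lift the IFS-level recurrence on the center direction to a geometric recurrence on a transversal. Pick any $v_0\in \operatorname{interior}(B^{uu})$ and set
$$\mathcal{D}:=\varphi^{-1}(\{v_0\}\times B^c\times B^s),\qquad K:=\varphi^{-1}(\{v_0\}\times K^c\times B^s).$$
Then $\mathcal{D}$ meets each strong unstable plaque through $R$ in exactly one point and is therefore a legitimate transversal for $\mathcal{W}^{uu}_{loc}$; the set $K$ is compact, meets every $W^u_{loc}(x)$ with $x\in \Lambda\cap R$, and has interior $\varphi^{-1}(\{v_0\}\times \operatorname{interior}(K^c)\times B^s)$ in the induced topology of $\mathcal{D}$ (because $K^c$ has nonempty interior in $B^c$ by the IFS recurrence hypothesis).

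To verify the recurrence condition, fix $x\in\Lambda$ and $z\in W^u_{loc}(x)\cap K$; in chart coordinates $\varphi(x)=(u_x,c_x,w)$ and $\varphi(z)=(v_0,c,w)$ with $c\in K^c$. The sets $B^c_j$ are pairwise disjoint, so there is a unique $j$ with $c\in B^c_j$, and the IFS recurrence provides $c'\in\operatorname{interior}(K^c)$ with $L_j(c')=c$. Introduce the stable-direction affine contractions $T_j(u):=A^s(u)+\pi^s(v_j)$ which describe the action of $f|_{R_j}$ along $E^s$. Reading off the backward itinerary of a point of $\Lambda$ identifies $\pi^s(\Lambda\cap R)$ with the attractor of the IFS $\{T_j\}$; consequently each $T_j$ preserves $\pi^s(\Lambda)$, so $w':=T_j(w)\in\pi^s(\Lambda)$ and we may choose $x'\in\Lambda$ with $\pi^s(x')=w'$. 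Set $z':=\varphi^{-1}(v_0,c',w')\in W^u_{loc}(x')$.

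The three requirements of Definition~\ref{d.recurrent} now reduce, with $n=1$, to affine computations in the chart. First, $z'\in\operatorname{interior}(K)$ because $c'\in\operatorname{interior}(K^c)$. Second, $w'\in B^s_j$, so the branch of $f^{-1}$ landing in $R_j$ sends $x'$ to a point of $W^u_{loc}(x)$: its stable coordinate is $(A^s)^{-1}(w'-\pi^s(v_j))=w$. Third, the same branch maps $W^{uu}_{loc}(z')=\varphi^{-1}(B^{uu}\times\{c'\}\times\{w'\})$ onto $\varphi^{-1}(B^{uu}_j\times\{L_j(c')\}\times\{w\})=\varphi^{-1}(B^{uu}_j\times\{c\}\times\{w\})\subset W^{uu}_{loc}(z)$. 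Proposition~\ref{p.criterion} therefore yields that $\Lambda$ admits a recurrent compact set. The one genuinely nonroutine step is producing the stable coordinate $w'$: the IFS hypothesis constrains only the center direction, whereas the criterion also demands an actual orbit point $x'\in\Lambda$ through $z'$; the $T_j$-invariance of $\pi^s(\Lambda)$ — an elementary consequence of viewing backward itineraries of $\Lambda$ as IFS trajectories on the stable side — bridges this gap and closes the one-step recurrence.
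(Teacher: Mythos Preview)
Your argument is correct and follows essentially the same route as the paper's: pick the transversal $\{v_0\}\times B^c\times B^s$, lift the IFS recurrence on $K^c$ to the full section with $n=1$, and check the three conditions by an affine computation; the paper takes $K=K^c\times\bigcup_j B^s_j$ rather than your $K=K^c\times B^s$, but this is immaterial since every stable coordinate that actually arises lies in $\pi^s(\Lambda)\subset\bigcup_j B^s_j$. Two small slips worth fixing: the sets $B^c_j$ need not be pairwise disjoint (the $B^u_j$ are, but their center projections can coincide), so take $j$ directly from the IFS recurrence $L_j(c')=c$---which automatically gives $c\in B^c_j$---rather than from a uniqueness claim; and the closing appeal to Proposition~\ref{p.criterion} is misplaced, since you have already exhibited the recurrent compact set $K$, which is precisely the conclusion of Proposition~\ref{p.standard}.
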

\begin{proof}
We introduce the local strong unstable lamination by plaques $W^{uu}_{loc}$ of the form
$[0,1]^{d_{uu}}\times\{a\}$ where $a\in [0,1]^{d_c+d_s}$.
We then define the section $\mathcal{D}=\{1/2\}^{d_{uu}}\times (0,1)^{d_c+d_s}$
and introduce the compact set $K=K^c\times \bigcup_j B^s_j$.

Let us consider $x=(x_{uu},x_c,x_s)$ in $\Lambda\subset \RR^{d_{uu}}\times \RR^{d_c}\times \RR^{d_s}$
and a point $z=\{1/2\}^{d_{uu}}\times (z_c,x_s)$ in $W^{u}_{loc}(x)\cap K$.
We have $z_c\in K^c$, hence there exists $z'_c\in \operatorname{interior}(K^c)$ and $1\leq j\leq \ell$
with $L_j(z'_c)=z_c$.
Since $W^{uu}_{loc}(z)$ intersects $B^u_j\times B^s$,
there exists a point $x'\in \Lambda$
of the form $x'=(x'_{uu},x'_c,x'_s)$ such that $A^{-1}(x'-v_j)$ belongs to $W^u_{loc}(x)$.
The point $z'=\{1/2\}^{d_{uu}}\times(z'_c,x'_s)$ has the property that
$f^{-1}(\overline{W^{uu}_{loc}(z')})\subset W^{uu}_{loc}(z)$. Moreover
since $z'_c$ belongs to $\operatorname{interior}(K^c)$, the point $z'$ belongs to $\operatorname{interior}(K)$
as required.\!\!\!
\end{proof}
\smallskip

\subsection{Perturbations of iterated function systems}
Let $B=[-1/2,1/2]^d$, let $L\in \GL(d,\RR)$ be a linear contraction, and write $J=|\det(L)|$.
For $H\geq 1$, let $v_0,\dots,v_H\in B$ such that the affine contractions
$L_j\colon z\mapsto L.z+v_j$ send $B$ into its interior.
For any finite word $\d=(j_1,...,j_n)$ in $\{1,\dots,H\}^n$
let $L_\d=L_{j_n} \circ \cdots \circ L_{j_1}$.
For $n$ large enough,
we consider the IFS $\cL_n=\{L_0\circ L_\d,\;\d\in \{1,\dots,H\}^n\}$.

We will assume furthermore that there exists $\beta>1$ and $c\in (0,1)$ such that:
\begin{itemize}
\item $\beta^{2-c}<(JH)^{2}$,
\item $\operatorname{Card}\left\{\d\in \{1,\dots,H\}^n, \; x\in L_\d(B)\right\}\leq \beta^n$ for any $n\geq 1$
and $x\in B$.
\end{itemize}

For $n\geq 1$
we define $m=[c\cdot n]+1$ and consider the space $\Omega_n$
of functions $w\colon \{1,\dots,H\}^{n}\to B$
that satisfy $w_{\d}=w_{\d'}$ each time the last $m$ letters
$j_{n-m+1},\dots, j_n$ of $\d$ and $\d'$ coincide.
Since $\Omega_n$ can be identified with $B^{H^{n-m}}$ it is a probability space
(endowed with the product Lebesgue measure).
\medskip

The following probabilistic argument allows us to perturb the initial iterated function system $\cL_n$
so that the recurrent compact condition is satisfied.

\begin{proposition}\label{p.IFS}
Under the previous assumptions, if $n$ is large enough, then there exists $w\in \Omega_n$
such that the IFS induced by the affine contractions
$$z\mapsto L_0\circ L_\d(z)+10.L^{n+1}(w_\d), \quad\d\in \{1,\dots,H\}^n$$
satisfies the recurrent compact condition.
\end{proposition}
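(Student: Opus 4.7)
The plan is to prove Proposition 6.5 by a first-moment probabilistic argument on $\Omega_n$. Since an element $w \in \Omega_n$ depends only on the last $m$ letters of its argument, $\Omega_n$ identifies with the product $B^{H^m}$ endowed with the product Lebesgue measure; its independent coordinates are indexed by length-$m$ suffixes $\mathbf{b}$, and for any prefix $\mathbf{a}$ of length $n-m$ one has $w_{\mathbf{a}\mathbf{b}} = w_{\mathbf{b}}$. The candidate recurrent compact set will be $K^c = L_0(B_0)$ for some fixed box $B_0 \subset \operatorname{int}(B)$ of definite diameter independent of $n$, and the aim is to show that with positive probability over $w$ the covering inclusion $K^c \subset \bigcup_{\d} \tilde L_{\d}(\operatorname{int}(K^c))$ holds, which immediately gives the recurrent compact condition.

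First, I would discretize $K^c$ into a grid of roughly $N = C J^{-(n+1)}$ test points with mesh comparable to $\|L^{n+1}\|$, the diameter of each $\tilde L_{\d}(B)$. For each test point $z$ and each suffix $\mathbf{b}$, the union of prefix-indexed perturbed images $\bigcup_{\mathbf{a}} \tilde L_{\mathbf{a}\mathbf{b}}(\operatorname{int}(K^c))$ is a single translate (by $10 L^{n+1}(w_{\mathbf{b}})$) of the fixed set $L_0 \circ L_{\mathbf{b}}\bigl(\bigcup_{\mathbf{a}} L_{\mathbf{a}}(\operatorname{int}(K^c))\bigr)$; let $p_{\mathbf{b}}(z)$ be the probability that $z$ lies in this random translate as $w_{\mathbf{b}}$ varies uniformly over $B$. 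The counting hypothesis $\operatorname{Card}\{\mathbf{a} : x \in L_{\mathbf{a}}(B)\} \leq \beta^{n-m}$ forces $\operatorname{vol}\bigl(\bigcup_{\mathbf{a}} L_{\mathbf{a}}(B)\bigr) \geq c_1 (HJ/\beta)^{n-m}$, and after composing with $L_0 \circ L_{\mathbf{b}}$ (of Jacobian $J^{m+1}$) and comparing with the $10^d J^{n+1}$-volume of the perturbation range, one would obtain $p_{\mathbf{b}}(z) \geq c_2 (H/\beta)^{n-m}$ for a definite fraction of suffixes $\mathbf{b}$, namely those whose unperturbed footprint lies near $z$.

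Next, by independence of the $w_{\mathbf{b}}$ and a union bound over the $N$ test points,
\begin{equation*}
\Pr\bigl[K^c \not\subset \textstyle\bigcup_{\d} \tilde L_{\d}(\operatorname{int}(K^c))\bigr] \;\leq\; C' J^{-(n+1)} \exp\bigl(-c_0 H^m (H/\beta)^{n-m}\bigr),
\end{equation*}
and since $m = [cn]+1$, the growth hypothesis $\beta^{2-c} < (HJ)^2$ is exactly what is needed to make this bound tend to zero as $n \to \infty$. Hence for $n$ large some $w \in \Omega_n$ realizes the desired grid coverage; a marginal shrinkage of $B_0$ then converts grid coverage into full coverage of $K^c$. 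The main obstacle is the per-suffix estimate $p_{\mathbf{b}}(z) \geq c_2 (H/\beta)^{n-m}$: one must show that $\bigcup_{\mathbf{a}} L_{\mathbf{a}}(B)$, besides having the right total volume, is sufficiently well-distributed within its bounding region for a definite fraction of its translates to cover $z$. The exponent $2-c$ in the hypothesis encodes the precise balance between the $H^m$ independent trials provided by the suffix structure and the $(H/\beta)^{n-m}$ per-trial coverage rate coming from the prefix structure, and the introduction of $\Omega_n$ together with the parameter $m = [cn]+1$ is tailored exactly to tune this trade-off.
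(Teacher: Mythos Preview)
Your proposal has a genuine gap: the choice of recurrent compact set $K^c = L_0(B_0)$ for a fixed box $B_0$ independent of $n$ cannot work. The perturbation $10\,L^{n+1}(w_{\d})$ has magnitude $O(\|L^{n+1}\|)$, far below the scale of $K^c$, so the perturbed images $\tilde L_{\d}(K^c)$ stay within an $O(\|L^{n+1}\|)$-neighborhood of the unperturbed union $\bigcup_{\d} L_0\circ L_{\d}(B)$. Nothing in the hypotheses forces this union to cover $L_0(B_0)$. For a concrete one-dimensional counterexample, let $L$ be multiplication by $1/3$ and let $L_1,\dots,L_H$ (with $H$ large) split evenly into two groups mapping $B$ onto the left and right thirds of $B$, leaving the middle third uncovered. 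Then $\beta=H/2$, the hypothesis $\beta^{2-c}<(HJ)^2$ holds for $H$ large, yet $\bigcup_{\d\in\{1,\dots,H\}^n}L_{\d}(B)$ misses an open neighborhood of $0$ for every $n$; hence $L_0(0)\in K^c$ is never reached by any $\tilde L_{\d}(K^c)$, and your covering inclusion fails. Your acknowledged ``main obstacle'' (that $\bigcup_{\mathbf a}L_{\mathbf a}(B)$ be well-distributed) is thus fatal, not merely technical, and the claim that a ``definite fraction of suffixes $\mathbf b$'' have footprint near an arbitrary $z\in K^c$ is false in general.

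The paper avoids this by an essential extra idea: it does \emph{not} fix the compact set in advance. It first identifies the set $A\subset B$ of ``popular'' points, those already lying in at least $\tfrac12 J^{n+1}H^n$ of the unperturbed images $L_0\circ L_{\d}(B)$; an averaging argument gives $|A|\ge\alpha_n:=\tfrac12 J^{n+1}(H/\beta)^n$, and a further count shows each $z_0\in A$ lies in at least $\tfrac12 J^{n+1}H^n\beta^{m-n}$ such images with pairwise distinct length-$m$ suffixes. The candidate $K$ is then the union of the $L^{n+1}$-scale tiles meeting $A$, with $\tilde K$ the union of their half-cores. For any grid point $z$ sharing a tile with some $z_0\in A$, and any $\d$ with $z_0\in L_0\circ L_{\d}(B)$, one has $z-L_0\circ L_{\d}(\tilde K)\subset 10\,L^{n+1}(B)$, so the event $\{z\in L_0\circ L_{\d}(\tilde K)+10\,L^{n+1}(w_{\d})\}$ has probability exactly $10^{-d}|\tilde K|\ge 20^{-d}\alpha_n$. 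With $\tfrac12 J^{n+1}H^n\beta^{m-n}$ independent such events, the exponent governing the failure probability is $\alpha_n\cdot\tfrac12 J^{n+1}H^n\beta^{m-n}\sim\bigl((HJ)^2\beta^{\,c-2}\bigr)^n$, and this is precisely where the hypothesis $\beta^{2-c}<(HJ)^2$ enters---not through the different balance $H^m(H/\beta)^{n-m}\sim (H\beta^{c-1})^n$ that your scheme produces.
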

\begin{proof}
Let $\alpha_n=\frac 1 2 J^{n+1} (\beta^{-1} H)^n$.

\begin{claim}
There exists a compact set $A\subset B$ with Lebesgue measure
$|A|\geq \alpha_n$ of points $x$ that belong to a least $\frac 1 2 J^{n+1}H^n\beta^{m-n}$ images
$L_0\circ L_\d(B)$ associated to sequences $\d\in\{1,\dots,H\}^n$ whose
$m$ last letters $j_{n-m+1},\dots,j_n$ are different.
\end{claim}
\begin{proof}
Let $A\subset B$ be the set of points $x$ that belong to at least $J^{n+1}H^n/2$ images
$L_0\circ L_\d(B)$ where $\d\in \{1,\dots,H\}^n$.
By our assumptions, a point $x$ can belong to at most
$\beta^n$ such images (here we apply the estimate~\eqref{e.bound-images} to the point $L_0^{-1}(x)$).
Considering the images of $B$ by all possible maps $L_0\circ L_\d$, one gets:

\begin{equation*}
\begin{split}J^{n+1} H^n&=\sum_{\d}|L_0\circ L_\d(B)|=\int_B \operatorname{Card}\{\d\in \{1,\dots,H\}^n, x\in L_0\circ L_\d(B)\} dx\\
&\leq \beta^{n}|A|+\frac{J^{n+1}H^n}{2}.
\end{split}
\end{equation*}
This gives $|A|\geq \alpha_n$.

Fix $j_{n-m+1},\dots,j_n$ in $\{1,\dots,H\}^m$ and some $x\in B$.
The number of images $L_0\circ L_\d(B)$ containing $x$
and whose last $m$ letters of $\d$ coincide with $j_{n-m+1},\dots,j_n$
is equal to the number of images $L_{\d'}(B)$ with $\d'\in \{1,\dots,H\}^{n-m}$
which contain $(L_0\circ L_{j_n}\circ\dots\circ L_{j_{n-m+1}})^{-1}(x)$.
It is thus smaller than $\beta^{n-m}$.
It follows that the points $x\in A$ belong to at least $\frac 1 2J^{n+1}H^n\beta^{m-n}$
images $L_0\circ L_\d(B)$ associated to sequences $\d\in\{1,\dots,H\}^n$ whose
$m$ last letters $j_{n-m+1},\dots,j_n$ are different.
\end{proof}

Consider the tiles $L^{n+1}(u+B)$, where $u\in \ZZ^d$,
and their cores $L^{n+1}(u+\frac 1 2 B)$.
We denote by $K$ the union of the tiles which intersect $A$
and by $\tilde K$ the union of the cores of these tiles.
We have $|\tilde K|=2^{-d}|K|\geq 2^{-d}\alpha_n$.
Finally, we define $$A' =K\cap L^{n+1}\left(\frac 1 {100}.\ZZ^d\right).$$
Then the conclusion of the proposition holds for any parameter $w\in \Omega_n$
such that:
\begin{equation}\label{e.recurrent}
A'\subset \bigcup_{\d\in \{1,\dots,H\}^n} L_0 \circ L_\d(\tilde K)+10.L^{n+1}(w_\d),
\end{equation}

For a fixed $z \in A'$, we estimate the measure of the set of parameters 
\begin{equation}\label{e.badevent}
\Omega_n(z):=\{w\in \Omega_n,\; z\not\in \bigcup_{\d\in \{1,\dots,H\}^n} L_0 \circ L_\d(\tilde K)+10.L^{n+1}(w_\d)\}.
\end{equation}

Pick some $z_0 \in A$ which belongs to a same tile $L^{n+1}(u+B)$ as $z$.
For each $\d\in\{1,\dots,H\}^n$ such that $z_0\in L_0\circ L_\d(B)$, the
probability that $z \in L_0 \circ L_\d(\tilde K)+10.L^{n+1}(w_\d)$ is
$$\frac{|L_0 \circ L_\d(\tilde K)|}{|10.L^{n+1}(B)|}=10^{-d}|\tilde K|\geq 20^{-d}\alpha_n.$$

The point $z_0$ belongs to at least $\frac 1 2 J^{n+1}H^{n}\beta^{m-n}$ images $L_0\circ L_\d(B)$
associated to words $\d\in \{1,\dots,H\}^n$ whose last $m$ letters
$j_{n-m+1},\dots, j_n$ are pairwise different.
For these different words $\d$, the events $z \in L_0 \circ L_\d(\tilde K)+10.L^{n+1}(w_\d)$
are independant in the parameter space $\Omega_n$.
Consequently, the probability of the event~\eqref{e.badevent} is at most
$(1-(20^{-d} \alpha_n))^{\frac 1 2 J^{n+1}H^{n}\beta^{m-n}}$. From the definition of $\alpha_n$ and $m$,
this gives:
$$|\Omega_n(z)|\leq \exp\left(-{100^{-d}}J^{2(n+1)}H^{2n}\beta^{(c-2)n}\right).$$

By our choice of $c$, we have $J^2H^{2}\beta^{c-2}>1$.
Since the cardinality of $A'$ grows at most exponentially with $n$,
it follows that the measure of the set
$$\Omega_n\setminus \bigcup_{z\in A'}\Omega_n(z)$$
becomes arbitrarily small.
In particular, for $n$ large there exists $w\in \Omega_n$
such that  condition~\eqref{e.recurrent} holds.
Hence the result follows.
\end{proof}

\subsection{Reduction to standard affine horseshoes}
Any affine horseshoe contains large standard horseshoes
(Definition~\ref{d.standard}). Note that the condition~\eqref{e.almost-pesin}
can be preserved by this construction.

\begin{proposition}\label{p.reduction-standard}
Consider $f$ and an affine horseshoe $\Lambda$ with constant
linear part. Then for any $\varepsilon>0$ there exist $\Lambda'\subset \Lambda$,
a chart $\varphi\colon U\to \RR^{d_u+d_s}$ and $N$ such that
\begin{itemize}
\item $[0,1]^{d_u+d_s}\subset \varphi(U)$ and $\Lambda'$ is a standard horseshoe of $f^N$ for the chart $\varphi$,
\item $\frac 1 N h_{top}(\Lambda',f^N)>h_{top}(\Lambda,f)-\varepsilon$,
\item the diameter of $R:=\varphi^{-1}([0,1]^{d_u+d_s})$ is smaller than $\varepsilon$,
\item if  $R_1,\dots,R_\ell$ are the connected components of
$R\cap f^{-1}(R)$ that intersect $\Lambda'$, then $f^i(R_j)\cap R=\emptyset$ for each $1\leq i<N$ and $1\leq j\leq \ell$.\end{itemize}
\end{proposition}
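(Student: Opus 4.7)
The plan is to extract a first-return sub-horseshoe of $\Lambda$ and rescale the affine chart so that Definition~\ref{d.standard} holds for $f^N$. First, since $E^u$ and $E^s$ are $A$-invariant, a linear change of coordinates inside the affine chart puts $A$ into block-diagonal form $A = A^u \oplus A^s$ with $(A^u)^{-1}$ and $A^s$ contractions on the factors $\RR^{d_u}$ and $\RR^{d_s}$. In this chart the local unstable and stable manifolds of points of $\Lambda$ are affine subspaces parallel to the two coordinate factors, so one can choose a Markov partition of $\Lambda$ whose rectangles are genuine products of an unstable cube and a stable cube. Refining by forward and backward iteration, we may require that each rectangle have diameter less than $\varepsilon$. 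Fix one such rectangle $R$ and take $\varphi$ to be the affine chart rescaled by a block-diagonal linear map so that $\varphi(R) = B^u \times B^s$ with $B^u = [0,1]^{d_u}$ and $B^s = [0,1]^{d_s}$.

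For each integer $N \geq 1$ introduce the $f$-invariant compact set
\[
\Lambda_N := \{\, x \in \Lambda : f^{jN}(x) \in R \text{ for all } j \in \Z,\text{ and } f^i(x) \notin R \text{ for } 0 < i < N\,\},
\]
and set $\Lambda' := \Lambda_N \cap R$, which is $f^N$-invariant. The connected components of $R \cap f^{-N}(R)$ that meet $\Lambda'$ are, by construction, the first-$N$-return branches of $R$; since $f$ is affine with constant linear part $A$ on a neighborhood of $\Lambda$, each such branch $R_j$ reads in $\varphi$ as a product $B^u_j \times B^s$ with $B^u_j = (A^u)^{-N}(B^u) + u_j$ for some $u_j \in \RR^{d_u}$, and its image is $f^N(R_j) = \varphi^{-1}(B^u \times B^s_j)$ with $B^s_j = (A^s)^N(B^s) + s_j$ for some $s_j \in \RR^{d_s}$. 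The Markov property of $R$ forces the $B^u_j$ (resp.\ $B^s_j$) to be pairwise disjoint translated copies of $(A^u)^{-N}(B^u)$ (resp.\ $(A^s)^N(B^s)$) contained in the interior of $B^u$ (resp.\ $B^s$). The first-return condition $f^i(R_j) \cap R = \emptyset$ for $1 \leq i < N$ is immediate from the definition of $\Lambda_N$, so $\Lambda'$ verifies all the requirements of Definition~\ref{d.standard} for $f^N$ with linear part $A^N$.

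The remaining point, and the main obstacle, is to choose $N$ so that $\frac{1}{N} h_{top}(\Lambda', f^N) > h_{top}(\Lambda, f) - \varepsilon$. Identifying $\Lambda$ with a transitive subshift of finite type $\Sigma$ and $R$ with a symbol $a$ of its alphabet, the system $(\Lambda', f^N)$ is conjugate to a topological Markov chain on the alphabet of admissible $N$-words of $\Sigma$ that start with $a$ and contain $a$ only in the first position; its topological entropy equals the exponential growth rate of the cardinality of this alphabet. By topological mixing of $\Sigma$ and standard counting for subshifts of finite type, this growth rate equals $N\, h_{top}(\Lambda, f) + o(N)$ as $N \to \infty$, so $\frac{1}{N} h_{top}(\Lambda', f^N) \to h_{top}(\Lambda, f)$, and the inequality holds for every large enough $N$. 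This is exactly the concatenation scheme carried out at the end of the proof of Proposition~\ref{p.split}, and once it is established the proposition follows.
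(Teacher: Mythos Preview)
Your main gap is the entropy estimate, and it is genuine. The claim that the number of admissible $N$-words starting with $a$ and containing no further $a$ grows like $e^{N h_{top}(\Lambda,f)}$ is false for a general mixing SFT. Take the golden-mean shift (alphabet $\{a,b\}$, forbidden word $aa$, entropy $\log\frac{1+\sqrt 5}{2}>0$) and $R=[a]$: the only first-return word of length $N$ is $ab^{N-1}$, so $p_N=1$ and $\frac 1 N\log p_N=0$. Your $(\Lambda',f^N)$ is then a single periodic orbit and carries no entropy at all. Refining the partition does not automatically help; the same phenomenon persists for the cylinder $[ab]$. The concatenation scheme in Proposition~\ref{p.split} that you cite works precisely because one first checks that \emph{forbidding} the chosen rectangle $R'$ costs less than $\varepsilon/2$ in entropy, and only then forces returns to $R'$ at multiples of $N$; you skipped that step, and without it the argument collapses.

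The paper handles this by first invoking Lemma~\ref{l.reduce-full-shift} to pass to a subhorseshoe conjugate to a \emph{full} shift on $\{0,\dots,\ell\}^{\Z}$ with $\log\ell>h_{top}(\Lambda,f)-\varepsilon/2$. It then takes $R$ to be a small affine cube around the fixed point $p=h^{-1}(\underline 0)$ and builds itineraries whose middle block of length $m$ ranges over $\{1,\dots,\ell\}^m$, flanked by short tails of length $n$ prescribed by carefully chosen homoclinic points $x^s\in W^s(p)$, $x^u\in W^u(p)$ in $\operatorname{interior}(R)$. With $N=2n+m$ this yields $\ell^m$ branches, so $\frac 1 N h_{top}(\Lambda',f^N)=\frac m N\log\ell$, which is as close to $h_{top}(\Lambda,f)$ as one likes. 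The homoclinic tails also guarantee that each $R(b)$ and $f^N(R(b))$ lie in $\operatorname{interior}(R)$ and that $f^i(R(b))\cap R=\emptyset$ for $1\le i<N$; your argument for the interior condition in Definition~\ref{d.standard} (``the Markov property of $R$ forces\dots'') is not sufficient either, since for a generic Markov rectangle the sub-cubes $B^u_j$ may meet $\partial B^u$.
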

\begin{proof}
By extraction one first reduces to the case of a horseshoe whose dynamics are conjugate to a full shift.
Indeed, by~\cite{Anosov-horseshoe}, any horseshoe is topologically conjugate to a transitive Markov subshift
$(X_0,\sigma_0)$ over a finite alphabet $\mathcal{A}_0$, hence it is enough to work with a transitive Markov subshift
and to apply the following lemma.

\begin{lemma}\label{l.reduce-full-shift}
Let $(X_0,\sigma_0)$
be a transitive Markov subshift  on a finite
alphabet $\cA_0$. Then for any $\varepsilon>0$,
there exists a compact subset $X\subset X_0$
and $k,\ell\geq 1$ such that
\begin{itemize}
\item $X=\sigma_0^k(X)$ and $\sigma_0^i(X)\cap X=\emptyset$ for $1\leq i<k$,
\item the restriction of $\sigma^k_0$ to $X$ is conjugate to the shift on $\{0,\dots,\ell\}^\ZZ$, and
\item $\frac 1 k \log (\ell)>h_{top}(X_0,\sigma_0)-\varepsilon$.
\end{itemize}
\end{lemma}
\begin{proof}
Let us fix a symbol $*\in \cA_0$ and consider for $n$ large
the collection $\cL(n,*)$ of words $(a_0,a_1,\dots,a_{n-1})$ of length $n$
such that $a_0=*$ and $(a_0,a_1,\dots,a_{n-1}, *)$
appears in $X_0$. Their number is larger than
$\exp(n.(h_{top}(X_0,\sigma_0)-\varepsilon/4))$.
Note that the set of words $w\in \cL(n,*)$
that can be written as
$w=w'w'...w'$ for some sub word $w'$
with length smaller than $n$ has cardinality smaller than
$\exp(\frac 1 2 n.(h_{top}(X_0,\sigma_0)+\varepsilon))$.
Consequently, there exists a word $w_0\in \cL(n,*)$
which does not have such a decomposition.
In particular $w_0$ appears only twice
as a sub-word of $w_0w_0$.

The number of different subwords of length $n$ in $w_0w_0$
is smaller or equal to $n$.
Consequently, there exists a collection
$\cL\subset \cL(n,*)$ of
$\exp(n.(h_{top}(X_0,\sigma_0)-\varepsilon/2))$
words which are not subwords of $w_0w_0$.

If $k\geq 1$ is a large multiple of $n$,
let $\cA$ be the collection of words of length $k$
of the form $w_1w_2w_3\dots w_{\frac k n-1}$
with $w_1=w_2=w_0$ and $w_i\in \cL$ for $i\not\in\{1,2\}$.
One labels the elements of $\cA$ by $\{0,\dots,\ell\}$
where $\ell$ is larger than
$\exp(k.(h_{top}(X_0,\sigma_0)-\varepsilon))$.

Let $X$ be the collection of sequences $(a_j)\in \cA_0^\ZZ$ such that
$(a_{ik},\dots,a_{(i+1)k-1})$ belongs to $\cA$ for each $i\in \ZZ$.
We have $X\subset X_0$ and the restriction of $\sigma_0^k$
to $X$ is conjugate to the shift on $\cA^\ZZ$.
By construction of $w_0$ and  $\cL$,
the image $\sigma_0^i(X)$ intersects $X$ if and only if $i\in k\ZZ$.
\end{proof}

From Lemma~\ref{l.reduce-full-shift}
one can now assume that the dynamics of $f$ on $\Lambda$ is conjugate to the
shift on $\{0,\dots,\ell\}^\ZZ$ by a homeomorphism $h$ and that the entropy on the subhorseshoe
$h^{-1}(\{1,\dots,\ell\}^\ZZ)$ obtained by deleting one symbol is larger than $h_{top}(\Lambda,f)-\varepsilon/2$.

Since $\Lambda$ is affine, up to a conjugacy,  we may assume that it is contained in
$\RR^{d_u+d_s}$ and that $f$ is piecewise affine in a neighborhood of $\Lambda$
with a constant linear part $A=A^u\times A^s$. By an affine change of the coordinates,
we may also assume that $A^s$ sends $B^s=[0,1]^{d_s}$ into its interior and that
$(A^u)^{-1}$ sends sends $B^u=[0,1]^{d_s}$ into its interior.
Let $p\in \Lambda$ denote the fixed point $h^{-1}(\underline 0)$ and let
$R$ be a small cube centered at $p$ that is the pre image of the standard cube $B^u\times B^s$ by
an affine map of the form $\varphi\colon z\mapsto \lambda.z+v$.
Hence $R$ is a product $R=R^u\times R^s$.

One can find $x^s\in W^s(p)$ and $x^u\in W^u(p)$ contained in $\operatorname{interior}(R)\cap \Lambda$
such that $f^{-i}(x^s),f^{i}(x^u)\not\in R$ for $i\geq 1$.
Denote by $W^s_{loc}(x^u)$ the connected component of $W^s(x^u)\cap R$ containing $x^u$:
it is a cube of the plane $p+\{0\}^{d_u}\times \RR^{d_s}$.
If $R$ has been chosen small enough, we may assume that
$f^i(W^s_{loc}(x^u))$ is disjoint from $R$ for any $i\geq 1$.
In the same way, denoting by $W^u_{loc}(x^s)$ the connected component of $W^u(x^s)\cap R$ containing $x^s$,
we may assume that
$f^{-i}(W^u_{loc}(x^s))$ is disjoint from $R$ for any $i\geq 1$.

We now fix $n\geq 1$ and consider $m$ much larger. Set $N=2n+m$.
Let $(a^u_j)=h(x^s)$ and $(a^s_j)=h(x^u)$.
To any sequence $b=(b_1,\dots,b_m)\in\{1,\dots,\ell\}^m$ we associate
the point $x(b)\in \Lambda$ such that $(c_j)=h(x(b))$ satisfies:
\begin{itemize}
\item $c_j=a^u_j$ for $j< n$,
\item $c_j=b_{j-n}$ for $n\leq i< n+m$,
\item $c_j=a^s_{j-2n-m+1}$ for $n+m \leq j$.
\end{itemize}

By construction $x(b)$ and $f^{N}(x(b)$ belong to the interior of $R$,
and the forward images of the connected component of $W^s(x(b))\cap R$ containing $x(b)$
(resp. the backward images of the connected component of $W^u(f^{N}x(b))\cap R$ containing $f^{N}(x(b))$)
are disjoint from the boundary of $R$.
Since $f$ is affine in a neighborhood of $\Lambda$, this implies that
the connected component $R(b)$ of $R\cap f^{-N}(R)$ containing $x(b)$
is a set of the form $R^u(b)\times R^s$, where $R^u(b)$ is contained in the interior of $R^u$.
Analogously, $f^{N}(R(b))$
is a set of the form $R^u\times R^s(b)$, where $R^s(b)$ is contained in the interior of $R^s$.
Moreover the iterates $f^i(R(b))$ for $1\leq i <N$ are disjoint from $R$.

The $\ell^m$ rectangles $R(b)$ (resp. $f^N(R(b))$) for $b\in \{1,\dots,\ell\}^m$ are pairwise disjoint.
Consequently, the maximal invariant set $\Lambda'$ for $f^N$ in $\bigcup_b R(b)$ is a standard horseshoe.

If $n$ is sufficiently large, then the entropy of $f^N$ on $\Lambda'$  satisfies the required lower bound:
$$\frac 1 N h_{top}(\Lambda',f^N)=\frac m N \log (\ell)
\geq h_{top}(\Lambda,f)-\varepsilon.$$
\end{proof}

\subsection{Proof of Theorem C }
We can now complete the construction of blenders for affine horseshoes with large entropy.
Consider $f$ and an affine horseshoe $\Lambda$ with constant linear part $A$
that is partially hyperbolic and satisfies condition~\eqref{e.almost-pesin}.
Denote by $\chi^u_{inf}(A)$ and $\chi^u_{max}(A)$ the smallest and the largest positive Lyapunov exponents of $A$;
by~\eqref{e.almost-pesin}, we can choose $c\in (0,1)$ such that
\begin{equation}\label{e.ck}
c.k.\chi^u_{max}(A)<\chi^u_{inf}(A),\,\,\hbox{ and}
\end{equation}
\begin{equation}\label{e.defc}
h_{top}(\Lambda,f)>\log |\det(A_{|E^u})|-\frac c 2 \chi_{max}^u(A).
\end{equation}

By Proposition~\ref{p.reduction-standard}, we can assume that there exist $N\geq 1$, a decomposition
$$\Lambda=\Lambda'\cup f(\Lambda')\cup\dots\cup f^{N-1}(\Lambda'),$$
and a chart $\varphi\colon U\to \RR^{d_u+d_s}$ such that:
\begin{itemize}
\item $\Lambda'$ is a standard horseshoe for $f^N$ in the neighborhood $R:=\varphi^{-1}([0,1]^{d_u+d_s})$,
\item $\Lambda'$ is contained in finitely many components
$R_0,\dots,R_H$ of $R\cap f^{-N}(R)$,
\item $f^i(R_j)\cap R=\emptyset$ for any $0\leq j\leq H$ and $1\leq i<N$, and
\item $\frac 1 N \log(1+H)=\frac 1 N h_{top}(\Lambda',f^N)$ is arbitrarily close to $h_{top}(\Lambda,f)$.
\end{itemize}
Consequently from~\eqref{e.defc} we have:
\begin{equation}\label{e.defc2}
\log(H)-\log |\det(A_{|E^u}^N)|>-\frac c {2}\chi^u_{max}(A^N)\geq -\frac c 2 \log |\det(A^N_{|E^{uu}})|.
\end{equation}

As explained in Section~\ref{ss.affine-recurrent},
the standard horseshoe $\Lambda'$ of $f^N$ defines an IFS inside the center space $E^c$
with affine contractions of the form $L_j\colon z\mapsto L.z+v_j$, $0\leq i\leq H$. Moreover
$J:=|\det(L)|$ coincides with $|\det(A^N_{E^c})|^{-1}$ and if we set
$\beta=|\det(A^N_{|E^{uu}})|$, then the property~\eqref{e.bound-images} holds and~\eqref{e.defc2} gives $\beta^{2-c}<(JH)^2$.

We can thus apply  Proposition~\ref{p.IFS} and find $n\geq 1$ large and a function
$w\colon \{1,\dots,H\}\to B^c$ such that, setting $m=[c.n]+1$, we have:
\begin{itemize}
\item the IFS induced by the affine contractions
$$z\mapsto L_0\circ L_\d(z)+10.L^{n+1}(w_\d), \quad\d\in \{1,\dots,H\}^n$$
satisfies the recurrent compact condition,
\item we have $w_\d=w_{\d'}$ each time the last $m$ letters of $\d$ and $\d'$ coincide.
\end{itemize}

After conjugating in the chart $\varphi$, the horseshoe $\Lambda'$ for $f^N$
is the maximal invariant set in the cube $[0,1]^{d_u+d_s}=B^u\times B^s$
for the dynamics induced by some affine maps:
$$F_j\colon B^u_j\times B^s\to B^u\times B^s_j\colon\quad\quad z\mapsto A^N(z)+v_j.$$
Consider the standard horseshoe $\Lambda''$ for $f^{(n+1).N}$ contained in $\Lambda'$ and defined by the affine maps $F_\d\circ F_0$
where $\d\in\{1,\dots,N\}^N$ and $F_\d=F_{j_1}\circ F_{j_2}\circ\dots\circ F_{j_n}$.
Each of these maps is defined on a domain $B^u_{(\d,0)}\times B^s$.
If $T_j$ denotes the contraction of $B^u$ induced by the map $F_j^{-1}$,
then $B^u_{(\d,0)}=T_0\circ T_{j_n}\circ\dots\circ T_{j_1}(B^u)$.
\bigskip

We now explain how to modify $f$ on $B^u_0\times U^s$, where $U^s$
is a small neighborhood of $B^s\subset \RR^{d_s}$.
Start by fixing a word $\underline i\in\{1,\dots,H\}^m$. Now consider the image
$B^u_{\underline i}=T_0\circ T_{i_m}\circ\dots\circ T_{i_1}(B^u)$;
it contains all the sets $B^u_{(\d,0)}$ such that the last $m$ letters of
$\d$ coincide with $\underline i$. Moreover the distance from each such set $B^u_{(\d,0)}$
to the boundary of $B^u_{\underline i}$ is greater than $\delta.\exp(-\chi^u_{max}.m.N)$,
where $\delta$ is a lower bound for the distance of the sets $B^u_j$
to the complement of $B^u$.

By construction, there exists $w_{\underline i}\in B^c$ that coincides with
all the vectors $w_\d$ associated to the words $\d$ whose last letters coincide with $\underline i$.
We modify $f$ inside
$B^u_{\underline i}\times U^s$ by a diffeomorphism that coincides
on each domain $B^u_{(\d,0)}\times U^s+(0,10.L^{(n+1)}(w_{\underline i}),0)$ with
$$(z^{uu},z^c,z^s)\mapsto f\big(z^{uu},z^c-10.L^{(n+1)}(w_{\underline i}),z^s\big).$$
Note that
$$\|10.L^{(n+1)}(w_{\underline i})\|^{1/k}\leq 10\exp(-\chi^u_{inf}.(n+1).N/k)$$
is much smaller than $\delta.\exp(-\chi^u_{max}.m.N)$, and hence than the distance between the sets $B^u_\d$ and the complement
of $B^u_{\underline i}$, from inequality~\eqref{e.ck}.
Consequently  diffeomorphism we obtain is $C^k$-close to $f$, provided $n$ has been chosen large enough.
Repeating this construction independently in each domain $B^u_{\underline i}\times U^s$,
we obtain  the diffeomorphism $g$. Note that $g$ can be chosen to preserve the volume if $f$ does.

By construction, the $(n+1).N$ consecutive iterates of $B^u_0\times U^s$ by $f$ are disjoint.
It follows that the diffeomorphism $G=g^{(n+1).N}$ has a standard horseshoe $\Lambda''_G$
whose IFS coincides with the affine contractions 
$z\mapsto L_0\circ L_\d(z)+10.L^{n+1}(w_\d)$, $\d\in \{1,\dots,H\}^n$.
By Propositions~\ref{p.criterion} and~\ref{p.standard} this horseshoe is a $d_{cs}$-stable blender.
Consequently the union of the iterates $g^i(\Lambda''_G)$, $1\leq i\leq (n+1).N$, is
a $d_{cs}$-stable blender for $g$, as required. Note also that the support of the perturbation is contained in
a small neighborhood of $R$, which has arbitrarily small diameter. \eproof

%\section{Birth of blenders inside large horseshoes}

\section{Appendix: Approximation of hyperbolic measures by horseshoes}
\label{s.katok}

\def\cY{\mathcal Y}

We prove in this section the version of Katok's approximation of hyperbolic measures stated in  Theorem~\ref{t.katok}.
Recall that we have fixed a diffeomorphism $f$ in $\diff^{1+\alpha}(M)$,
an ergodic hyperbolic measure $\mu$, a constant $\delta>0$ and a neighborhood
$\cV$ of $\mu$ in the weak* topology.

\subsection{Uniformity blocks}
Our goal is to extract a subset of $M$ that generates a horseshoe with large entropy. 
In order to select enough orbit types, we will need to shadow certain pseudo-orbits for $f$ by true $f$-orbits.  Such shadowing lemmas exist for nonuniformly hyperbolic dynamics, but we need one especially adapted to our setting, allowing us to control Lyapunov exponents.  The orbits will be selected from points visiting a special set called a \emph{uniformity block}.

We first introduce the Oseledets-Pesin charts associated to an ergodic measure.
See Theorem S.3.1 in~\cite{katok-hasselblatt}.

\begin{theorem}[Pesin]\label{t=pesin} Let $f\colon M\to M$ be a $C^{1+\alpha}$ diffeomorphism preserving the ergodic probability measure $\mu$ and let $\eta>0$.
Then there exist  $Z\subset M$, measurable with $\mu(Z)=1$,
two measurable functions $r,K\colon M\to (0,1]$ and a measurable family
of invertible linear maps $C(x)\colon T_xM\to \RR^d$ satisfying the following properties.
\begin{enumerate}
\item $|\log(r(f(x)) / r(x))|<\eta$ and $|\log(K(f(x))/K(x))|<\eta$ for any $x\in Z$.

\item If $\chi_1>\dots>\chi_\ell$ are the Lyapunov exponents of $\mu$,
with multiplicities $n_1,\dots,n_\ell$, then for any $x\in Z$
there are $A_{i}(x)\in GL(n_i,\RR)$ such that
$$L=C(f(x))\cdot Df(x) \cdot C^{-1}(x) = \diag\left(A_{1}(x),\ldots, A_{\ell}(x)\right),$$
$$e^{\chi_i - \eta} < \|A_{i}^{-1}\|^{-1} \leq \|A_{i}\| < e^{\chi_i + \eta}.$$

\item In the charts $\phi_x\colon x\mapsto C(x)\circ \exp_x^{-1}$,
the maps $f_x=\phi_{f(x)}\circ f \circ \phi_x^{-1}$ are defined on $B(0,r(x))$
and satisfy $d_{C^1}(f_x, Df_x(0))<\eta$ at every $x\in Z$.

\item For any $x\in Z$ and any $y,y'\in \phi_x^{-1}(B(0,r(x)))$,
$$ d(y,y') \leq \|\phi_{x}(y) - \phi_{x}(y')\| \leq  \frac 1 {K(x)} \, d(y,y').$$
\end{enumerate}
\end{theorem}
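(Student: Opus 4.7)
The plan is to follow the standard construction of Oseledets-Pesin (Lyapunov) charts, making three ingredients explicit: (a) an invariant measurable splitting and estimate of $Df$ in a suitable norm, (b) a linear change of coordinates that diagonalizes this norm, and (c) a tempering step that forces the measurable parameters $r(x), K(x)$ to decay subexponentially along orbits. First I would invoke the Oseledets multiplicative ergodic theorem to obtain a full-measure invariant set on which there is a measurable splitting $T_xM=E_1(x)\oplus\cdots\oplus E_\ell(x)$ and exponents $\chi_1>\cdots>\chi_\ell$ of multiplicities $n_i$. Then, for the given $\eta>0$, I would define on each $E_i(x)$ the Pesin (a.k.a.\ Lyapunov) inner product
\[
\langle u,v\rangle_{i,x,\eta}=\sum_{n\in\ZZ}\langle Df^n u, Df^n v\rangle\, e^{-2\chi_i n-2\eta|n|},\qquad u,v\in E_i(x),
\]
extended by declaring the $E_i(x)$ mutually orthogonal. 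Convergence of the series (for $\mu$-a.e.\ $x$) comes from Oseledets' estimates; a direct computation gives that $Df|E_i(x)$ has operator norm in $[e^{\chi_i-\eta},e^{\chi_i+\eta}]$ and similarly for $(Df|E_i)^{-1}$.

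Next I would pick any measurable field of linear isomorphisms $C(x)\colon T_xM\to\RR^d$ carrying this Pesin inner product to the standard one and sending each $E_i(x)$ to the corresponding coordinate block $\RR^{n_i}$; writing $L=C(fx)\,Df(x)\,C(x)^{-1}$ yields the block-diagonal form $\diag(A_1(x),\dots,A_\ell(x))$ with $e^{\chi_i-\eta}<\|A_i^{-1}\|^{-1}\leq\|A_i\|<e^{\chi_i+\eta}$, giving item (2). The charts $\phi_x=C(x)\circ\exp_x^{-1}$ are well defined on a neighborhood of $x$ whose radius depends measurably on the injectivity radius of $\exp_x$, on $\|C(x)\|$, and on the modulus of H\"older continuity of $Df$ near $x$; choosing $r(x)>0$ small enough (a measurable function) guarantees that $f_x=\phi_{f(x)}\circ f\circ\phi_x^{-1}$ is defined on $B(0,r(x))$ and satisfies $d_{C^1}(f_x,Df_x(0))<\eta$ (this is where the $C^{1+\alpha}$ hypothesis is essential — H\"older control of $Df$ lets us trade a small domain radius against small $C^1$ deviation from the linearization). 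The distortion estimate (4) follows from the bilipschitz comparison of the Pesin norm $\|C(x)v\|$ with the Riemannian norm $\|v\|$, with distortion bounded by $1/K(x)$ where $K(x)$ is a measurable function measuring the ratio of the two norms.

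The main obstacle, and the reason everything above must be upgraded, is that a priori $r$ and $K$ may decay arbitrarily fast along orbits, so they need not satisfy the tempering estimates in item~(1). This is handled by the tempering kernel lemma (Pesin): any positive measurable function $\psi$ on $M$ with $\log(\psi\circ f/\psi)\in L^1(\mu)$ and $\mu$-a.s.\ finite can be dominated pointwise by another positive measurable function $\tilde\psi\ge\psi$ (respectively $\tilde\psi\le\psi$ in the case of $r$) satisfying $|\log(\tilde\psi\circ f/\tilde\psi)|<\eta$ a.e., obtained by the formula $\tilde\psi(x)=\sup_{n\in\ZZ}\psi(f^nx)\,e^{-\eta|n|}$ (and the symmetric inf for $r$). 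Applied to the natural candidates for $K$ and $1/r$, this produces new measurable functions still satisfying (3) and (4), now with the prescribed slow variation along orbits of item~(1). Restricting to the full-measure invariant set $Z$ where all these constructions are defined and finite completes the proof.
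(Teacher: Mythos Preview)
The paper does not supply a proof of this theorem at all: it is stated as a known result of Pesin, with a pointer to Theorem~S.3.1 in \cite{katok-hasselblatt}, and is then used as a black box in the construction of uniformity blocks and in the shadowing argument. Your sketch is the standard construction of Oseledets--Pesin charts (Lyapunov inner product, block-diagonalizing isometry $C(x)$, then tempering), so there is nothing to compare against in the paper itself.

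One small caveat on your write-up: the hypothesis you state for the tempering step, namely $\log(\psi\circ f/\psi)\in L^1(\mu)$, is not quite the usual one and is not what you actually need. What makes the formula $\tilde\psi(x)=\sup_{n\in\ZZ}\psi(f^n x)e^{-\eta|n|}$ finite and $\eta$-tempered is that $\frac{1}{n}\log\psi(f^n x)\to 0$ almost everywhere; for the norm ratio $\|C(x)\|$ and for the natural radius function this subexponential behavior along orbits is a direct consequence of the Oseledets estimates, not of any integrability of the log-ratio. With that adjustment, your outline is correct.
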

The sets $\phi_x^{-1}(B(0,r(x)))$ are called {\em regular neighborhoods} of the points $x\in Z$.  They decay slowly exponentially in size (at rate at most $e^{-\eta}$) along the orbits.

\begin{definition}  A compact set $X\subset M$ with $\mu(X)>0$ is a {\em uniformity block for $\mu$ (with tolerance $\eta>0$)}  if there exist $Z,r,K,C$ as in Theorem~\ref{t=pesin}
with $X\subset Z$ such that $r,K,C$ are continuous on $X$.
\end{definition}
For any tolerance $\eta>0$, Theorem~\ref{t=pesin} and Lusin's theorem imply the existence of uniformity blocks
with measure arbitrarily close to $1$.

\subsection{Shadowing theorem}
The shadowing theorem we will use applies to special pseudo-orbits -- those with jumps in a uniformity block $X$
for a hyperbolic measure $\mu$.  
A sequence $(x_n)_{n\in \ZZ} \subset M$ is an {\em $\epsilon$-pseudo-orbit with jumps in a set $X\subset M$} if
$\{x_n, n\geq n_0\}$ and $\{x_n, n\leq -n_0\}$ meet $X$ for arbitrarily large $n_0$ and moreover
$$\forall n\in \ZZ,\quad d(f(x_n), x_{n+1}) > 0 \implies f(x_n), x_{n+1} \in X \text{ and }d(f(x_n),x_{n+1}) < \epsilon.$$
The following theorem uses the hyperbolicity assumption on the measure $\mu$.

\begin{theorem}\label{p=shadowing}  For any $f\in \diff^{1+\alpha}(M)$ with ergodic hyperbolic measure $\mu$, 
there exists $\kappa>0$ such that, for every
$\eta>0$ sufficiently small and every uniformity block $X$ for $\mu$ of tolerance $\eta$,
the following property holds for some constants $C_0,\epsilon_0>0$.

If $(x_n)$ is an $\epsilon$-pseudo-orbit with jumps in $X$ and $\epsilon\in (0,\epsilon_0)$,
then there exists a unique $y\in M$ whose orbit $C_0\epsilon$-shadows $(x_n)$, i.e. $d(f^n(y), x_n) < C_0\epsilon$ for all $n\in \ZZ$.
Moreover $y$ belongs to the regular neighborhood $\phi_{x}^{-1}(B(0,r(x)/2))$.

If $y,y'$ shadow two pseudo-orbits $(x_n)$, $(x'_n)$ such that
$x_n=x'_n$ for $|n|\leq N$, then
$$d(y,y')\leq C_0e^{- \kappa N}.$$\end{theorem}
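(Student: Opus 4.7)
\emph{Approach.} The plan is to recast the shadowing problem in the Pesin charts of Theorem~\ref{t=pesin} as a fixed point equation for bounded sequences, and solve it via the Banach contraction principle. Fix $\kappa>0$ smaller than the minimum $|\chi_i|$ of the Lyapunov exponents of $\mu$, and work with $\eta$ so small that all hyperbolic rates beat $\eta$ by at least $\kappa$. Then the chart sizes $r(x_n)$ decay at most at rate $e^{-\eta}$ along the orbit while contraction/expansion in the charts are at rates exceeding $e^{\kappa+\eta}$, so the charts furnish a coordinate system in which the dynamics is uniformly hyperbolic and nearly linear. The uniformity of $r,K,C$ on the compact set $X$ will ensure that at each jump of the pseudo-orbit the chart change is controlled by a constant depending only on $X$.

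\emph{Setup.} Given an $\epsilon$-pseudo-orbit $(x_n)$ with jumps in $X$, I would look for the shadow by writing candidate points as $y_n=\phi_{x_n}^{-1}(z_n)$, with $z_n\in \RR^d$, and seeking $y=y_0$ so that $y_{n+1}=f(y_n)$. In charts this becomes
\[
z_{n+1} = \phi_{x_{n+1}}\circ \phi_{f(x_n)}^{-1}\bigl(f_{x_n}(z_n)\bigr) = f_{x_n}(z_n) + e_n,
\]
where $e_n=0$ at non-jump indices, and at a jump the continuity of $x\mapsto\phi_x$ on $X$ combined with $d(f(x_n),x_{n+1})<\epsilon$ and item (4) of Theorem~\ref{t=pesin} give $\|e_n\|\leq C(X)\epsilon$. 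Item (3) then decomposes $f_{x_n}(z_n)=L_n z_n+g_n(z_n)$ with $L_n=Df_{x_n}(0)$ block-hyperbolic (blocks $A_i(x_n)$ of norm in $(e^{\chi_i-\eta},e^{\chi_i+\eta})$) and $g_n$ being $\eta$-Lipschitz on $B(0,r(x_n))$.

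\emph{Hyperbolic fixed point.} On the Banach space $\mathcal{Y}$ of sequences $\boldsymbol{z}=(z_n)_{n\in\ZZ}$ with $\|\boldsymbol{z}\|_\infty=\sup_n\|z_n\|<\infty$, decompose along the invariant splitting $\RR^d=E^s\oplus E^u$ from the signs of the $\chi_i$ and invert the difference operator $D\boldsymbol{z}:=(z_{n+1}-L_n z_n)_n$ by the explicit Green's function
\[
(D^{-1}\boldsymbol{w})_n^s=\sum_{k<n} L^s_{n-1}\!\cdots L^s_{k+1}\, w_k^s,\qquad (D^{-1}\boldsymbol{w})_n^u=-\sum_{k\geq n}(L^u_n)^{-1}\!\cdots(L^u_k)^{-1}\, w_k^u,
\]
whose norm is bounded by $(1-e^{-\kappa})^{-1}$. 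The shadowing equation $\boldsymbol{z}=D^{-1}(g(\boldsymbol{z})+\boldsymbol{e})$ is then a contraction with factor $\eta(1-e^{-\kappa})^{-1}<\tfrac12$ on the closed ball $\{\|\boldsymbol{z}\|_\infty\leq C_0\epsilon\}$, yielding a unique fixed point $\boldsymbol{z}^\ast$ with $\|\boldsymbol{z}^\ast\|_\infty\leq C_0\epsilon$. Taking $\epsilon<\epsilon_0$ small relative to the infimum of $r$ on $X$ forces $z_n^\ast\in B(0,r(x_n)/2)$ for every $n$, so $y:=\phi_{x_0}^{-1}(z_0^\ast)$ is well-defined, lies in the prescribed regular neighborhood, and satisfies $d(f^n(y),x_n)\leq\|z_n^\ast\|\leq C_0\epsilon$ by item (4) of Theorem~\ref{t=pesin}; uniqueness of the shadow follows from uniqueness of the fixed point.

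\emph{Exponential closeness and main obstacle.} If two pseudo-orbits agree on $|n|\leq N$, the errors $\boldsymbol{e}-\boldsymbol{e}'$ vanish on that window, so the Green's function sums for $(D^{-1}(\boldsymbol{e}-\boldsymbol{e}'))_0$ start at distance $\geq N$ and contribute at most $C\epsilon\, e^{-\kappa N}$; combined with the $\tfrac12$-contraction this yields $\|z_0^\ast-(z')_0^\ast\|\leq C_0\epsilon\, e^{-\kappa N}$, hence the claimed bound $d(y,y')\leq C_0 e^{-\kappa N}$ (absorbing $\epsilon$ into the constant, which is legitimate because $\epsilon<\epsilon_0$). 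The main technical obstacle is controlling the interplay between the slow decay $e^{-\eta}$ of the Pesin chart radius $r(x_n)$ and the hyperbolic rates: one needs $C_0\epsilon\leq r(x_n)/2$ along the entire bi-infinite orbit, even though $r$ may shrink exponentially between visits to $X$. This is handled by exploiting the hypothesis that the pseudo-orbit meets $X$ arbitrarily far in both directions together with the lower bound of $r$ on the compact block $X$, so that the ratio $r(x_n)/\inf_X r$ stays bounded by $e^{\eta|n-n(x)|}$ where $n(x)$ is the nearest index at which $x_{n(x)}\in X$; choosing $\eta\ll\kappa$ ensures that the contraction estimate and the ball-stability estimate can be satisfied simultaneously, completing the proof.
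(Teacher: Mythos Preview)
Your strategy---lift the pseudo-orbit to the Pesin charts, linearize, and solve the resulting hyperbolic fixed-point equation---is exactly the approach the paper takes. The paper packages the linear-hyperbolic step as a black-box sequence shadowing lemma (Theorem~\ref{t=sequenceshadow}) rather than writing out the Green's function, but the content is the same.

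The gap is in your handling of the domain constraint. You run the contraction on the flat ball $\{\|\boldsymbol z\|_\infty\le C_0\epsilon\}$ in $\ell^\infty$, and then assert that choosing $\epsilon$ small relative to $\inf_X r$ forces $z_n^\ast\in B(0,r(x_n)/2)$ for all $n$. But between two consecutive visits to $X$ the excursion can be arbitrarily long, and along it $r(x_n)$ may shrink by the factor $e^{-\eta\cdot\text{length}}$; thus for $n$ deep in such an excursion $r(x_n)$ can be far smaller than any fixed $C_0\epsilon$. The flat $\ell^\infty$ ball is therefore \emph{not} contained in $\prod_n B(0,r(x_n))$, and the nonlinearity $g_n$---controlled only on $B(0,r(x_n))$---is not even defined on the set where you are iterating. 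Your final paragraph names the obstacle correctly, but ``$\eta\ll\kappa$'' by itself does not close it: you would need either a weighted norm adapted to the varying radii, or the device the paper actually uses.

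That device is to first \emph{extend} each chart map $f_{x_n}$ to a global diffeomorphism $g_{x_n}$ of $\RR^d$ that agrees with $f_{x_n}$ on $B(0,r(x_n)/2)$ and still satisfies $d_{C^1}(g_{x_n},L_n)<3\eta$. Sequence shadowing then applies with no domain restriction and produces $(\bar y_n)$ with $\|\bar y_n\|\le\theta\Delta\epsilon$. One checks \emph{a posteriori} that $\bar y_n\in B(0,r(x_n)/2)$---hence that $(\bar y_n)$ is a genuine $f$-orbit in charts---by comparing, on each excursion $[n_1,n_2]$ between visits to $X$, with the bona fide $(g_n)$-orbit equal to $0$ on that window. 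The uniqueness estimate of Theorem~\ref{t=sequenceshadow} gives
\[
\|\bar y_n\|\le\bigl(e^{-\kappa(n-n_1)}+e^{-\kappa(n_2-n)}\bigr)\theta\Delta\epsilon,
\]
which decays at rate $\kappa$ into the excursion, while $r(x_n)\ge e^{-\eta\min(n-n_1,\,n_2-n)}\inf_X r$ decays only at rate $\eta<\kappa$; so the orbit stays inside the half-radius balls. This same comparison, applied with $n_1=-N$, $n_2=N$, also yields the exponential-closeness bound $d(y,y')\le C_0e^{-\kappa N}$ directly, without having to track differences of Green's-function sums in possibly different chart systems for $|n|>N$.
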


We will use the classical shadowing theorem for sequences of diffeomorphisms (see, e.g. \cite{Pi}).

\begin{theorem} \label{t=sequenceshadow} For every $\kappa> 0$ there exist $\theta,\eta_0>1$ with the following property.

Let $(L_n^s)\in \GL(d_s,\RR)^\ZZ$ and $(L_n^u)\in \GL(d_u,\RR)^\ZZ$ satisfying 
$\|L_n^s\|, \|(L_n^u)^{-1}\| < e^{-\kappa}$ and for each $n$
let $L_n=\diag(L_n^u,L_n^s)$.
Let $(g_n) \in \Diff^1(\RR^d)^\ZZ$, where $d=d_u+d_s$,
be a sequence of diffeomorphisms with $d_{C^1}(g_n,L_n) < \eta_0$
and fix any $\epsilon>0$.

Then every $\epsilon$-pseudo-orbit for $(g_n)$  is $\theta\epsilon$-shadowed by a unique $(g_n)$-orbit.
\smallskip

\noindent
More precisely: for every sequence $(x_n)$ in $\RR^d$ satisfying
$\|g_n(x_n) - x_{n+1}\| < \epsilon$ for all $n\in \ZZ$, there exists a sequence $(y_n)$ in $\RR^d$ satisfying
$g_n(y_n) = y_{n+1}$ and   $\|y_n - x_n\|<\theta\epsilon$ for all $n\in \ZZ$.  This sequence is unique:
for any sequence $(z_n)$ in $\RR^d$ satisfying $g_n(z_n) = z_{n+1}$ for all $n\in \ZZ$, and for any $n_1\leq k\leq n_2$,
we have:
$$\| z_{k} - y_k\| \leq  e^{ - \kappa (k-n_1)}\|z_{n_1}- y_{n_1} \|+  e^{- \kappa (n_2-k)} \|z_{n_2}- y_{n_2}\|.$$
\end{theorem}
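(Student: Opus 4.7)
The plan is to reduce the shadowing equation to a Banach fixed-point problem on $\ell^\infty(\ZZ,\RR^d)$, using that the linear cohomological operator associated to $(L_n)$ is boundedly invertible because the splitting $\RR^d=\RR^{d_u}\oplus\RR^{d_s}$ is uniformly hyperbolic. The same invertibility then yields both the uniqueness of the shadow and the quantitative stability estimate, via a cone-field argument for the perturbed cocycle.

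I will parametrize a candidate true orbit as $y_n = x_n + v_n$ with $v=(v_n)\in\ell^\infty(\ZZ,\RR^d)$. Writing $e_n:=g_n(x_n)-x_{n+1}$ (so $\|e_n\|<\epsilon$) and $H_n(v):=g_n(x_n+v)-g_n(x_n)-L_n v$, the equation $g_n(y_n)=y_{n+1}$ becomes
\[
v_{n+1}-L_n v_n \;=\; e_n + H_n(v_n).
\]
The operator $(Tv)_{n+1}:=v_{n+1}-L_n v_n$ on $\ell^\infty$ is invertible componentwise: on the stable factor, the unique bounded solution of $v_{n+1}^s=L_n^s v_n^s + w_n^s$ is $v_n^s=\sum_{k<n} L_{n-1}^s\cdots L_{k+1}^s\, w_k^s$, and on the unstable factor one solves symmetrically by summing backward using the contractions $(L_k^u)^{-1}$. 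Both Neumann series converge geometrically at rate $e^{-\kappa}$, so $\|T^{-1}\|\leq\theta_0$ for an explicit constant $\theta_0$ depending only on $\kappa$.

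Next, the hypothesis $d_{C^1}(g_n,L_n)<\eta_0$ makes $H_n$ globally $\eta_0$-Lipschitz (after a harmless cut-off that does not affect the fixed point I seek). If $\eta_0$ is taken so small that $\theta_0\eta_0<\tfrac12$, the map $\Phi(v):=T^{-1}(e+H(v))$ is a contraction on $\ell^\infty$, and its unique fixed point $v$ satisfies $\|v\|_\infty\leq 2\theta_0\epsilon=:\theta\epsilon$. The resulting $y_n=x_n+v_n$ is the desired shadow.

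For the final stability clause, apply the same setup to the difference $w_n:=z_n-y_n$ of two true orbits, which satisfies $w_{n+1}=L_n w_n + R_n(w_n)$ with $R_n$ globally $\eta_0$-Lipschitz. I will use the standard cones $\cC^u:=\{\|w^s\|\leq\|w^u\|\}$ and $\cC^s:=\{\|w^u\|\leq\|w^s\|\}$, which become strictly forward- and backward-invariant respectively under the nonlinear cocycle once $\eta_0$ is small enough; inside these cones vectors expand forward (resp.\ backward) at rate at least $e^{\kappa'}$ for some $\kappa'$ arbitrarily close to $\kappa$. Splitting the interval $[n_1,n_2]$ at the (unique) transition time from $\cC^s$ to $\cC^u$ and bounding each piece by a geometric series anchored at $n_1$ and $n_2$ respectively yields the stated telescoping estimate, after renaming $\kappa'$ as $\kappa$. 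The main obstacle is calibrating $\eta_0$ uniformly in $\kappa$ so that the three ingredients, invertibility of $T$, contraction of $\Phi$, and invariance of the perturbed cones, all hold simultaneously with contraction rate arbitrarily close to the original $e^{-\kappa}$; this is a routine but careful perturbation calculation in the spirit of the classical Hirsch--Pugh proof of structural stability.
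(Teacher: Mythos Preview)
The paper does not supply its own proof of this statement; it is quoted as the classical shadowing theorem for sequences of diffeomorphisms, with a reference to Pilyugin~\cite{Pi}. Your argument is essentially the standard one found in such references: invert the linear transfer operator on $\ell^\infty$ via geometric series along the stable and unstable factors, then close up with a contraction mapping. This is correct and is precisely the kind of proof the citation points to.

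Two small remarks. First, the cut-off you mention is unnecessary here: the hypothesis $d_{C^1}(g_n,L_n)<\eta_0$ is global, so $H_n(v)=g_n(x_n+v)-g_n(x_n)-L_n v$ already satisfies $\|DH_n\|<\eta_0$ everywhere, and the fixed-point argument runs on all of $\ell^\infty$ without modification. Second, your phrasing of the stability estimate via a ``unique transition time'' is slightly loose; the cleaner way to say it is that for each $k\in[n_1,n_2]$ the vector $w_k=z_k-y_k$ lies in $\cC^u$ or in $\cC^s$ (the two cones cover $\RR^d$), and in the first case forward expansion gives $\|w_k\|\leq e^{-\kappa'(n_2-k)}\|w_{n_2}\|$, while in the second case backward expansion gives $\|w_k\|\leq e^{-\kappa'(k-n_1)}\|w_{n_1}\|$, so either way the sum of the two terms bounds $\|w_k\|$. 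You correctly note that the achieved rate is some $\kappa'<\kappa$ depending on $\eta_0$; since the statement allows $\eta_0$ to depend on $\kappa$, one recovers any prescribed rate by taking $\eta_0$ small enough, which is exactly how the theorem is applied in the proof of Theorem~\ref{p=shadowing} (where $\kappa=\chi/2$ is taken strictly below the true hyperbolicity rate $\chi$).
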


\begin{proof}[Proof of Theorem~\ref{p=shadowing}]
Choose a positive lower bound $\chi$ for the $|\chi_i|$.
We set $\kappa = \chi/2$ and get constants $\theta, \eta_0$ from Theorem~\ref{t=sequenceshadow}.
We choose $\eta < \min(\chi/2,\eta_0/4)$.
We consider a uniformity block $X$ for $\mu$ with tolerance $\eta$ and associated to functions $r,K,C$.
Assuming that $\eta$ is small enough, for any $C^1$ map $f_0\colon B(0,r)\to \RR^d$
and for any linear map $L\in \GL(d,\RR)$ such that $d_{C^1}(f_0, L)<\eta$,
there exists a $C^1$-map $g_0\colon \RR^d\to \RR^d$ which coincides with $f_0$ on $B(0,r/2)$
and satisfies $d_{C^1}(g_0, L)<2\eta$.
We extend in this way each diffeomorphism $f_x$ to a diffeomorphism $g_x$ of $\RR^d$
agreeing with $f_x$ on $B(0,r(x)/2)$ and such that $d_{C^1}(g_x,Df_x(0))<3\eta$.

Let $\Delta>1$ be an upper bound for $\frac 1 {K(x)}$, $x\in X$.
We choose $\epsilon_0>0$ such that $r(x)>2\theta\Delta\epsilon_0$ for each $x\in X$.
Moreover if $\epsilon_0$ is small enough, when $f(x),x'\in X$ are $\epsilon_0$-close,
the diffeomorphisms $f_{x,x'}=\phi_{x'}\circ f \circ \phi_x^{-1}$ and $f_x$
are $\eta$-close in the $C^1$-topology. In particular $f_{x,x'}$ can be extended
to a diffeomorphism  $g_{x,x'}$ of $\RR^d$
that coincides with $f_{x,x'}$ on $B(0,r(x)/2)$ so that $d_{C^1}(g_{x,x'},Df_x(0))<3\eta$.

Fix $\epsilon<\epsilon_0$ and let $(x_n)$ be a $\epsilon$-pseudo-orbit with jumps in $X$.
We consider the sequence $(g_n)$ of diffeomorphisms of $\RR^d$ defined by
$g_n=g_{x_n}$ if $x_{n+1}=f(x_n)$ and $g_{n}=g_{x_n,x_{n+1}}$ otherwise.
In the first case $g_n(0)=0$ and in the second case
$|g_n(0)|\leq \Delta \epsilon$. The pseudo-orbit $(0)$ is thus $\theta\Delta\epsilon$-shadowed by an orbit $(\bar y_n)$
of $(g_n)$.

At times $n$ such that $x_n\in X$ we have $\bar y_n\in B(0,r(x_n)/2)$ by our choice of $\epsilon_0$.
At other times, we consider the smallest interval $\{n_1,\dots, n_2\}$ of $\ZZ$  containing $n$
such that $x_{n_1},x_{n_2}\in X$.
The sequence $(\bar y_n)$ may be compared to the orbit of $(g_n)$ which coincides with $0$
for the indices $n_1,\dots, n_2-1$. Consequently,
$$\|\bar y_n\|\leq (e^{- \kappa (n-n_1)}\;+\;e^{- \kappa (n_2-n)})\theta\Delta\epsilon.$$
From the property (1) in Theorem~\ref{t=pesin} we have:
$$r(x_n)\geq \max(e^{-\eta(n-n_1)}r(x_{n_1})\;,\; e^{-\eta(n_2-n)}r(x_{n_2})),$$
which implies that $\bar y_n\in B(0,r(x_n)/2)$ still holds.
The projection $y_n=\phi_{x_n}^{-1}(\bar y_n)$
satisfies $d(y_n,x_n)\leq d(\bar y_n, 0) \leq \theta\Delta\epsilon$,
so that the sequence $(y_n)$ $C_0\epsilon$-shadows $(x_n)$ where $C_0:=\theta\Delta$.
By construction $(y_n)$ is an orbit of $f$.

If $(y'_{n})$ is another orbit that shadows a pseudo-orbit $(x'_n)$ such that
$x_n=x'_n$ for $|n|\leq N$,
the lifts $\phi_{x_n}(y_n)$ and $\phi_{x_n}(y'_n)$ may be compared.
From Theorem~\ref{t=sequenceshadow},
\begin{equation*}
\begin{split}
d(y_0,y'_0)&\leq \|\phi_{x_0}(y_0),\phi_{x_0}(y'_0))\|\\
&\leq
2e^{-\kappa N} \max(\|\phi_{x_N}(y_N),\phi_{x_N}(y'_N)\|),
\|\phi_{x_{-N}}(y_{-N}),\phi_{x_{-N}}(y'_{-N}))\|\\
&\leq 2\theta \Delta e^{-\kappa N}\leq C_0 e^{-\kappa N}.
\end{split}
\end{equation*}

This completes the proof of Theorem~\ref{p=shadowing}.
\end{proof}

\subsection{Metric entropy}
For $n\geq 1$ we define the dynamical distance
$$d_{f,n}(x,y)=\min\{d(f^k(x),f^k(y)),\; k=0,\dots,n-1\}.$$
For $\rho>0$ and  $\beta>0$, let $C_\mu(n,\rho,\beta)$ denote the
minimal cardinality of the families of $d_{f,n}$-balls of radius $\rho$
whose union $\cup_iB_{f,n}(x_i,\rho)$ has measure larger than $\beta$.

\begin{theorem}[Katok, Theorem 1.1 in~\cite{katok}] For any $\beta>0$,
$$  h(\mu, f) = 
\lim_{\rho\to 0} \varliminf_{n\to\infty} \frac{1}{n}\log C(n,\rho,\beta)
= \lim_{\rho\to 0} \varlimsup_{n\to\infty} \frac{1}{n}\log C(n,\rho,\beta).
$$
\end{theorem}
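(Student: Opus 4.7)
The plan is to deduce the formula from the Brin--Katok local entropy theorem,
$$h(\mu,f) = \lim_{\rho\to 0}\varliminf_{n\to\infty} -\tfrac{1}{n}\log \mu(B_{f,n}(x,\rho)) = \lim_{\rho\to 0}\varlimsup_{n\to\infty}-\tfrac{1}{n}\log\mu(B_{f,n}(x,\rho)),$$
valid for $\mu$-a.e.\ $x$. Since $\varliminf_n\le\varlimsup_n$, it is enough to prove, for any fixed $\varepsilon>0$, both $\varlimsup_n\tfrac{1}{n}\log C_\mu(n,\rho,\beta) \leq h(\mu,f)+\varepsilon$ and $\varliminf_n\tfrac{1}{n}\log C_\mu(n,\rho,\beta) \geq h(\mu,f)-\varepsilon$ for all sufficiently small $\rho$. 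The first (and main) technical step will be to upgrade the pointwise Brin--Katok statement to a uniform one via Egorov's theorem: for any $\rho<\rho_0(\varepsilon,\beta)$ there exist a set $G_\rho\subset M$ with $\mu(G_\rho)>1-\beta/3$ and an integer $N_\rho$ such that
$$e^{-n(h(\mu,f)+\varepsilon)} \leq \mu(B_{f,n}(x,\rho)) \leq e^{-n(h(\mu,f)-\varepsilon)}, \qquad x\in G_\rho,\ n\geq N_\rho.$$

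For the \emph{upper bound}, I will use a greedy packing. Select inductively $x_1,x_2,\ldots\in G_\rho$ with $d_{f,n}(x_i,x_j)>\rho$ for $i\neq j$. The dynamical balls $B_{f,n}(x_i,\rho/2)$ are then pairwise disjoint by the triangle inequality, and each has measure at least $e^{-n(h(\mu,f)+\varepsilon)}$ when the uniform estimate is invoked at scale $\rho/2$, so the process stops after $k\leq e^{n(h(\mu,f)+\varepsilon)}$ steps; maximality forces $G_\rho\subset\bigcup_i B_{f,n}(x_i,\rho)$, a set of $\mu$-measure $\geq\beta$, yielding $C_\mu(n,\rho,\beta)\leq e^{n(h(\mu,f)+\varepsilon)}$.

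For the \emph{lower bound}, let $\{B_{f,n}(x_i,\rho)\}_{i=1}^{k}$ be any family whose union has $\mu$-measure $\geq\beta$. Discarding balls disjoint from $G_{2\rho}$ leaves a subfamily whose union still has $\mu$-measure $\geq\beta/2$; for each surviving ball choose $y_i\in B_{f,n}(x_i,\rho)\cap G_{2\rho}$, so that $B_{f,n}(x_i,\rho)\subset B_{f,n}(y_i,2\rho)$, and apply the uniform Brin--Katok estimate at scale $2\rho$ to obtain $\mu(B_{f,n}(y_i,2\rho))\leq e^{-n(h(\mu,f)-\varepsilon)}$. Summing the measures gives $k\geq (\beta/2)\,e^{n(h(\mu,f)-\varepsilon)}$, whence $\varliminf_n\tfrac{1}{n}\log C_\mu(n,\rho,\beta)\geq h(\mu,f)-\varepsilon$; letting $\varepsilon\to 0$ concludes both inequalities. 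The heart of the argument is the uniform Brin--Katok step: the real content is that the measurable functions $x\mapsto \varliminf_n -\tfrac{1}{n}\log\mu(B_{f,n}(x,\rho))$ converge, as $\rho\to 0$, to the constant $h(\mu,f)$ not only pointwise but, by Egorov, uniformly on sets of measure tending to $1$; everything else is a packing/covering argument with dynamical balls.
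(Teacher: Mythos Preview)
The paper does not give a proof of this statement: it is quoted verbatim as Theorem~1.1 of Katok's 1980 paper and used as a black box in the proof of Theorem~\ref{t.katok}. So there is no ``paper's own proof'' to compare against.

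Your argument via the Brin--Katok local entropy theorem is correct and is the standard modern route to this formula. Two minor points. First, your Egorov threshold $\mu(G_\rho)>1-\beta/3$ is tailored to the lower-bound step; for the upper bound you also need $\mu(G_\rho)>\beta$, which fails when $\beta\geq 3/4$. Simply take $\mu(G_\rho)>\max(\beta,1-\beta/3)$ (Egorov allows measure arbitrarily close to $1$). Second, in the upper bound you invoke the uniform estimate ``at scale $\rho/2$'' while the good set was built at scale $\rho$; this is harmless because $h^+(x,\rho):=\varlimsup_n -\tfrac1n\log\mu(B_{f,n}(x,\rho))$ is monotone in $\rho$ and bounded above by $h(\mu,f)$ for \emph{every} $\rho>0$, so the lower bound on ball measure holds at all scales once $n$ is large on the Egorov set.

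It is worth noting that Katok's original 1980 proof could not have used Brin--Katok (which appeared in 1983); he argued directly from the Shannon--McMillan--Breiman theorem applied to a fine partition, which gives the same two-sided control on the measure of dynamical balls after an Egorov step. Your approach and Katok's are therefore close cousins: both reduce to uniform pointwise entropy estimates followed by the same packing/covering count, but yours outsources the pointwise step to a now-standard theorem.
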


\subsection{Proof of Theorem~\ref{t.katok}}
We choose a $\gamma>0$ and a finite 
collection of continuous functions $\psi_1,\ldots, \psi_k$ on $M$
such that $\cV$ contains the set of probability measures $\nu$ satisfying:
$$
\max_{1\leq i\leq k}\left| \int_M \psi_i \, d\mu- \int_M \psi_i\, d\nu\right| < \gamma.
$$
\bigskip

\paragraph{\it The tolerance $\eta$}
Define cones $\cC^1,\dots,\cC^{\ell-1}$ of $\RR^d$ by:
$$\cC^i=\{v=(v_1,v_2)\in \RR^{d_i}\times \RR^{d-d_i} \; 
\|v_1\|>\|v_2\|\},$$
where $d(i)=n_1+\dots+n_i$.
Let  $\eta\in (0, \min\|\chi_i\|)$ small such that the shadowing theorem holds.
We will consider linear maps $D\in \GL(d,\RR)$ that are $\eta$-close to diagonal maps
$\diag(A_1,\dots,A_\ell)$ with
$$\exp(\chi_i - \eta) < \|A_{i}^{-1}\|^{-1} \leq \|A_{i}\| < \exp(\chi_i + \eta).$$
If $\eta$ is chosen sufficiently small, then for any $i\in \{1,\dots,\ell-1\}$ and for any such $D$, the
closure of the cone $\cC^i$ is mapped by $D$
inside $\cC^i\cup \{0\}$. Moreover if $(v'_1,v'_2)$ is the image of $(v_1,v_2)\in \cC^i$
by $D$, then
$$\exp(\chi_1+\delta)\|v_1\|\geq \|v'_1\|\geq \exp(\chi_i-\delta)\|v_1\|.$$
Symmetrically, if $(v'_1,v'_2)\in \RR^d\setminus \cC^1$ is the image of $(v_1,v_2)$
by $D$ then
$$\exp(\chi_{i+1}+\delta)\|v_2\|\geq \|v'_2\|\geq \exp(\chi_{d}-\delta)\|v_2\|.$$

We then choose a uniformity block for $\mu$ with tolerance $\eta$.
\medskip

\paragraph{\it The separation scale $\rho$}
Choose $\xi>0$  such that
$$
\xi<  \frac{\delta}{h(\mu, f) + 4} \quad \left(<\frac\delta4\right).
$$
Next choose a separation scale $\rho\in (0,\delta)$ such that
$$d(x,y)<\rho \implies \max_{1\leq i\leq k} |\psi_i(x) - \psi_i(y)| < \gamma/2,$$
and a separation time $N_0\in \NN$ such that $n\geq N_0$ implies
$$
 \frac{1}{n}\log C(n,\rho,\mu(X)/2) > h(\mu, f) -\xi.
$$
For any time $n\geq N_0$,
in any subset of $X$ of $\mu$-measure at least $\mu(X)/2$,
the maximal $(n,\rho)$-separated sets
contain at least $\exp(n(h(\mu, f) - \xi))$.
\bigskip

\paragraph{\it The shadowing scale $\epsilon$ and the common return time $N$}  We choose $\epsilon\in (0,\epsilon_0)$ so that $C_0\epsilon  < \rho/2$, where $C_0,\epsilon_0$ are given by Proposition~\ref{p=shadowing} for the uniformity block $X$.  

\begin{lemma}
There are $N\geq N_0$, an $\epsilon/2$-ball $B$ centered at a point in $X$
and a set $Y\subset B\cap X$ such that:
\begin{itemize}
\item the points of $Y$ are $\rho$-separated in the distance $d_{f,N}$,
\item $f^N(Y)\subset B\cap X$,
\item $\left|\frac{1}{N}\sum_{j=1}^{N} \psi_i(f^j(x)) - \int_M \psi_i\, d\mu \right| < \gamma/2$
for each $y\in Y$ and $i\in\{1,\dots,k\}$,
\item the cardinality of $Y$ is larger than $\exp({N(h(\mu,f) - \delta)})$.
\end{itemize}
\end{lemma}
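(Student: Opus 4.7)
The plan is to combine three standard ingredients: Birkhoff's ergodic theorem, to force the averages of the $\psi_i$ along typical orbits to be close to $\int\psi_i\,d\mu$; Poincar\'e recurrence (more precisely, ergodicity of $\mu$ under $T=f^N$), to ensure that a positive-measure set of orbits returns to a prescribed ball after $N$ iterates; and the entropy estimate $C(N,\rho,\mu(X)/2)\ge\exp(N(h(\mu,f)-\xi))$ coming from the choice of $N_0$, to supply many $(N,\rho)$-separated orbits. The challenge is to realise all three conditions on a single finite set $Y$ and for a single value of $N$.

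First I would apply Birkhoff's theorem to the finite collection of continuous functions $\psi_1,\dots,\psi_k$ and to $\mathbf{1}_X$, and invoke Egorov's theorem to extract a compact set $X'\subset X$ with $\mu(X')$ arbitrarily close to $\mu(X)$, together with an integer $N_1\ge N_0$ such that for every $x\in X'$ and every $n\ge N_1$
$$\Bigl|\tfrac{1}{n}\sum_{j=1}^{n}\psi_i(f^j(x))-\int\psi_i\,d\mu\Bigr|<\gamma/2 \quad (i=1,\dots,k),$$
and such that the forward orbit of $x$ visits $X'$ with asymptotic frequency at least $\mu(X')/2$.

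Next I would localise the endpoints of the relevant orbits inside a small ball. Cover $X'$ by finitely many closed balls $B^{(1)},\dots,B^{(m)}$ of radius $\epsilon/2$ centered at points of $X'$. Applying Birkhoff's theorem to each indicator $\mathbf{1}_{B^{(j)}\cap X'}$ and summing over $j$ with Cauchy--Schwarz, the Ces\`aro average $\frac{1}{L}\sum_{N=N_1}^{N_1+L-1}\sum_{j}\mu\bigl(B^{(j)}\cap X'\cap f^{-N}(B^{(j)}\cap X')\bigr)$ is bounded below by $\mu(X')^2/m$ for $L$ large; hence there is an index $j_0$ and an arbitrarily large integer $N\ge N_1$ for which $\mu(Z)\ge\mu(X')^2/(2m^2)$, where $Z:=B\cap X'\cap f^{-N}(B\cap X')$ and $B=B^{(j_0)}$.

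Finally, to obtain the cardinality bound, I would take a maximal $(N,\rho)$-separated subset of $X'\cap f^{-N}(X')$. By maximality this set is $(N,\rho)$-dense in $X'\cap f^{-N}(X')$, which has $\mu$-measure at least $\mu(X)/2$ by the frequency estimate from step one, so the hypothesis on $N_0$ forces its cardinality to be at least $\exp(N(h(\mu,f)-\xi))$. Distributing over the $m^2$ pairs $(B^{(i)},B^{(i')})$ indexing initial and terminal balls, pigeonhole yields a pair containing at least $\exp(N(h(\mu,f)-\xi))/m^2$ separated points; the averaging of $N$ over a long window combined with Cauchy--Schwarz on the diagonal (as in step three) forces this pair to be the diagonal $(B,B)$, so the corresponding separated points form the desired $Y\subset Z$. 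Since $\xi<\delta/(h(\mu,f)+4)$, the multiplicative correction $m^{-2}$ is absorbed for $N$ sufficiently large and one obtains $|Y|\ge\exp(N(h(\mu,f)-\delta))$. The subtle point is producing a single $N$ for which (i) $f^N(Y)\subset B\cap X$, (ii) the $\psi_i$-averages along each orbit are good, and (iii) the pigeonhole lands on the diagonal pair; this forces the joint averaging over a long window of values of $N$ and the invariance of $\mu$ under $T=f^N$ to be used together rather than separately.
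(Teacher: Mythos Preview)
Your proposal has a genuine gap in the final step. The Cauchy--Schwarz/averaging argument of step two is measure-theoretic: it produces an $N$ and a ball $B$ for which the \emph{measure} $\mu\bigl(B\cap X'\cap f^{-N}(B\cap X')\bigr)$ is bounded below. But the pigeonhole in step three is about \emph{counting} $(N,\rho)$-separated points, and nothing ties the two together. Pigeonholing the separated set over the $m^2$ pairs $(B^{(i)},B^{(i')})$ only tells you that \emph{some} pair carries at least $m^{-2}\exp(N(h-\xi))$ points; nothing forces that pair to be diagonal, and the sentence ``the averaging of $N$ over a long window combined with Cauchy--Schwarz on the diagonal forces this pair to be the diagonal'' does not correspond to any valid inequality --- there is no Cauchy--Schwarz for cardinalities of separated sets that privileges diagonal pairs. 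A secondary issue: the claim $\mu\bigl(X'\cap f^{-N}(X')\bigr)\ge\mu(X)/2$ is in general false for a fixed $N$ (for ergodic but non-mixing $\mu$ this quantity is only close to $\mu(X')^2$ in Ces\`aro mean), so the entropy lower bound on your separated set is not justified either.

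The paper sidesteps both problems by reorganising the argument. Instead of fixing a single $N$ via a measure argument, it lets $N$ range over a short window $[m,(1+\xi)m)$ and builds the diagonal return condition in from the start: the set $X_m$ consists of those $x\in X$ with good Birkhoff averages for which, for \emph{some} $n$ in this window and \emph{some} ball $B_i$ of a fixed finite cover, both $x$ and $f^n(x)$ lie in $B_i$. Birkhoff's theorem applied to each indicator $\mathbf{1}_{B_i}$ gives $\mu(X_m)\to\mu(X)$, since a.e.\ point of $B_i$ has return times to $B_i$ of positive density and hence one in every long enough window. One then takes a maximal $(m,\rho)$-separated set $E_m\subset X_m$ --- automatically $(N,\rho)$-separated for every $N\ge m$ --- of cardinality at least $\exp(m(h-\xi))$, and performs two honest pigeonholes: first over the at most $\xi m$ values of $n$ in the window, then over the $t$ balls. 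The total loss is a factor $\xi m\cdot t$, subexponential in $m$; the choice $\xi<\delta/(h(\mu,f)+4)$ together with $m>\xi^{-1}\log t$ and $N\le(1+\xi)m$ then yields $\#Y\ge\exp(N(h(\mu,f)-\delta))$.
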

\begin{proof}
Cover $X$ by finitely many $\epsilon/2$-balls centered at points $x_1,\ldots,x_t\in X$. For $m\geq N_0$, let
$$
X_m^{0} =\left\{x\in X : \exists n, i, \text{ s.t. }
m\leq n <(1+\xi) m, \; 1\leq i\leq t, \;  \hbox{and } x, f^n(x) \in  B(x_i,\epsilon/2)\right\},
$$
%and let
$$
\text{ and }X_m  =\left\{ x\in X_m^0: \sup_{n\geq m}\, \max_{1\leq i\leq k} \left|\frac{1}{n}\sum_{j=1}^{n} \psi_i(f^j(x)) - \int_M \psi_i\, d\mu \right| < \gamma/2.\right\}.
$$
The Birkhoff Ergodic Theorem implies that $\mu(X\setminus X_m)\to 0$ as $m\to \infty$.
Fix $m>  \max(N_0,\xi^{-1}\log t)$ such that $\mu(X_m)>\mu(X)/2$, and
let $E_m$ be a maximal $(m,\rho)$-separated set in $X_m$.  By our choice of $N_0$, we have
$$
\# E_m \geq  \exp({m(h(\mu,f) - \xi)}).
$$

For $n\in \{m,(1+\xi)m-1\}$, let
$$
V_n = \{x\in E_m :  x, f^n(x)\in  B(x_i,\epsilon/2),\hbox{ for some } i\in\{1,\dots t\}\},
$$
and let $N$ be a value of $n$ maximizing $\# V_n$.
Then
$$
\# V_N \geq \frac{\# E_m}{\xi m} \geq \frac{e^{m(h(\mu,f) - \xi)}}{ \xi m} > \exp({m(h(\mu,f)- 2\xi)}).
$$

Next choose $i\in\{1,\dots,t\}$ such that $B(x_i,\epsilon/2)\cap V_N$ has maximal cardinality,  and let
$B = B(x_i, \epsilon/2)$ and
$Y= B(x_i, \epsilon/2)\cap V_N$.  
 Then from the choice of $\xi$ and since $m>\xi^{-1}\log(t)$, we get the required estimate
$$
\#Y \geq \frac{\# V_N}{t} \geq  \frac{1}{t}\exp({m(h(\mu,f) - 2\xi)})>\exp(N(h(\mu,f)-\delta)).
$$
\end{proof}
\medskip

\paragraph{\it The set $\Lambda$}
Consider the shift $\cY=Y^\ZZ$ over the alphabet $Y$. Since the diameter of $B$ is smaller than $\epsilon$,
each $(y_{n})\in \cY$ is associated to the
$\epsilon$-pseudo-orbit:
$$\ldots, y_{0}, \ldots, f^{N-1}(y_{0}), y_{1}, \ldots,f^{N-1}(y_{1}),\ldots,$$
which is obtained by concatenating the orbit segments of length $N$
starting at the points $y_n$. Theorem~\ref{p=shadowing} implies that for each such pseudo-orbit there is a unique point
$\pi((y_n))$ whose orbit shadows. Consequently, the union $\Lambda_0$ of all the orbits that shadow the elements of $\cY$ is an
$f^N$-invariant compact set. By the last property of the shadowing theorem,
the map $\pi\colon \cY\to \Lambda_0$ is continuous
(for the natural product topology on $\cY=Y^\ZZ$).
Hence $\pi$ conjugates the shift
to $f^N$.

Since for distinct $y, y'\in Y$, there exists $j\in[0,N-1]$ such that $d(f^{j}(y), f^j(y')) > \rho$,
and since $C_0\epsilon < \rho/2$, it follows that if $x$ and $x'$ $C_0\epsilon$-shadow distinct elements of $\cY$, then $x$ and $x'$ are distinct as well.  This implies that $\pi$ is a homeomorphism.
We define $\Lambda=\Lambda_0\cup f(\Lambda_0)\cup\dots\cup f^{N-1}(\Lambda_0)$.
It is a transitive $f$-invariant compact set.
From this we will obtain the entropy estimate (conclusion (2) of Theorem~\ref{t.katok}):
$$
h(\Lambda, f) = \frac{1}{N} h(\Lambda_0, f^N) >  h(\mu,f) -  \delta.
$$
\medskip

From the construction of $Y$, for any $y_i\in Y$, we have 
$$\max_{1\leq i\leq k} \left|\frac{1}{N}\sum_{j=1}^{N} \psi_i(f^j(y_i)) - \int_M \psi_i\, d\mu \right| < \gamma/2.$$
By our choice of $\rho$, if  for some $x\in X$, we have $\max_{0<j \leq N} d(f^j(x), f^j(y_i)) < \rho$,
then 
$$\max_{1\leq i\leq k} \left| \frac{1}{N}\sum_{j=1}^{N} \psi_i(f^j(y_i)) -   \frac{1}{N}\sum_{j=1}^{N} \psi_i(f^j(x))\right| < \gamma/2.$$ 
Since $C_0\epsilon < \rho$,  it follows that if a point $x$ $C_0\epsilon$-shadows a pseudo-orbit in 
$\cY$, then 
$$\varlimsup_{n\to\infty} \max_{1\leq i\leq k} \left|\frac{1}{n}\sum_{j=0}^{n-1} \psi_i(f^j(x)) - \int_M \psi_i\, d\mu \right| < \gamma.$$
From this conclusion (3) follows.

By construction $\Lambda$ is contained in the $C_0\epsilon$-neighborhood of the support of $\mu$.
Assuming that $\cV$ has been chosen small enough, $\Lambda$ intersects any $\delta$-ball centered at points
in the support of $\mu$, hence $\Lambda$ is $\delta$-close to the support of $\mu$ in the Hausdorff distance,
which gives conclusion (1).
\medskip

By the shadowing theorem, any orbit in $\Lambda_0$ stays in the regular neighborhoods of the pseudo-orbit
it shadows. Moreover $\Lambda_0$ is contained in the regular neighborhood of 
the center $x$ of $B$ and is preserved by $f^N$.
By properties (2) and (3) in Theorem~\ref{t=pesin}
and by the choice of the tolerance constant, it follows that after conjugacy by the chart $\phi_x$,
the derivative of $f^N$ preserves the cones $\cC^1,\dots,\cC^{\ell-1}$. The cone field criterion
implies that $\Lambda$ has a dominated splitting as in condition (4):
$$T_\Lambda M=E_1\oplus\dots\oplus E_\ell,\quad \text{ with } \dim(E_i)=n_i.$$
By definition and invariance of the cones, the growth of the iterates of any vector in $E_i$
is given by its second coordinate in $\RR^d=\RR^{d(i-1)}\times \RR^{n_i}\times \RR^{d-d(i)}$.
Consequently for $v$ in $E_i$ and $n\geq 1$ large enough we obtain condition (5):
$$\exp((\chi_i - \delta)n ) \leq \|Df^n(v)\| \leq \exp((\chi_i + \delta)n).$$
Since the exponents $\chi_i$ are all different from zero, the set $\Lambda$ is hyperbolic.
\medskip

We can reduce to the case where $\Lambda$ is a horseshoe by applying the following proposition.

\begin{proposition}
Let $K$ be a hyperbolic set for a $C^1$-diffeomorphism $f$.
Then for any $\delta>0$, there exists a horseshoe $\Lambda$ that is $\delta$-close to $K$
in the Hausdorff topology and satisfies
$h_{top}(\Lambda,f)\geq h_{top}(K,f)-\delta$.
\end{proposition}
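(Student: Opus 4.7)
The plan is to follow the strategy of the proof of Theorem~\ref{t.katok} above, with the essential simplification that the uniform hyperbolicity of $K$ allows us to replace Pesin theory by the classical Anosov--Bowen shadowing lemma. Indeed, the exponential dichotomy of $Df$ over $K$ extends to a thin $f$-invariant neighborhood $U$ of $K$, and there exist constants $C_0,\varepsilon_0,\kappa>0$ such that every $\varepsilon$-pseudo-orbit in $U$ with $\varepsilon<\varepsilon_0$ is $C_0\varepsilon$-shadowed by a unique $f$-orbit, with shadowing orbits of pseudo-orbits agreeing on a window of length $2N$ being $C_0e^{-\kappa N}$-close. This is the $C^1$ analog of Theorem~\ref{p=shadowing} and requires no differentiability beyond $C^1$. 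Choosing $U$ small enough so that $U\subset N_{\delta/2}(K)$ and replacing $K$ by the maximal invariant set in $U$, I may assume that $K$ is locally maximal, and by Smale's spectral decomposition I may further restrict attention to a transitive basic piece $C\subset K$ with $h_{top}(C,f)=h_{top}(K,f)$.

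Now I run the argument of Theorem~\ref{t.katok} on $C$, replacing Theorem~\ref{p=shadowing} by the uniform shadowing lemma above. The variational principle furnishes an ergodic $\mu$ supported on $C$ with $h_\mu(f)>h_{top}(K,f)-\delta/4$; fixing a separation scale $\rho<\delta/4$ and a shadowing scale $\varepsilon$ with $C_0\varepsilon<\rho/2$, and invoking Katok's entropy formula (purely topological, hence valid for any $C^1$ diffeomorphism), one obtains a common return time $N$, a small ball $B$ meeting $\supp\mu$, and a set $Y\subset B\cap C$ of pairwise $(N,\rho)$-separated points with $f^N(Y)\subset B$ and $|Y|\geq\exp(N(h_{top}(K,f)-\delta/2))$. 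The map $\pi\colon Y^{\ZZ}\to M$ that sends each sequence to the shadow of its concatenated length-$N$ orbit segments is a continuous injection conjugating the shift to $f^N$ on a compact invariant set $\Lambda_0$: injectivity follows from $C_0\varepsilon<\rho/2$ together with the $(N,\rho)$-separation of $Y$, and local maximality of $\Lambda_0$ from the uniqueness clause of the shadowing lemma. The orbit $\Lambda:=\bigcup_{i=0}^{N-1}f^i(\Lambda_0)$ is then a horseshoe with $h_{top}(\Lambda,f)=N^{-1}\log|Y|>h_{top}(K,f)-\delta$.

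The remaining point is to guarantee Hausdorff closeness to all of $K$, not merely to $\supp\mu\subset C$. I would fix at the outset a finite $(\delta/2)$-net $\{x_1,\dots,x_p\}$ in $K$, use transitivity of the locally maximal extension together with the Anosov closing lemma to produce orbit segments passing from a neighborhood of $B$ to a neighborhood of each $x_i$ and back, and enrich the symbolic system $Y^{\ZZ}$ into a subshift of finite type $\Sigma$ on an augmented alphabet (still containing $Y^{\ZZ}$ as a subsystem, so the entropy lower bound is preserved) whose typical orbits visit every $B(x_i,\varepsilon)$ infinitely often. Feeding $\Sigma$ back into the uniform shadowing lemma produces the final horseshoe, now within $\delta$ of every point of $K$. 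The main technical obstacle is this last chaining step: one must verify that the enlarged subshift conjugates via shadowing to a locally maximal totally disconnected set, which requires choosing the transition blocks so that $\Sigma$ remains Markovian and the shadowing map remains injective; this is however standard Bowen-style symbolic hyperbolic dynamics, and costs nothing in entropy since only a bounded number of connecting blocks are adjoined.
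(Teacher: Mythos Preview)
Your overall strategy---replace Pesin charts by the uniform Anosov--Bowen shadowing lemma, produce a large $(N,\rho)$-separated set $Y$ with $f^N(Y)\subset B$, and let $\Lambda_0=\pi(Y^{\ZZ})$---is essentially the paper's approach, and your argument that $\pi$ is injective (hence $\Lambda_0$ is a Cantor set conjugate to a full shift for $f^N$) is correct.

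The real gap is the sentence ``The orbit $\Lambda:=\bigcup_{i=0}^{N-1}f^i(\Lambda_0)$ is then a horseshoe.'' You have only shown that $\Lambda_0$ is locally maximal for $f^N$; nothing prevents the iterates $f^i(\Lambda_0)$ from overlapping, and when they do, local maximality of $\Lambda$ for $f$ (equivalently, local product structure near points lying in two different iterates) is not clear. This is precisely the technical point the paper isolates: it chooses a distinguished block $L_0$ whose associated periodic shadow has minimal period exactly $N$, discards from $\cL$ the few blocks that $\rho$-shadow a subword of $L_0L_0$, and then forces every coded pseudo-orbit to begin each length-$\ell N$ window with the marker $L_0L_0$. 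This guarantees that two distinct coded pseudo-orbits are $2\rho$-separated \emph{and} that $\Lambda_0$ is disjoint from its first $N\ell-1$ iterates, so $\Lambda$ is an honest tower over a full shift and one can invoke \cite{Anosov-horseshoe}. Your construction lacks any such marker mechanism. (One could alternatively fill the gap by citing Anosov's extension of zero-dimensional hyperbolic sets to locally maximal ones, but you do not do this either.)

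A secondary issue: your chaining argument for the direction $K\subset N_\delta(\Lambda)$ appeals to ``transitivity of the locally maximal extension,'' but the locally maximal hull of $K$ has in general several basic pieces, and there is no reason orbits can pass from $B\subset C$ to points $x_i$ lying in other pieces and back. The paper is itself rather terse on this point, but your proposed fix does not work as stated.
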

\begin{proof}
The set $\Lambda$ will be obtained through the shadowing lemma
after fixing a collection of pseudo-orbits. This construction ensures that $\Lambda$
is semi-conjugate to a subshift. In order to prove that $\Lambda$ is totally disconnected
we need to choose these pseudo-orbits so that this semi-conjugacy is a conjugacy.
This uses an argument similar to the proof of Lemma~\ref{l.reduce-full-shift}.

Let $h=h_{top}(K,f)$.
The shadowing lemma associates arbitrarily small constants $\epsilon, \rho>0$
such that any $\epsilon$-pseudo-orbit contained in $K$ is $\rho$-shadowed by an orbit of $f$.
Arguing as previously, there exists $N\geq 1$ arbitrarily large and a point $x\in K$
such that $K$ contains a set $\cL$ of sequences $L=(x_1,\dots,x_{N-1})$ satisfying:
\begin{itemize}
\item $\cL$ contains at least $\exp(N(h-\delta/4))$ elements,
\item each sequence $(x,L,x)=(x,x_1,\dots,x_{N-1},x)$ is a $\epsilon$-pseudo-orbit, and
\item any two such sequences $L=(x_i)$, $L'=(x'_i)$ are $4\rho$-separated:
there exists $1\leq i<N$ such that $d(x_i,x'_i)>\rho$.
\end{itemize}

We may assume that there is $L_0=(x_1^0,\dots,x_{N-1}^0)$ in $\cL$ such that the $N$-periodic pseudo-orbit
$(x,L_0)=(x,x^0_1,\dots,x^0_{N-1})$ is shadowed by an orbit whose minimal period is exactly $N$.
Indeed the collection of periodic orbits which have same period and shadow different $(x,L)$, $L\in \cL$ are $\rho$-separated.
Hence the number of those whose minimal period is smaller than $N/2$
is at most $\exp((h-\delta/4)N/2)$. This is less than the total number of elements of $\cL$.

Fix such a sequence $L_0$. By construction
the only sub-intervals of length $N-1$ of $(L_0,x,L_0)=(x^0_1,\dots,x^0_{N-1},x,x^0_1,\dots,x^0_{N-1})$
that are $2\rho$-shadowed by $L_0$ are the initial and the final sub-intervals.

The number of sequences $L\in \cL$ that $2\rho$-shadow
a sub-interval of length $N-1$ of $(L_0,x,L_0)$
is smaller than $N$, by the separation assumption.
Consequently, one can find a subset $\cL'\subset \cL$
with $\#\cL'\geq \exp(N(h-\delta/2))$
such that the $L\in \cL'$ do not $\rho$-shadow any sub-interval of length $N-1$ of
$(L_0,x,L_0)$.

We then choose $\ell$ large and consider
pseudo-orbits of the form
$$\dots,x,L(0),x,L(1),x, L(2),x,\dots$$
such that $L(i)=L_0$ when $i=0$ or $1 \;\mod\; \ell$
and $L(i)\in \cL'$ otherwise.
Each of these sequences is shadowed by the orbit of a unique point.
This collection of orbits is a compact set $\Lambda_0$ invariant by
$f^N\ell$ and conjugate to the shift on $(\cL')^{N(\ell-2)}$.

By the choice of $L_0$ and $\cL'$, two different pseudo-orbits are $2\rho$-separated
and are $\rho$-shadowed by disjoint orbits.
Consequently, $\Lambda_0$ is disjoint from its $N\ell-1$ first iterates.
The invariant compact set $\Lambda=\Lambda_0\cup\dots\cup f^{N\ell-1}(\Lambda_0)$
is a hyperbolic set with entropy larger than $h-\delta$ and conjugate to a transitive subshift of finite type.
By \cite{Anosov-horseshoe}, any hyperbolic set conjugate to a subshift is a horseshoe and so
$\Lambda$ is a horseshoe. By construction it is arbitrarily
close to $K$ in the Hausdorff topology.
\end{proof}

The proof of Theorem~\ref{t.katok} is now complete.

\bigskip
\noindent
{\bf Acknowledgements.}\\
We thank Carlos Gustavo Moreira, Xiao-Chuan Liu and Pierre-Antoine Guih\'eneuf for helpful conversations and comments.

\end{document}